\definecolor{darkgreen}{rgb}{0.0, 0.4, 0.0}
\definecolor{cyan}{cmyk}{1,0,0,0}
\newcommand{\cb}{\color{blue}}
\newcommand{\cm}{\color{magenta}}
\newcommand{\bdg}{\begin{dg}}
\newcommand{\edg}{\end{dg}}
\newcommand{\cre}{\color{red}}
\newtheorem{tm}{Theorem}[subsection]
\newtheorem{lm}[tm]{Lemma}
\newtheorem{pr}[tm]{Proposition}
\newtheorem{rmk}[tm]{Remark}
\newtheorem{cor}[tm]{Corollary}
\newtheorem{fact}[tm]{Fact}
\newtheorem{??}[tm]{Question}
\newtheorem{defi}[tm]{Definition}
\newtheorem{setup}[tm]{Set-up}
\newcommand{\ben}{\begin{enumerate}}
\newcommand{\een}{\end{enumerate}}
\newcommand{\bit}{\begin{itemize}}
\newcommand{\eit}{\end{itemize}}
\newcommand{\beq}{\begin{equation}}
\newcommand{\eeq}{\end{equation}}
\newcommand{\la}{\label}
\newcommand\ci{\cite}
\font\tenmsb=msbm10
\font\sevenmsb=msbm7
\font\fivemsb=msbm5
\def\Bbb#1{{\fam\msbfam #1}}
\font\teneufm=eufm10
\font\seveneufm=eufm7
\font\fiveeufm=eufm5
\def\frak#1{{\fam\eufmfam\relax#1}}
\newcommand{\lorw}{\longrightarrow}
\newcommand\rat{{\Bbb Q}}
\newcommand\oql{\overline{\Bbb Q}_\ell}
\newcommand\comp{{\Bbb C}}
\newcommand\zed{{\Bbb Z}}
\newcommand\s{\sigma}
\newcommand\e{\epsilon}
\newcommand{\w}[1]{\widetilde{#1}}
\newcommand{\m}[1]{\mathcal{#1}}
\newcommand{\ms}[1]{\mathscr{#1}}
\newcommand{\ptd}[1]{ \,^{p}\!\tau_{ \leq {#1} } }
\newcommand{\ptu}[1]{ \,^{p}\!\tau_{ \geq {#1} } }
\newcommand{\hh}{f}
\newcommand{\ph}{^p\!H}
\newcommand{\Gm}{{\mathbb G}_m}
\newcommand{\disk}{\Delta}
\newcommand{\SP}{S}
\title[Perverse Leray filtration and specialization]{Perverse Leray filtration and specialization \\
with applications to the Hitchin morphism}
\author{ Mark Andrea A. de Cataldo}
\address{Mark Andrea A. de Cataldo, Stony Brook University}
\email{mark.decataldo@stonybrook.edu}
\begin{document}

\begin{abstract} 
We initiate and develop a framework to handle the specialization morphism as a filtered morphism for the perverse, and for the perverse Leray filtration, on the cohomology with constructible coefficients of varieties and morphisms parameterized by a curve.  As an application, we use this framework to carry out a detailed study of filtered specialization for  the Hitchin morphisms associated with the compactification of Dolbeault moduli spaces in \ci{decomp2018}.
\end{abstract}

\maketitle

\tableofcontents


\section{Introduction, notation  and preliminaries}\la{intronotpre}$\;$

Let $v: Y\to S$ be a morphism into a connected nonsingular curve, let $s\in S$ be a point, let $G \in D^b_c(Y)$
be a bounded constructible complex, let $t \in \SP$ be a suitably general point (in a more algebraic set-up, the geometric generic point 
of the curve, or of an  Henselian trait),  let $Y_s$ and $Y_t$ be the corresponding
fibers. 

It is natural, and of fundamental importance, to compare 
the cohomology groups $H^j(Y_s, G_{|Y_s})$ and $H^j(Y_t, G_{|Y_t}).$
One classical example, is the study of Lefschetz pencils.
Similarly, if $X\to Y$ is an $S$-morphism, then it is equally  important to compare, for $F\in D^b_c(X)$, the  cohomology groups 
$H^j(X_s, F_{|X_s}) \to H^j(X_t, F_{|X_t})$.

 When it is defined, the specialization morphism
$H^j(Y_s, G_{|Y_s}) \to H^j(Y_t, G_{|Y_t}),$ and similarly for $X_s$ and $X_t$,
is a key tool for this comparison.

The paper \ci{dema} studies the Hitchin $S$-morphism $f: X\to Y$  for a smooth and projective family  $\m{X}/S$
and  establishes that the resulting specialization morphisms in intersection cohomology exist, and that they are filtered isomorphisms
for the respective perverse Leray filtrations. The two authors realized that there seems to be no
available discussion in the literature of the specialization morphism as a filtered morphism, so that they developed
a criterion for having such a filtered isomorphism that worked in the context of the Hitchin morphism associated with a family of smooth
varieties over a base.

In this paper, we initiate and develop a general  framework to study the specialization morphism
as a filtered morphism for the perverse (Leray) filtration. We then apply such a framework to a more detailed study of the Hitchin morphism.

\subsection{Motivation and outline of the results}\la{intro}$\;$

Let us consider the toy model situation  in Remark \ref{ert}  based on the Set-up \ref{setzxw}, where we start with a smooth morphism
 $f:X^o\to Y^o$ over a
a smooth  curve $S$, and we compactify $f$ by adding divisors $Z$ and $W$, to get $f: X^o \cup Z  = X \to Y= Y^o\cup W$, 
so that  all resulting morphisms are smooth and proper, except for $X^o$ and $Y^o$ over $S$. Then  the morphisms of long exact sequences
(\ref{gys})
for the relative singular cohomology of the  pairs  $(X_{t,s},X^o_{t,s})$ and $(Y_{t,s}, Y^o_{t,s})$ relating  any two point $s,t \in S$,  is   a filtered isomorphisms for the Leray spectral sequences
for the morphisms $f_t: X_t\to Y_t$ and $f_s:X_s \to Y_s$.  

What happens when the morphisms are not proper, the varieties are singular,
we take coefficients in an arbitrary bounded constructible complex of sheaves in $D^b_c(X)$, and we consider the perverse Leray filtration?
This is the question addressed in this paper.

First of all, why the perverse Leray filtration?
The middle perversity $t$-structure is more suitable to study singularities, so we focus on the perverse (Leray)  filtration, instead
of the Grothendieck (Leray) filtration.  In principle the Leray filtration can be studied with the methods of this paper, but 
the results  I can think of are much weaker, essentially due to the fact that the vanishing/nearby cycle functors have several important perverse $t$-exactness properties, without having a counterpart for the usual standard $t$-structure.

Let us focus on an arbitrary situation $X/Y/S$ with $K\in D^b_c(X)$, with $s \in S$ a point and $t \in S$ a suitably general point (the geometric generic point of the curve/Henselian trait,  if the reader prefers that language). 
In this  general context,  the specialization morphism (``from a special point to a general  point") 
$H^*(X_s, K_{|X_s}) \to H^*(X_t, K_{|X_t})$ is not even defined.
This is due to the failure of the relevant base-change morphisms to be isomorphisms. Even when the specialization morphism is defined, its perverse filtered counterpart may fail to be well-defined.

The purpose of this paper is to develop a framework where these questions can be studied systematically.
What follows is a list of some of the outcomes of this study, presented  in a weaker and less complete  form with respect to what is found in the body of the paper,  so that the reader may get an idea of the techniques introduced  and of the  results that are proved.

Let us start with some of the preparatory results in \S\ref{prep}.
In the set up of  morphisms $X\to Y \to S$ and of the specialization to a point $s$ on the curve $S$,
the  fibers $X_s$ and $Y_s$ over the  special point  are Cartier divisor. A key point is to study, in the more general set-up
of a Cartier divisor $T'$ inside a variety $T$,   the failure of commutativity of the perverse truncation functors $\ptd{k}$ with the restriction $i^*$ to  $T'$,
when applied to a complex $G\in D(T)$. This is codified in the failure  to be  isomorphisms of a certain  morphisms  that we introduce and denote by $\delta$ (\ref{j11a}).
Proposition \ref{iotanoc} says that the  $\delta$ are  isomorphisms when $G$ has no constituents (e.g. no non-trivial  direct summands) supported on
the divisor $T'$.
Lemma \ref{kij}, which is placed   in the context of  specialization,  gives a criterion for the $\delta$ to be isomorphisms in terms of the vanishing 
$\phi G=0$
of the  vanishing cycles of the complex $G$.
Proposition \ref{crux} is another criterion for the $\delta$ being isomorphisms for a complex $f_* F$, when we are in a  projective morphism $f: X\to Y$ situation (not necessarily over a curve $S$), where
 a  Cartier divisor on $X$, pulled back from $Y$, has suitable transversality conditions with respect to the complex $F$ on $X$: essentially, one requires 
the complex $F$  on $X$ to be semisimple and to stay  semisimple after restriction  to the Cartier divisor.   Corollary \ref{vbre}
shows that this kind of transversality can be achieved in a simple normal crossing singularities situation, where the  restriction does  not stay semisimple.

As it may be clear by now, the main goal of this paper is to identify conditions that ensure that  specialization morphisms are defined, and, when they are defined, that they are  isomorphisms, and,  in the filtered context, that they are filtered isomorphisms.  

Following the preparation in \S\ref{prep}, concerning the morphisms $\delta$, we zero in on  the problem by defining the perverse  filtered version of the specialization morphism (when it exists),  and by 
offering some criteria  in the main section  \S\ref{spsfi} of this paper.

Let me know state a version of the main Theorem \ref{crit} which is a list of different criteria for  having well defined specialization  filtered isomorphisms. Let us stress that when $v$ is not proper, without additional constraints on the situation,
the specialization morphism may fail to be well-defined.
We are in the situation $f:X \to Y,$ $v:Y\to S$, $F \in D^b_c(X)$. 
In order to try to convey the flavor of
the theorem, we offer  two different sets of conditions, each of which is  a sufficient set of conditions: 
$f$ and $v$ are proper, and $\phi(F)=0$;  $f$ proper, $F$ semisimple and $\phi(F)=0$.

We apply these results  to a compactification of the morphism $f$ as above, i.e. as in Set-up  \ref{setzxw}. 
In this case, we prove  Theorem \ref{sonoloro}, i.e that, under suitable sets of hypotheses, the  situation of the toy model discussed at the beginning of this section can be reproduced in its entirety: the specialization morphisms for $X_s,X_t$, $X^o_s,X^o_t$ and $Z_s, Z_t$   are filtered isomorphisms 
compatibly with the restriction and Gysin morphisms (which one needs to show are well-defined) stemming from the inclusions.

 Finally, Theorem \ref{lezse} is our application of the methods of this paper, and especially of Theorem \ref{sonoloro}, to the compactification of Dolbeault moduli spaces
 constructed in \ci{decomp2018}, thus showing that the various criteria developed in this paper can actually be implemented in a highly non-trivial and geometrically interesting situation. Perhaps surprisingly, this is true whether we consider intersection cohomology,
 or singular cohomology.

\subsection{More precise outline of the contents of the paper}\la{more precise}$\;$

\S\ref{intronotpre} includes this introduction \S\ref{intro}, lists the general notation \S\ref{gennot} and then discusses the formalism
of the vanishing/nearby cycles  functors;
more precisely: 
\S\ref{vncf} summarizes the parts of the formalism of vanishing/nearby cycles  functors that we need in this paper;
in particular, Fact \ref{psphit}, and (\ref{r00}) are of key importance;
\S\ref{neob} explains in some detail the geometric intuition behind the nearby cycle functor.

\S\ref{prep} is preparatory in nature: it introduces and discusses the morphisms of type $\delta$;
these are key to this paper since their being isomorphisms is necessary to the
specialization morphisms being filtered isomorphisms for the perverse (Leray) filtration. The heart of this paper, i.e. the discussion of the specialization morphism
as a filtered morphism for the perverse Leray filtration, is \S\ref{spsfi}. In order to carry out that discussion
we need to measure the failure of the restriction-to-the-special-fiber functor to commute with perverse truncation.
This failure is measured by the morphisms of type $\delta$, which are defined in Lemma \ref{j11l1} and Remark
\ref{not1z}, in the more general context of effective Cartier divisors (the special fiber being one such).
Proposition \ref{iotanoc} gives a sufficient condition for the morphisms of type $\delta$ to be isomorphisms.
Lemma \ref{kij} establishes the key facts we need when dealing with the morphisms of type $\delta$ together
with the vanishing/nearby cycle functors; in particular, the vanishing of the vanishing cycle functor gives a sufficient condition for the morphisms of type $\delta$ being isomorphisms.
Proposition \ref{crux} provides another such criterion under the assumption that certain  relative hard Lefschetz symmetries are in place; Corollary \ref{vbre} ensures that if we are in a simple normal crossing divisor situation,
then such symmetries are in place.

The aforementioned framework is developed   in \S\ref{spsfi}. We refer to the beginning of that section
for a more detailed account of its contents. Here we simply list   the main points.
The set-up is the one of an $\SP$-morphism $X\to Y$, where $\SP$ is a nonsingular connected curve,
and $s \in \SP$ is a point, the ``special" point.
The definition of the specialization morphism as a filtered morphism for the perverse Leray filtration 
is  contained in Definition \ref{defspf1}. The main Theorem of this paper is
Theorem \ref{crit}, which establishes various criteria for this morphisms to exist and to be a filtered isomorphism.
On of the main themes here is to work with non proper structural morphisms $X\to \SP$ and $Y\to \SP$, 
for in this case, the base change morphisms are not isomorphisms in general. Compactifying the situation is
one traditional way to circumvent this issue; this introduces additional base change issues, and Proposition
\ref{r54}  provides criteria to resolve them. Once a compactification is in place, one has the long exact sequence of the resulting triple (boundary, compactification, original space), and Theorem \ref{sonoloro}
provides three criteria ensuring that the resulting three specialization morphisms gives rise to an isomorphism
of filtered long exact sequences associated with the special and general triples.

\S\ref{apell} applies the abstract framework developed in \S\ref{spsfi} in the case of the Hitchin morphisms
arising from the compactification of Dolbeault moduli spaces introduced in \ci{decomp2018}.
We refer to the beginning of that section for a detailed outline of the contents of this section.
The main result is Theorem \ref{lezse}, to the effect that we get the desired isomorphism
of filtered long exact sequences associated with the special and general triples stemming from the particular compactification \ci{decomp2018}. Certain preliminary results concerning descending certain 
properties along $\Gm$-quotients, which could be of general independent interest, are established along the way.

\begin{rmk}\la{otherfields}
{\rm ({\bf Other algebraically closed fields, generic geometric points vs general points})}  We have chosen to work with  vanishing/nearby cycles wrt a morphism into  a nonsingular curve, over the field of complex numbers, with the classical topology, and with 
finite algebraic Whitney stratifications (which play a role only in the background, by merely existing and having the usual properties).
While this is mostly a matter of expository style, we also work in a more global (not over a curve/Henselian trait) context in parts of \S\ref{prep}.
By complementing the references given in \S\ref{vncf} with \ci[\S4]{il2}, which also deals with vanishing/nearby cycles and the perverse t-structure, the exposition and the results
 of this paper remain valid, mutatis mutandis,  for varieties over algebraically closed fields of characteristic zero and for the  \'etale cohomology
with coefficients in $\oql$, for any prime $\ell$;
for example, the role played in this paper
by a suitably general point $t$ on a nonsingular curve $S$  is now played by the geometric generic point of $S$, or by the geometric generic point of an Henselian trait. Similarly, except for \S\ref{apell}, where in the application to the compactification of the Hitchin morphism we need to consider quotients by finite groups on which we have little control, the results remain 
valid over an algebraically closed field of positive characteristic   and  $\oql$-coefficients, with $\ell$ not dividing the characteristic of the field.
 \end{rmk}

{\bf Acknowledgments.}
The author  thanks: Michel Brion, Victor Ginzburg, Jochen Heinloth, Luca Migliorini,  J\"org Sch\"urmann and  Geordie Williamson for useful suggestions. Special thanks to Davesh Maulik, with whom the author has discussed various preliminary versions of Theorem \ref{lezse}. The  author, who is partially supported by N.S.F. D.M.S. Grants n. 1600515 and 1901975, would like to thank the Freiburg Research Institute for Advanced Studies  for the perfect working conditions; the research leading to these results has received funding from the People Programme (Marie Curie Actions) of the European Union's Seventh Framework Programme
(FP7/2007-2013) under REA grant agreement n. [609305].  Finally,  we thank very warmly  the referee for pointing out some inaccuracies in the first submission.


\subsection{Notation}\la{gennot}$\;$

By variety, we mean a separated scheme of finite type over the field of complex numbers
$\comp$.  By point, we mean a closed point. See \ci{bams} for a quick introduction  and for standard references concerning  the constructible derived category and other
concepts in this subsection.

Given a variety $Y,$ we denote by $D^b_c(Y)$ the constructible bounded derived category of  sheaves of rational vector spaces on $Y,$ and by $DF(Y)$ its filtered variant \ci{il,bbd}. We endow $D^b_c(Y)$
with the middle perversity $\frak{t}$-structure. 
A functor that is exact with respect to this $\frak t$-structure is said to be 
$\frak t$-exact. 
The full subcategory of perverse sheaves is denoted by $P(Y)$.
We employ  the following standard notation for the objects associated with this $\frak{t}$-structure:
the full subcategories 
$^p\!D^{\leq j}(Y)$ and ${^p\!D}^{\geq j}(Y)$, $\forall j \in \zed$, and
${^p\!D}^{[j,k]}(Y): =    {^p\!D}^{\geq j}(Y) \cap    {^p\!D}^{\leq k}(Y),$ $\forall j\leq k \in \zed$,
of $D^b_c(Y)$; the truncation functors 
$\ptd{j}: D^b_c(Y) \to  {^p\!D}^{\leq j}(Y)$ and  $\ptu{j}: D^b_c(Y) \to {^p\!D}^{\geq j}(Y)$;  the perverse cohomology functors
$\ph^j: D^b_c(Y) \to P(Y)$. 
We denote derived functors using the un-derived notation, e.g. if $f:X\to Y$
is a morphism of varieties, then the derived direct image (push-forward) functor  $Rf_*: D^b_c(X) \to D^b_c(Y)$ is denoted by $f_*$, etc.
Distinguished triangles  in $D^b_c(Y)$ are denoted $G' \to G \to G'' \rightsquigarrow$. 
At times, we drop the space variable $Y$ from the notation.

The following operations preserve constructibility of complexes:
ordinary and extraordinary push-forward and pull-backs, hom and tensor product,
Verdier duality, nearby and vanishing cycles.


When we write a  Cartesian diagram of morphisms of varieties: 
\beq\la{amb}
\xymatrix{
X'  \ar[r]^g \ar[d]^f &     X \ar[d]^f
\\
Y'  \ar[r]^g &   Y,
}
\eeq
the ambiguities arising by having denoted different arrows by the same  symbol,
are automatically resolved by the context in which they are used; e.g. when we write the base change morphism
of functors, the expression  $g^*f_* \to f_*g^*$ is unambiguous.

The $k$-th  (hyper)cohomology
groups of $Y$ with coefficients in $G \in D^b_c(Y)$ are denoted by $H^k(Y,G)$. The complex computing
this cohomology is denoted by $R\Gamma (Y,G)$ and it lives in the bounded derived category
$D^b_c(pt)$
whose objects are    complexes of vector spaces with cohomology given by finite dimensional rational vector spaces.

The filtrations we consider are finite and increasing.  A sequence of morphism 
$G_\bullet:= (\ldots \to G_n \to G_{n+1} \to \dots)$
in $D^b_c(Y)$
subject to $G_i =0$ $\forall i \ll 0$, and  to $G_i \stackrel{\sim}\to G_{i+1}$ $\forall i \gg 0$,
gives rise to  objects $(G,\ms{F}) \in DF(Y)$ and  $(R\Gamma (Y, G), \ms{F}) \in D^b_c(pt).$
We call such a sequence of morphisms a system in $D^b_c(Y).$ 
There is the evident notion of morphism of systems
$G_\bullet \to G'_\bullet$ and, for each such, 
the resulting morphisms  $(G,\ms{F}) \to (G', \ms{F})$ in $DF(Y),$ and $(R\Gamma (Y, G), \ms{F}) \to (R\Gamma (Y, G'), \ms{F})$ in $DF(pt)$.
Given $G\in D^b_c(Y)$ we have the  system of perverse truncation morphisms:
$
\ldots \to \ptd{n} G \to  \ptd{n+1} G \to \ldots \to G.
$
A morphism $G\to G'$ in $D^b_c(Y)$ gives rise to a morphism of systems
$
\ptd{\bullet} G \to \ptd{\bullet} G',
$
which  gives rise to a morphisms of filtered objects:
\beq\la{lmn}
\xymatrix{
\left( R\Gamma (Y, G), P) \right) \ar[r] &  \left( R\Gamma (Y, G'), P)\right) & {\rm in}\; DF({pt}).
}
\eeq
The filtration $P$ is called the perverse filtration.
There is also the evident notion of systems of functors; e.g. the truncation functors. 

Let $f:X\to Y$ be a morphism of varieties,  let  $F\in D^b_c(X)$ and  let  $G=f_* F$.  Then we have  the filtered objects $(R\Gamma (X,F), P)\neq (R\Gamma(X,F), P^f) := (R\Gamma (Y, f_* F),P).$ The filtration $P^f$ is called the perverse Leray filtration associated with $f.$
Given a morphism $F \to F'$ in $D^b_c(X),$ the analogue of (\ref{lmn}) holds. We have the corresponding objects $P^f_j H^k(X,F)$, ${\rm Gr}^{P^f}_j H^k(X,F)$.

If a statement is valid for every value of  an index, e.g. the degree of a cohomology group, or the step of a filtration, then we denote such an index by a bullet-point symbol, or by a star symbol, e.g.  $H^\bullet (X,F)$, $P_{\bullet}$, $\ptd{\bullet}$,  $P_\star H^\bullet (X,F)$, ${\rm Gr}^{P_f}_\star H^\bullet (X,F)$.

We employ the following convention for shifts of filtered increasing filtrations: 
\beq\la{conv}
\ms F(j)_\bullet:= \ms{F}_{\bullet -j}.
\eeq

We have  the evident and  equivalent relations between truncations and shifts:
\beq\la{tsh}
[\star] \circ \ptd{\bullet}= \ptd{\bullet -\star} \circ [\star], 
\qquad
\ptd{\bullet} \circ  [\star]  = [\star] \circ \ptd{\bullet +\star},
\eeq
which are valid for  $\ptu{\bullet}$ and $\ph^{\bullet}$ as well.

The category  $P(Y)$  of perverse sheaves on $Y$ is Abelian and  Artinian, so that the Jordan-Holder theorem holds in it.
The constituents of a non-zero perverse sheaf $G \in P(Y)$ are the isomorphisms classes of the
perverse sheaves appearing  in the unique and finite
collection of non-zero simple perverse sheaves appearing as  the quotients  in a Jordan-Holder filtration of $G.$
The  constituents of  a non-zero complex $G\in D^b_c(Y)$ are defined to be the constituents of all of its non-zero perverse
cohomology sheaves.

In general, we drop decorations (indices, parentheses, space variables, etc.) if it seems harmless in the context.

Given a morphism of varieties $X\to Y$ and a point $y \in Y$,  we denote by $X_y$ the fiber over $y$ in $X$.

We are going to  use the nearby/vanishing cycle functors. See \S\ref{vncf} for the basic facts.


For what follows, namely the Decomposition Theorem,  see:   \ci{bbd}, \ci{saito}, \ci{htam}, \ci{mo,sa}. For a discussion, further references, and a  generalization of \ci{mo} to the proper case with rational coefficients see  \ci{dess}. 

\begin{tm}\la{dt}
{\rm ({\bf Decomposition  and Relative Hard Lefschetz theorems})}

Let $f:X\to Y$ be a proper morphism of varieties and let $F \in P(X)$ be semisimple. Then $f_* F$ is semisimple.

If $f$ is projective, then the Relative Hard Lefschetz holds: the choice of an $f$-ample line bundle induces isomorphisms
${\ph^{-\bullet}} (f_* F) \stackrel{\simeq}\to
{\ph^{\bullet}}(f_*(F)$.
\end{tm}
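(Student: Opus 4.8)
The plan is: reduce to a projective morphism; extract the decomposition formally from Relative Hard Lefschetz together with semisimplicity of the perverse cohomology sheaves $\ph^{i}(f_*F)$; and prove these last two facts by a single induction that feeds in polarized Hodge theory (or, in the $\oql$-setting, weights). First I would reduce to $f$ \emph{projective}. By a relative form of Chow's lemma there is a projective surjective modification $\pi\colon X'\to X$ with $f\circ\pi\colon X'\to Y$ projective, and one realizes the semisimple $F$ as a direct summand of $\pi_*G$ for a suitable semisimple $G\in P(X')$ (e.g.\ intersection complexes of the local systems underlying the simple constituents of $F$); then $f_*F$ is a direct summand of $(f\circ\pi)_*G$, and the semisimplicity statement descends to direct summands. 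Relative Hard Lefschetz is asserted only for projective $f$, so nothing has to be reduced there. From now on $f$ is projective, $\eta\in H^{2}(X,\rat)$ is the class of an $f$-ample line bundle, and $L\colon f_*F\to f_*F[2]$ is the associated Lefschetz morphism, with perverse pieces $L^{i}\colon\ph^{-i}(f_*F)\to\ph^{i}(f_*F)$.

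Next I would run the purely homological \textbf{Deligne argument}: once each $L^{i}$ is an isomorphism, $L$ splits the perverse filtration of $f_*F$, producing a canonical isomorphism $f_*F\simeq\bigoplus_{i}\ph^{i}(f_*F)[-i]$ in $D^b_c(Y)$; combined with semisimplicity of each $\ph^{i}(f_*F)$ this exhibits $f_*F$ as a finite direct sum of shifted semisimple perverse sheaves, hence semisimple, and the maps $L^{i}$ are precisely the Relative Hard Lefschetz isomorphism of the statement. So the whole theorem reduces, for $f$ projective, to proving: (a) each $L^{i}$ is an isomorphism; and (b) each $\ph^{i}(f_*F)$ is semisimple.

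I would prove (a) and (b) simultaneously by induction (on $\dim f(X)$, or on $\dim X$). Fix an algebraic Whitney stratification adapted to $f$ and $F$, with dense open stratum $Y^{\circ}\subset Y$. Over $Y^{\circ}$, reducing via the simple constituents of $F$ to intersection complexes of semisimple local systems, each $\ph^{i}(f_*F)_{|Y^{\circ}}$ is a shifted local system underlying a polarizable variation (of Hodge structure, or of pure twistor structure in the general semisimple case --- here \ci{mo}, \ci{sa} enter), so over $Y^{\circ}$ statement (a) reduces to the classical Relative Hard Lefschetz for smooth projective families (hard Lefschetz on fibers, propagated by the global invariant cycle theorem) and (b) reduces to Deligne's semisimplicity theorem for the monodromy of a polarizable variation. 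The real content of the induction is to propagate these facts across $Y\setminus Y^{\circ}$: one must show that $\ph^{i}(f_*F)$ has no subobject or quotient object supported on a smaller stratum --- so that it is a direct sum of simple intersection complexes, which yields (b) --- and, in tandem, that the Lefschetz maps $L^{i}$ remain isomorphisms over all of $Y$, which yields (a); these two assertions are intertwined and cannot be decoupled.

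The main obstacle is exactly this propagation; it carries the genuine content of the theorem and absorbs all the Hodge-theoretic or arithmetic input. In the Hodge-theoretic treatment one proves, by a secondary induction on the defect of semismallness of $f$, the relative Hodge--Riemann bilinear relations: the (intersection) cohomology of the fibers carries polarizations compatible with the Lefschetz operator, and the definiteness of these forms on primitive pieces simultaneously rules out the small summands and produces the Lefschetz isomorphisms --- the delicate bookkeeping of the signs across the stratification is the heart of \ci{htam}. In the $\oql$-theoretic treatment one spreads $X\to Y$ and $F$ out over a finitely generated $\zed$-algebra, specializes to a finite field, invokes Deligne's theorem (Weil II) that $f_*$ carries pure complexes to pure complexes, uses that a pure perverse sheaf becomes semisimple over $\ov{\bb{F}}_{q}$ to get (b), and derives (a) from Deligne's hard Lefschetz over finite fields together with the yoga of weights, transferring everything back to characteristic zero by a standard specialization argument. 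Either route isolates a single hard ingredient --- positivity of a polarization, or Gabber's purity --- and the rest is formal or classical; full details are in \ci{bbd}, \ci{saito}, \ci{htam}, \ci{mo}, \ci{sa}, \ci{dess}.
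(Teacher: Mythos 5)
The paper contains no proof of this statement: Theorem \ref{dt} is quoted as background, with the proof delegated wholesale to the cited literature (\ci{bbd}, \ci{saito}, \ci{htam}, \ci{mo}, \ci{sa}, \ci{dess}). So there is nothing in the paper to compare your argument against; what you have written is a survey of the standard proof architecture, and as such it is broadly accurate --- the Deligne splitting argument deducing the decomposition from Relative Hard Lefschetz plus semisimplicity of the $\ph^{i}(f_*F)$, and the simultaneous induction establishing those two facts with Hodge-theoretic (or weight-theoretic) input, are indeed how all the known proofs are organized.

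Two caveats on your outline. First, the reduction from proper $f$ to projective $f$ for \emph{arbitrary} semisimple coefficients is not the routine Chow's-lemma step you present it as: realizing a given semisimple $F$ as a direct summand of $\pi_*G$ already presupposes the decomposition theorem for the modification $\pi$ with the relevant coefficients, and making this circle of reductions work is precisely the content of \ci{dess}, which the paper cites for ``a generalization of \ci{mo} to the proper case with rational coefficients.'' Second, the $\oql$/Weil II route of \ci{bbd} proves the theorem only for semisimple complexes \emph{of geometric origin}; for a general semisimple perverse sheaf (an $IC_T(L)$ with $L$ an arbitrary irreducible local system) one cannot spread out and specialize to positive characteristic, and the twistor-theoretic machinery of \ci{mo} and \ci{sa} is genuinely needed, not merely an alternative. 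With those qualifications your plan correctly identifies where the hard content lives, but it remains a roadmap through the references rather than a proof --- which is also exactly how the paper treats the statement.
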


We  define the intersection complex $IC_T$ of a variety $T$ as the direct sum of the intersection complexes
of its irreducible components $T_j$: $IC_T:= \oplus_j IC_{T_j}$.  Then, as it is shown in \ci{dejag}:  $IC_T \in P(T)$ is a semisimple perverse sheaf; it is the intermediate extension from the smooth locus of $T$ of the direct sum of the constant sheaves on each component
shifted by its dimension (see \ci{wu} for a topological characterization); the Decomposition Theorem, Relative Hard Lefschetz Theorem, and allied Hodge-Theoretic facts
hold for it and its cohomology.  The complex $IC_T$  underlies a polarizable Hodge module. 
The simple objects in $P(Y)$ are the intersection complexes  $IC_T(L)$, where $T\subseteq Y$ is closed and irreducible
and $L$ is an irreducible local systems on some Zariski dense open subset $T^o \subseteq T^{\rm reg}$.

We define the topologist's intersection complex
as follows: 
$\m{IC}_T:= \oplus_j IC_{T_j}[\dim T_j]$. 

We define the intersection cohomology groups  by setting $I\!H^\bullet (T):= H^\bullet (T, IC_T)$;  they 
start in degree $-\dim{T}$. We define the topologist's intersection cohomology groups  by setting $\m{I\!H}^\bullet (T):= H^\bullet (T, \m{IC}_T)$;  they 
start in degree zero.
E.g. for $T= o \coprod \Lambda$ the disjoint union of a point and a line, we have:
$\rat_T = \rat_o \oplus \rat_\Lambda$;  
$IC_T=\rat_o \oplus \rat_\Lambda [1]$;  $\m{IC}_T=\rat_o \oplus \rat_\Lambda$; $\m{IH}^\bullet (T) =H^\bullet(\rat_T) =H^\bullet (o) \oplus H^\bullet (\Lambda)$; $I\!H^\bullet (T) = H^\bullet (o) \oplus H^{\bullet +1} (T)$.

In order to avoid  repeating naturality-type statements, we employ systematically the following notation.
The symbol ``$=$" denotes either equality, or a canonical isomorphism. 
The symbol
$\stackrel{\simeq}\to$ and  denotes the fact that a canonical arrow is, in the context where it appears, an isomorphism.
The symbol $\cong$ 
denotes an isomorphism; if the direction of the arrow is relevant, we write $\stackrel{\simeq}\to$. E.g.: the base change canonical isomorphism reads: $g^!f_*= f_* g^!$;
the proper base change isomorphism for $f$ proper reads:  $g^* f_* \stackrel{\simeq}\to f_* g^*$.


\subsection{The vanishing and nearby cycles formalism }\la{vncf}$\;$

This section is an expanded version of \ci[\S2.2]{dema}.
Standard references  for this section are \ci[XIII, XIV]{sga72}  and \ci[Ch. 8,10]{kash}.
See also \ci{il2} (cf. Remark \ref{otherfields}).


Let  $v=v_Y: Y\to S$
be a morphism of varieties, where $S$ is a nonsingular curve. Let $s \in S$ be a point, usually called the special point.

We have the exact  functors  of triangulated categories:
\beq\la{fctz}
i^*,i^!, \psi=\psi_v, \phi=\phi_v:  D^b_c(Y) \to D(Y_s),
\eeq
where, $i: Y_s \to Y$ is the closed embedding of the (special)  fiber of $v$ over $s$,  $\phi$ is the vanishing cycle functor and $\psi$ is the nearby cycle functor. 
The functors $\phi$ and $\psi$ depend on $v$; e.g.   \ci[Example 2.0.5]{elm}; this is not an issue in this paper.

We have  the two, Verdier dual, canonical distinguished triangle of functors: (we often write $\s$ instead of $\s^*$ and $\s^!$)
\beq\la{1p}
\xymatrix{
i^*[-1]  \ar[r]^-{\s^*} & \psi [-1]  \ar[r]^-{\rm can} & \phi   \ar@{~>}[r]   &, &
\phi \ar[r]^-{\rm var} & \psi[-1]  \ar[r]^-{\s^!} & i^![1]   \ar@{~>}[r] &.
}
\eeq

\begin{fact}\la{psp0}

{\rm ({\bf 
$\frak{t}$-exactness for $\psi [-1]$ and $\phi$})} 
The functors $\psi [-1]$  and $\phi$ are $\frak t$-exact and commute with Verdier duality. We thus  have the following canonical identifications: \beq\la{can1}
\xymatrix{
\ptd{\bullet} \phi =  \phi \ptd{\bullet}; & \ptd{\bullet} \psi [-1] =  \psi [-1] \ptd{\bullet};&
 \mbox{ditto for $ \ptu{\bullet}$ and  $\ph^{\bullet}$}.
}
\eeq

\end{fact}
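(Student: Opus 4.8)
The plan is to establish the $\frak{t}$-exactness of $\psi[-1]$ and $\phi$, from which the identities in (\ref{can1}) follow formally. First I would recall the standard construction of $\psi_v$ and $\phi_v$ via the canonical diagram: pull back $Y\to S$ along the universal cover $\widetilde{S^*}\to S^*$ of a punctured disk around $s$, form $\widetilde{Y^*}$, and set $\psi_v G = i^* (Rk_* k^* G)$ where $k:\widetilde{Y^*}\to Y$ is the resulting map; $\phi_v$ then sits in the triangle (\ref{1p}). The cleanest route to $\frak{t}$-exactness is to invoke the theorem (due to Gabber, see \ci[\S4]{il2} or \ci{kash}) that $\psi_v[-1]$ is $\frak{t}$-exact for the middle perversity. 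I would present this as the key input: $\psi_v[-1]$ sends $^p\!D^{\leq 0}$ to $^p\!D^{\leq 0}$ and $^p\!D^{\geq 0}$ to $^p\!D^{\geq 0}$.

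Next I would deduce $\frak{t}$-exactness of $\phi$. The right-exactness ($\phi$ sends $^p\!D^{\leq 0}$ to $^p\!D^{\leq 0}$) is immediate from the first triangle in (\ref{1p}): for $G\in {}^p\!D^{\leq 0}(Y)$ we have $i^*[-1]G \in {}^p\!D^{\leq 0}(Y_s)$ by right $\frak{t}$-exactness of $i^*$, and $\psi[-1]G \in {}^p\!D^{\leq 0}(Y_s)$ by the input above; since $\phi G$ is the cone of a map between objects of $^p\!D^{\leq 0}$, it lies in $^p\!D^{\leq 1}$ — so this gives the wrong bound directly, and instead one should use that $\phi$ commutes with Verdier duality together with right-exactness to get left-exactness, or argue via the second triangle. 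Concretely: $\phi$ commutes with Verdier duality $\mathbb{D}$ (this follows because $\psi$ does, by the compatibility of $Rk_*k^*$ with duality up to the monodromy, and because $\mathbb{D}$ swaps the two triangles in (\ref{1p})); combined with $\frak{t}$-exactness of $\mathbb{D}$ (up to the sign convention), right-exactness of $\phi$ yields left-exactness, hence full $\frak{t}$-exactness. The same duality argument upgrades one half of the $\psi[-1]$ statement to the other half if only one direction is taken as input.

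Finally I would derive (\ref{can1}). This is a general principle: if an exact functor $T$ between triangulated categories with $\frak{t}$-structures is $\frak{t}$-exact, then it commutes with truncations, i.e. there are canonical isomorphisms $\ptd{k}\circ T \cong T\circ \ptd{k}$ and $\ptu{k}\circ T \cong T\circ \ptu{k}$ and $\ph^k\circ T \cong T\circ \ph^k$. One proves this by applying $T$ to the truncation triangle $\ptd{k}G \to G \to \ptu{k+1}G \rightsquigarrow$, noting that $T(\ptd{k}G)\in {}^p\!D^{\leq k}$ and $T(\ptu{k+1}G)\in {}^p\!D^{\geq k+1}$ by $\frak{t}$-exactness, and invoking uniqueness of the truncation triangle. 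I would state this explicitly for $T=\psi[-1]$ and $T=\phi$, and note the analogous statements for $\ptu{\bullet}$ and $\ph^{\bullet}$ follow by the same argument (or by taking cones). The main obstacle is properly citing and using the $\frak{t}$-exactness of $\psi[-1]$ — this is the substantive geometric input (it relies on Artin vanishing for affine morphisms applied to the nearby-cycles construction) and not something to reprove here; everything else is formal manipulation of triangles and duality. I would be careful that the duality compatibility for $\phi$ is stated with the correct shift so that right-exactness genuinely transfers to left-exactness without an off-by-one error.
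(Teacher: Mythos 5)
Your overall strategy coincides with the paper's own treatment: Fact \ref{psp0} is stated there without proof, as a citation of the standard package (\ci[XIII, XIV]{sga72}, \ci[Ch.~8,10]{kash}, \ci[\S4]{il2}), and the one substantive geometric input --- the $\frak t$-exactness of $\psi[-1]$, i.e.\ Gabber's theorem --- is exactly what you isolate and cite. The passage from $\frak t$-exactness to the identifications (\ref{can1}) via uniqueness of the truncation triangle is the standard formal argument, correctly described in your last paragraph.

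One correction to your middle paragraph: the direct argument for right $\frak t$-exactness of $\phi$ that you abandon actually works. The subcategory ${}^p\!D^{\leq 0}$ is stable under extensions, so the cone of a morphism between two objects of ${}^p\!D^{\leq 0}$ lies in ${}^p\!D^{\leq 0}$, not merely in ${}^p\!D^{\leq 1}$: rotating $A\to B\to C\rightsquigarrow$ to $B\to C\to A[1]\rightsquigarrow$ places $C$ between $B\in{}^p\!D^{\leq 0}$ and $A[1]\in{}^p\!D^{\leq -1}$. (Also note that for $G\in{}^p\!D^{\leq 0}$ one only has $i^*[-1]G\in{}^p\!D^{\leq 1}$, the shift costing one degree, cf.\ Lemma \ref{iotatex}; this is still good enough, since $A[1]$ then lands in ${}^p\!D^{\leq 0}$.) As written, your fallback is circular: you propose to obtain left-exactness of $\phi$ from duality ``together with right-exactness'' immediately after declaring that the argument for right-exactness gives the wrong bound. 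The clean repair, which you do gesture at, is: right $\frak t$-exactness of $\phi$ from the first triangle in (\ref{1p}) as above; left $\frak t$-exactness from the rotated second triangle $i^!G\to\phi G\to\psi[-1]G\rightsquigarrow$, using the left $\frak t$-exactness of $i^!$ and of $\psi[-1]$. The self-duality of $\psi[-1]$ and $\phi$ is then a separately cited compatibility, not an ingredient needed for the exactness statement.
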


 
 The following is a key property of the vanishing cycle functor.
 
 \begin{fact}\la{psp}
{\rm ({\bf 
Smooth morphisms and vanishing of $\phi$})}
If $v: Y\to S$ is smooth over a neighborhood of $s$, and  $G\in D^b_c(Y)$  has locally constant cohomology sheaves  near $s,$ 
then $\phi\,G=0 \in D(Y_s)$. See \ci[XIII, 2.1.5]{sga72}. In the special case where  $v=id_S$, this implies that 
 a complex $G \in D(S)$ has constant cohomology sheaves  near $s$ if and only if  $\phi G=0$.
\end{fact}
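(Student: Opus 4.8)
The plan is to reduce the asserted vanishing to the statement that the canonical morphism $\s^*\colon i^*[-1]G \to \psi_v[-1]G$ of the triangle $(\ref{1p})$ is an isomorphism, to verify this stalk by stalk using the Milnor description of $\psi_v$, and then to treat the converse — in the special case $v=\mathrm{id}_S$ — by computing $\psi_{\mathrm{id}_S}$ explicitly.

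For the forward implication I would either appeal to \ci[XIII, 2.1.5]{sga72}, or argue directly as follows. The first triangle of $(\ref{1p})$, namely $i^*[-1]G \xrightarrow{\,\s^*\,} \psi_v[-1]G \xrightarrow{\,{\rm can}\,} \phi_v G \rightsquigarrow$, identifies $\phi_v G$ with the cone of $\s^*$, so $\phi_v G=0$ is equivalent to $\s^*$ being an isomorphism, which I would check on the stalk at each $y\in Y_s$. There the source is $G_y$, the target is $R\Gamma(B\cap v^{-1}(t),\,G)$ for a small ball $B\ni y$ in $Y$ and a point $t$ near $s$ (the Milnor fibre of $v$ at $y$), and $\s^*_y$ is the restriction morphism. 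When $v$ is smooth at $y$, I would pick local analytic coordinates identifying $(Y,y)$ with a polydisc, $(S,s)$ with a disc, and $v$ with the first coordinate; the Milnor fibre then becomes a coordinate slice, hence contractible, and $B$ is contractible too. Shrinking $B$ so that every $\mathcal{H}^q(G)$ is constant on it — possible since $G$ has locally constant cohomology sheaves near $s$ and $B$ is simply connected — a comparison of the hypercohomology spectral sequences $H^p(-,\mathcal{H}^q(G))\Rightarrow H^{p+q}(-,G)$ for $B$ and for $B\cap v^{-1}(t)$ shows the restriction map is an isomorphism: each $E_2$ is concentrated in $p=0$ with value $\mathcal{H}^q(G)_y$, and the map is the identity there. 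Hence $\phi_v G=0$, and taking $v=\mathrm{id}_S$ yields one implication of the final assertion.

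For the converse I would assume $\phi_{\mathrm{id}_S}G=0$ and prove that $G$ has constant cohomology sheaves near $s$, after replacing $S$ by a small disc $\Delta$ centred at $s$. As before, $\phi_{\mathrm{id}_S}G=0$ says that $\s^*\colon i^*G\to \psi_{\mathrm{id}_S}G$ is an isomorphism. Now $\psi_{\mathrm{id}_S}G$, a complex on the point $\{s\}$, is computed by $R\Gamma(\widetilde{\Delta^{*}},\,G)$, where $\widetilde{\Delta^{*}}$ is the universal cover of the punctured disc; since $\widetilde{\Delta^{*}}$ is contractible and the cohomology sheaves of $G$, being locally constant on $\Delta^{*}$, pull back to constant sheaves there, one obtains $\mathcal{H}^j(\psi_{\mathrm{id}_S}G)=\mathcal{H}^j(G)_{\bar\eta}$, the generic (nearby) stalk with its monodromy. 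Taking $\mathcal{H}^j$ of the isomorphism $\s^*$ then shows that, for every $j$, the cospecialization $\mathcal{H}^j(G)_s\to \mathcal{H}^j(G)_{\bar\eta}$ is an isomorphism. Finally I would invoke the elementary fact that a constructible sheaf $\mathcal{F}$ on $\Delta$ with cospecialization $\mathcal{F}_s\stackrel{\simeq}\to \mathcal{F}_{\bar\eta}$ is constant near $s$: injectivity forces $\mathcal{F}$ to have no sections supported at $s$; the image of a germ at $s$ of a section of $\mathcal{F}$ is monodromy-invariant, so surjectivity forces the monodromy of $\mathcal{F}|_{\Delta^{*}}$ to be trivial, whence $\mathcal{F}|_{\Delta^{*}}$ is the constant sheaf $\underline{\mathcal{F}_{\bar\eta}}$; comparing $\mathcal{F}$ with $\mathcal{H}^0(j_*j^*\mathcal{F})$, for $j\colon \Delta^{*}\hookrightarrow \Delta$, then identifies it with $\underline{\mathcal{F}_{\bar\eta}}$ on all of $\Delta$. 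Applying this to each $\mathcal{F}=\mathcal{H}^j(G)$ completes the proof.

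I expect the principal obstacle to be keeping the bookkeeping honest rather than any deep point. In the forward direction one is tempted to say ``$G$ is constant near a slice'', but only its cohomology sheaves are, so homotopy invariance must be routed through the hypercohomology spectral sequence; the genuinely geometric input — the smooth local normal form and the contractibility of the Milnor fibre — is classical. In the converse the delicate step is to pin down $\psi_{\mathrm{id}_S}G$ correctly: its cohomology is the \emph{full} generic stalk with monodromy, not merely the invariants (which is what $i^*Rj_*$ would yield), and it is precisely this that lets the elementary sheaf-theoretic argument detect simultaneously the triviality of the monodromy and the absence of sections supported at $s$.
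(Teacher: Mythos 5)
Your argument is correct. Note, though, that the paper offers no proof of this statement at all: the forward implication is delegated wholesale to the citation \ci[XIII, 2.1.5]{sga72}, and the converse in the case $v=\mathrm{id}_S$ is asserted as an unproved ``this implies.'' So what you have written is not an alternative to the paper's proof but a self-contained substitute for it. Your forward direction is essentially the standard argument underlying the SGA~7 reference, transported to the classical topology: $\phi G$ is the cone of $\sigma^*\colon i^*[-1]G\to\psi[-1]G$, the stalk of $\psi_v G$ at $y$ is the cohomology of the local Milnor fibre, and smoothness plus local constancy of the $\mathcal{H}^q(G)$ collapse both hypercohomology spectral sequences to their $p=0$ columns, where the restriction map is the identity. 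Your converse correctly isolates the two pieces of information carried by the isomorphism $\mathcal{H}^j(G)_s\stackrel{\simeq}\to\mathcal{H}^j(\psi G)=\mathcal{H}^j(G)_{\bar\eta}$ — injectivity kills sections supported at $s$, surjectivity onto the full nearby stalk (not merely the invariants) kills the monodromy — and the comparison with $\mathcal{H}^0(j_*j^*\mathcal{F})$ finishes it. The only step you leave tacit is that, after shrinking $\Delta$, constructibility of $G$ forces its cohomology sheaves to be locally constant on $\Delta^*$ (a constructible sheaf on a curve is locally constant off a discrete set); you should state this explicitly since it is what licenses the identification $\mathcal{H}^j(\psi G)=\mathcal{H}^j(G)_{\bar\eta}$ via the contractibility of $\widetilde{\Delta^*}$. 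With that said, there is no gap.
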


We recall, for later use,  the following simple

\begin{lm}\la{silm} {\rm ({\bf 
Vanishing of $\phi$ implies no constituents})}
Let $G \in D^b_c(Y).$  If $\phi G=0,$ then no constituent of $G$ is supported on $Y_s.$
\end{lm}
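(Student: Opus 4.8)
The plan is to contrapose: suppose some constituent $L$ of $G$ is supported on $Y_s$, i.e.\ $L$ appears as a Jordan--Hölder factor of some perverse cohomology sheaf $\ph^k(G)$, and $\mathrm{supp}(L) \subseteq Y_s$; I will deduce $\phi G \neq 0$. The first step is to reduce to the case of a single perverse sheaf. Since $\psi[-1]$ and $\phi$ are $\frak t$-exact (Fact \ref{psp0}), for each $k$ the distinguished triangle (\ref{1p}) gives a long exact sequence of perverse sheaves relating $\ph^k(i^*G[-1])$, $\ph^k(\psi G[-1])$ and $\ph^k(\phi G)=\phi\,\ph^k(G)$; in particular $\phi G = 0$ forces $\phi\,\ph^k(G)=0$ for every $k$. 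So it suffices to treat $G = P \in P(Y)$ a perverse sheaf admitting a constituent supported on $Y_s$, and show $\phi P \neq 0$.

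\textbf{Second step: reduce to a simple perverse sheaf supported on $Y_s$.} Given $P \in P(Y)$, choose a Jordan--Hölder filtration $0 = P_0 \subset P_1 \subset \cdots \subset P_r = P$; by hypothesis at least one quotient $P_j/P_{j-1}$ is a simple perverse sheaf $L$ with $\mathrm{supp}(L)\subseteq Y_s$. Now $\phi$, being a functor of triangulated categories and in particular exact, sends each short exact sequence of perverse sheaves $0 \to P_{j-1}\to P_j \to P_j/P_{j-1}\to 0$ to a distinguished triangle in $D(Y_s)$; combining these, the class of $\phi P$ in the Grothendieck group $K(D(Y_s))$ equals $\sum_j [\phi(P_j/P_{j-1})]$. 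By $\frak t$-exactness of $\phi$ all $\phi(P_j/P_{j-1})$ lie in $P(Y_s)$, and passing to perverse cohomology, $[\phi P] = \sum_j [\phi(P_j/P_{j-1})]$ in $K(P(Y_s))$, a sum of classes of objects in an abelian category. If all these classes were $0$ each would be $0$; but it suffices to show at least one summand, namely $\phi L$ for $L$ a simple perverse sheaf supported on $Y_s$, is nonzero (there is no cancellation possible among effective classes unless one of them is nonzero). Hence the problem reduces to: if $L \in P(Y)$ is simple with $\mathrm{supp}(L) \subseteq Y_s$, then $\phi L \neq 0$.

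\textbf{Third step: compute $\phi$ on a complex supported on the special fiber.} This is where the only real content lies. Since $L$ is supported on $Y_s$, we have $i_*i^*L = L = i_*i^!L$ canonically, where $i:Y_s\hookrightarrow Y$. The nearby cycle functor $\psi$ factors through restriction to a punctured tubular neighborhood of $Y_s$, on which $L$ vanishes (it is supported on $Y_s$ itself); more precisely, if $j: Y\setminus Y_s \hookrightarrow Y$ denotes the open complement, then $\psi_v(L)$ depends only on $j^*L = 0$, so $\psi_v(L) = 0$. Feeding this into the triangle (\ref{1p}), $\;i^*L[-1] \xrightarrow{\ \s^*\ } \psi L[-1] \xrightarrow{\ \mathrm{can}\ } \phi L \rightsquigarrow$, and using $\psi L = 0$, we obtain $\phi L \cong i^*L[1-1]$ shifted appropriately, i.e.\ $\phi L \cong i^*L \cong L \neq 0$ (using that $L$ is a nonzero simple perverse sheaf and $i^*$ restricted to complexes supported on $Y_s$ is an equivalence onto its image). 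This contradicts $\phi G = 0$, completing the proof.

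\textbf{Main obstacle.} The delicate point is the assertion $\psi_v(L) = 0$ for $L$ supported on the special fiber. One must invoke the standard description of $\psi_v$ as (up to the choices entering its definition) computed from the restriction of $L$ to the generic part of a Milnor/tubular neighborhood of $Y_s$ — equivalently $\psi_v = i^* j_* (\text{local system pullback})$ applied to $j^*L$, and $j^*L = 0$. Care is needed because $\psi$ and $\phi$ depend on $v$ and are not just computed fiberwise; but the vanishing of $j^*L$ is robust enough that all reasonable definitions give $\psi_v(L)=0$. Once that is granted, everything else is formal manipulation of the triangles (\ref{1p}) together with the exactness properties in Fact \ref{psp0}.
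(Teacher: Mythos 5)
Your proof is correct. The paper itself gives no argument here, deferring to \ci[Lemma 3.1.5]{dema}, so there is nothing to compare line by line; but your reductions (to a single perverse cohomology sheaf via $\frak t$-exactness of $\phi$, then to a simple Jordan--H\"older factor) and the key computation ($j^*L=0$ for $L$ supported on $Y_s$, hence $\psi_v L=0$ by the very construction $\psi_v = i^*\,(j\pi)_*\,(j\pi)^*$, hence $\phi L\cong i^*L\neq 0$ from the triangle (\ref{1p})) are exactly the standard route and are all sound. One small streamlining: the Grothendieck-group detour in your second step is unnecessary --- since $\phi$ is $\frak t$-exact it restricts to an exact functor $P(Y)\to P(Y_s)$, so $\phi P=0$ immediately forces $\phi$ of every subquotient of $P$, in particular $\phi L$, to vanish, which is the contradiction you want without any discussion of effective classes.
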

\begin{proof}
See \ci[Lemma 3.1.5]{dema} 
\end{proof}

\begin{rmk}\la{gt5}
The converse to Lemma \ref{silm} is false; see \ci[Ex. VIII.14]{kash} (Lefschetz degeneration to an ordinary double point).\end{rmk}

 \begin{fact}\la{gh1}
The composition $i^*[-1] \to \psi[-1] \to i^![1]$ yields a natural morphism of functors $D^b_c(Y) \to D(Y_s)$:
\beq\la{gh}
\xymatrix{
{\nu}:
i^*[-1] \ar[r] &   i^![1].
}
\eeq
The morphism $\nu$ 
coincides with the morphism obtained via Verdier's specialization functor, (cf. \ci{jorg}, for example), so that it depends  only on the closed embedding $Y_s\to Y,$
i.e. it is independent of $v.$ 
If $\phi G=0$, then $\s^*(G), \s^!(G)$ and $\nu(G)$  are  isomorphisms. 
\end{fact}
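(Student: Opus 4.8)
The plan is to obtain $\nu$ directly from the two canonical triangles of functors in (\ref{1p}). The first of these supplies a morphism of functors $\s^*\colon i^*[-1]\to\psi[-1]$ and the second a morphism $\s^!\colon\psi[-1]\to i^![1]$, so I would simply set $\nu:=\s^!\circ\s^*\colon i^*[-1]\to i^![1]$; being a composition of morphisms of functors $D^b_c(Y)\to D(Y_s)$, it is itself such a morphism, and nothing more is needed for the first assertion. For the claim that $\nu$ coincides with the morphism produced by Verdier's specialization functor — and hence that it depends only on the embedding $i\colon Y_s\hookrightarrow Y$, not on $v$ — I would not argue from scratch but invoke the comparison of $\psi_v$ with the Verdier specialization, under which $\s^*$ and $\s^!$ go over to the corresponding maps on the specialization side; this is exactly what \ci{jorg} provides (cf. the references in \S\ref{vncf}), and composing the two identifications yields $\nu$.

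For the last assertion, assume $\phi G=0$. Feeding $G$ into the first triangle of (\ref{1p}) produces a distinguished triangle $i^*[-1]G\xrightarrow{\s^*(G)}\psi[-1]G\to\phi G\rightsquigarrow$ whose third vertex is $0$, so $\s^*(G)$ is an isomorphism; feeding $G$ into the second triangle produces $\phi G\to\psi[-1]G\xrightarrow{\s^!(G)}i^![1]G\rightsquigarrow$ whose first vertex is $0$, so $\s^!(G)$ is an isomorphism. Consequently $\nu(G)=\s^!(G)\circ\s^*(G)$ is an isomorphism, being the composite of two isomorphisms. That settles this part entirely.

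I expect the only genuinely delicate point to be the identification of $\nu$ with Verdier's specialization morphism: here one has to match up the shifts, twists and sign conventions on the nearby-cycle side with those on the specialization side so that the composite $\s^!\circ\s^*$ corresponds \emph{on the nose} to the specialization map. By contrast, the naturality of $\nu$ and the vanishing statement are formal consequences of the triangles (\ref{1p}) and carry no real difficulty.
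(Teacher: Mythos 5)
Your proposal is correct and matches what the paper intends: the paper states this as a Fact without proof, defining $\nu$ as the composite $\s^!\circ\s^*$ from the triangles (\ref{1p}), citing \ci{jorg} for the comparison with Verdier specialization, and leaving the vanishing statement as the formal consequence of those triangles that you spell out. Nothing is missing.
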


\begin{fact}\la{psphit}
({\bf 
Base change diagrams for $\psi$ and $\phi$})
Let $f:X\to Y$ be an $S$-morphism and let $v_X= v_Y \circ f:X\to S$ be the structural morphism.
The various base change natural transformation associated with  $i$ and $f$ give rise to 
functorial morphisms of distinguished triangles as follows. See \ci[XIII, 2.1.7]{sga72}.

\ben

\item\la{pp3a}
The one in $D(X_s)$  stemming from the  morphism of functors $i^*f^!\to f^!i^*$, and from the 
 isomorphism of functors $f^*i^* \stackrel{=}\to i^*f^*,$ respectively: (we write $\phi_Y$ instead of $\phi_{v_Y},$ etc.)
\beq\la{fdt2bis}
\xymatrix{
 i^*[-1]f^!    \ar[r]^-{\s \circ f^!} \ar[d]  &   \psi_X [-1]  f^!    \ar[r]  \ar[d]  &  \phi_X  f^!  \ar[d]    \ar@{~>}[r] & &
  i^*[-1] f^*    \ar[r]^-{\s \circ f^*}  &   \psi_X [-1] f^*    \ar[r]  &  \phi_X  f^*      \ar@{~>}[r] &
\\
 f^! i^* [-1] \ar[r]^-{\s \circ f^!}    &  f^! \psi_Y [-1]     \ar[r]    & f^! \phi_Y   \ar@{~>}[r]  &, &
  f^* i^* [-1]  \ar[r]^-{\s \circ f^!} \ar[u]^-=     &  f^* \psi_Y  [-1]    \ar[r]    \ar[u]  & f^* \phi_Y \ar[u]   \ar@{~>}[r]  &.
}
\eeq
When $f$ is \'etale, so that $f^*=f^!,$
the two resulting morphisms of triangles are isomorphisms, inverse to each other.

\item\la{pp3b}
The one in $D(Y_s)$ stemming from   the base change morphism of functors $i^*f_* \to f_* i^*,$ and from 
the base change isomorphism
$f_!i^* \stackrel{=}\to i^*f_!:$
\beq\la{fdtbis}
\xymatrix{
i^* [-1] f_*   \ar[r]^{\s\, \circ f_*} \ar[d] &   \psi_Y [-1]  f_*   \ar[r] \ar[d]  &   \phi_Y f_* \ar[d]  \ar@{~>}[r] & &
i^* [-1] f_!   \ar[r]^{\s\, \circ f_!}  &   \psi_Y [-1]  f_!   \ar[r]   &   \phi_Y f_!   \ar@{~>}[r] &
\\
f_* i^* [-1]  \ar[r]^{f_* \circ \, \s}   &  f_* \psi_X  [-1]  \ar[r]   & f_* \phi_X   \ar@{~>}[r]  &, &
f_! i^* [-1] \ar[r]^{f_! \circ \, \s}  \ar[u]^-=  &  f_! \psi_X [-1]   \ar[r] \ar[u]   & f_! \phi_X  \ar[u]   \ar@{~>}[r]  &.
}
\eeq
When $f$ is proper, so that $f_!\stackrel{\simeq}\to f_*$,
the two resulting morphisms of triangles are isomorphisms, inverse to each other.

\item\la{p!}
We dualize the four diagrams associated in  (\ref{fdt2bis}) and (\ref{fdtbis}), and obtain the four analogous diagrams: 
\beq\la{fd0}
\xymatrix{
 \phi_X f^*    \ar[r]   &   \psi_X[-1] f^*    \ar[r]^-{\s \circ f^*}   &  i^! [1] f^*      \ar@{~>}[r] & &
  \phi_X f^!    \ar[r] \ar[d]  &   \psi_X[-1]  f^!    \ar[r]^-{\s \circ f^!}  \ar[d]  &i^! [1] f^!    \ar[d]^-=    \ar@{~>}[r] &
\\
 f^* \phi_Y \ar[r]  \ar[u]   &  f^* \psi_Y [-1]    \ar[r]^-{f^* \circ \s}   \ar[u]    & f^* i^![1]   \ar@{~>}[r]   \ar[u]   &, &
  f^! \phi_Y \ar[r]     &  f^! \psi_Y[-1]    \ar[r]^-{f^! \circ \s}      & f^! i^![1]   \ar@{~>}[r]  &,
}
\eeq
which are isomorphisms inverse to each other when $f$ is \'etale, and 
\beq\la{fd1}
\xymatrix{
 \phi_Y f_!    \ar[r]  &   \psi_Y[-1] f_!    \ar[r]^-{\s \circ f_!}   &  i^! [1] f_!     \ar@{~>}[r] & &
  \phi_Y f_*    \ar[r]  \ar[d]  &   \psi_Y [-1] f_*    \ar[r]^-{\s \circ f_*}  \ar[d]  &  i^! [1] f_*   \ar[d]^-=    \ar@{~>}[r] &
\\
 f_! \phi_X \ar[r]  \ar[u]   &  f_! \psi_X [-1]    \ar[r]^-{f_! \circ \s}    \ar[u]   & f_! i^![1]  \ar[u]  \ar@{~>}[r]  &, &
  f_* \phi_X \ar[r]      &  f_* \psi_Y[-1]     \ar[r]^-{f_* \circ \s}      & f_*i^![1]  \ar@{~>}[r]  &,
}
\eeq
which are isomorphisms  inverse to each other  when $f$ is proper.

\item
by combining  the  l.h.s. square commutative diagram in (\ref{fdtbis}) with the r.h.s. square commutative diagram in    (\ref{fd1}),  we obtain the following commutative diagram of morphisms of functors:
\beq\la{r00}
\xymatrix{
i^*[-1] f_* \ar[d]    \ar[r]   & \psi[-1] f_* \ar[r]    \ar[d]    & i^![1] f_* \ar[d]^-= 
\\
f_* i^*[-1]      \ar[r]   & f_* \psi[-1]  \ar[r]        &      f_* i^![1],
}
\eeq
where the compositions of the horizontal arrows are given by the corresponding morphims  (\ref{gh}), suitably pre/post-composed with $f_*$.
\een
\end{fact}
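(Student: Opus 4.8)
The plan is to recognize the rectangle (\ref{r00}) as the horizontal concatenation of two squares whose commutativity is already established in Fact \ref{psphit}, and then to read off its horizontal composites from (\ref{1p}) and from the definition of $\nu$ in Fact \ref{gh1}.

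First, I would isolate the commutative square with corners $i^*[-1]f_*$, $\psi_Y[-1]f_*$, $f_*i^*[-1]$, $f_*\psi_X[-1]$: the left square of the $f_*$-diagram in (\ref{fdtbis}), whose horizontal arrows are $\s^*\circ f_*$ and $f_*\circ\s^*$, whose verticals are the base change morphisms $i^*f_*\to f_*i^*$ and $\psi_Y f_*\to f_*\psi_X$, and whose commutativity is part of item (\ref{pp3b}) of Fact \ref{psphit}. Next I would isolate the commutative square with corners $\psi_Y[-1]f_*$, $i^![1]f_*$, $f_*\psi_X[-1]$, $f_*i^![1]$: the right square of the $f_*$-diagram in (\ref{fd1}), whose horizontal arrows are $\s^!\circ f_*$ and $f_*\circ\s^!$, whose right vertical is the base change isomorphism $i^!f_*=f_*i^!$ (valid for every $f$; it carries the decoration $=$ and supplies the right vertical of (\ref{r00})), and whose commutativity is part of item (\ref{p!}) of Fact \ref{psphit} (recall that the diagrams of (\ref{fd0})--(\ref{fd1}) are the Verdier duals of those of (\ref{fdt2bis})--(\ref{fdtbis})). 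These two squares share the edge $\psi_Y[-1]f_*\to f_*\psi_X[-1]$, the nearby-cycle base change, which is the same morphism in both; hence their concatenation along that edge is precisely (\ref{r00}), and since a concatenation of two commutative squares is a commutative rectangle, (\ref{r00}) commutes.

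It then remains to identify the horizontal composites. By construction, the top row of the $f_*$-diagram in (\ref{fdtbis}) is the first distinguished triangle of (\ref{1p}), taken for $v_Y$ and the embedding $Y_s\to Y$ and post-composed with $f_*$, so its first arrow is $\s^*$ post-composed with $f_*$; likewise, the top row of the $f_*$-diagram in (\ref{fd1}) is the second distinguished triangle of (\ref{1p}), for the same data and post-composed with $f_*$, so its arrow $\psi_Y[-1]f_*\to i^![1]f_*$ is $\s^!$ post-composed with $f_*$. Therefore the top row of (\ref{r00}) is $\s^!\circ\s^*$ post-composed with $f_*$, which by the definition of $\nu$ in Fact \ref{gh1} is exactly the morphism (\ref{gh}) post-composed with $f_*$. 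The identical argument for the bottom rows --- where the two triangles of (\ref{1p}) are now taken for $v_X$ and $X_s\to X$ and pre-composed with $f_*$ --- identifies the bottom row of (\ref{r00}) with the morphism (\ref{gh}) pre-composed with $f_*$.

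I do not expect a genuine obstacle; the only thing that demands attention is bookkeeping: keeping straight which square is extracted from which diagram of Fact \ref{psphit}, that the two squares being pasted genuinely share their middle edge, that no properness of $f$ is needed (the right vertical is an isomorphism for arbitrary $f$), and that the symbol $\psi$ in the top row of (\ref{r00}) is $\psi_Y$ whereas in the bottom row it is $\psi_X$ --- so that "the corresponding morphism (\ref{gh})" means $\nu$ for $Y_s\hookrightarrow Y$ in the top row and $\nu$ for $X_s\hookrightarrow X$ in the bottom row, harmlessly so, since by the last sentence of Fact \ref{gh1} each $\nu$ depends only on its closed embedding.
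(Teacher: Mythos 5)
Your proposal is correct and follows the paper's own route: items (1)--(3) are taken from \ci[XIII, 2.1.7]{sga72} and Verdier duality exactly as in the text, and item (4) is obtained, just as the paper does, by pasting the left square of the $f_*$-diagram in (\ref{fdtbis}) to the right square of the $f_*$-diagram in (\ref{fd1}) along their common vertical edge $\psi_Y[-1]f_*\to f_*\psi_X[-1]$, with the horizontal composites identified with the morphisms $\nu$ of (\ref{gh}) via (\ref{1p}). Your parenthetical observations (no properness of $f$ needed for the right vertical, and that $\nu$ depends only on the closed embedding so the top and bottom rows are $\nu$ for $Y_s\hookrightarrow Y$ and $X_s\hookrightarrow X$ respectively) are exactly the right bookkeeping.
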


\subsection{Nearby cycle functor and nearby points}\la{neob}$\;$

Even if logically not necessary,  it maybe helpful to the intuition to clarify the use of the word ``nearby."

Let $t \in S$ be any point. By abuse of notation, denote by $t: t \to S$,  $t: X_t \to X$ and  $t: Y_t \to Y$ the resulting closed embeddings.
There is the  morphism  (\ref{gh}) of functors $D(-) \to D(-_t):$
\beq\la{ght}
t^*[-1]\lorw  t^![1].
\eeq

Let us start with the following

\begin{fact}\la{gi9} For what follows, see \ci[Fact 2.2.7]{dema}.
Given a   finite collection $G \in D^b_c(Y)$ 
of complexes,  there is a Zariski-dense open subset
$S^o (G) \subseteq S$ such that: the direct images ${v}_* \ptd{\bullet} G$ have locally constant cohomology sheaves, and  their formation commutes with arbitrary base change;
the $\ptd{\bullet} G$ have no constituent supported on any of the  fibers of the morphisms $v$ over $S^o  (G)$;
 the strata on  $Y,$ of a stratification
with respect to which $G$ -hence the   $\ptd{\bullet}G$- are constructible, are smooth over $S^o (G)$.
\end{fact}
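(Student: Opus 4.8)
The plan is to reduce all three assertions of the statement --- in order, (1) local constancy and arbitrary-base-change for $v_*\ptd{\bullet}G$, (2) no constituent of $\ptd{\bullet}G$ supported on a fiber of $v$ over $S^o(G)$, (3) smoothness of the strata over $S^o(G)$ --- to a single finite algebraic Whitney stratification of $Y$ adapted to $G$, and then to run the standard generic-triviality argument. First I would fix a finite algebraic Whitney stratification $\Sigma=\{Y_\sigma\}$ of $Y$ with respect to which every complex in the finite collection $G$ is constructible. Since the middle perversity truncations $\ptd{\bullet}$ preserve $\Sigma$-constructibility (\ci{bbd}), every $\ptd{\bullet}G$ is again $\Sigma$-constructible; and since, for each member of the collection, the system $\ptd{\bullet}G$ stabilizes for $\bullet\gg0$ and vanishes for $\bullet\ll0$, only finitely many complexes $\ptd{\bullet}G$ occur in total. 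A finite intersection of dense Zariski opens of the connected curve $S$ being again a dense Zariski open, it suffices to produce, for each assertion and each of these complexes, a dense open of $S$ on which it holds, and then to intersect.

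For (3): the restriction $v|_{Y_\sigma}\colon Y_\sigma\to S$ of $v$ to a stratum has image either a single point (when $Y_\sigma$ is not dominant) or a dense open of $S$; I would delete from $S$ the finitely many points that are images of non-dominant strata and apply generic smoothness in characteristic zero to the finitely many dominant strata, obtaining a dense open $S^o_1\subseteq S$ over which every $v|_{Y_\sigma}$ is smooth (its source being empty over $S^o_1$ when $Y_\sigma$ is non-dominant). To also control (1), I would compactify $v$ to a proper $\bar v\colon\bar Y\to S$, extend $\Sigma$ by a finite algebraic Whitney stratification of the boundary $\bar Y\setminus Y$, and shrink $S^o_1$ further so that the boundary strata are likewise smooth over it. Then $\bar v$ over $S^o_1$ is a proper stratified submersion, so Thom's first isotopy lemma produces stratum-preserving local trivializations $\bar v^{-1}(V)\cong Y_{v_0}\times V$ over $S^o_1$, which restrict to trivializations of $v$ as a stratified map.

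Assertion (1) then follows: $v_*\ptd{\bullet}G$ is constructible, hence has locally constant cohomology sheaves over some dense open of $S$, which I include into $S^o_1$; and the stratified local triviality of $v$ over $S^o_1$ yields the compatibility of $v_*\ptd{\bullet}G$ with arbitrary base change (equivalently, in the algebraic/\'etale reformulation of Remark \ref{otherfields}, (1) is an instance of Deligne's generic base change theorem; see \ci[\S4]{il2} and the references of \S\ref{vncf}). For (2): the perverse cohomology sheaves of $\ptd{\bullet}G$ are among those of $G$, so the constituents of $\ptd{\bullet}G$ form a finite set of simple perverse sheaves $IC_{Z_a}(L_a)$, with $Z_a\subseteq Y$ irreducible closed; those $Z_a$ dominating $S$ lie in no fiber of $v$, and the remaining finitely many $Z_a$ lie in the fiber over one point of $S$ each, and I delete those points. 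Taking $S^o(G)$ to be the intersection of $S^o_1$ with the complements of all the finitely many deleted points proves the assertion; compare \ci[Fact 2.2.7]{dema}.

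The step I expect to be the real work is assertion (1), precisely its base-change half: local constancy is cheap (a constructible complex on a curve is locally constant on a dense open), but ``formation commutes with arbitrary base change'' for the non-proper $v$ genuinely requires the local triviality of the stratified map $v$ over $S^o(G)$ --- that is, Thom--Mather theory, or, algebraically, Deligne's generic local acyclicity. Everything else is bookkeeping: the reduction to a single stratification, generic smoothness for (3), and the Jordan--H\"older count for (2); one need only keep all the shrinkings compatible, which is automatic as they are finitely many dense opens of the irreducible curve $S$.
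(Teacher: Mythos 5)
Your proof is correct and is essentially the expected one: the paper itself offers no argument for Fact \ref{gi9}, only the citation to \ci[Fact 2.2.7]{dema}, and the argument there is exactly the standard one you reconstruct (a single Whitney stratification adapted to the finite collection, preserved by $\ptd{\bullet}$; generic smoothness of the strata over the curve; local constancy plus generic base change obtained from stratified local triviality of a compactified $\bar v$ via Thom's first isotopy lemma, or algebraically from Deligne's generic base change; and the Jordan--H\"older support count for the constituents). You correctly identify the base-change half of assertion (1) as the only non-bookkeeping step, so there is nothing to add.
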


\begin{defi}\la{tge}
We say that $t \in S$ is general for $G$ 
if $t \in S^o  (G)$ as in Fact \ref{gi9}.
\end{defi}

\begin{fact}\la{gogo}
Let $G\in D^b_c(Y)$.
Let $t \in S^o (G)$ be a  general point for $G.$  Then:

\ben
\item
 The natural morphism $t^* [-1] G\to t^![1]G$ is an isomorphism
in $D(Y_t).$  See \ci[Fact 2.2.5]{dema}.
\item
We have the identifications of  \ci[Fact 2.2.6]{dema}:
\beq\la{vbgfz}
t^*[-1] \ptd{\bullet} G = \ptd{\bullet } t^*[-1] G, \quad t^![1] \ptd{\bullet} G = \ptd{\bullet } t^![1] G,
\quad \mbox{ditto for  $\ptu{\bullet}$ and  $\ph^{\bullet}$.}
\eeq
\een
\end{fact}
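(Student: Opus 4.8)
The plan is to reduce both assertions to the single statement
\[
(\ast)\qquad \phi_{v,t}(G)=0 ,
\]
where I write $\phi_{v,t},\psi_{v,t},t^{*},t^{!}$ for the functors of type (\ref{fctz}) attached to $v$ and to the point $t$ in place of the special point $s$; the triangles (\ref{1p}), the $\frak{t}$-exactness statements of Fact \ref{psp0}, and Fact \ref{gh1} all hold verbatim at $t$. Granting $(\ast)$, everything else is a diagram chase with (\ref{1p}) and (\ref{can1}); so the only substantial point will be to verify $(\ast)$ from the hypothesis that $t$ is general for $G$, and that is the step I expect to require genuine geometric input rather than formal manipulation.

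Assuming $(\ast)$, I would argue as follows. Assertion (1) is immediate from Fact \ref{gh1}: the vanishing of $\phi_{v,t}(G)$ makes $\s^{*}(G)\colon t^{*}[-1]G\to\psi_{v,t}[-1]G$ and $\s^{!}(G)\colon\psi_{v,t}[-1]G\to t^{!}[1]G$ into isomorphisms, hence also their composite, which is the morphism $\nu(G)$ of (\ref{gh}) at $t$. For assertion (2), let $\tau$ stand for any one of $\ptd{\bullet},\ptu{\bullet},\ph^{\bullet}$. Since $\phi_{v,t}$ commutes with $\tau$ by (\ref{can1}), one has $\phi_{v,t}(\tau G)=\tau\,\phi_{v,t}(G)=0$, so the previous observation applies to $\tau G$ as well as to $G$ and yields canonical identifications $t^{*}[-1](\tau G)=\psi_{v,t}[-1](\tau G)$ and $t^{*}[-1]G=\psi_{v,t}[-1]G$. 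Combining these with the commutation $\psi_{v,t}[-1]\circ\tau=\tau\circ\psi_{v,t}[-1]$ of (\ref{can1}) gives $t^{*}[-1](\tau G)=\tau\bigl(t^{*}[-1]G\bigr)$. The identity for $t^{!}[1]$ is obtained in exactly the same way, using $\s^{!}$ in place of $\s^{*}$ and the $i^{!}[1]$-halves of (\ref{1p}) and (\ref{can1}). This is precisely (\ref{vbgfz}).

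It remains to prove $(\ast)$, which is the main obstacle. Because $t$ is general for $G$, Definition \ref{tge} and Fact \ref{gi9} put $t$ in a Zariski-dense open $S^{o}(G)\subseteq S$ over which every stratum of a stratification of $Y$ adapted to $G$ is smooth; equivalently, the fibre $Y_{t}$ is transverse to that stratification, i.e.\ non-characteristic for $G$. For such $t$ the vanishing cycles of $G$ along $Y_{t}$ vanish: this is the form of Fact \ref{psp} valid for stratified morphisms, \ci[XIII, 2.1.5]{sga72}, which one reaches from the smooth case by the standard local picture in which, near $Y_{t}$, the triple $(Y,Y_{t},v)$ is modelled on a product $(Y_{t}\times\Delta,\,Y_{t}\times\{0\},\,\mathrm{pr}_{\Delta})$ with $\Delta\subseteq S$ a small disc and $G$ constant along the $\Delta$-factor, so that the relevant nearby cycle reduces to $\phi_{\mathrm{id}_{\Delta}}(\rat_{\Delta})=0$. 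Transversality of $Y_{t}$ to the strata is exactly what supplies this local model, and it is the only place where generality of $t$ enters the proof. As a consistency check, I would note that $(\ast)$ is strictly stronger than the clause of Fact \ref{gi9} asserting that the $\ptd{\bullet}G$ have no constituent supported on fibres over $S^{o}(G)$: by Lemma \ref{silm} and Remark \ref{gt5} that clause alone does not give $(\ast)$, so the non-characteristic hypothesis really is needed.
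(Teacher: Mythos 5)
Your argument is correct, and it is worth saying up front that the paper does not prove Fact \ref{gogo} at all: it delegates both parts to \ci[Facts 2.2.5--2.2.6]{dema}. Your reduction of everything to $(\ast)$, i.e.\ $\phi_{v,t}(G)=0$, is exactly the mechanism the paper itself codifies at the special point in Lemma \ref{kij}: once $\phi$ vanishes, the horizontal arrows of (\ref{fr1z}) (read at $t$) are isomorphisms, $\delta^{\psi}$ is an isomorphism by $\frak t$-exactness of $\psi[-1]$, and hence so are the $\delta$'s for $t^{*}[-1]$ and $t^{!}[1]$; your chain through $\psi_{v,t}$ produces precisely those canonical morphisms, so the identifications in (\ref{vbgfz}) are the canonical ones and not merely abstract isomorphisms. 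Two remarks by way of comparison. First, for part (2) there is a route inside the paper that does not need $(\ast)$: the clause of Fact \ref{gi9} asserting that the $\ptd{\bullet}G$ have no constituent supported on the fibers over $S^{o}(G)$, fed into Proposition \ref{iotanoc}, already makes the morphisms of type $\delta$ at $t$ isomorphisms. That route is strictly weaker input (as you note, ``no constituents'' does not imply $\phi=0$, cf.\ Remark \ref{gt5}), but it does not reach part (1), for which the full strength of $(\ast)$ is genuinely needed — the Lefschetz ordinary-double-point example has no constituents on the special fiber and yet $\nu$ fails to be an isomorphism there. So your decision to prove $(\ast)$ rather than lean on the ``no constituents'' clause is the right one if you want both parts from a single statement. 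Second, the only place where your write-up is thinner than a complete proof is $(\ast)$ itself: the local product model $(Y_{t}\times\Delta,\,Y_{t}\times\{0\},\,\mathrm{pr}_{\Delta})$ requires the local form of Thom's first isotopy lemma for the stratified submersion $v$ over $S^{o}(G)$, plus the observation that a complex constructible for a product stratification with contractible second factor is pulled back from the fiber; both are standard (equivalently, one can quote the non-characteristic vanishing $\operatorname{supp}\phi_{g}F\subseteq SS(F)\cap\{dg\}$ from \ci{kash}), and you correctly flag this as the one step with real geometric content, so I would not call it a gap — but a final version should cite the stratified statement rather than only the smooth case \ci[XIII, 2.1.5]{sga72}.
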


 

\begin{rmk}\la{idspt}
In the special case where $v_Y:Y\to S$ is the identity on $S,$ we have that $i^*,i^!, \psi ,\phi: D(S) \to D(s)$ and that $t^*, t^!: D(S)\to D(t)$.
In general, we  have canonical identifications $D(s)=D({pt})= D(t),$ where  ${pt}$ is just a point, so that all three categories are 
naturally equivalent to the bounded derived category of finite dimensional  rational vector spaces. Similarly, in the filtered case:
$DF(s)=DF({pt})= DF(t)$. We use the catch-all notation $D^b_c(pt)$ and $DF(pt)$.
\end{rmk}

Given $G\in D^b_c(Y)$ and $s\in S$ our special point, we want to think: of the points $t \in S^o(G)$ as the points nearby $s;$
 of the nearby vanishing cycle as capturing the complex $G$ at points $t$ nearby $s.$ The following important fact makes this precise.

\begin{fact}\la{jo} {\rm{\bf (Fundamental isomorphism)}}
For what follows, see the fundamental identity \ci[XIV, 1.3.3.1]{sga72}. 
Let $v:Y\to S$ be the identity, let $G \in D^b_c(Y),$ 
let $t \in S^o (G)$ be a  general point for $G.$  
Choose a disk $\disk \subseteq S$ centered at $s$ and passing through $t,$  such that $\disk^*:= \disk \setminus \{s\} \subseteq S^o(G).$ Choose an universal covering of $\disk^*$ and a point $\w{t}$ on it over $t.$ This choice of data gives rise
to  isomorphisms: 
\beq\la{000}
\xymatrix{
t^*[-1] G \ar[r]^-\sim & \psi [-1]  G  \ar[r]^-\sim & t^![1] G,  &in\; D({pt}),
}
\eeq
where the composition is (\ref{ght}), hence it is independent of the choices made above, but the indicated morphisms depend   on the choice of data made above.
Note that {\rm (\ref{000})} is Verdier self-dual.
Changing the  choice of data  changes the two individual arrows by the monodromy automorphism induced
by an appropriate element in $\pi_1 (S^o(G),t).$

Let $v: Y\to S$ be any morphism and let $G \in D^b_c(Y).$ Then:

\ben
\item
By taking $v_*G \in D(S)$  in (\ref{000}), we get  collections
of isomorphisms:
\beq\la{r401}
\xymatrix{
\psi v_* G  \ar[r]^-\sim & t^* v_* G =  v_* t^* G = R\Gamma (Y_t, t^*G), & in\; D({pt}),
}
\eeq
where any two such identifications $\stackrel{\sim}\to$  differ by the action induced  by an element of
$\pi_1 (S^o(G), t)$ on the r.h.s.  The second $=$ sign is clear.  The first
can be seen by  using the  base change properties relative to the general point $t$ expounded in  \S\ref{vncf}
as follows.
The choice of $t$ general for  $G,$ made in Definition
\ref{tge}, allows us to:  
replace $\psi $ with $t^*$ (cf. (\ref{000})); use the identification $t^*v_*=v_* t^*$. 
 One cannot select  a  point $t\in S$ that is general for every $G \in D^b_c(Y).$ Because of this it is preferable to work with 
 $\psi: D(S)\to D(s)$
 instead of with a general point $t \in S.$  When working with finitely many complexes in $D^b_c(Y),$
 or with a family of complexes constructible with respect to an arbitrary fixed stratification,  we can always choose a
 point  $t \in S$ that is general for all of them.
\item
By taking $v_*\ptd{\bullet} G \in D(S)$ and  $v_*\ptd{\bullet} f_* F \in D(S)$ in (\ref{000}), we reach conclusions  analogous to part (1), namely, we obtain isomorphisms:
\beq\la{r403}
\xymatrix{
(\psi v_* G, P)  \ar[r]^-\sim & (R\Gamma (Y_t, t^*G),P), & in\; DF({pt});
}
\eeq
\beq\la{r405}
\xymatrix{
(\psi v_* f_* F, P)  \ar[r]^-\sim & (R\Gamma (X_t, t^*F),P), & in\; DF({pt}).
}
\eeq
\een
\end{fact}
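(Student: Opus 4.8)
The plan is to derive the whole statement from a single geometric input, the fundamental identity of \ci[XIV, 1.3.3.1]{sga72}, which furnishes the isomorphisms (\ref{000}) in the case $v=\mathrm{id}_S$. I would first recall the mechanism behind (\ref{000}): having fixed the disk $\disk\ni s$ through $t$ with $\disk^*\subseteq S^o(G)$, the universal cover $\w{\disk^*}\to\disk^*$ and the lift $\w t$, one computes $\psi[-1]G$ from the pullback of $G$ to $\w{\disk^*}$; since $t$ is general for $G$, the complex $G$ has locally constant cohomology sheaves over $\disk^*$ (equivalently $\phi G=0$ there, Fact \ref{psp}), so this pullback deformation-retracts onto its stalk above $\w t$, which produces the left isomorphism $t^*[-1]G\xrightarrow{\ \sim\ }\psi[-1]G$; the right one is its Verdier dual, using that $\psi[-1]$ commutes with duality (Fact \ref{psp0}) and that duality conjugates $t^*[-1]$ into $t^![1]$. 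The composite $t^*[-1]G\to t^![1]G$ is the canonical arrow (\ref{ght}), hence independent of all choices (Fact \ref{gh1}), whereas each individual factor is pinned down only by the choice of $(\w{\disk^*},\w t)$; two choices differ by a deck transformation, i.e. by the monodromy action of the corresponding element of $\pi_1(S^o(G),t)$ — which is also precisely the symmetry making (\ref{000}) self-dual.

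For part (1), with $v\colon Y\to S$ arbitrary, I would apply the above to $G':=v_*G\in D(S)$. The point is that the fundamental identity applies to $G'$: since $G$ is bounded, $v_*G=v_*\,\ptd{n}G$ for $n\gg 0$, and by Fact \ref{gi9} these complexes have locally constant cohomology sheaves over $S^o(G)\ni t$. Applying the shift functor $[1]$ to the left isomorphism of (\ref{000}) and inverting gives $\psi v_*G\xrightarrow{\ \sim\ }t^*v_*G$ in $D(pt)$; the equality $t^*v_*G=v_*t^*G$ is the base-change isomorphism at $t$, valid because (again Fact \ref{gi9}) the formation of $v_*\,\ptd{n}G=v_*G$ commutes with arbitrary base change over $S^o(G)$; and $v_*t^*G=R\Gamma(Y_t,t^*G)$ is the definition of $R\Gamma$. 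The monodromy indeterminacy and the self-duality are inherited from (\ref{000}) for $v_*G$. The parenthetical assertion that no single $t$ is general for every $G\in D^b_c(Y)$ I would settle by observing $\bigcap_{G}S^o(G)=\varnothing$, e.g. via the skyscrapers $\rat_{\{x\}}$, $x\in S$.

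For part (2), I would run the same argument on systems rather than single complexes. Pick $t$ general for the finite collection consisting of $G$ and $f_*F$ on $Y$ and of $F$ on $X$, so that by Fact \ref{gi9} all of $v_*\,\ptd{\bullet}G$ and $v_*\,\ptd{\bullet}f_*F$ are locally constant near $t$ with base-change-compatible formation. Since (\ref{000}) is a natural transformation of functors $D(S)\to D(pt)$, applying it to the morphism of systems $v_*\,\ptd{\bullet}G\to v_*G$ yields, after the shift $[1]$, an isomorphism of systems $\psi v_*\,\ptd{\bullet}G\xrightarrow{\ \sim\ }t^*v_*\,\ptd{\bullet}G$; base change together with Fact \ref{gogo}(2) rewrites the target as $v_*\,t^*\ptd{\bullet}G=R\Gamma(Y_t,\ptd{\bullet}t^*G)$, which — once the index shifts are absorbed by the conventions (\ref{conv}) and (\ref{tsh}) — is precisely the system defining the perverse filtration $P$ on $R\Gamma(Y_t,t^*G)$. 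Passing to the associated filtered objects of $DF(pt)$ gives (\ref{r403}). For (\ref{r405}) I would repeat this with $f_*F$ in place of $G$ and then invoke $t^*f_*F=f_{t,*}t^*F$ (base change at the general point $t$, or proper base change when $f$ is proper) to identify $R\Gamma(Y_t,t^*f_*F)=R\Gamma(X_t,t^*F)$ together with the perverse Leray filtration for $f_t\colon X_t\to Y_t$.

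I expect the difficulty to be organizational rather than conceptual: the only analytic content is the cited SGA identity, and everything else is the functorial propagation of (\ref{000}) through $v_*$ and the truncations $\ptd{\bullet}$. The step that genuinely needs care is the last one in part (2): checking that the system arising on the right-hand side of (\ref{r403}) and (\ref{r405}), after invoking base change and the $\frak t$-exactness/commutation statements of Facts \ref{psp0} and \ref{gogo}, is \emph{literally} the one defining the perverse (Leray) filtration — so that the index shifts cancel consistently under the conventions (\ref{conv}), (\ref{tsh}) — and that the monodromy ambiguity tracked in part (1) is compatible with these filtrations.
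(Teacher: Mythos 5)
Your proposal is correct and follows essentially the same route as the paper: the base case (\ref{000}) is taken from the cited fundamental identity of SGA 7, and parts (1) and (2) are obtained by plugging $v_*G$, $v_*\ptd{\bullet}G$ and $v_*\ptd{\bullet}f_*F$ into it and invoking the base-change and local-constancy properties at the general point $t$ from Facts \ref{gi9} and \ref{gogo}. The extra detail you supply (the retraction mechanism behind (\ref{000}), the skyscraper argument for the non-existence of a universally general $t$, and the bookkeeping of index shifts in part (2)) is consistent with, and slightly amplifies, what the paper leaves implicit.
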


\section{The morphisms of type \texorpdfstring{$\delta$}{d}}\la{prep}

This section is a necessary technical interlude on the way to \S\ref{spsfi}, where we study the behavior
of the perverse filtration relative to the specialization morphism.

The  distinguished triangles  (\ref{1p})  and  the  $\frak t$-exactness of the functors $\phi $ and $\psi [-1]$
 suggest the need to  quantify the failure of  $\frak t$-exactness for the functors $i^*[-1]$ and $i^! [1]$.

In this section we quantify this failure by introducing the morphisms of functors  of type $\delta$
(\ref{j11aa})  which play an important role starting with  \S\ref{spsfi}, diagram (\ref{ar0}). When
the morphisms of type $\delta$ are  evaluated against a complex $G\in D^b_c(Y)$,  they are isomorphisms
if and only if   perverse truncations and perverse cohomology commute with the appropriately shifted pull-backs.

The special fiber $Y_s$ in $Y$ is an effective Cartier divisor. We introduce the morphisms of type $\delta$ in the more general context of embedded effective Cartier divisors in  (\ref{j11a}). For easier bookkeeping, we use the  shifted version
(\ref{bnnb}).

Proposition  \ref{iotanoc} is a criterion, in the general context of Cartier divisors (i.e. when we are not necessarily working over a curve $S$), for the morphisms of type $\delta$ to be isomorphisms,
hence it is a criterion for  the aforementioned  commutativity of perverse truncations/cohomology
with shifted pullbacks.
 It states that if  a complex  has no constituents  supported on an effective  Cartier divisor,
then the resulting morphisms of type $\delta$ are isomorphisms.
 
Lemma \ref{kij} yields, in the context of the vanishing/nearby cycle functors (here we are working over a curve $S$, and  the Cartier divisor in question is the fiber over the  special
point $s$ )
 the important diagram (\ref{fr1z}) of morphisms of  distinguished triangles of functors   involving the morphisms of type $\delta$ needed in this paper. In particular, Lemma \ref{kij}.(3) combines
Lemma \ref{silm} (if $\phi G=0$, then $G$ has no constituents on $Y_s$) with Proposition \ref{iotanoc} (``if no constituents,
then the $\delta$'s are isomorphisms).

Proposition \ref{crux} is another general (i.e not necessarily related to a situation over a curve)  criterion of a different flavor for the $\delta$'s to be isomorphisms in the context of a complex $G=f_*F$
being a direct image under a projective  morphism $f$. It replaces the assumptions on constituents and/or on the vanishing of the vanishing cycle functor,  with the assumptions that both $F$ and $i^*F[-1]$ are semisimple; the proof uses the Relative Hard Lefschetz Theorem.

\subsection{Pull-back to Cartier divisors and truncations: the morphisms \texorpdfstring{$\delta$}{d}}\la{cdte} $\;$

In the context of the vanishing cycle functor, i.e. $v:Y \to S$ and $s \in S$,  the special fiber $Y_s$ is a Cartier divisor in the total space $Y.$ The    vanishing cycle functor $\phi$  and the  shifted nearby cycle functor
$\psi [-1]$
  are $\frak t$-exact. It is important to study the defect of $\frak t$-exactness
of the shifted restriction to the special fiber functors $i^*[-1]$ and $i^![1].$

In this section,   which amplifies \ci[\S3.1]{dema} to meet the needs of this paper, we introduce, in the general context of the closed embedding of an effective Cartier divisor, morphisms of functors that measure the aforementioned  defect.
We define the morphisms of type $\delta$ in the context of embeddings of Cartier divisors in Lemma \ref{j11l1}, and we prove a criterion for when these morphisms $\delta$ are isomorphisms in Proposition \ref{iotanoc}.

Let us emphasize that in this section we work with varieties and not with $S$-varieties.

We start by listing the needed general   $\frak{t}$-exactness properties related to embeddings of effective Cartier divisors on varieties.

\begin{lm}\la{iotatex} {\rm ({\bf Inequalities for embeddings of Cartier divisors})}
Let $\iota: T' \to T$ be a closed embedding  of varieties such that the open embedding  $T\setminus T'$ is an affine morphism
(e.g. $T'$ is an effective Weil divisor supporting an effective Cartier divisor). Then: (we omit
 the space variables)
 
 \ben
 \item The functor $\iota^*$ is  is right $\frak{t}$-exact and the functor $\iota^!$ is left $\frak{t}$-exact:
 \beq\la{isrt}
 \xymatrix{
 \iota^*: {^p\!D}^{\leq \bullet}  \to {^p\!D}^{\leq \bullet} , &  \iota^!: {^p\!D}^{\geq \bullet}  \to {^p\!D}^{\geq \bullet}.
 }
 \eeq
 \item
The functor $\iota^*[-1] $ is left  $\frak{t}$-exact and the functor $\iota^! [1]$ is 
right $\frak{t}$-exact:
\beq\la{isrt1}
\xymatrix{
\iota^*: {^p\!D}^{\geq \bullet}  \to {^p\!D}^{\geq \bullet -1}, & \iota^!: {^p\!D}^{\leq \bullet}  \to {^p\!D}^{\leq \bullet +1}.
}
\eeq
\een
In particular, we have that: $\iota^* P(T)  \subseteq {^p\!D}^{[ -1,0]}(T')$ and 
$\iota^! P(T) \subseteq {^p\!D}^{[0 ,1]} (T')$.
\end{lm}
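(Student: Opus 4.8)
The plan is to deduce everything from two standard inputs: the affineness of the open complement $j\colon U:=T\setminus T' \hookrightarrow T$, via Artin--Grothendieck vanishing for affine morphisms, which gives that $j_*$ and $j_!$ shift the perverse $\frak t$-structure by one; and the basic localization distinguished triangles attached to the closed-open decomposition $(\iota,j)$, namely $j_!j^* \to \mathrm{id} \to \iota_*\iota^* \rightsquigarrow$ and $\iota_*\iota^! \to \mathrm{id} \to j_*j^* \rightsquigarrow$. Recall that $j_*$ and $j_!$ are $\frak t$-exact on the nose for an \emph{open} embedding, but when the open embedding is affine one has the stronger (or rather, complementary) fact that $j_!\colon {^p\!D}^{\leq 0}(U)\to {^p\!D}^{\leq 0}(T)$ and $j_*\colon {^p\!D}^{\geq 0}(U)\to {^p\!D}^{\geq 0}(T)$ already hold for any open embedding, while affineness gives $j_!\colon {^p\!D}^{\geq 0}(U)\to {^p\!D}^{\geq 0}(T)$ and $j_*\colon {^p\!D}^{\leq 0}(U)\to {^p\!D}^{\leq 0}(T)$; in other words $j_!$ and $j_*$ are both $\frak t$-exact when $j$ is affine. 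This is \ci{bbd} 4.1.10. Since $\iota_*$ is $\frak t$-exact (closed embedding) and conservative onto its image, the bounds on $\iota^*$ and $\iota^!$ will be extracted from the triangles.

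First I would prove part (1). Take $G\in {^p\!D}^{\leq \bullet}(T)$; without loss of generality $\bullet = 0$. In the triangle $j_!j^*G \to G \to \iota_*\iota^*G \rightsquigarrow$, the first term lies in ${^p\!D}^{\leq 0}(T)$ because $j^*$ is $\frak t$-exact (open pullback) and $j_!$ preserves ${^p\!D}^{\leq 0}$, and $G\in {^p\!D}^{\leq 0}(T)$ by hypothesis; hence $\iota_*\iota^*G\in {^p\!D}^{\leq 0}(T)$ as the cone of a map between objects of ${^p\!D}^{\leq 0}$, and since $\iota_*$ is $\frak t$-exact and $\iota_* \iota^* D^{\leq 0} \subseteq D^{\leq 0}$ reflects, $\iota^*G\in {^p\!D}^{\leq 0}(T')$. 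Dually (apply Verdier duality, which swaps $\iota^*\leftrightarrow \iota^!$, $\leq \leftrightarrow \geq$, and uses that $U\hookrightarrow T$ affine is self-dual as a hypothesis), $\iota^!$ is left $\frak t$-exact. Alternatively, run the argument directly with the triangle $\iota_*\iota^!G \to G \to j_*j^*G\rightsquigarrow$: if $G\in {^p\!D}^{\geq 0}$ then $j_*j^*G\in {^p\!D}^{\geq 0}$ ($\frak t$-exactness of $j^*$ and of $j_*$ for open embeddings), so $\iota_*\iota^!G$, sitting in ${^p\!D}^{\geq -1}$ a priori, is the fibre of a map of ${^p\!D}^{\geq 0}$ objects, hence in ${^p\!D}^{\geq 0}$, giving $\iota^!G\in {^p\!D}^{\geq 0}(T')$.

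Then for part (2) I would use the affineness crucially. Take $G\in {^p\!D}^{\geq \bullet}(T)$, say $\bullet=0$. Now in $\iota_*\iota^!G \to G \to j_*j^*G \rightsquigarrow$, the point is that $j_*j^*G\in {^p\!D}^{\geq 0}(T)$ as before, $G\in {^p\!D}^{\geq 0}$, so $\iota_*\iota^!G \in {^p\!D}^{\geq -1}(T)$, whence $\iota^!G\in {^p\!D}^{\geq -1}(T')$, i.e. $\iota^![1]G\in {^p\!D}^{\geq 0}(T')$: this shows $\iota^![1]$ is... wait, that direction is the \emph{easy} one and doesn't need affineness. The affine hypothesis enters in the opposite bound: I want $\iota^*[-1]$ left $\frak t$-exact, i.e. $G\in {^p\!D}^{\geq 0}(T) \Rightarrow \iota^*G\in {^p\!D}^{\geq -1}(T')$. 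From the triangle $j_!j^*G\to G\to \iota_*\iota^*G\rightsquigarrow$ rotated, $\iota_*\iota^*G[-1]\to j_!j^*G \to G\rightsquigarrow$, so $j_!j^*G$ is the fibre of a map $\iota_*\iota^*G[-1]\to \cdots$; using that $j$ affine forces $j_!j^*G\in {^p\!D}^{\geq 0}(T)$ (this is exactly Artin vanishing: $j_!$ preserves ${^p\!D}^{\geq 0}$ for $j$ affine) and $G\in {^p\!D}^{\geq 0}$, we get $\iota_*\iota^*G[-1]\in {^p\!D}^{\geq -1}(T)$, hence $\iota^*G[-1]\in {^p\!D}^{\geq -1}(T')$, i.e. $\iota^*G\in {^p\!D}^{\geq 0}(T')$... which is too strong, so I must be misremembering the direction of the shift in Artin vanishing; the correct statement is that $j_!$ sends ${^p\!D}^{\geq 0}$ to ${^p\!D}^{\geq -1}$ (cone taken the other way) for $j$ affine open, equivalently $j_*$ sends ${^p\!D}^{\leq 0}$ to ${^p\!D}^{\leq 1}$. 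The main obstacle is precisely pinning down the correct Artin--Grothendieck shift conventions and then bookkeeping the triangles so the shifts land exactly as in \eqref{isrt1}; once the correct input ``$j$ affine $\Rightarrow j_*\colon {^p\!D}^{\leq \bullet}(U)\to {^p\!D}^{\leq \bullet}(T)$ and $j_!\colon {^p\!D}^{\geq\bullet}(U) \to {^p\!D}^{\geq \bullet}(T)$'' from \ci{bbd} 4.1.10(i)-(ii) is in hand, each of the four bounds follows from one of the two localization triangles by a one-line cone/fibre argument, and the final ``in particular'' clause is the intersection of (1) and (2) applied to $P(T) = {^p\!D}^{[0,0]}(T)$. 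Verdier self-duality of the pair $(\iota^*[-1] \dashv, \iota^![1])$ lets me prove only the two statements about $\iota^*$ and get the $\iota^!$ statements for free.
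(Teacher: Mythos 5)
Your overall strategy --- part (1) from the localization triangles and the $\frak t$-exactness of $\iota_*=\iota_!$, part (2) from the same triangles plus Artin--Grothendieck vanishing for the affine open complement --- is the standard route; the paper gives no argument and simply cites \ci[Lemma 3.1.1]{dema}, which in turn is \ci[4.1.10]{bbd}, proved exactly this way. Part (1) in fact needs no affineness at all: $\iota_*$ is $\frak t$-exact, so its left adjoint $\iota^*$ is right $\frak t$-exact and its right adjoint $\iota^!$ is left $\frak t$-exact; your triangle argument for it is also fine.

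The genuine problem is in part (2), where you talk yourself out of the correct input. Your first recollection is the right one: for an \emph{affine} open immersion $j$, both $j_!$ and $j_*$ are $\frak t$-exact (\ci[Cor.\ 4.1.3]{bbd}). The ``corrected'' version you substitute mid-proof ($j_!$ sends ${^p\!D}^{\geq 0}$ only to ${^p\!D}^{\geq -1}$ when $j$ is affine) is false, and --- more to the point --- too weak: fed into your own cone argument it would only yield $\iota^*G\in{^p\!D}^{\geq -2}$, which does not give (\ref{isrt1}). What actually went wrong is the shift bookkeeping, not Artin vanishing. With $G\in{^p\!D}^{\geq 0}(T)$ one has $j_!j^*G\in{^p\!D}^{\geq 0}(T)$ by affineness; rotating $j_!j^*G\to G\to \iota_*\iota^*G\rightsquigarrow$ twice gives the distinguished triangle $G[-1]\to \iota_*\iota^*G[-1]\to j_!j^*G\rightsquigarrow$, whose outer terms lie in ${^p\!D}^{\geq 1}$ and ${^p\!D}^{\geq 0}$ respectively, so the middle term lies in ${^p\!D}^{\geq 0}(T)$ (not merely ${^p\!D}^{\geq -1}$). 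By $\frak t$-exactness and full faithfulness of $\iota_*$ this says $\iota^*G[-1]\in{^p\!D}^{\geq 0}(T')$, i.e. $\iota^*G\in{^p\!D}^{\geq -1}(T')$ --- which is precisely the left $\frak t$-exactness of $\iota^*[-1]$, not the ``too strong'' $\iota^*G\in{^p\!D}^{\geq 0}$ you arrived at: you converted $F[-1]\in{^p\!D}^{\geq a}$ into $F\in{^p\!D}^{\geq a+1}$, whereas it gives $F\in{^p\!D}^{\geq a-1}$. With this fixed, the bound for $\iota^![1]$ follows by Verdier duality (or by the same computation with the triangle $\iota_*\iota^!\to\mathrm{id}\to j_*j^*\rightsquigarrow$ and the $\frak t$-exactness of $j_*$), and the ``in particular'' clause is the conjunction of (1) and (2).
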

\begin{proof}
See \ci[Lemma 3.1.1]{dema}. 
\end{proof}

The following lemma extends \ci[Lemma 3.1.2]{dema}  in the direction needed in this paper.

We denote by $\gamma_{\leq \bullet}:\ptd{\bullet -1} \to  \ptd{\bullet}$  and 
$\gamma_{\geq \bullet}: \ptd{\bullet } \to  \ptd{\bullet +1}$
 the natural morphisms. They are Verdier dual to each other.  We set:
 \[\gamma_{\leq \bullet}^* :=\gamma_{\leq \bullet} \iota^*,   \quad
 \gamma_{\leq \bullet}^! :=\gamma_{\leq \bullet} \iota^!,  \quad
 \gamma_{\geq \bullet}^* :=\gamma_{\geq \bullet} \iota^*, \quad
 \gamma_{\geq \bullet}^! :=\gamma_{\geq \bullet} \iota^!. 
 \]
 The entries of each of the two pairs 
 $(\gamma_{\leq \bullet}^*, \gamma_{\geq \bullet}^!)$ and $(\gamma_{\geq \bullet}^*, \gamma_{\leq \bullet}^!)$    are
 exchanged by Verdier  duality.
\begin{lm}\la{j11l1}
{\rm ({\bf The morphisms $\delta$})}
There are natural morphisms of distinguished triangles of functors:
\beq\la{j11a}
\xymatrix{
\ptd{\bullet -1} \iota^*  \ar@/_3pc/[dd]|-{\gamma^*_{\leq \bullet}}  \ar[r] \ar[d]^-{\delta^*_{\leq \bullet}} & \iota^* \ar[d]^-= \ar[r] & \ptu{\bullet}  \iota^*  
\ar[d]^{\delta^*_{\geq \bullet}}  \ar@/^3pc/[dd]|-{\gamma^*_{\geq \bullet}}  \ar@{~>}[r]  & &
\ptd{\bullet -1} \iota^!  \ar@/_3pc/[dd]|-{\gamma^!_{\leq \bullet}}  \ar[r] \ar[d]^-{\e^!_{\leq \bullet}} & \iota^! \ar[d]^-= \ar[r] & \ptu{\bullet}  \iota^!  
\ar[d]^{\e^!_{\geq \bullet}}  \ar@/^3pc/[dd]|-{\gamma^!_{\geq \bullet}}  \ar@{~>}[r]  &
\\
\iota^* \ptd{\bullet}   \ar[r] \ar[d]^-{\e^*_{\leq \bullet}} & \iota^* \ar[u] \ar[d]^-= \ar[r] &\iota^*  \ptu{\bullet +1}    \ar[d]^-{\e^*_{\geq \bullet}}  \ar@{~>}[r]  & &
\iota^! \ptd{\bullet -1}   \ar[r] \ar[d]^-{\delta^!_{\leq \bullet}} & \iota^! \ar[u] \ar[d]^-= \ar[r] &\iota^!  \ptu{\bullet}    \ar[d]^-{\delta^!_{\geq \bullet}}  \ar@{~>}[r] &
\\
\ptd{\bullet} \iota^*  \ar[r]  & \iota^*  \ar[r] \ar[u] & \ptu{\bullet +1}  \iota^*  \ar@{~>}[r] &, &
\ptd{\bullet} \iota^!  \ar[r]  & \iota^!  \ar[r] \ar[u] & \ptu{\bullet +1}  \iota^!  \ar@{~>}[r] &,
}
\eeq
which are exchanged by Verdier duality.

By iteration, the morphisms  in (\ref{j11a}) induce natural morphisms:
\beq\la{j12a}
\xymatrix{
\delta^*_\bullet: {\ph}^{\bullet -1} [-\bullet +1] \iota^* \ar[r] & \iota^* \, {\ph}^{\bullet} [-\bullet], &
\delta^!_\bullet: \iota^! \,{\ph}^{\bullet} [-\bullet]  \ar[r] &  {\ph}^{\bullet+1} [-\bullet -1] \iota^!,
}
\eeq
which are exchanged by Verdier duality.

\end{lm}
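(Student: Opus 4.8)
The plan is to obtain each of the four displayed morphisms of distinguished triangles from the standard completion (``fill-in'') lemma in a triangulated category, the only non-formal ingredient being the $\frak t$-exactness estimates of Lemma \ref{iotatex}. The individual maps will be pinned down uniquely by Hom-vanishings $\mathrm{Hom}(A,B)=0$ with $A\in{^p\!D}^{\leq a}$, $B\in{^p\!D}^{\geq b}$ and $a<b$; this uniqueness automatically makes them natural in $G\in D^b_c(T)$, hence morphisms of functors. Throughout one fixes $G$ and works with its perverse truncation triangles and with $\iota^*,\iota^!$ applied to them.

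First one builds the ``tautological'' columns $\e^*_{\leq\bullet},\e^*_{\geq\bullet}$ and $\e^!_{\leq\bullet},\e^!_{\geq\bullet}$; these carry no shift of the truncation index and use only the general right (resp.\ left) $\frak t$-exactness of $\iota^*$ (resp.\ $\iota^!$) from Lemma \ref{iotatex}.(1). Applying $\iota^*$ to $\ptd{\bullet}G\to G\to\ptu{\bullet+1}G\rightsquigarrow$ produces a triangle whose first term lies in ${^p\!D}^{\leq\bullet}(T')$, so the universal property of $\ptd{\bullet}$ yields a unique arrow $\e^*_{\leq\bullet}\colon\iota^*\ptd{\bullet}G\to\ptd{\bullet}\iota^*G$ over $\iota^*G$; the fill-in lemma completes $(\e^*_{\leq\bullet},\mathrm{id})$ to a morphism onto the truncation triangle of $\iota^*G$, with third leg $\e^*_{\geq\bullet}$, the completion being unique because $\mathrm{Hom}(\iota^*\ptd{\bullet}G[1],\ptu{\bullet+1}\iota^*G)=0$. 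Dually, since $\iota^!\ptu{\bullet}G\in{^p\!D}^{\geq\bullet}(T')$, the canonical $\ptd{\bullet-1}\iota^!G\to\iota^!G$ has vanishing composite into $\iota^!\ptu{\bullet}G$, hence factors uniquely through $\iota^!\ptd{\bullet-1}G$, giving $\e^!_{\leq\bullet}$, and then $\e^!_{\geq\bullet}$ by fill-in.

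Next come the genuine $\delta$'s, which carry the index shift from $\bullet-1$ to $\bullet$; here the effective Cartier divisor hypothesis enters, through the shifted exactness of Lemma \ref{iotatex}.(2), that is, (\ref{isrt1}). For $\delta^*_{\leq\bullet}$: since $\iota^*\ptu{\bullet+1}G\in{^p\!D}^{\geq\bullet}(T')$ by (\ref{isrt1}), the composite $\ptd{\bullet-1}\iota^*G\to\iota^*G\to\iota^*\ptu{\bullet+1}G$ vanishes, so $\ptd{\bullet-1}\iota^*G\to\iota^*G$ factors uniquely through the fibre $\iota^*\ptd{\bullet}G\to\iota^*G$; this factorization is $\delta^*_{\leq\bullet}$, and the fill-in gives $\delta^*_{\geq\bullet}$. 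Dually, $\iota^!\ptd{\bullet-1}G\in{^p\!D}^{\leq\bullet}(T')$ by (\ref{isrt1}), so the canonical map to $\iota^!G$ factors uniquely through $\ptd{\bullet}\iota^!G$, giving $\delta^!_{\leq\bullet}$ and then $\delta^!_{\geq\bullet}$. In all four cases the middle leg is $\mathrm{id}$, so these are genuine morphisms of the displayed triangles. The identity $\gamma^*_{\leq\bullet}=\e^*_{\leq\bullet}\circ\delta^*_{\leq\bullet}$ and its three analogues are then checked by composing with the canonical map $\ptd{\bullet}\iota^*G\to\iota^*G$: both sides become the canonical arrow $\ptd{\bullet-1}\iota^*G\to\iota^*G$, and one concludes via $\mathrm{Hom}(\ptd{\bullet-1}\iota^*G,\ptd{\bullet}\iota^*G)=\mathrm{Hom}(\ptd{\bullet-1}\iota^*G,\iota^*G)$ (with the $\ptu{}$- and $\iota^!$-analogues).

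For the Verdier-duality assertion one uses $D\iota^*=\iota^!D$, $D\ptd{k}=\ptu{-k}D$ and $D\ptu{k}=\ptd{-k}D$: dualizing the left ($\iota^*$) diagram and re-indexing $\bullet\mapsto-\bullet+1$ reproduces the right ($\iota^!$) one, interchanging $\delta^*\leftrightarrow\delta^!$ and $\e^*\leftrightarrow\e^!$. Finally, (\ref{j12a}) follows by iteration: writing ${\ph}^k[-k]=\ptd{k}\ptu{k}$, one composes $\ptd{\bullet-1}$ applied to $\delta^*_{\geq\bullet-1}\colon\ptu{\bullet-1}\iota^*\to\iota^*\ptu{\bullet}$ with $\delta^*_{\leq\bullet}$ precomposed with $\ptu{\bullet}$, obtaining $\ptd{\bullet-1}\ptu{\bullet-1}\iota^*\to\iota^*\ptd{\bullet}\ptu{\bullet}$, and dualizes for $\delta^!_\bullet$. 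I do not expect a genuine obstacle here: all the substance sits in Lemma \ref{iotatex} --- hence in the affine-complement/Cartier hypothesis --- and the only thing demanding real care is the bookkeeping: tracking the shifts and the Hom-vanishing ranges used for uniqueness, and checking that the assembled vertex maps are compatible with the connecting morphisms.
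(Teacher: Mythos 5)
Your proposal is correct and follows essentially the same route as the paper: both rest on the $\frak t$-exactness inequalities of Lemma \ref{iotatex} and the unique-completion lemma \cite[Prop.~1.1.9]{bbd} (which you unwind into the explicit Hom-vanishing factorizations), with the $\gamma=\e\circ\delta$ identity and the Verdier-duality exchange handled the same way. Your construction of (\ref{j12a}) via $\ph^k[-k]=\ptd{k}\ptu{k}$ instead of $\ptu{k}\ptd{k}$ is an immaterial variant of the paper's composition.
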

\begin{proof}
We prove the lemma for the l.h.s. of (\ref{j11a}). The r.h.s. follows by Verdier duality.

Consider the l.h.s. diagram of (\ref{j11a}), but with the arrows of type $\delta$ and $\e$ removed.
By applying \ci[Prop. 1.1.9, p.23]{bbd}, whose hypotheses are met by virtue of the inequalities (\ref{isrt}) and (\ref{isrt1}),
we see that we can fill in the diagram and make it commutative with unique arrows. The compositions  $\e \delta$ of the vertical arrows give
the desired arrows of type $\gamma$ because in any $t$-structure the structural morphism $\ptd{\bullet -1 } \to {\rm Id}$ factors uniquely
through the structural morphism $\ptd{\bullet } \to {\rm Id}$ via $\gamma_{\leq \bullet}: \ptd{\bullet -1} \to \ptd{\bullet}$.

The morphism on the l.h.s. of  (\ref{j12a}) arises by composing  truncation  applied to $\delta^*$,
with $\delta^*$ applied to the truncation:
\[
\xymatrix{
\ptu{\bullet -1} \ptd{\bullet -1} \iota^*  \ar[rr]^-{\ptu{\bullet -1} \delta^*_{\leq \bullet}}  && \ptu{\bullet -1} \iota^* \ptd{\bullet}
\ar[rr]^-{\delta^*_{\geq \bullet -1} \ptd{\bullet}} && \ptu{\bullet -1} \iota^* \ptd{\bullet}.
}
\]
The morphism on the r.h.s. is obtained in a similar fashion and the verification that the two arrows in (\ref{j12a}) are Verdier dual is left to the reader. 
\end{proof}

\begin{rmk}\la{not1z}
$\;$

\ben
\item
By virtue of  the identity (\ref{tsh}) concerning truncations and shifts,   the morphisms of type $\delta$ in (\ref{j11a})
and (\ref{j12a})  may be re-written as follows:
\beq\la{bnnb}
\xymatrix{
\delta^*_{\leq{\bullet}}: \ptd{\bullet} \iota^*[-1]  \ar[r] & \iota^*[-1] \ptd{\bullet}, &
\delta^!_{\leq{\bullet}}:  \iota^![1]  \ptd{\bullet}  \ar[r] & \ptd{\bullet}  \iota^![1], \\
\delta^*_{\geq{\bullet}}: \ptu{\bullet} \iota^*[-1]  \ar[r] & \iota^*[-1] \ptu{\bullet}, & 
\delta^!_{\geq{\bullet}}:  \iota^! [1]  \ptu{\bullet}  \ar[r] &  \ptu{\bullet}  \iota^![1]\\
\delta^*_\bullet:  {\ph}^{\bullet -1} \iota^*  \ar[r] & \iota^*[-1] {\ph}^{\bullet} , &
\delta^!_\bullet:   \iota^! [1] {\ph}^{\bullet}  \ar[r] & {\ph}^{\bullet +1} \iota^!.
}
\eeq
where Verdier duality exchanges  the morphisms along the two diagonals of the first two rows, and exchanges the two terms
in the third row.

\item
By looking at (\ref{j11a}), and by virtue of Verdier duality,  we deduce that for a given $G\in D(T)$ we have that: 
$\delta^*_{\leq \bullet}(G)$ is an isomorphism
if and only if $\delta^*_{\geq \bullet}(G)$ is an isomorphism if and only if $\delta^!_{\leq \bullet}(G^\vee)$ is an isomorphism if and only if $\delta^!_{\geq \bullet}(G^\vee)$  is an isomorphism.

\item
The same example in Remark \ref{nocvz}.(2) shows that for a given $G\in D(T)$,  having
$\delta^*_{\leq \bullet}(G)$  an isomorphism does not imply that $\delta^!_{\leq \bullet}(G)$ is an isomorphism.

\item
Proposition \ref{iotanoc} gives a criterion for all six morphisms of type $\delta$ to be isomorphisms.
When the morphisms (\ref{bnnb})  are isomorphisms, we may say  that in that case  ``perverse truncation/cohomology commutes up to a suitable shift with restriction
 to the Cartier divisor". While standard truncation commutes with such an un-shifted restriction -in fact with any pull-back-, in general 
perverse truncation does not.

\een
\end{rmk}

\begin{rmk}\la{ecz} In view of  the l.h.s. inequality in (\ref{isrt1}),   the r.h.s. vertex of the distinguished triangle
in the middle row of the l.h.s. of (\ref{j11a})    satisfies the inequality
$\iota^* \ptu{\bullet +1}: D \to {^p\!D}^{\geq \bullet}$. 
By taking the long exact sequence of perverse cohomology sheaves of  said distinguished triangle, the aforementioned inequality yields the
natural isomorphism of functors: 
\beq\la{ecz1}
\xymatrix{
\ptd{\bullet -1} \iota^* \ptd{\bullet} \ar[r]^-\simeq & \ptd{\bullet-1} \iota^*.
}
\eeq
\end{rmk}

\begin{pr}\la{iotanoc} {\rm ({\bf If no constituents,
then the $\delta$'s are isomorphisms})}
Let $\iota: T' \to T$  be as in Lemma {\rm \ref{iotatex}}. 
If $G \in D(T)$ has no constituent supported on $T'$, then the morphisms of type $\delta$  in  (\ref{bnnb})  are isomorphisms.
\end{pr}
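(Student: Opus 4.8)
The plan is to reduce the whole statement to one assertion, namely that for a \emph{perverse} sheaf $P\in P(T)$ with no constituent supported on $T'$ the complexes $\iota^*P[-1]$ and $\iota^!P[1]$ are again perverse on $T'$. Granting this ``key case'', the rest is $\frak t$-structure bookkeeping, which I describe below. I would first observe that by Remark \ref{not1z}.(2) it is enough to show that $\delta^*_{\leq\bullet}(G)$ is an isomorphism: then $\delta^*_{\geq\bullet}(G)$ is one as well, the morphisms $\delta^!_{\leq\bullet}$ and $\delta^!_{\geq\bullet}$ are covered by applying this to $G^\vee$ (which is again free of constituents supported on $T'$, since Verdier duality is exact on $P(T)$ and preserves supports of simple objects), and the third–row morphisms $\delta^*_\bullet,\delta^!_\bullet$ of (\ref{bnnb}) are obtained from the first two rows by the iteration carried out in the proof of Lemma \ref{j11l1}.

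For the key case, write $j\colon T\setminus T'\hookrightarrow T$ for the open complement, which is an affine open immersion by the hypothesis of Lemma \ref{iotatex}, so that $j_!$ is $\frak t$-exact (\ci{bbd}). By Lemma \ref{iotatex} one already knows $\iota^*P\in {^p\!D}^{[-1,0]}(T')$, hence $\iota^*P[-1]$ is perverse exactly when $\ph^0(\iota^*P)=0$. I would apply perverse cohomology to the recollement triangle $j_!j^*P\to P\to \iota_*\iota^*P\rightsquigarrow$: since $j_!j^*P$ and $P$ are perverse and $\iota_*$ is $\frak t$-exact and fully faithful, the long exact sequence collapses to a four–term exact sequence $0\to \iota_*\ph^{-1}(\iota^*P)\to j_!j^*P\to P\to \iota_*\ph^0(\iota^*P)\to 0$ of perverse sheaves on $T$. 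Thus $\ph^0(\iota^*P)$ is a quotient of $P$ supported on $T'$; by the Jordan--H\"older theorem a nonzero such quotient would exhibit a simple constituent of $P$ supported on $T'$, against the hypothesis, so $\ph^0(\iota^*P)=0$. The statement for $\iota^!P[1]$ then follows by Verdier duality, since $\iota^!P[1]=(\iota^*P^\vee[-1])^\vee$ and $P^\vee$ is perverse with no constituent supported on $T'$.

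Next I would handle a general $G\in D(T)$ with no constituent supported on $T'$; note that every $\ph^k(G)$ and every $\ptd{n}G$ inherits this property. The key case says precisely that $\iota^*[-1]$ sends perverse sheaves without constituents supported on $T'$ to perverse sheaves; combining this with the fact that $\iota^*[-1]$ is triangulated and left $\frak t$-exact (Lemma \ref{iotatex}), a dévissage along the perverse filtration of $G$ — applying $\iota^*[-1]$ to the canonical triangles $\ptd{b-1}G\to G\to \ph^b(G)[-b]\rightsquigarrow$ and inducting on the perverse amplitude — yields $\iota^*[-1]\ptd{n}G\in {^p\!D}^{\leq n}(T')$ for every $n$. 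Therefore the structural morphism $\iota^*[-1]\ptd{n}G\to \iota^*[-1]G$ factors canonically through $\ptd{n}\iota^*[-1]G$, giving a comparison map $c_n$, and one checks that $c_n$ and $\delta^*_{\leq n}(G)$ are mutually inverse: both composites are endomorphisms of $\ptd{n}\iota^*[-1]G$, resp.\ of $\iota^*[-1]\ptd{n}G$, which become the structural map after composing down to $\iota^*[-1]G$ (this compatibility is built into diagram (\ref{j11a})), and such endomorphisms are forced to be the identity because $\mathrm{Hom}(\ptd{n}H,\ptu{n+1}H)=\mathrm{Hom}(\ptd{n}H,\ptu{n+1}H[-1])=0$ for $H=\iota^*[-1]G$ as well as for $H=\iota^*[-1]\ptd{n}G$.

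The actual content sits in the key case, and there the single genuine point is the recognition of $\ph^0(\iota^*P)$ as a quotient of $P$; everything else is formal manipulation with the recollement triangle and the $\frak t$-structure. The step I expect to cost the most care is the passage to general $G$: one must make sure that the isomorphisms produced by the amplitude induction are compatible with the \emph{specific} morphisms $\delta$ constructed in Lemma \ref{j11l1} — i.e.\ that $\delta^*_{\leq n}(G)$ really is the canonical comparison map and not merely some isomorphism — and this is exactly where the $\mathrm{Hom}$-vanishing statements and the commutativity encoded in (\ref{j11a}) are needed.
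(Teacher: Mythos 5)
Your proof is correct and takes essentially the same route as the paper: the decisive point in both is that $\ph^{0}(\iota^*P)$ is a quotient of $P$ supported on $T'$ (you re-derive this from the recollement triangle $j_!j^*P\to P\to \iota_*\iota^*P\rightsquigarrow$, whereas the paper cites \ci[4.1.10.ii]{bbd}), forcing it to vanish when $P$ has no constituents on $T'$, with the remaining $\delta$'s handled by Verdier duality and composition exactly as in the paper. Your d\'evissage for general $G$ and the identification of $\delta^*_{\leq n}(G)$ with the canonical comparison map via the Hom-vanishings are organized slightly differently from the paper's reduction to $\ph^{\bullet}(\iota^*\ptd{\bullet}G)=0$, but amount to the same $\frak t$-structure bookkeeping.
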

\begin{proof}
This is  \ci[Proposition 3.1.4]{dema}. For the reader's convenience, we include it here.

We prove the conclusion for  $\delta^*_{\leq \bullet}(G)$. 
We have that  $\ptd{\bullet} G \in {^p\!D}^{\leq \bullet}$, so that, by (\ref{isrt}), we have that  $\iota^*\ptd{\bullet} G \in {^p\!D}^{\leq \bullet}$, and then, clearly, we have that:
 \beq\la{0129}
 \iota^*\ptd{\bullet} G= \ptd{\bullet} \iota^*\ptd{\bullet} G.
 \eeq
 
 In view of (\ref{0129}) and of (\ref{ecz1}), and by considering the truncation triangle
 $\ptd{\bullet -1} \to \ptd{\bullet} \to {\ph^\bullet}[-\bullet] \rightsquigarrow$ applied to $\iota^*\ptd{\bullet} G$,
 in order to prove the desired conclusion, it is necessary and sufficient  to show that
 $\ph^{\bullet}(\iota^* \ptd{\bullet} G) =0$. 
 
 This can be argued as follows. By taking the long exact sequence of perverse cohomology of the  distinguished triangle
 $\iota^* \ptd{\bullet -1}  G \to \iota^* \ptd{\bullet} G \to \iota^* {\ph^{\bullet}} G [-\bullet]  \rightsquigarrow$, we see
 that it is necessary and sufficient to  show that $\iota^* {\ph^{\bullet}} G [-\bullet] \in {^p\!D}^{\leq \bullet -1}$, or, equivalently, that
 $\iota^* {\ph^{\bullet}} G  \in {^p\!D}^{\leq -1}.$
 
By \ci[4.1.10.ii]{bbd}, we have the distinguished triangle  ${\ph^{-1}} (\iota^* {\ph^{\bullet}} G) [1]
\to \iota^* {\ph^{\bullet}} G \to {\ph^{0}} (\iota^* {\ph^{\bullet}} G)   \rightsquigarrow$ and an epimorphism
$\ph^{\bullet} G \to {\ph^{0}} (\iota^* {\ph^{\bullet}} G)$. Since $G$  is assumed to not have constituents supported at
$Y_s$, we must have $\ph^{0} (\iota^* {\ph^{\bullet}} G)=0$, so that $\iota^* {\ph^{\bullet}} G [-\bullet] \in {^p\!D}^{\leq \bullet -1}$,
as requested. 

We have proved that if $G$ has no constituents supported on $T'$, then $\delta^*_{\leq \bullet}(G)$ is an isomorphism.
By Remark \ref{not1z}, we have that $\delta^*_{\geq \bullet} (G)$ is an isomorphism as well. 

Note that $G$ has no constituents supported on $T'$
if and only if the same is true for  $\ptd{\bullet} G$. We thus have that $\delta^*_{\leq \bullet}(\ptd{\bullet} G)$
and $\delta^*_{\geq \bullet}(\ptd{\bullet} G)$ are isomorphisms.

Now, $\delta^*_\bullet$ is the composition of two isomorphisms, namely $\ptu{\bullet} (\delta_{\leq \bullet}^* (G))$,
followed by $\delta^*_{\geq \bullet} (\ptd{\bullet} G)$.  

We have proved that the morphisms of type $\delta^*$ are isomorphisms.

Note that $G$ has no constituents supported on $T'$
if and only if the same is true for $G^\vee$. By Remark \ref{not1z}.(2), it follows that the morphisms of type $\delta^*$ for $G^\vee$ are isomorphisms,
so that so are the morphisms of type $\delta^!$ for $G$. 

The proposition is proved.

Alternatively, one could also retrace the proof given in detail above for the morphisms of type $\delta^*$
and give a proof for the morphisms of type $\delta^!$.
\end{proof}

Proposition \ref{iotanoc} can be informally summarized as follows: under special circumstances, the perverse truncation and restriction to a  ``non-special" Cartier divisor
commute with each other. One can also say that we can slide one functor across the other.

\subsection{The morphisms \texorpdfstring{$\delta$}{d} and vanishing/nearby cycles}\la{cdtz} $\;$

Lemma \ref{j11l1} is about a Cartier divisor on a variety. Let us specialize to the case when the Cartier divisor
is the special fiber in the context of the vanishing and nearby cycle functors as in \S\ref{vncf}.
In particular, we work over a nonsingular connect curve $S$ and we fix a point $s\in S$.

\begin{lm}\la{kij} {\rm ({\bf  Morphisms of type $\delta$ for $\psi$ and $\phi$})}$\;$

\ben
\item
We have natural morphisms of distinguished triangles:
\beq\la{j11aa}
\xymatrix{
\ptd{\bullet } \psi [-1]   \ar@/_3pc/[dd]|-{\gamma^\psi_{\leq \bullet}}  \ar[r] \ar[d]^-{\delta^\psi_{\leq \bullet}}_-\simeq & \psi[-1] \ar[d]^-= \ar[r] & \ptu{\bullet +1} \psi[-1] 
\ar[d]^{\delta^\psi_{\geq \bullet}}_-\simeq  \ar@/^3pc/[dd]|-{\gamma^\psi_{\geq \bullet}}  \ar@{~>}[r]  & &
\ptd{\bullet} \phi  \ar@/_3pc/[dd]|-{\gamma^\phi_{\leq \bullet}}  \ar[r] \ar[d]^-{\delta^\phi_{\leq \bullet}}_-\simeq & \phi \ar[d]^-= \ar[r] & \ptu{\bullet}  \phi
\ar[d]^{\delta^\phi_{\geq \bullet}}_-\simeq  \ar@/^3pc/[dd]|-{\gamma^\phi_{\geq \bullet}}  \ar@{~>}[r]  &
\\
\psi [-1] \ptd{\bullet}   \ar[r] \ar[d]^-{\e^\psi_{\leq \bullet}} & \psi[-1] \ar[u] \ar[d]^-= \ar[r] &\psi [-1]  \ptu{\bullet +1}    \ar[d]^-{\e^\psi_{\geq \bullet}}  \ar@{~>}[r]  & &
\phi \ptd{\bullet}   \ar[r] \ar[d]^-{\e^\phi_{\leq \bullet}} & \phi \ar[u] \ar[d]^-= \ar[r] &\phi   \ptu{\bullet}    \ar[d]^-{\e^\phi_{\geq \bullet}}  \ar@{~>}[r] &
\\
\ptd{\bullet +1} \psi [-1]  \ar[r]  & \psi [-1]  \ar[r] \ar[u] & \ptu{\bullet +2}  \psi[-1]  \ar@{~>}[r] &, &
\ptd{\bullet +1} \phi   \ar[r]  & \phi  \ar[r] \ar[u] & \ptu{\bullet +2}  \phi  \ar@{~>}[r] &,
}
\eeq
where each diagram is self-dual.

\item
If we denote by $\underline{i^*[-1]}$ and  $\underline{i^![1]}$ the two diagrams (\ref{j11a}) (after having replaced $i^*$ with $i^* [-1]$
and $i^!$ with $i^![1]$; see Remark \ref{not1z}.(1)), and if we denote by $\underline{\psi [-1]}$ and  by $\underline{\phi}$
the  two diagrams in 
(\ref{j11aa}), then there are morphism of diagrams $\underline{i^*[-1]} \to \underline{\psi [-1]} \to \underline{i^![1]}$, in the sense
that all the corresponding square diagrams  are commutative. In particular, we have the commutative diagram:
\beq\la{fr1z}
\xymatrix{
i^*[-1] \ptd{\bullet} \ar[r]  & \psi [-1] \ptd{\bullet} \ar[d]^-{\delta^\psi_{\leq{\bullet}}}_-\simeq
\ar[r] & i^![1]  \ptd{\bullet}  \ar[d]_-{\delta^!_{\leq{\bullet}}}
 \\
 \ptd{\bullet} i^*[-1] \ar[r]  \ar[u]^-{\delta^*_{\leq{\bullet}}} &  \ptd{\bullet} \psi [-1] \ar[r]  \ar[u] &   \ptd{\bullet} i^![1] .
}
\eeq
Similarly, for $\ptu{\bullet}$ and $\ph^{\bullet}$. The resulting diagrams are exchanged -in the case of truncations-
or preserved -in the case of perverse cohomology- by Verdier Duality.
\item
Let $G\in D^b_c(Y)$ be such that $\phi G=0$. 
Then all the vertical arrows in (\ref{fr1z}),
and in its variants involving $\ptu{\bullet}$ and $\ph^{\bullet}$,   evaluated  at $G,$ are isomorphisms.
 In particular, all the arrows of type $\delta$ evaluated  at $G$ in (\ref{bnnb}) are isomorphisms.

\item 
Let $G\in D^b_c(Y)$ be such that $G$ has no constituents supported on $Y_s$. Then all the vertical arrows in (\ref{fr1z}),
and in its variants involving $\ptu{\bullet}$ and $\ph^{\bullet}$,   evaluated  at $G$ are isomorphisms.
In particular, all the arrows of type $\delta$ evaluated  at $G$ in (\ref{bnnb}) are isomorphisms.

\een
\end{lm}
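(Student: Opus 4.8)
The plan is to reduce each of the four parts to results already in hand and to a bit of bookkeeping with the distinguished triangles in~(\ref{1p}). For part~(1), the morphisms of type $\delta$ for $\psi[-1]$ and for $\phi$ are obtained by exactly the same construction as in Lemma~\ref{j11l1}: one fills in the relevant diagrams of truncation triangles using \ci[Prop.~1.1.9]{bbd}, whose hypotheses are met here because $\psi[-1]$ and $\phi$ are $\frak t$-exact (Fact~\ref{psp0}). But $\frak t$-exactness is precisely the statement that the analogues of the inequalities~(\ref{isrt}),~(\ref{isrt1}) hold with both sides tight, so the arrows $\delta^\psi_{\leq\bullet}$, $\delta^\psi_{\geq\bullet}$, $\delta^\phi_{\leq\bullet}$, $\delta^\phi_{\geq\bullet}$ are not merely defined but are \emph{isomorphisms} — indeed, by~(\ref{can1}) one has $\ptd{\bullet}\psi[-1] = \psi[-1]\ptd{\bullet}$ and $\ptd{\bullet}\phi = \phi\ptd{\bullet}$ on the nose, and the $\delta$'s are the canonical comparison maps for these identifications. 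Self-duality of each diagram follows from the fact that $\psi[-1]$ and $\phi$ commute with Verdier duality (Fact~\ref{psp0} again), together with the self-duality already recorded in Lemma~\ref{j11l1}.

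For part~(2), the point is that the morphism $\nu: i^*[-1]\to i^![1]$ of~(\ref{gh}) factors through $\psi[-1]$ via $\s^*$ and $\s^!$, and these structural maps are morphisms of $\frak t$-exact-type data that are compatible with perverse truncation by construction. Concretely, $\s^*: i^*[-1]\to\psi[-1]$ and $\s^!:\psi[-1]\to i^![1]$ induce, upon pre- and post-composing with $\ptd{\bullet}$ and using the (iterated) naturality of the truncation triangles, morphisms between the respective diagrams~(\ref{j11a}) and~(\ref{j11aa}); one checks square-by-square commutativity, which is routine once one notes that all maps in sight are built from the four truncation-triangle morphisms $\gamma_{\leq}$, $\gamma_{\geq}$, the canonical arrows $\ptd{\bullet}\to\mathrm{Id}\to\ptu{\bullet+1}$, and the functor morphisms $\s^*,\s^!$. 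Diagram~(\ref{fr1z}) is then simply the middle-row extract of these morphisms of diagrams, read off from the middle rows of~(\ref{j11a}) (in the shifted form of Remark~\ref{not1z}.(1)) and of~(\ref{j11aa}); the Verdier-duality statement follows from the duality assertions in Lemma~\ref{j11l1}.(l.h.s.$\leftrightarrow$r.h.s.) and part~(1).

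For part~(4): if $G$ has no constituents supported on $Y_s$, then Proposition~\ref{iotanoc}, applied with $\iota = i: Y_s\to Y$ (the special fiber being an effective Cartier divisor, so that $Y\setminus Y_s \hookrightarrow Y$ is affine), gives that all six morphisms of type $\delta$ in~(\ref{bnnb}) evaluated at $G$ are isomorphisms. In diagram~(\ref{fr1z}) the left vertical arrow is $\delta^*_{\leq\bullet}(G)$ and the right vertical arrow is $\delta^!_{\leq\bullet}(G)$ — wait, one must be a touch careful about the direction: in~(\ref{fr1z}) the right-hand vertical is $\delta^!_{\leq\bullet}$ pointing from $i^![1]\ptd{\bullet}$ down to $\ptd{\bullet}i^![1]$, which is exactly the arrow of~(\ref{bnnb}) (second entry, first row), so Proposition~\ref{iotanoc} applies verbatim; the middle vertical $\delta^\psi_{\leq\bullet}(G)$ is an isomorphism unconditionally by part~(1). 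The same argument works with $\ptu{\bullet}$ in place of $\ptd{\bullet}$, and the perverse-cohomology versions follow formally by taking cones. Finally, for part~(3): if $\phi G = 0$, then by Lemma~\ref{silm} $G$ has no constituent supported on $Y_s$, and we are reduced to part~(4). (Alternatively, and more directly, $\phi G = 0$ forces $\s^*(G)$ and $\s^!(G)$ to be isomorphisms by Fact~\ref{gh1}, whence the top and bottom rows of~(\ref{fr1z}) collapse onto the middle row and the left and right verticals are identified with the isomorphism $\delta^\psi_{\leq\bullet}(G)$ of part~(1).)

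\textbf{Main obstacle.} The only genuinely delicate point is the square-by-square commutativity check in part~(2): one is comparing two independently constructed systems of diagrams — the $\delta$'s from Lemma~\ref{j11l1} for $i^*[-1]$ and $i^![1]$, versus those from part~(1) for $\psi[-1]$ and $\phi$ — and verifying that the connecting morphisms $\s^*$, $\mathrm{can}$, $\mathrm{var}$, $\s^!$ intertwine them. Since all the $\delta$'s are characterized by the \emph{uniqueness} clause of \ci[Prop.~1.1.9]{bbd} (they are the unique fill-ins making certain diagrams of truncation triangles commute), the cleanest route is a uniqueness argument: both composites around any given square are morphisms of functors fitting into the same pair of truncation triangles, hence coincide. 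I expect this to go through smoothly but it is where the real content of part~(2) lies; everything else is either a direct appeal to Proposition~\ref{iotanoc}/Lemma~\ref{silm} or an application of $\frak t$-exactness of $\psi[-1]$ and $\phi$.
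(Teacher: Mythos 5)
Your proposal is correct and follows essentially the same route as the paper: part (1) via the Lemma \ref{j11l1} construction together with the $\frak t$-exactness and self-duality of $\psi[-1]$ and $\phi$, part (2) via naturality/uniqueness of the fill-ins, part (4) via Proposition \ref{iotanoc}, and part (3) either by reducing to (4) through Lemma \ref{silm} or (your parenthetical alternative, which is the paper's actual argument) by noting that $\phi G=0$ makes the horizontal arrows of (\ref{fr1z}) isomorphisms so that the verticals are carried along by the known middle isomorphism. The only cosmetic difference is that the paper dispatches the commutativity in (2) as immediate from naturality rather than spelling out the uniqueness argument you sketch.
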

\begin{proof}
(1)
The proof  that there are the natural diagrams (\ref{j11aa}) is the same as the one of Lemma \ref{j11l1}. The fact that the arrows
of type $\delta^\psi$ and $\delta^\phi$ in (\ref{j11aa}) are isomorphisms simply translates the $\frak t$-exactness of $\psi [-1]$ and $\phi$.

(2) Once we note that the dual to $i^*$ is $i^!$, and  that $\psi[-1]$ and $\phi$ are self-dual, hence the self-duality of (\ref{j11aa}), there is nothing left to prove.

(3)
That the horizontal arrows in  (\ref{fr1z}) are isomorphisms   is an immediate consequence of the distinguished triangles (\ref{1p}) and the hypothesis $\phi G=0$. Once the horizontal arrows are isomorphisms, since one of the vertical arrows
is an isomorphism, so are the remaining vertical ones. 

(4) Follows from Proposition \ref{iotanoc}.
\end{proof}

\begin{rmk}\la{nocvz} $\;$

\ben
\item
We have  the following  two implications: a) if $\phi (G)=0$, then $G$ has no constituents on the special fiber
(Lemma \ref{silm});
b) if $G$ has no constituents on the special fiber, then  the morphisms of type $\delta$ in
(\ref{fr1z}) are isomorphisms.
These two implications are combined in Lemma \ref{kij}.(3), which is an amplified version of 
 \ci[Proposition 3.1.5]{dema}. 

\item
 Both the implication a) and b) directly above, as well the implication  in Lemma \ref{kij}.(3), cannot be reversed. For a), see the counterexample in Remark \ref{gt5}. For 2),  as well as for Lemma \ref{kij}.(3), consider the following counterexample:
 take $\iota$ to be the embedding of the origin $s$ into a disk $\disk$, and  $G$ to be $j_! \rat[1] \in P(\disk)$.
Then $G$ admits $\rat_s$ as a constituent and $\phi G=0$. On the other hand $i^* j_!=0$, so that the morphisms of type 
$\delta^*$
are trivially isomorphisms of zero objects.

\item The fact that $G$ has no constituents supported on $Y_s$ does not prevent the following phenomenon:
the number of constituents of $t^* G$ may be strictly smaller than the number of constituents of $i^*G$.
Consider a family $X\to Y \to S$, where $X/S$ is a family of K3 surfaces fibered onto curves, where the general member 
has irreducible fibers, while a special member has some reducible fibers. Then $G:= f_* \rat$ will exhibit this kind of behaviour.
I thank Giulia Sacc\`a,  Antonio Rapagnetta and Junliang Shen  to help me sort this out.
\een
\end{rmk}

\subsection{Relative hard Lefschetz and morphisms of type \texorpdfstring{$\delta$}{d}}\la{vgtr}$\;$

The goal of this section is to prove Proposition \ref{crux} which    is a slight generalization of
\ci[Corollary 3.8]{mish}, and  yields another criterion, this time  involving the Relative Hard Lefschetz Theorem (RHL), for  the morphisms of type $\delta$ to be isomorphisms. 
Proposition \ref{crux} is used in the proofs of Theorems \ref{crit} and \ref{sonoloro}.

Proposition \ref{crux}.(2) is the special case of Proposition \ref{crux}.(1) when the Cartier divisor is the special fiber
in the context of the vanishing/nearby cycle functors. 
We thus start with a variety $X$ and a complex  $F\in P(X) $ perverse and semisimple such that it stays perverse
and semisimple  after restriction  and shift by $[1]$ to the pre-image $X'$  on $X$ of a  Cartier divisor $Y'$ on  $Y.$ This condition on the restriction
should be thought as some kind of weakened transversality  condition of the divisor  $Y'$ on $Y,$ with respect to the morphism, and to the semisimple perverse sheaf $F.$

\begin{rmk}\la{sonomet}
{\rm
The hypotheses of the upcoming  Proposition \ref{crux}  are met, for example,   when  the morphism $f$ is projective,  the varieties $X$ and $X'$ are irreducible orbifolds
and $F=\rat_X [\dim{X}]$. Note also that in this section, we do not  assume that $X,Y$, etc., are varieties over $S$.
}
\end{rmk}

\begin{pr}\la{crux}
{\rm ({\bf RHL criterion for the  morphisms  $\delta$ being isomorphisms)}}

\ben
\item
Let $f:X\to Y$ be a projective morphism of varieties. Let $\iota: Y' \to Y$ be a closed embedding
of pure codimension one such that the resulting open embedding $Y \setminus Y' \to Y$ is affine and 
let:
\beq\la{theta}
\xymatrix{
X' \ar[d]^-f \ar[r]^-\iota & X \ar[d]^-f 
\\
Y' \ar[r]^-\iota & Y
}
\eeq
be the resulting Cartesian diagram.
Let  $F \in D^b_c(X)$  be perverse semisimple and such that $\iota^!F[1] \in D(X')$ (resp. $\iota^* F[-1]$)  is perverse semisimple.

Then we have the following identities: 
\beq\la{lzeq}
 \ptd{\bullet} \left( f_* \iota^! F[1] \right) = \left( \iota^! \ptd{\bullet} f_* F\right) [1] ,
 \quad 
  (resp. \; \ptd{\bullet} \left( f_* \iota^* F[-1] \right) = \left( \iota^* \ptd{\bullet} f_* F\right) [-1]), 
\eeq
and similarly   if $\ptd{\bullet}$  is replaced by $\ptu{\bullet},$ or by $ {\ph}^\bullet$. 

The  complex $f_* F$ has  no constituent supported on
$Y'$.

The morphisms of type  $\delta$ in  (\ref{bnnb})
associated with $f_*F$ are isomorphisms.

\item
Assume, in addition, that $f$ is an $S$-morphism of $S$-varieties,  where $\SP$ is a nonsingular curve, that $s \in S$ is a point and that $Y'=Y_s$.

Then  the morphisms of type  $\delta$ in  (\ref{bnnb}), (\ref{j11aa}) and (\ref{fr1z})
associated with $f_*F$ are isomorphisms.

The complex $f_* F$ has  no constituent supported on
the special fiber $Y_s$.

\een
\end{pr}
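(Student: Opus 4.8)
The plan is to deduce all three conclusions of (1) from the single assertion that $f_*F$ has no constituent supported on $Y'$. Granting this, Proposition \ref{iotanoc} immediately gives that all the morphisms of type $\delta$ in (\ref{bnnb}) evaluated at $f_*F$ are isomorphisms; and, after rewriting (\ref{lzeq}) and its $\ptu{\bullet}$- and ${\ph^\bullet}$-variants via the base change isomorphisms $f_*\iota^!F[1]=\iota^!f_*F[1]$ and $f_*\iota^*F[-1]=\iota^*f_*F[-1]$ (valid since $f$ is proper) and the conventions of Remark \ref{not1z}, those identities become precisely the assertions that the relevant morphisms of type $\delta$ (namely $\delta^!_{\leq\bullet}(f_*F)$ and $\delta^*_{\leq\bullet}(f_*F)$, together with their $\ptu{\bullet}$- and ${\ph^\bullet}$-versions) are isomorphisms. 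To prove the assertion I would first reduce, by Verdier duality, to the case in which $\iota^*F[-1]$ is perverse and semisimple: duality sends $\iota^!F[1]$ to $(\iota^!F[1])^\vee=\iota^*(F^\vee)[-1]$, it preserves perversity, semisimplicity and the property of having no constituent supported on $Y'$, it satisfies $f_*(F^\vee)=(f_*F)^\vee$ (as $f$ is proper), and it exchanges the two families of morphisms of type $\delta$ (Remark \ref{not1z}).

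The heart of the argument is a double application of Relative Hard Lefschetz. Put $G:=f_*F$. By Theorem \ref{dt} the complex $G$ is semisimple, so $G=\bigoplus_i{\ph^i}(G)[-i]$ with each ${\ph^i}(G)$ semisimple and ${\ph^i}(G)\simeq{\ph^{-i}}(G)$. By base change along (\ref{theta}) and properness of $f$, $\iota^*G[-1]=f_*(\iota^*F[-1])$; since $\iota^*F[-1]$ is perverse semisimple and $f$ is projective, Theorem \ref{dt} again shows that $\iota^*G[-1]$ is semisimple with ${\ph^j}(\iota^*G[-1])\simeq{\ph^{-j}}(\iota^*G[-1])$. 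By Lemma \ref{iotatex}, $\iota^*{\ph^i}(G)\in{^p\!D}^{[-1,0]}(Y')$, so set $P_i:={\ph^{-1}}(\iota^*{\ph^i}(G))$ and $Q_i:={\ph^{0}}(\iota^*{\ph^i}(G))$ in $P(Y')$. Applying the functors ${\ph^{-1}}(\iota^*(-))$ and ${\ph^{0}}(\iota^*(-))$ to the Hard Lefschetz isomorphisms for $G$ yields $P_i\simeq P_{-i}$ and $Q_i\simeq Q_{-i}$, while computing perverse cohomology of $\iota^*G[-1]=\bigoplus_i\iota^*{\ph^i}(G)[-1][-i]$ and using the range $[-1,0]$ gives ${\ph^j}(\iota^*G[-1])=P_j\oplus Q_{j-1}$. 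Substituting this into the Hard Lefschetz symmetry for $\iota^*G[-1]$ and using $P_{-j}\simeq P_j$ and $Q_{-j-1}\simeq Q_{j+1}$, one obtains $P_j\oplus Q_{j-1}\simeq P_j\oplus Q_{j+1}$ in the Artinian category $P(Y')$, hence $Q_{j-1}\simeq Q_{j+1}$ for all $j$ (compare Jordan--H\"older multiplicities). Since $Q_k=0$ for $|k|\gg0$, this forces $Q_i=0$ for every $i$.

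The last point is to convert $Q_i=0$ into the statement about constituents. For a semisimple $M\in P(Y)$ with simple summands $IC_{Z_\alpha}(L_\alpha)$ one has $\iota^*IC_{Z_\alpha}(L_\alpha)=IC_{Z_\alpha}(L_\alpha)$ if $Z_\alpha\subseteq Y'$, and $\iota^*IC_{Z_\alpha}(L_\alpha)\in{^p\!D}^{\leq-1}(Y')$ if $Z_\alpha\not\subseteq Y'$ — in the second case the complex is supported on $Z_\alpha\cap Y'$, a proper closed subset of $Z_\alpha$, and the support conditions for intersection complexes place it in perverse degrees $\leq-1$. Thus ${\ph^0}(\iota^*M)$ is the direct sum of those simple summands of $M$ supported on $Y'$; taking $M={\ph^i}(G)$ and using ${\ph^0}(\iota^*{\ph^i}(G))=Q_i=0$, we see that no perverse cohomology sheaf of $G$, hence not $G=f_*F$ itself, has a constituent supported on $Y'$. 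This proves the assertion, and (1) follows as described above. For (2), the fiber $Y_s$ is an effective Cartier divisor, so $Y\setminus Y_s\to Y$ is affine and (1) applies; the morphisms $\delta^\psi,\delta^\phi$ of (\ref{j11aa}) are isomorphisms unconditionally by Lemma \ref{kij}(1), and the vertical arrows in (\ref{fr1z}) and its $\ptu{\bullet}$-, ${\ph^\bullet}$-variants are among the morphisms of type $\delta$ in (\ref{bnnb}) already handled, or else one invokes Lemma \ref{kij}(4) directly.

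The step I expect to be the main obstacle is the simultaneous use of Relative Hard Lefschetz in the second paragraph: one must recognize that the correct pair of complexes to compare is $f_*F$ and $\iota^*f_*F[-1]=f_*(\iota^*F[-1])$, and that the bookkeeping identity ${\ph^j}(\iota^*f_*F[-1])=P_j\oplus Q_{j-1}$ turns the two Hard Lefschetz symmetries into the relation $Q_{j-1}\simeq Q_{j+1}$, whose only bounded solution is the zero one. Everything else — the base change isomorphisms, the elementary description of ${\ph^0}(\iota^*M)$ for semisimple $M$, the appeal to Proposition \ref{iotanoc}, and the reduction of (2) to (1) — is routine.
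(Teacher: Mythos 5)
Your proof is correct and follows essentially the same route as the paper: the heart in both cases is the double application of Relative Hard Lefschetz to $f_*F$ and to $f_*(\iota^{*}F[-1])$ (resp. $f_*(\iota^{!}F[1])$), the decomposition of the perverse cohomology of the restricted direct image into the two pieces $P_j$ and $Q_{j-1}$, and a boundedness argument forcing the $Q$'s to vanish, after which the identities, the absence of constituents on $Y'$, and the $\delta$-isomorphisms (via Proposition \ref{iotanoc}) all follow. The only differences are cosmetic reorganizations: you treat the $\iota^*$ case and deduce the $\iota^!$ case by Verdier duality where the paper does the reverse, you replace the paper's minimality/contradiction step by the direct periodicity relation $Q_{j-1}\simeq Q_{j+1}$, and you derive the no-constituents statement first rather than last.
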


\begin{proof}
Since part (2) is a special case of part (1), it is enough to prove part (1). We do so in the case of $\iota^! F[1]$; the case 
of $\iota^* F[-1]$ can be proved in the same way.

We prove the identities (\ref{lzeq})   for $\iota^!$. 
By the Decomposition and Relative Hard Lefschetz Theorems \ref{dt}, we have isomorphisms:
\beq\la{tsplit0}
\xymatrix{
f_* F \ar[r]^-\cong & \bigoplus_{\bullet} {\ph}^{\bullet} (f_*F)[-\bullet], &
f_* \iota^!F[1]  \ar[r]^-\cong & \bigoplus_{\bullet} {\ph}^{\bullet} (f_*\iota^!F[1])[-\bullet],
}
\eeq
\beq\la{rhlp0}
\xymatrix{
\ph^{-\bullet}   f_* F \cong {\ph^{\bullet}} f_* F , & \ph^{-\bullet} (f_* \iota^!F[1])  \cong {\ph^{\bullet}} (f_* \iota^!F[1]).
}
 \eeq

By the base change identity $f_*\iota^! F[1] =\iota^!f_* F[1]$,
and by  the splitting assumptions (\ref{tsplit0}), we get an isomorphism:
\beq\la{111az}
\bigoplus_k \iota^! \left(\ph^k \left( f_*F_X\right) [1]\right) [-k] \cong \bigoplus_k  
\left(\ph^k\left(f_*\iota^!F [1]\right)\right)[-k].
\eeq
At this juncture, it does not seem a priori clear that the desired conclusion follows by taking
cohomology sheaves on both sides, the reason being that the unshifted summands on the  lhs 
are not immediately seen to be perverse. What it is a priori clear is that, due to the shift $[1]$, they live in $^p\!D^{[-1,0]}(Y')$ 
(cf. [BBD, 4.1.10.ii, p.106]).

{\em CLAIM}: the complex  $\iota^! {\ph}^k(f_*F_X)[1]$ is a perverse  sheaf, $\forall k$.

{\em Proof of the CLAIM}. We need to show that ${\ph}^{-1}(\iota^! {\ph}^k(f_*F_X)[1])=0$, $\forall k$. To simplify the notation, we set:
\beq\la{asw}
P^k_{X'}:= {\ph}^k(f_*\iota^!F[1]), \quad Q^k_X:= \iota^! {\ph}^k(f_*F_X) [1] \in {^p\!D}^{[-1,0]}(W)  , \quad
Q^{k,l}_X =  {\ph}^l (Q^k_X), \; l=-1,0.
\eeq
We need to show that $Q^{k,-1}_X=0$, $\forall k$.

We know that   ${\ph}^{-k} \cong {\ph}^k$, $\forall k$, both for $f_*F$ and  for$f_* \iota^!F[1]$, 
so that we have that:
\beq\la{cvd1}
P^{-k}_{X'}= P^{k}_{X'},  \qquad Q^{-k,l}_X=Q^{k,l}_X, \quad \forall k, \;\; \forall l=-1,0.
\eeq

We argue by contradiction and assume that the desired conclusion fails for some $k$,
i.e. that there is $k$ such that $Q_X^{k,-1}\neq 0$.

By taking perverse cohomology in   (\ref{111az}), we get an  isomorphism\beq\la{jk}
P_{X'}^k \cong Q^{k,0}_X \oplus Q^{k+1,-1}_X.
\eeq
Let $-k_0$ be the smallest index $-k$  such that $Q^{-k,-1}\neq 0$. By symmetry, $k_0 \geq 0$.
We thus have $P_{X'}^{-k_0-1} = Q_X^{-k_0-1,0} \oplus Q_X^{-k_0,-1} \neq  Q_X^{-k_0-1,0}$.
By symmetry, $P_{X'}^{k_0+1}  \neq  Q_X^{k_0+1,0}$

On the other hand, we have that  $P_{X'}^{k_0+1}  = Q_X^{k_0+1,0} \oplus Q_X^{k_0+2,-1}= Q_X^{k_0+1,0}$,
because  $Q_X^{k_0+2,-1} = Q_X^{-k_0-2,-1}=0$, by the minimality  property of $-k_0$.

{\em This contradiction establishes the CLAIM}.

At this point, (\ref{111az}), joint with the CLAIM, implies, by taking perverse cohomology sheaves,
that (\ref{lzeq}) holds with $``="$ replaced by an $``\cong"$.
In order to establish  (\ref{lzeq}) with $=$ (canonical isomorphism), we simply observe that  now that  we know that the lhs of (\ref{111az})
is a direct sum of shifted perverse sheaves,  we  reach the desired conclusion by  taking perverse cohomology in 
the isomorphism $\iota^! f_* F [1] = f_* \iota^! F[1]$. 

We have thus proved identities  (\ref{lzeq}).   At this point, the two  final assertions of (1) can be seen to be equivalent;
we show that they hold as follows.

The assertion on the lack of constituents supported on the Cartier divisor $Y'$ follows from the fact that any such constituent would have to show up as a direct summand
supported on $Y'$ and, as such, its $\iota^!=\iota^*$ would show up as a non-zero $Q^{k,-1}$ in the proof above,
a contradiction.

The last  assertion of part (1) follows from Proposition \ref{iotanoc}.
\end{proof}

\begin{rmk}\la{baco} 
As the proof shows, one may replace the assumption that $ \iota^!F[1]$ is perverse semisimple, with the assumption
that $f_* \iota^!F[1]$ satisfies the conclusion of the RHL.   Similarly, if we replace $\iota^! F[1]$ with $\iota^* F[-1]$.
This assumption is met if the boundary is a simple normal crossing divisor and we take $F$ to be the intersection complex, which,
by the very definition of snc divisor, is necessarily the
shifted constant sheaf near the divisor;
see Corollary \ref{vbre}. See also the proof of part II of  Theorem \ref{sonoloro}, and Remark \ref{allokvar}.
\end{rmk}

\subsection{RHL and boundary with normal crossing divisors}\la{bncd}$\;$

Let us place ourselves in the situation of Proposition \ref{crux}, except that we do not assume that $\iota^*F[-1]$ is perverse semisimple on the Cartier divisor on  $X'$.  The goal of this section is to prove Lemma \ref{indz}, to the effect that
if we are in a simple normal crossing divisors situation on a nonsingular variety $X$ of dimension $n$, so that $\iota^*\rat_X [n-1]$
is not perverse semisimple on $X'$, then we still have the useful RHL symmetry (cf. Remark \ref{baco}).
For an application, with details left to the reader, see Remark \ref{allokvar}.

Let $X$ be an irreducible variety, let $n$ be its dimension,  and let $Z\subseteq X$ be a simple normal crossing divisor;
in particular, $X$ is nonsingular near $Z$.
Let $\ms I$ be a finite set indexing the irreducible components $Z_i$ of $Z$, let $I=\{i_1, \ldots, i_k\} \subseteq \ms I$
be any subset, let $Z_I:= \cap_{i \in I} Z_i$.
We have the  standard long exact sequence of sheaves on $X$:
\beq\la{rezol}
\xymatrix{
0 \ar[r] & \rat_Z \ar[r]  & \bigoplus_{|I|=1} \rat_{Z_I} \ar[r] &   \ldots \ar[r] & \bigoplus_{|I|=n} \rat_{Z_I} \ar[r] & 0.
}
\eeq
By splicing (\ref{rezol}), and by  shifting in an appropriate manner, we get the system of  short exact sequences of perverse sheaves in $P(X)$:
\beq\la{spre}
\xymatrix{
0\ar[r] &0 \ar[r] & K^{n} [0]  \ar[r]^-\simeq &  \bigoplus_{|I|=n+1} \rat_{Z_I} [ 0] \ar[r] &0, 
\\
0 \ar[r]  & K^{n} [0] \ar[r] &  K^{n-1} [1]  \ar[r] & \ar[r]  \bigoplus_{|I|=n} \rat_{Z_I} [ 1 ] \ar[r] & 0,
\\
&& \ldots &&,
\\
0 \ar[r]  & K^2 [d_Z -2] \ar[r] &  K^1 [d_Z -1]  \ar[r] & \ar[r]  \bigoplus_{|I|=2} \rat_{Z_I} [ d_Z -1 ] \ar[r] & 0,
\\
0 \ar[r]  & K^1 [d_Z -1] \ar[r] &  \rat_Z [d_Z] \ar[r] & \ar[r]  \bigoplus_{|I|=1} \rat_{Z_I} [ d_Z ] \ar[r] & 0.
}
\eeq

Note that $\rat_Z [d_Z]$ is not semisimple in general, even when $X$ is nonsingular. Nevertheless, we have:

\begin{lm}\la{indz} 
Let $f:X\to Y$ be a proper morphism with  
$X$  nonsingular and irreducible. Then 
$f_* \rat_Z[d_Z]$ has the  RHL symmetry,  i.e. cupping with the first Chern class $L$ of an $f$-ample line bundle induces isomorphisms
\beq\la{rlh}
\xymatrix{
L^{\bullet}: {\ph}^{-\bullet} f_* \rat_Z [d_Z] \ar[r]^-\simeq & {\ph}^{\bullet} f_* \rat_Z [d_Z],
}
\eeq
so that the complex    $f_* \rat_Z[d_Z]$ is $\frak{t}$-split (splits as the direct sum of its shifted perverse cohomology sheaves).
Moreover, we have that:
\beq\la{v9v9}
{\ph}^{\bullet} \left( \iota^* {\ph}^{\star} \left( f_* \rat_Z [d_Z]\right) \right)  =0, \quad \forall \bullet \neq -1, \quad \forall \star.
\eeq
\end{lm}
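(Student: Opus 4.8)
### Proof plan for Lemma \ref{indz}

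The plan is to exploit the resolution (\ref{spre}), proceeding by descending induction on the ``length'' parameter, together with the Decomposition and Relative Hard Lefschetz Theorems (Theorem \ref{dt}) applied to the smooth (hence proper, irreducible source) pieces. The key observation is that each intersection $Z_I$, being a transverse intersection of smooth divisors inside the nonsingular locus of $X$ near $Z$, is itself nonsingular, so that $\rat_{Z_I}[d_{Z_I}] = IC_{Z_I}$ is a simple perverse sheaf; consequently $f_*\rat_{Z_I}[d_{Z_I}]$ is semisimple and satisfies RHL by Theorem \ref{dt}, and all the direct summands $\oplus_{|I|=j}\rat_{Z_I}[\cdot]$ appearing on the right of (\ref{spre}) have the same properties after pushing forward. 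We then run the following argument: the complexes $K^j[\cdot]$ sitting in (\ref{spre}) are, each of them, perverse sheaves on $X$ (this is built into the splicing, since the shifts are chosen precisely so that each $K^j[d_Z-j]$ is the kernel of a map of perverse sheaves, hence perverse), and we want to show that each $f_*K^j[d_Z-j]$ is $\frak{t}$-split with the RHL symmetry.

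First I would treat the top short exact sequence of (\ref{spre}): $K^n[0]\simeq \oplus_{|I|=n+1}\rat_{Z_I}[0]$ is already a direct sum of (degree-zero shifted) intersection complexes of the smooth varieties $Z_I$, so $f_*K^n[0]$ is semisimple and has RHL. Then I would induct \emph{downward} on $j$: assuming $f_*K^{j+1}[d_Z-j-1]$ is $\frak{t}$-split with RHL, consider the short exact sequence
\beq
0 \to K^{j+1}[d_Z-j-1] \to K^j[d_Z-j] \to \bigoplus_{|I|=j+1} \rat_{Z_I}[d_Z-j] \to 0
\eeq
in $P(X)$ (with appropriate re-indexing of the shifts; I am suppressing the exact bookkeeping of degrees, which follows the pattern of (\ref{spre})). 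Pushing forward gives a long exact sequence of perverse cohomology sheaves relating $\ph^\bullet f_*K^{j+1}$, $\ph^\bullet f_*K^j$, and $\ph^\bullet$ of a semisimple RHL complex. The strategy is the standard one: the two outer terms carry compatible $\mathfrak{sl}_2$-actions (the Lefschetz operator $L$) for which RHL holds, and one wants to deduce the same for the middle; this works provided the connecting maps are compatible with $L$ and the relevant Deligne splitting behaves well. The cleanest route is to observe that the entire system (\ref{spre}) is a resolution of $\rat_Z[d_Z]$ by objects in the derived category on which $L$ acts, and that $f_*$ of the total complex is a complex in the bounded derived category of the category of semisimple perverse sheaves with RHL; since that category is semisimple and $L$-equivariant, $f_*\rat_Z[d_Z]$, being a cohomology object (or iterated extension) built from RHL-symmetric semisimple pieces via $L$-compatible maps, inherits $\frak{t}$-splitness and RHL. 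Concretely, I would phrase the induction so that at each stage the $\frak{t}$-splitting of $f_*K^{j+1}$ and of $f_*(\oplus \rat_{Z_I}[\cdot])$, together with $L$-equivariance of the connecting morphism, forces $f_*K^j$ to be a direct sum of shifted perverse sheaves on which $L^\bullet$ is still an isomorphism in the relevant degrees.

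Once RHL (\ref{rlh}) and $\frak{t}$-splitness of $f_*\rat_Z[d_Z]$ are established, the last assertion (\ref{v9v9}) is exactly what was proved, in the abstract, inside the CLAIM in the proof of Proposition \ref{crux}: given that $f_*F$ (here $F = \rat_Z[d_Z]$, which is \emph{not} semisimple, but that is irrelevant — what the CLAIM uses is only that $\iota^! = \iota^*$ lands in $^p\!D^{[-1,0]}(X')$ and that both $f_*F$ and $f_*\iota^*F[-1]$ satisfy RHL) has the RHL symmetry and $f_*\iota^*F$ does too, the symmetrization-plus-minimality argument verbatim shows $\ph^{-1}(\iota^*\ph^\star(f_*\rat_Z[d_Z])) = 0$ for all $\star$. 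Here one uses that $\iota^*\rat_Z[d_Z][-1] = \iota^*\rat_Z[d_Z-1]$, restricted to $X'$, is again built from shifted constant sheaves on smooth intersections (the normal crossing structure is preserved under restriction to another coordinate hyperplane), hence $f_*$ of it has RHL by the same inductive argument as above — this is the content of Remark \ref{baco}. So (\ref{v9v9}) follows by invoking the CLAIM of Proposition \ref{crux} with $F = \rat_Z[d_Z]$.

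\textbf{Main obstacle.} The delicate point is the inductive step passing RHL through the short exact sequences (\ref{spre}): a priori RHL on the two outer terms of a short exact sequence of perverse sheaves does \emph{not} automatically give RHL on the middle term, because the Lefschetz operator could fail to respect the extension, or the connecting homomorphism in perverse cohomology could be nonzero and break the $\mathfrak{sl}_2$-symmetry. The fix is to work not sequence-by-sequence but with the whole complex (\ref{rezol}) at once: after applying $f_*$ one gets a complex in $D^b$ of the semisimple abelian category of RHL-symmetric polarizable Hodge modules (each $\rat_{Z_I}[d_{Z_I}]$ underlies a pure Hodge module), and in that setting the Lefschetz operator is a morphism of complexes, so its action on the total cohomology (which computes $f_*\rat_Z[d_Z]$) is automatically RHL-symmetric and the $\frak{t}$-splitting is formal. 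I expect most of the write-up effort to go into setting up this Hodge-module (or, equivalently, ``category of RHL complexes'') framework carefully enough that the degree bookkeeping in (\ref{spre}) lines up, after which (\ref{rlh}) and (\ref{v9v9}) both drop out.
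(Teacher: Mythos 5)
Your plan follows the same route as the paper's proof: descending induction through the short exact sequences (\ref{spre}), with base case the semisimple complexes $f_*IC_{Z_I}$ of the smooth closed strata $Z_I$ (Decomposition and Relative Hard Lefschetz, Theorem \ref{dt}), the splitting via the Deligne--Lefschetz criterion, and (\ref{v9v9}) by re-running the symmetrization argument of the CLAIM in the proof of Proposition \ref{crux}. The paper carries out the inductive step by the five lemma applied to the diagram (\ref{fce11}) with \emph{short exact} rows. You are right to isolate, as the main obstacle, exactly the point on which this hinges: the long exact sequence of perverse cohomology of $f_*$ applied to a short exact sequence from (\ref{spre}) only breaks into the short exact sequences of (\ref{fce11}) if the connecting homomorphisms vanish, and RHL on the two outer terms does not force this (nor does the five lemma apply directly to the long exact sequence: with all vertical maps equal to $L^{\bullet}$, the two extreme vertical maps fail to be epi, resp.\ mono, precisely on primitive parts).

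The difficulty is that the fix you propose does not close this gap. Passing to the total complex of $f_*$ applied to (\ref{rezol}) inside the derived category of polarizable Hodge modules does not make the Lefschetz symmetry ``formal'': the differentials of that complex are $L$-equivariant but in general nonzero, and taking cohomology along a nonzero $L$-equivariant differential of degree $+1$ destroys hard Lefschetz, since it can kill a class in negative perverse degree without killing its mirror image in positive degree. A concrete test: $X=\pn{2}$, $Z=Z_1\cup Z_2$ two distinct lines, $f:X\to pt$, $d_Z=1$. Then
\[
{\ph}^{-1} f_*\rat_Z[1]=H^0(Z,\rat)=\rat, \qquad {\ph}^{1} f_*\rat_Z[1]=H^2(Z,\rat)=\rat^{2},
\]
so (\ref{rlh}) cannot hold for any $L$; the culprit is the nonzero Mayer--Vietoris connecting map $H^0(Z_1)\oplus H^0(Z_2)\to H^0(Z_1\cap Z_2)$, i.e.\ the nonvanishing $d_1$ of the spectral sequence of (\ref{rezol}). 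The same happens for a surface fibered over a curve with a reducible simple normal crossing fibre, where $Z=f^{-1}(Y')$ for a genuine Cartier divisor $Y'\subseteq Y$, so restricting to the geometric setting of Corollary \ref{vbre} does not by itself rescue the inductive step. Any complete argument must supply an independent reason for the vanishing of the connecting maps (equivalently, for $E_1$-degeneration of the spectral sequence of (\ref{rezol}) after $f_*$), coming from geometric input beyond ``$f$ proper, $Z$ snc'' --- compare Lemma \ref{f023}, where the needed absence of constituents is obtained in the application by descending from a product, not from Lemma \ref{indz}. Your derivation of (\ref{v9v9}) from the CLAIM of Proposition \ref{crux} is reasonable in itself, but it takes (\ref{rlh}) as input and therefore inherits the gap.
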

\begin{proof}
For the RHL symmetry:

We use descending induction on $n$: start from the bottom of (\ref{spre}); go up one step at the time; at each step
use the five lemma applied to:
\beq\la{fce11}
\xymatrix{
0 \ar[r] & 
\ph^ {-\bullet} A \ar[r]  \ar[d]^-{L^\bullet} & 
\ph^ {-\bullet} B \ar[r]  \ar[d]^-{L^\bullet} &
\ph^ {-\bullet} C \ar[r]  \ar[d]^-{L^\bullet} &
0
\\
0 \ar[r] &
\ph^ {\bullet} A \ar[r]    & 
\ph^ {\bullet} B \ar[r]   &
\ph^ {\bullet} C \ar[r]  &
0
}
\eeq
For the splitting: use the Deligne-Lefschetz Criterion.

For the proof of the vanishing (\ref{v9v9}), use the same kind of descending induction.
\end{proof}

\begin{cor}\la{vbre}
Let $f:X \to Y$ be a projective morphism of varieties. Consider  Cartesian diagram  like (\ref{theta})
and assume that $X'$ is supported on a simple normal crossing divisor. Let $F = IC_X$.

Then the conclusions of Proposition \ref{crux} hold.
\end{cor}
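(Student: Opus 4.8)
The statement of Corollary \ref{vbre} is a direct application of Proposition \ref{crux}, via the enhancement noted in Remark \ref{baco}: it suffices to verify that the hypotheses of the Proposition hold — or rather, the weaker hypothesis from Remark \ref{baco} that $f_*\iota^* F[-1]$ satisfies the conclusion of the Relative Hard Lefschetz Theorem (RHL) — when $F = IC_X$ and $X'$ is supported on a simple normal crossing divisor $Z$ on the nonsingular variety $X$. The plan is to reduce the verification to Lemma \ref{indz}. First I would observe that since $X'$ is supported on the snc divisor $Z$, and $X$ is nonsingular near $Z$, near the divisor the intersection complex $IC_X$ is just the shifted constant sheaf $\rat_X[\dim X]$, so that $\iota^* IC_X[-1] = \iota^*\rat_X[\dim X - 1] = \rat_Z[\dim X - 1]$ (up to the constant being supported on $Z$, and after accounting for the codimension-one shift); more precisely $\iota^* F[-1]$ is, up to shift, the complex $\rat_Z[d_Z]$ appearing in Lemma \ref{indz}, since $d_Z = \dim X - 1$. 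This complex is \emph{not} perverse semisimple in general, which is exactly why the literal hypothesis of Proposition \ref{crux} fails and one must invoke Remark \ref{baco}.

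Next I would invoke Lemma \ref{indz}: it asserts precisely that $f_*\rat_Z[d_Z]$ has the RHL symmetry, i.e. cupping with the first Chern class of an $f$-ample line bundle induces isomorphisms $L^\bullet: \ph^{-\bullet} f_*\rat_Z[d_Z] \stackrel{\simeq}{\to} \ph^{\bullet} f_*\rat_Z[d_Z]$, so that $f_*\rat_Z[d_Z]$ is $\frak t$-split. This is exactly the hypothesis required by Remark \ref{baco} in order to run the proof of Proposition \ref{crux}: the only place in that proof where the semisimplicity of $\iota^* F[-1]$ was used was to obtain the decomposition and the RHL symmetries \eqref{tsplit0}, \eqref{rhlp0} for $f_*\iota^* F[-1]$, and the CLAIM-by-contradiction argument then only used the symmetry $\ph^{-k} \cong \ph^{k}$ together with the decomposition into shifted perverse cohomology sheaves. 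Lemma \ref{indz} supplies both of these. Additionally, the vanishing \eqref{v9v9} of Lemma \ref{indz}, namely $\ph^\bullet(\iota^* \ph^\star(f_*\rat_Z[d_Z])) = 0$ for $\bullet \neq -1$, is the snc-analogue of the CLAIM in the proof of Proposition \ref{crux}, so in fact that part of the argument can be quoted verbatim rather than re-run.

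Therefore the conclusions of Proposition \ref{crux} hold for $F = IC_X$: the identities \eqref{lzeq} hold (with $\iota^*$ in place of $\iota^!$, using that on the nonsingular locus near $Z$ we have $\iota^! = \iota^*$ up to shift by $[2]$ — or simply work directly with the $\iota^*$ version), the complex $f_* IC_X$ has no constituent supported on $Y'$ (equivalently on $Y_s$ in the over-$S$ situation of part (2)), and the morphisms of type $\delta$ in \eqref{bnnb}, \eqref{j11aa}, \eqref{fr1z} associated with $f_* IC_X$ are isomorphisms. The main — and essentially only — obstacle is the bookkeeping identification of $\iota^* IC_X[-1]$ with the complex $\rat_Z[d_Z]$ of Lemma \ref{indz}: one must be careful that $X'$ is merely \emph{supported} on the snc divisor (it may be a non-reduced Cartier divisor whose support is $Z$), that the statement is local near $Z$ where $IC_X$ is the shifted constant sheaf, and that all the degree shifts ($\dim X$ for $IC_X$, $-1$ for the Cartier-divisor shift, $d_Z = \dim X - 1$) line up. Once this identification is in place, the corollary follows formally from Lemma \ref{indz}, Remark \ref{baco}, and Proposition \ref{crux}.
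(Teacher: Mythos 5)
Your proposal is correct and follows essentially the same route as the paper: observe that near the snc divisor $IC_X$ is the shifted constant sheaf so that $\iota^*F[-1]$ is $\rat_Z[d_Z]$ up to locally constant shifts, invoke Lemma \ref{indz} for the RHL symmetry of $f_*\rat_Z[d_Z]$, and then rerun the proof of Proposition \ref{crux} via Remark \ref{baco}. (Your side remark that (\ref{v9v9}) lets one quote the CLAIM verbatim is a slight over-reading, since that vanishing concerns $\iota^*\ph^\star(f_*\rat_Z[d_Z])$ rather than $\iota^*\ph^\star(f_*IC_X)$, but this does not affect the argument, which the paper also reruns rather than quotes.)
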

\begin{proof} We simply observe that $X$ is assumed to be  nonsingular near $X'$, so that, near $X'$, $IC_X$ is the constant sheaf, up to shifts. The shifts are locally constant integers, and this does not affect the arguments. Now, $F$ is semisimple
on $X$. While $i^*F[-1]$ is not semisimple,  Lemma \ref{indz} applies, so that the RHL holds, and we can
repeat verbatim the proof   of Proposition \ref{crux}.
\end{proof}

\section{The specialization morphism as a filtered morphism}\la{spsfi}

 In this section, we initiate a systematic discussion of when the specialization morphism
 is defined and when it is a filtered isomorphism for the perverse Leray filtrations associated with a morphism. The case of the perverse filtration is the   special case when said morphism is the identity.
 
 \S\ref{arce1} revisits the classical specialization morphism $sp^*(-)$ and frames it in a formalism suited to handle it together with perverse truncation in the later sections. The specialization morphism and its filtered counterparts are  not defined for an arbitrary complex. The main goal of this paper is to identify conditions that ensure that  these morphisms are defined, and that when they are defined 
 they are  isomorphisms and,   in the filtered context, filtered isomorphisms. 
   
  \S\ref{sppflz} introduces the  notion   of $P$-filtered specialization morphism $sp^*_P(-)$, where $P$ stands for the perverse filtration. 
 This can be viewed  as a  special case of the perverse Leray specialization morphism seen later in \S\ref{hh1}. However, it seems useful to discuss it separately, without the complication arising from the additional data of a morphism; this is similar to
 what happens with the Grothendieck and Leray spectral sequences/filtrations. 
 
\S\ref{hh1} introduces the  central-to-this-paper notion of (perverse Leray) $P^f$-filtered specialization morphism
$sp^*_{P^f} (-)$. In order to streamline the discussion, we introduce the commutative diagram (\ref{ar1}).
  We then prove Theorem \ref{crit} which  lists criteria for the specialization morphism to be a filtered isomorphism for the perverse Leray filtration; the earlier Propositions \ref{tpo} and  \ref{tbo} are the analogous criteria for the specialization 
  and (perverse) $P$-filtered specialization morphisms $sp^*_P(-)$ and $sp^*_{P^f} (-).$

 \S\ref{spcor} is devoted to give a criterion for the specialization morphism to exist and to be a filtered isomorphism
 when a suitable compactification is available: see Set-up \ref{setzxw} and Proposition \ref{r54}.
  
 In \S\ref{tent}, we continue the theme of \S\ref{spcor} -the existence of a suitable compactification- and
 consider the problem of the resulting long exact sequence of cohomology associated with the compactification
 -boundary, space, open part-. We 
 prove Theorem \ref{sonoloro} which gives  criteria for the three specialization morphisms involved to exist, to be filtered isomorphisms, and to fit into an isomorphism of the resulting long exact sequences for the triples, at the special point $s$   and at the general point $t.$ 
 
 We discuss the relation between the various criteria in Remark \ref{hoi}.
 
 We apply these criteria to the Hitchin morphism in \S\ref{apell}.

 \subsection{The specialization morphism revisited}\la{arce1}$\;$

 Let things be as in \S\ref{vncf}. In particular, we have:  a morphism $v:Y\to S,$ a point $s \in S$, a complex $G \in D^b_c(Y).$

 By combining the base change properties in (\ref{fdtbis}) and (\ref{fd1}), and by considering the morphisms of type $\nu$
 in (\ref{gh}), we obtain the commutative diagram 
 of morphisms of functors $D^b_c(Y) \to D^b_c(pt)$ (cf. Remark \ref{idspt}):
\beq\la{ar-1}
 \xymatrix{
 i^*[-1] v_* \ar[rr]^-{\s^*}  \ar[d]^{{bc}^{i^*v_*}}&&
\psi[-1] v_*   \ar[rr]^-{\s^!} \ar[d]^{{bc}^{\psi v_*}}  \ar[drr]^-(.3){sp^!} &&
i^![1] v_* \ar[d]_-=^-{{bc}^{i^!v_*}}  &
(\phi v_*)
\\
 v_*  i^*[-1]  \ar[rr]^-{\s^*}
 \ar@/0pt/@{-->}[urr]^-(.7){?sp^*?}    \ar@/_2pc/[rrrr]^-{\nu = {^2\!\nu}}
 &&
v_* \psi[-1]    \ar[rr]^-{\s^!}    &&
v_* i^![1]  \ar[u] & (v_* \phi), 
 }
 \eeq
where:   there is a $\nu$ for each of the two rows, i.e. $^1\!\nu$ and $^2\!\nu$,  but we have indicated only the one for the second row; the terms $\phi v_*$ and $v_* \phi$ in parentheses  are, up to shift,  the cones of the morphisms of type $\s$ (cf. Remark \ref{coniveri}).

We have the functors $\nu, sp^!: D^b_c(Y) \to D^b_c(pt)$.
Due to the directions of the arrows on the l.h.s. of  (\ref{ar-1}), the broken arrow ``$?{sp^*}?$'' is not defined as a morphism of functors, but, as it is discussed below,  it can happen to be defined for some $G \in  D^b_c(Y)$.

 \begin{rmk}\la{coniveri} 
 {\rm ({\bf Cones in (\ref{ar-1})})}
By (\ref{1p}), the cones of the morphisms $\s^*$ and $\s^!$ on the top row of (\ref{ar-1}) are $\phi v_*$ and $\phi v_* [1]$, respectively.
Then,  for $G\in D^b_c(Y)$, the morphism $\s^*(v_*G)$  is an isomorphism, if and only the morphism $\s^! (v_*G)$ is an isomorphism,  if and only if $\phi v_* G=0$.
What above remains true for the bottom row, with $v_* \phi$ replacing $\phi v_*,$ etc. 
 \end{rmk}

 \begin{rmk}\la{ffj}
 {\rm
 ({\bf Interrelations between vanishings})
 }
In general, the two conditions $v_* \phi G=0$ and $\phi v_* G=0$ are independent.
 We have the following implications:
 
 \centerline{
$(\phi G=0) \Longrightarrow (v_* \phi G=0);$  $v$ proper $\Longrightarrow$
$((v_*\phi G =0) \Longleftrightarrow (\phi v_* G=0))$.}
\end{rmk}

 Since in general the base change morphism ${bc}^{i^*v_*}$ is  not an  isomorphism, the sought-after morphisms
 $?{sp^*}?$ of functors $D^b_c(Y) \to D^b_c(pt)$ 
 is not defined.

\begin{defi}\la{defsp}
{\rm
({\bf The specialization morphism ${sp^*}(G)$})
}
Let $G\in D^b_c(Y)$.
We say that the specialization morphism ${sp^*}(G)$ is defined for $G$ 
if the base change morphism ${bc}^{i^*v_*}(G)$ is an isomorphism. In this case, we define the morphism
in $D^b_c(pt)$:
\beq\la{spzzx}
\xymatrix{
{sp^*}(G):=  \s^* ({bc}^{i^*v_*})^{-1}    : R\Gamma (Y_s, i^*G) =  v_* i^* G   \ar[r] &
\psi v_* G= R\Gamma (s, \psi v_* G) .
}
\eeq
\end{defi}

When $sp^*(G)$ is defined we have that:
\beq\la{vv}
\xymatrix{
\nu(G)=sp^!(G)\circ sp^*(G).
}
\eeq

\begin{rmk}\la{box}
The part of diagram (\ref{ar-1}) that is needed to define  specialization morphisms is:
\beq\la{boxs}
\xymatrix{
v_* i^*[-1]
&
i^*[-1]v_* \ar[r]^-{\s^*}  \ar[l]_-{bc^{i^*v_*}} 
&
  \psi [-1] v_*.  
}
\eeq
The harmless shift by $[-1]$ is  convenient later on, when dealing with the morphisms of type $\delta;$ see diagram
(\ref{ar0}). The reason for having embedded (\ref{boxs}) in (\ref{ar-1}) is explained in  Remark \ref{kaw3}. 
\end{rmk}

\begin{rmk}\la{f01}$\;$
\ben
\item
 By using (\ref{r401}), we may re-write  (\ref{spzzx}) as follows:
\beq\la{spz1b}
\xymatrix{
{sp^*}(G):  R\Gamma (Y_s, i^*G) \ar[r] 
&
\psi v_* G \ar[r]^-\sim
&
R\Gamma (Y_t, t^*G).
}
\eeq
The ambiguity involved in this re-writing (see Fact \ref{jo}, especially (\ref{r401})) plays no role in this paper: firstly, 
the image lands in the monodromy invariants; secondly, we have limited ourselves to criteria for when the specialization morphism is:  defined; an isomorphism; filtered for the perverse filtrations; a filtered isomorphism for the perverse filtrations.
All of these issues can be settled at a point $t$ general for $G,$   independently of the monodromy action at $t$. 
\item
Even when $sp^* (G)$ is defined,   I  do not see a reason for the base change morphism 
\beq\la{bcniso0}
\xymatrix{
bc^{\psi v_*}:  (R\Gamma (Y_t, t^*G) \simeq) \psi v_* G \ar[r] & v_* \psi G =R\Gamma (Y_s, \psi G)
}
\eeq
to be  necessarily an isomorphism.
\item
In view of the isomorphism $\psi v_* G \simeq R\Gamma (Y_t, t^*G)$, typically, and this paper is no exception, we are interested in $\psi v_* G$, and not as much in $v_* \psi G$. This is important to keep in mind when, later we consider the filtered version of the specialization morphisms
see (\ref{ar0}), (\ref{spz1}) and  (\ref{spz1cbix}) for the perverse version, and see (\ref{ar1}), (\ref{spz1bix}) and 
(\ref{spz1c}) for the perverse Leray version. Of course, the base change morphism $bc^{\psi v_*}:\psi v_*   \to v_*\psi $ is an isomorphism when $v$ is proper.
\een
\end{rmk}

\begin{rmk}\la{ft}
{\rm
 ({\bf $v$ proper})
 }
If $v:Y\to S$ is proper, then, by the proper base change theorem, we get the functor:
 ${sp}^* : D^b_c(Y) \to D({pt})$. 

\end{rmk}

\begin{rmk}\la{spsh} {\rm
 ({\bf The morphism of functors $sp^!: \psi [-1] v_* \to v_* i^![1]$})
 }
Regardless of whether $v$ is proper or not,  the functor $sp^!: D^b_c(Y) \to D^b_c(pt)$ is defined.
In analogy with (\ref{spz1b}),  and taking into account the definition of cohomology with supports
on the closed set $Y_s$, for a given $G\in D^b_c(Y)$, it takes the form of a morphism:
\beq\la{morp sh}
\xymatrix{
sp^!(G) :  R\Gamma (Y_t, t^*G) \ar[r]^-\simeq &  \psi  v_* G  \ar[r] & R\Gamma (Y_s, i^![2] G)= R\Gamma_{Y_s}(Y,G[2]).
}
\eeq
Most of the results proved in this paper for the morphism $sp^*(G)$ (when defined), hold for the morphism 
$sp^!$. 
In fact, the proofs are simpler in this case, since we do not need to establish the existence of $sp^!$ 
along the way, the way we must do for a  $sp^*(G)$.
On the other hand, in general, the r.h.s. of (\ref{morp sh}) is not easily relatable to, say, $R\Gamma (Y_s, i^*G)$.
For this reason, we have not included in this paper the   $sp^!$-versions of the results. 
We simply observe that when, for some necessarily special $G$, the natural morphism $\nu: i^*[-1] (G) \to i^![1](G)$ is an isomorphism, then, we do have that
$sp^!(G) :  R\Gamma (Y_t, t^*G) \to R\Gamma(Y_s,i^* G)$ and this can be a very useful tool, especially, when $sp^*(G)$ fails to be defined. This proved to be crucial 
in the paper \ci{demash2019} (where due to the particular set-up there, we opted for a more direct construction
of the morphism $sp^!$).
\end{rmk}

\begin{rmk}\la{short}
In what follows, ``($\Leftarrow (\phi G=0)$)" is  short for ``(which is implied by $(\phi G=0)$)".
\end{rmk}
\begin{pr}\la{tpo} $\;$
{\rm 
({\bf  Criteria for the existence of ${sp^*}(G)$ and for it being an isomorphism})
}
Let $G \in D^b_c(Y).$ Recall Remark \ref{short}.

\ben
\item If $v$ is proper, then ${sp}^*:D^b_c(Y) \to D({pt})$ is defined as a functor.

If in addition $\phi v_* G=0$ ($\Leftarrow (\phi G=0)$),   then ${sp}^* (G)$ is   an isomorphism. 
\item
If  $\phi v_* G=0$ and  $\phi G=0$, then ${sp}^* (G)$ is defined and  an isomorphism. 
\een
\end{pr}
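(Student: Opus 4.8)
The statement has two parts. Part (1) is essentially a two-step argument: first, properness of $v$ gives the proper base change isomorphism $bc^{i^*v_*}: i^*v_* \stackrel{\simeq}\to v_*i^*$ (this is the content of Remark \ref{ft}), so that $sp^*(-): D^b_c(Y) \to D^b_c(pt)$ is defined as a morphism of functors directly from Definition \ref{defsp}. Second, to see that $sp^*(G)$ is an isomorphism when $\phi v_* G = 0$: by Definition \ref{defsp}, $sp^*(G)$ is the composite of $(bc^{i^*v_*}(G))^{-1}$, which is an isomorphism by the first step, with $\s^*(v_*G): i^*[-1]v_*G \to \psi[-1]v_*G$. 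By Remark \ref{coniveri}, the cone of $\s^*(v_*G)$ is $\phi v_* G$, so $\s^*(v_*G)$ is an isomorphism precisely when $\phi v_* G = 0$. Hence $sp^*(G)$ is a composite of two isomorphisms. The parenthetical ``$\Leftarrow(\phi G = 0)$'' is exactly the first implication recorded in Remark \ref{ffj}, namely $(\phi G = 0) \Rightarrow (v_*\phi G = 0)$, combined with the observation that for $v$ proper, $v_*\phi G = \phi v_* G$ (the second assertion of Remark \ref{ffj}, which itself follows from the base change diagram (\ref{fdtbis}) for the proper morphism $v$, where $f_! \stackrel{\simeq}\to f_*$).

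For Part (2), we drop the properness hypothesis and instead assume $\phi G = 0$ and $\phi v_* G = 0$. We first show $sp^*(G)$ is defined, i.e. that $bc^{i^*v_*}(G): i^*v_*G \to v_*i^*G$ is an isomorphism. The idea is to compare the base change morphism on the $i^*$-side with the one on the $i^!$-side via the $\nu$/specialization machinery encoded in diagram (\ref{ar-1}). Since $\phi G = 0$, Fact \ref{gh1} tells us that $\s^*(G)$, $\s^!(G)$ and $\nu(G): i^*[-1]G \to i^![1]G$ are all isomorphisms (as morphisms applied to $G$, before pushing forward). More to the point, applying the pushforward $v_*$ and using that the base change isomorphism $bc^{i^!v_*}$ (the one on the right edge of (\ref{ar-1})) is always an isomorphism — it is the canonical isomorphism $i^!v_* = v_*i^!$, the ``$=$'' decorating that arrow — we want to transfer invertibility across the square. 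The cleanest route: in diagram (\ref{ar-1}) evaluated at $G$, the bottom composite $v_*i^*[-1]G \to v_*\psi[-1]G \to v_*i^![1]G$ is $v_*$ applied to $\nu(G) = \s^!(G)\circ\s^*(G)$, which is an isomorphism since $\phi G = 0$. The top composite $i^*[-1]v_*G \to \psi[-1]v_*G \to i^![1]v_*G$ is the morphism $^1\!\nu(v_*G)$, and since $\phi v_* G = 0$, Remark \ref{coniveri} gives that both $\s^*(v_*G)$ and $\s^!(v_*G)$ are isomorphisms, hence so is the top composite. Now the right square of (\ref{ar-1}) has both horizontal arrows ($\s^!$ on top, $\s^!$ on bottom) potentially non-invertible, but: $\s^!(v_*G)$ is invertible (just shown), and the right vertical arrow $bc^{i^!v_*}(G)$ is invertible (canonical). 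Therefore $bc^{\psi v_*}(G): \psi[-1]v_*G \to v_*\psi[-1]G$ is invertible. Then, in the left square, the top arrow $\s^*(v_*G)$ is invertible ($\phi v_* G = 0$), the bottom arrow $\s^*(G)$ applied with $v_*$, i.e. $v_*\s^*(G): v_*i^*[-1]G \to v_*\psi[-1]G$, is invertible (since $\s^*(G)$ is an isomorphism as $\phi G = 0$), and we have just shown $bc^{\psi v_*}(G)$ is invertible; by commutativity of the left square, $bc^{i^*v_*}(G)$ is invertible. This shows $sp^*(G)$ is defined.

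Finally, with $sp^*(G)$ defined, showing it is an isomorphism is exactly the argument from Part (1): $sp^*(G) = \s^*(v_*G) \circ (bc^{i^*v_*}(G))^{-1}$, a composite of two isomorphisms — the first because $\phi v_* G = 0$ (Remark \ref{coniveri}), the second because we have just shown $bc^{i^*v_*}(G)$ is invertible.

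\medskip

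\textbf{Main obstacle.} The genuinely delicate point is the diagram-chase in Part (2) establishing that $bc^{i^*v_*}(G)$ is an isomorphism without properness; one must be careful that the commutativity of the relevant squares in (\ref{ar-1}) is the commutativity actually asserted there (it ultimately rests on the compatibility between the base change diagrams (\ref{fdtbis}), (\ref{fd1}) and the morphisms of type $\nu$, i.e. on (\ref{r00})), and that one is transferring invertibility in the correct direction around each square — in particular, the left square must be read as ``top, right, bottom invertible $\Rightarrow$ left invertible,'' which is legitimate since all four arrows form a commutative square. Everything else is bookkeeping with cones via Remark \ref{coniveri} and the vanishing implications of Remark \ref{ffj}.
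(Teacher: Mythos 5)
Your proposal is correct and follows essentially the same route as the paper: part (1) via Remark \ref{ft} plus the vanishing of the cone $\phi v_* G$ of $\s^*(v_*G)$, and part (2) via the observation that both hypotheses kill the cones of all four horizontal arrows in diagram (\ref{ar-1}), after which invertibility propagates from the canonical isomorphism $bc^{i^!v_*}$ to the other two vertical arrows. Your version merely spells out the two-square chase that the paper compresses into one sentence.
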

\begin{proof}

(1). For the first assertion,  see Remark \ref{ft}.
For the second one,  we are left with showing that the morphism $\s^* (v_* G)$ on the top row of (\ref{ar-1})  is an isomorphism. This follows  from the fact that
its cone  $\phi v_* G=0$.

(2). The hypotheses imply that the cones of the four horizontal arrows in (\ref{ar-1}) are zero. 
These four  arrows are then all isomorphisms.
Since one of the three vertical arrows
in (\ref{ar-1})
is an isomorphism, so are the remaining two, and  we are done.
\end{proof}

\begin{rmk}\la{kaw3}
{\rm
 ({\bf Vanishing of  $\phi$ and base change})
 }
 In order to define the specialization morphism, we only need the l.h.s. of diagram (\ref{ar-1}).
The proof of  Proposition \ref{tpo}.(2)  shows  that the two conditions $\phi G=0$ and $\phi v_* G=0$ combined,   imply that the base change morphism
 ${bc}^{i^*v_*} (G)$  for $G$ is an isomorphism. This is the reason why we have introduced diagram  (\ref{ar-1}):
  if, after having plugged some $G \in D^b_c(Y)$,  the four horizontal arrows in (\ref{ar-1}) are isomorphisms, then, because
 of the presence of  base change isomorphism ${bc}^{i^!v_*}$, the three vertical arrows evaluated at $G$ are also isomorphism, i.e. the 
 base change morphism $bc^{i^* v_*}$  is also an isomorphism, and the specialization morphism ${sp^*}(G)$ is defined.
\end{rmk}

\subsection{The specialization morphism and the perverse filtration}\la{sppflz}$\;$

Let things be as in \S\ref{vncf}. In particular, we have:  a   morphisms  $v:Y\to S,$ a point $s \in S$, a complex $G \in D^b_c(Y).$

By analogy to (\ref{ar-1}), and by using the naturality properties of the morphisms of type $\delta$
in \S\ref{prep},
we obtain the commutative diagram of  morphism of systems of functors:
 \beq\la{ar0}
 \xymatrix{
 i^*[-1] v_* \ptd{\bullet} \ar[rr]^-{^1\!\s^*_{\leq \bullet}}  \ar[d]^-{{bc}^{i^*v_*}_{\leq \bullet}}&&
\psi[-1] v_* \ptd{\bullet}  \ar[rr]^-{^1\!\s^!_{\leq \bullet}} \ar[d]^{{bc}^{\psi v_*}}  \ar[ddrr]^-(.3){sp^!_{\leq \bullet}} &&
i^![1] v_* \ptd{\bullet}  \ar[d]_-=^{{bc}^{i^!v_*}_{\leq \bullet}}  & (\phi v_* \ptd{\bullet})
\\
 v_*  i^*[-1] \ptd{\bullet}  \ar[rr]^-(0.4){^2\!\s^*_{\leq \bullet}}  &&
v_* \psi[-1]  \ptd{\bullet}  \ar[rr]^-(0.6){^2\!\s^!_{\leq \bullet}}   \ar[d]^-{\delta^\psi_{\leq \bullet}}_-= &&
v_* i^![1]  \ptd{\bullet}  \ar[d]^-{\delta^!_{\leq \bullet}} \ar[u] &   (v_*\phi \ptd{\bullet})
  \\
v_*  \ptd{\bullet}  i^*[-1]    \ar[rr]^-{^3\!\s^*_{\leq \bullet}}  \ar[u]_-{\delta^*_{\leq \bullet}} 
\ar@/0pt/@{-->}[uurr]^-(.7){?sp^*_{\leq \bullet}?}    \ar@/_2pc/[rrrr]^-{\nu_{\leq \bullet} = \, ^3\!\nu_{\leq \bullet}}   &&
 v_*   \ptd{\bullet} \psi[-1]  \ar[rr]^-{^3\!\s^!_{\leq \bullet}}  \ar[u]     &&
  v_*  \ptd{\bullet}  i^![1], & (\phi).
 }
 \eeq
 
 \begin{rmk}\la{occio} 
 {\rm
 ({\bf Cones/not cones in (\ref{ar0})})
 }
 In  the top two rows of diagram (\ref{ar0}), 
 we have indicated in parentheses  the cones -up to shift- of the morphisms of type $\s$: apply (\ref{ar-1}) and Remark
 \ref{coniveri} to $\ptd{\bullet}$.
 Since truncation is not an exact functor,  the cones of the morphisms of type $\s$ in the third row
 are not shifts of $v_* \ptd{\bullet} \phi$;   the term in parentheses  on the third bottom row
 is a term whose vanishing when evaluated at $G$  ensures that the arrows of type $\s (G)$ on the third row are isomorphisms.
\end{rmk}
 
 The  reason why (\ref{ar0}) has three rows, instead of the two in  (cf. \ref{ar-1}), is that
 $\ptd{\bullet}$ does not commute with $i^*[-1]$ -nor with any other shift-; the morphisms of type $\delta$ measure the failure of this commutativity.

 \begin{rmk}\la{ffj1} 
 {\rm
 ({\bf Interrelations between vanishings})
 }
 The conditions   $\phi v_* \ptd{\bullet} G=0$ and $v_* \phi \ptd{\bullet} G=0$ are independent;
 if $v$ is proper, then they are equivalent.
By the $\frak t$-exactness of $\phi$ and the additivity of $v_*$, we have that:
$v_*\phi \ptd{\bullet}= v_* \ptd{\bullet} \phi$, and that:   $(\phi G=0) \Longleftrightarrow (\phi \ptd{\bullet} G=0) \Longrightarrow v_* \phi  \ptd{\bullet}G=0$. 

\end{rmk}

Since in general the base change morphism ${bc}^{i^*v_*}_{\leq \bullet}$ are not  isomorphisms, the sought-after
morphism of  systems of functors $?{sp^*_{\leq \bullet}}?$ is not defined. On the other hand, there can be a corresponding arrow for special
$G \in D^b_c(Y)$. Recall the notion of  ``systems" (cf. \S\ref{gennot}).

\begin{defi}\la{defspf}
{\rm
({\bf The $P$-filtered specialization morphism ${sp^*_P}(G)$})
}
Let $G\in D^b_c(Y)$.
We say that the $P$-filtered specialization morphism ${sp^*_P}(G)$ is defined for $G$ 
if the base change morphisms ${bc}^{i^*v_*}_{\leq \bullet}(G)$ are isomorphisms, in which case, since 
the inverse isomorphisms form an isomorphism of systems, we can define the arrow in $DF(pt)$:
 ($bc^{i^*v_*}_P$ denotes the  evident
resulting 
filtered isomorphism, and $\delta_P$ the evident resulting filtered morphism)
\beq\la{spz1}
\xymatrix{
{sp^*_P}(G):=  \s^*_P ({bc}^{i^*v_*}_P)^{-1}   \delta^*_P  : v_* i^* G= (R\Gamma (Y_s, i^*G) ,P) \ar[r]
&
(\psi v_* G, P),
}
\eeq
where the last term is the one associated with the system $\psi v_* \ptd{\bullet} G$. 
\end{defi}

We have functors $\nu_P, sp^!_P: D^b_c(Y) \to DF(pt)$. When $sp^*_P(G)$ is defined, we have that:
\beq\la{vvp}
\xymatrix{
\nu_P(G)=sp^!_P(G)\circ sp^*_P(G).
}
\eeq

\begin{rmk}\la{boxfil}
Remark \ref{box} holds essentially verbatim in the context of diagram (\ref{ar0}). In particular, we have:
\beq\la{boxs0}
\xymatrix{
v_* \ptd{\bullet} i^*[-1] \ar[r]^-{\delta^*_{\leq \bullet}}
&
v_* i^*[-1] \ptd{\bullet}
&
i^*[-1]v_* \ptd{\bullet} \ar[r]^-{\s^*}  \ar[l]_-{bc^{i^*v_*}} 
&
  \psi [-1] v_* \ptd{\bullet}.  
}
\eeq
\end{rmk}

\begin{rmk}\la{f02}$\;$

\ben
\item
 By using (\ref{r403}), the filtered analogue of   (\ref{spzzx}) takes the following form:
\beq\la{spz1cbix}
\xymatrix{
{sp^*_P}(G):  (R\Gamma (Y_s, i^*G),P) \ar[r]&
( \psi  v_* G,P) \ar[r]^-\sim
&
(R\Gamma (Y_t, t^*G),P).
}
\eeq
The  analogue of  Remark \ref{f01} on the  ambiguities due to monodromy holds in this context. 
\item
Even when $sp^* (G)$ is defined,  the base change morphism: 
\beq\la{bcniso}
\xymatrix{
bc^{\psi v_*}:  (R\Gamma (Y_t, t^*G) \simeq) \psi v_* G \ar[r] & v_* \psi G =R\Gamma (Y_s, \psi G)
}
\eeq
 is not necessarily an isomorphism. 
 In particular, the filtration $P$ in $(\psi v_* G, P)$ is not  the perverse  filtration
 for a complex on $Y_s$.
\een
\end{rmk}

\begin{rmk}\la{ft1}
{\rm
 ({\bf $v$ proper})
 }
The evident analogue of Remark \ref{ft} holds in this perverse filtered context:
if $v:Y\to S$ is proper, then, by the proper base change theorem, we get the functor:
 ${sp}^*_P : D^b_c(Y) \to DF({pt}).$
\end{rmk}

Proposition \ref{tbo} below is the $P$-filtered counterpart to the  un-filtered Proposition \ref{tpo}.

\begin{pr}\la{tbo}
{\rm 
({\bf  Criteria for the existence of ${sp^*_P}(G)$ and for it being a filtered isomorphism})
}
$\;$

\ben

\item\la{tbo1}
If $v$ is proper, then ${sp^*_P}: D^b_c(Y) \to DF({pt})$ is defined as a functor.

If in addition $\phi G=0$, then  ${sp^*_P}(G)$ is a  filtered isomorphism.
\item\la{tbo4}
If $\phi v_* \ptd{\bullet} G=0$ and $v_* \phi \ptd{\bullet} G=0$, then  ${sp^*_P}(G)$ is defined.

If in addition $\phi G=0$, then  ${sp^*_P}(G)$ is a  filtered isomorphism.

\item\la{tbo5} 
If $G$ is semisimple, $\phi v_*G=0$ and $v_* \phi  G=0$,  then  ${sp^*_P}(G)$ is defined.

If in addition $\phi G=0$, then  ${sp^*_P}(G)$ is a  filtered isomorphism.

\een

\end{pr}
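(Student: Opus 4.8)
The plan is to prove part (2) first, as it is the substantive case, and then to obtain parts (1) and (3) from it by reducing their hypotheses to those of (2). The whole argument takes place inside the three-row commutative diagram (\ref{ar0}) of systems of functors, evaluated at the fixed $G$: this is the $P$-filtered, three-row enhancement of the two-row diagram (\ref{ar-1}) underlying Proposition \ref{tpo}, the third row being forced by the fact that $i^*[-1]$ does not commute with $\ptd{\bullet}$, with the defect measured by the morphisms of type $\delta$. I would begin by recalling, from Definition \ref{defspf}, that $sp^*_P(G)$ is \emph{defined} exactly when the base-change arrows ${bc}^{i^*v_*}_{\leq\bullet}(G)$ are isomorphisms, and that the resulting $sp^*_P(G) = \s^*_P \circ ({bc}^{i^*v_*}_P)^{-1} \circ \delta^*_P$ is a composite of three filtered morphisms, the middle one being a filtered isomorphism by construction; hence $sp^*_P(G)$ will be a filtered isomorphism as soon as, in addition, the morphisms of systems $\delta^*_{\leq\bullet}(G)$ (far left of (\ref{ar0})) and ${}^1\!\s^*_{\leq\bullet}(G)$ (top row of (\ref{ar0})) are isomorphisms for every $\bullet$.

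For part (2), assume $\phi v_*\ptd{\bullet}G = 0$ and $v_*\phi\ptd{\bullet}G = 0$. By Remarks \ref{coniveri} and \ref{occio} the cones, up to shift, of the first-row arrows ${}^1\!\s^*_{\leq\bullet}$, ${}^1\!\s^!_{\leq\bullet}$ of (\ref{ar0}) evaluated at $G$ equal $\phi v_*\ptd{\bullet}G = 0$, and those of the second-row arrows ${}^2\!\s^*_{\leq\bullet}$, ${}^2\!\s^!_{\leq\bullet}$ evaluated at $G$ equal $v_*\phi\ptd{\bullet}G = 0$; so all four of these arrows are isomorphisms at $G$. Since the rightmost vertical arrow ${bc}^{i^!v_*}_{\leq\bullet}$ is always an isomorphism, chasing the two rightmost squares formed by the first two rows of (\ref{ar0}) yields first that ${bc}^{\psi v_*}(G)$ and then that ${bc}^{i^*v_*}_{\leq\bullet}(G)$ is an isomorphism, so $sp^*_P(G)$ is defined. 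If moreover $\phi G = 0$, then, $Y_s$ being the relevant effective Cartier divisor, Lemma \ref{kij}.(3) shows that every morphism of type $\delta$ evaluated at $G$ is an isomorphism, in particular $\delta^*_{\leq\bullet}(G)$ for all $\bullet$; and ${}^1\!\s^*_{\leq\bullet}(G)$ is an isomorphism for all $\bullet$ by the standing hypothesis $\phi v_*\ptd{\bullet}G = 0$. Hence $sp^*_P(G)$ is a composite of filtered isomorphisms, so a filtered isomorphism.

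Parts (1) and (3) should then follow quickly. The first assertion of (1) is Remark \ref{ft1}: when $v$ is proper, ${bc}^{i^*v_*}$ is an isomorphism of functors by proper base change, and Definition \ref{defspf} then produces the functor $sp^*_P$. For the second assertion of (1), with $v$ proper and $\phi G = 0$: the $\frak t$-exactness of $\phi$ (Fact \ref{psp0}) gives $\phi\ptd{\bullet}G = 0$, hence $v_*\phi\ptd{\bullet}G = 0$, and properness gives $\phi v_*\ptd{\bullet}G = v_*\phi\ptd{\bullet}G = 0$ (Remark \ref{ffj1}); so part (2) applies. For part (3), with $G$ semisimple and $\phi v_*G = 0 = v_*\phi G$: a semisimple complex is $\frak t$-split, so each $\ptd{\bullet}G$ is a direct summand of $G$, and the additive functors $\phi v_*$ and $v_*\phi$ therefore carry it to a direct summand of $0$; thus $\phi v_*\ptd{\bullet}G = 0 = v_*\phi\ptd{\bullet}G$ and part (2) applies, giving that $sp^*_P(G)$ is defined and, when also $\phi G = 0$, a filtered isomorphism.

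The main obstacle I anticipate is keeping straight the role of the morphisms of type $\delta$: being able to \emph{define} $sp^*_P(G)$ requires only the base-change arrows ${bc}^{i^*v_*}_{\leq\bullet}(G)$ to be isomorphisms, whereas its being a \emph{filtered} isomorphism additionally demands that the $\delta^*$'s at $G$ be isomorphisms — which is precisely what the vanishing $\phi G = 0$ supplies, through Lemma \ref{kij}.(3) (hence through Lemma \ref{silm} and Proposition \ref{iotanoc}). Everything else is a diagram chase inside (\ref{ar0}) parallel to the proof of Proposition \ref{tpo}, together with the two elementary propagation facts used above: properness converts $\phi G = 0$ into the two vanishings needed by part (2), and semisimplicity converts vanishings for $G$ into vanishings for every $\ptd{\bullet}G$.
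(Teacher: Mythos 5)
Your proof is correct and follows essentially the same route as the paper's: the same chase in diagram (\ref{ar0}), with the two vanishings killing the cones of the first two rows of $\s$-arrows, the always-isomorphic ${bc}^{i^!v_*}$ propagating to the other base change arrows, and $\phi G=0$ supplying (via Lemma \ref{kij}.(3)) the isomorphy of the $\delta^*$'s. The only differences are organizational — you prove (2) first and reduce (1) to it, whereas the paper argues (1) directly and lets (2) and (3) refer back — and these change nothing of substance.
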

\begin{proof}

(1). For the first assertion, see Remark \ref{ft1}. If we assume that $\phi G=0$, then, in view of Remarks
\ref{occio} and \ref{ffj1}, all horizontal arrows in the commutative diagram (\ref{ar0}) are isomorphisms. Since each row of vertical arrows contains an isomorphisms, all arrows in (\ref{ar0}) are isomorphisms, so that
${sp^*_P}$ is a filtered isomorphism.

(2). The two vanishing assumptions imply that all the four horizontal arrows  in the first two rows of (\ref{ar0}) are isomorphisms. As seen above, this implies that the vertical base change arrows are isomorphisms as well, 
so that ${sp^*_P}$ is defined.
If we we assume that $\phi G=0$, then we conclude as above that ${sp^*_P}$ is a filtered isomorphism.

(3). Since $G$ is semisimple, we have that:  $\phi v_* G=0$ is equivalent to $\phi  v_* \ptd{\bullet} G=0$;
$v_* \phi G=0$ is equivalent to $  v_* \phi \ptd{\bullet} G=0$, Now (3) follows from (2).
\end{proof}

\begin{rmk}\la{kaw4}
{\rm
 ({\bf Vanishing of $\phi$ and base change})
 }
The analogue of  Remark \ref{kaw3}, on the vanishing of $\phi$ assumptions implying   base change isomorphisms, holds in the context of the proof of Proposition \ref{tbo}.  
\end{rmk}

\begin{??}\la{quno} 
I do not know of an example where:  
$v$  is  proper -so that ${sp^*}$ and  $sp^*_{P}$ are defined-, where $\phi v_* G=0$
-so that ${sp}(G)$ is an isomorphism-,  but  where ${sp_P}(G)$  is not an  isomorphism (i.e. in the filtered sense). The issue is that
while $\delta^*_{\leq \bullet}(G)$ in (\ref{ar0}) is an isomorphism for $\bullet \gg 0$, it is not clear to me what happens for other values of 
$\bullet$.  We know that if in addition $\phi G=0$, then $sp^*_P$ is an isomorphism.
\end{??}

\subsection{The specialization morphism and the perverse Leray filtration}\la{hh1} $\;$

Let things be  as in \S\ref{vncf}.  In particular, we have:   morphisms $f: X\to Y$ and $v:Y\to S,$ a point 
$s \in S$, a complex $G:=f_*F \in D^b_c(Y)$ for $F \in D^b_c(X)$.

By analogy to (\ref{ar-1}) and (\ref{ar0}), and by using the naturality properties of the morphisms of type $\delta$
in \S\ref{prep},
we obtain the commutative diagram of  morphisms of  systems of functors:
\beq\la{ar1}
\xymatrix{
i^*[-1] v_* \ptd{\bullet} f_* \ar[r]^-{^1\!\s^*_{\leq \bullet}}  \ar[d]_-{{bc}^{i^*v_*}_{\leq \bullet}} & \psi [-1] v_* \ptd{\bullet} f_*  \ar[d]^-4  
 \ar[r]^-{^1\!\s^!_{\leq \bullet}}  
\ar[dddr]^-{sp^!_{P^f,\leq \bullet}}
& i^![1] v_* \ptd{\bullet} f_* \ar[d]_-=^-{{bc}^{i^!v_*}_{\leq \bullet}} & ((\phi v_* \ptd{\bullet} f_* F=0) \Leftrightarrow {^1\!\s} \,{\rm isos})
\\
 v_* i^*[-1] \ptd{\bullet} f_* \ar[r]^-{^2\!\s^*_{\leq \bullet}}    &  v_*  \psi [-1]  \ptd{\bullet} f_*    \ar[r]^{^2\!\s^!_{\leq \bullet}}  \ar[d]_-= &  v_*   i^![1] \ptd{\bullet} f_* 
 \ar[d]^-{\delta^!_{\leq \bullet}}  \ar[u] & ((v_* \phi  \ptd{\bullet} f_* F=0) \Leftrightarrow {^2\!\s} \,{\rm isos})
\\
v_*  \ptd{\bullet} i^*[-1]  f_* \ar[r]^-{^3\!\s^*_{\leq \bullet}}  \ar[u]^-{\delta^*_{\leq \bullet}} \ar[d]_-{{bc}^{i^*f_*}_{\leq \bullet}}   &  v_*  \ptd{\bullet}  \psi [-1]  f_*    \ar[r]^{^3\!\s^!_{\leq \bullet}} \ar[u]^-=
\ar[d]^-{4'}   &  v_*    \ptd{\bullet} i^![1]  f_*  \ar[d]^-{{bc}^{i^!f_*}_{\leq \bullet}}_-= & ((\phi  f_* F=0) \Rightarrow 
{^3\!\s} \,{\rm isos})
\\
  v_*  \ptd{\bullet}  f_* i^*[-1]   \ar[r]^-{^4\!\s^*_{\leq \bullet}}  \ar@/0pt/@{-->}[uuur]^-{?sp^*_{P^f,\leq \bullet}?}  
  \ar@/_2pc/[rr]^-{{\nu_{\leq \bullet} = \, ^4\!\nu_{\leq \bullet}}}
  &  v_*  \ptd{\bullet}    f_* \psi [-1]    \ar[r]^{^4\!\s^!_{\leq \bullet}}  &  
  v_*    \ptd{\bullet}   f_* i^![1] \ar[u] 
  & ((f_*\phi F=0) \Rightarrow {^4\!\s} \,{\rm isos}).
}
\eeq

\begin{rmk}\la{occiz} {\rm
 ({\bf Cones/not cones in (\ref{ar1})})
 }
 This remark is analogous to Remark \ref{occio}.
The cones of the morphisms of type $\s$ appearing on the first two rows are indicated in parentheses on the r.h.s.
The terms in parentheses on the third and fourth row are not the cones of the morphisms of type $\s$ (truncation is not an exact functor), but their vanishing implies that the morphisms of type $\s$ are isomorphisms.
\end{rmk}

 \begin{rmk}\la{ffj2} 
 {\rm
 ({\bf Interrelations between vanishings})
 }
 The conditions $\phi F =0$ and $\phi f_* F=0$ are independent; if $f$ is proper, then $\phi F=0$ implies $\phi f_* F=f_* \phi F=0$.
Note that $\phi f_* F=0$ is equivalent to $\phi \ptd{\bullet} f_* F=0$, and it implies  $v_* \phi \ptd{\bullet} f_* F=0$. 

The conditions $v_* \phi \ptd{\bullet} f_* F=0$ and $\phi v_* \ptd{\bullet} f_* F=0$ are independent;
if $v$ is proper, then they are equivalent.
\end{rmk}

\begin{defi}\la{defspf1}
{\rm
({\bf The $P^f$-filtered specialization morphism ${sp^*_{P^f}}(F)$})
}
We say that the $P^f$-filtered specialization morphism ${sp^*_{P^f}}(F)$ is defined for $F \in D^b_c(X)$ 
if the base change morphisms ${bc}^{i^*v_*}_{\leq \bullet}(F)$  and 
${bc}^{i^*f_*}_{\leq \bullet}(F)$
are isomorphisms. In this case,  since 
the inverse isomorphisms form a morphism of systems, we can define the arrow in $DF(pt)$:
\beq\la{spz1bix}
\xymatrix{
{sp^*_{P^f}}(F):= {^1\!\s^*_{P}} (bc_{P}^{i^*v_*})^{-1} \delta^*_{P^f} (bc_{P}^{i^*f_*})^{-1}
   : v_* i^* G= (R\Gamma (X_s, i^*F) ,P^f) \ar[r] &
(\psi v_* f_* F, P),
}
\eeq
where the last term is the one associated with the system $\psi v_* \ptd{\bullet} f_* F.$
\end{defi}

We have functors $\nu_{P^f}, sp^!_{P^f}: D^b_c(X) \to DF(pt)$. When $sp^*_{P^f}(F)$ is defined, we have that:
\beq\la{vvpf}
\xymatrix{
\nu_{P^f}(F)=sp^!_{P^f}(F)\circ sp^*_{P^f}(F).
}
\eeq

\begin{rmk}\la{boxfilp}
Remark \ref{box} holds essentially verbatim in the context of diagram (\ref{ar1}). In particular, we have:
\beq\la{boxs1}
\xymatrix{
v \ptd{\bullet} f_* i^* [-1] 
&
v_* \ptd{\bullet} i^*[-1] f_* \ar[r]^-{\delta^*_{\leq \bullet}} \ar[l]_-{bc_{\leq{\bullet}}^{i^*f_*}}
&
v_* i^*[-1] \ptd{\bullet} f_*
&
i^*[-1]v_* \ptd{\bullet} f_*  \ar[d]^-{\s^*}  \ar[l]_-{bc^{i^*v_*}} \\
&&&
  \psi [-1] v_* \ptd{\bullet} f_*.  
}
\eeq

\end{rmk}

\begin{rmk}\la{f03}
 By using (\ref{r405}), we may re-write  (\ref{spz1bix}) as follows:
\beq\la{spz1c}
\xymatrix{
{sp^*_{P^f}}(F):  (R\Gamma (X_s, i^*F),P^{f_s}) \ar[r]   &
(\psi v_* f_*F, P) \ar[r]^-\sim &
(R\Gamma (X_t, t^*F),P^{f_t}).
}
\eeq
The  analogue of  Remark \ref{f02}, on the  ambiguities due to monodromy and on the meaning of $P$
in $(\psi v_* f_* F, P)$, holds in this context. 
\end{rmk}

\begin{rmk}\la{ft2}
{\rm
 ({\bf $v$ proper})
 }
The evident analogue of Remarks \ref{ft} and \ref{ft1} holds in this perverse Leray filtered context: 
if $f$ and $v$ are proper, then we get the functor:
 ${sp}^*_{P^f} : D^b_c(X) \to DF({pt}).$
\end{rmk}

The following  theorem is a perverse Leray analogue of Proposition \ref{tbo}.
Recall the notational  Remark  \ref{short}.

\begin{tm}\la{crit}$\;$
{\rm ({\bf Criteria for the existence of ${sp^*_{P^f}}(F)$ and for it being a filtered isomorphism})}

\ben
\item
If 
$v$ and $f$ are  proper, then   ${sp^*_{P^f}}: D^b_c(X) \to DF({pt})$ is a functor.

If in addition $\phi f_* F=0$ ($\Leftarrow (\phi F=0)$)  then ${sp^*_{P^f}}(F)$ is defined and a filtered isomorphism.

\item
If $v$ proper, $\phi f_* F=0$, and $f_* \phi F=0$ ($\Leftarrow (\phi F=0)$),  then ${sp^*_{P^f}}(F)$ is defined and a  filtered isomorphism.

\item
If $f$ is proper, $\phi v_*\ptd{\bullet} f_*F=0,$ and $v_*\phi \ptd{\bullet} f_*F=0$
($\Leftarrow (\phi f_* F=0) \Leftarrow (\phi F=0)$), then ${sp^*_{P^f}}(F)$ is defined.

If in addition $\phi f_* F=0$ ($\Leftarrow (\phi F=0)$),  then ${sp^*_{P^f}}(F)$ is defined and a filtered isomorphism.

\item
If $\phi v_* \ptd{\bullet} f_* F=0$,   $\phi f_* F=0,$ and $f_* \phi F=0$ ($\Leftarrow (\phi F=0)$), then 
${sp^*_{P^f}}(F)$ is defined and a filtered isomorphism.

\item 
 If $f$ is proper and $F$ is semisimple, $\phi v_* f_* F=0$ and $v_* \phi f_* F=0$
($\Leftarrow (\phi f_* F=0) \Leftarrow (\phi F=0)$),
 then ${sp^*_{P^f}}(F)$ is defined.
 
 If in addition, $\phi f_*F=0$ ($\Leftarrow (\phi F=0)$), then ${sp^*_{P^f}}(F)$ is a filtered isomorphism.

\item
If $v$ is proper, $f$ is projective and $F$ and $i^*F[-1]$ are semisimple, and $\phi v_* f_* F=0$
($\Leftarrow (\phi f_* F=0)    \Leftarrow (\phi F=0)$), then ${sp^*_{P^f}}(F)$ is defined and a filtered isomorphism.

\item
If $f$ is projective and $F$ and $i^*F[-1]$ are semisimple,  $\phi v_* f_* F=0$ and $ v_* \phi f_* F=0$ 
($\Leftarrow (\phi f_* F=0) \Leftarrow (\phi F=0)$,
then ${sp^*_{P^f}}(F)$ is defined and a filtered isomorphism.
\een
\end{tm}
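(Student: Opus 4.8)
The plan is to feed the hypotheses into the big commutative diagram (\ref{ar1}) and track which arrows become isomorphisms, exactly as in the proofs of Propositions \ref{tpo}, \ref{tbo} and the earlier parts of Theorem \ref{crit}. Recall the bookkeeping: the diagram has four rows, three of which carry a vertical ``$=$'' sign, and in any such row the seven arrows are all isomorphisms as soon as the four horizontal (type-$\s$) arrows in it are. The two extra $\delta$-type vertical arrows (namely $\delta^*_{\leq\bullet}$ between rows 2 and 3, and $\delta^!_{\leq\bullet}$ between rows 1 and 2, together with $bc^{i^*f_*}_{\leq\bullet}$ between rows 3 and 4) are what must be shown to be isomorphisms in order to define $sp^*_{P^f}(F)$ and to identify the source with $(R\Gamma(X_s,i^*F),P^{f_s})$.

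First I would handle the $\s$-arrows. The cone of the first-row ${}^1\!\s$ is $\phi v_*\ptd{\bullet}f_*F$, which vanishes by hypothesis, so ${}^1\!\s^*_{\leq\bullet}$ and ${}^1\!\s^!_{\leq\bullet}$ are isomorphisms. The cone of the second-row ${}^2\!\s$ is $v_*\phi\ptd{\bullet}f_*F$, which vanishes by the second hypothesis, so ${}^2\!\s$ is an isomorphism. For rows 3 and 4 I would not argue via cones (truncation is not exact) but instead invoke that $\phi f_*F=0$ (which follows from $\phi v_*\ptd{\bullet}f_*F=0$ together with $v_*\phi\ptd{\bullet}f_*F=0$? — no: here it is listed separately as $v_*\phi f_*F=0$, and it is automatic once $f$ is projective and $F,i^*F[-1]$ are semisimple via Proposition \ref{crux}; more directly, $v_*\phi f_*F = v_*\phi\ptd{\bullet}f_*F$ summed, which is $0$). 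Actually the cleanest route: rows 1 and 2 give us that the three vertical base-change arrows $bc^{i^*v_*}_{\leq\bullet}$, $bc^{\psi v_*}$, $bc^{i^!v_*}_{\leq\bullet}(=)$ are isomorphisms, exactly as in Remark \ref{kaw3}; so $sp^*_P$ at the level of $f_*F$ is defined. It remains to pass from $(\psi v_*f_*F,P)$-source data down to the perverse Leray source via $\delta^*_{\leq\bullet}$ and $bc^{i^*f_*}_{\leq\bullet}$ on $f_*F$.

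For the $\delta$-arrows I would invoke Proposition \ref{crux}.(2): since $f$ is projective and both $F$ and $i^*F[-1]$ are semisimple, the complex $f_*F$ has no constituent supported on $Y_s$ and all the morphisms of type $\delta$ in (\ref{bnnb}), (\ref{j11aa}), (\ref{fr1z}) associated with $f_*F$ are isomorphisms; in particular $\delta^*_{\leq\bullet}(f_*F)$ and $\delta^!_{\leq\bullet}(f_*F)$ are isomorphisms. For $bc^{i^*f_*}_{\leq\bullet}$ I would use the identity (\ref{lzeq}) of Proposition \ref{crux}, which gives $\ptd{\bullet}(f_*\iota^*F[-1]) = (\iota^*\ptd{\bullet}f_*F)[-1]$, i.e. precisely the statement that restriction to $X_s$ commutes with the perverse-Leray truncation on $F$ up to the harmless shift; this is the assertion that $bc^{i^*f_*}_{\leq\bullet}(F)$ is an isomorphism, so that the fourth-row source $v_*\ptd{\bullet}f_*i^*[-1]F$ — which by definition assembles $(R\Gamma(X_s,i^*F),P^{f_s})$ — is identified with the third-row source. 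Chaining these isomorphisms along the left column of (\ref{ar1}) gives that $sp^*_{P^f}(F)$ of (\ref{spz1bix}) is defined. Finally, to upgrade it to a filtered isomorphism I would add the hypothesis $\phi f_*F=0$ — which, as noted, already follows here from $v_*\phi f_*F=0$ combined with $\phi v_*f_*F=0$ only if $v$ proper, but is in any case implied by $\phi F=0$ — and observe that then $\phi\ptd{\bullet}f_*F=0$, hence all the remaining horizontal arrows ${}^3\!\s$, ${}^4\!\s$ become isomorphisms too, so every arrow in (\ref{ar1}) is an isomorphism of systems and $sp^*_{P^f}(F)$ is a filtered isomorphism.

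The main obstacle, and the only place where genuine input beyond diagram-chasing is needed, is the verification that $bc^{i^*f_*}_{\leq\bullet}(F)$ is an isomorphism and that the resulting filtered source really is the perverse Leray filtration $P^{f_s}$ on $R\Gamma(X_s,i^*F)$: this is where one must appeal to the Relative Hard Lefschetz argument packaged in Proposition \ref{crux}, using that the hypotheses ``$f$ projective, $F$ and $i^*F[-1]$ semisimple'' are exactly what make $f_*\iota^*F[-1]$ satisfy RHL (cf. Remark \ref{baco}), hence $\frak t$-split compatibly with $\iota^*$, hence (\ref{lzeq}). Everything else is formal manipulation of the cones and vertical ``$=$'' signs in (\ref{ar1}), exactly as in parts (5) and (6) of the theorem, of which this is the non-proper-$v$ strengthening; one simply replaces ``$v$ proper so $bc^{\psi v_*}$ is an iso'' by the explicit vanishing hypotheses $\phi v_* f_*F=0$ and $v_*\phi f_* F=0$ that force the first two rows of the diagram to consist of isomorphisms.
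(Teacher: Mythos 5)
Your proposal only details part (7) (treating the earlier parts as given), but the strategy you describe for it — chase diagram (\ref{ar1}); use Proposition \ref{crux} for the morphisms of type $\delta$; use the Decomposition Theorem to pass from $\phi v_* f_* F=0$ and $v_*\phi f_* F=0$ to their $\ptd{\bullet}$-truncated versions — is exactly the paper's. There are, however, two concrete confusions. First, you attribute the isomorphy of $bc^{i^*f_*}_{\leq\bullet}$ to the identity (\ref{lzeq}). That arrow, from $v_*\ptd{\bullet}i^*[-1]f_*$ to $v_*\ptd{\bullet}f_*i^*[-1]$, is just $v_*\ptd{\bullet}$ applied to the proper base change morphism $i^*f_*\to f_*i^*$, and it is an isomorphism for the sole reason that $f$ is proper; the content of (\ref{lzeq}), i.e.\ of Proposition \ref{crux}, is what makes the \emph{other} vertical arrow $\delta^*_{\leq\bullet}(f_*F)$, between rows $2$ and $3$ of (\ref{ar1}), an isomorphism. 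You have both facts available, but the two arrows must be kept apart — in part (5), where $f$ is only proper and $i^*F[-1]$ is not assumed semisimple, the first is automatic while the second requires a separate argument (there, semisimplicity of $f_*F$ plus the no-constituents criterion).

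The second point is the genuine gap: you assert that $\phi f_*F=0$ ``is automatic once $f$ is projective and $F,i^*F[-1]$ are semisimple via Proposition \ref{crux}'', and you then propose to \emph{add} the hypothesis $\phi f_*F=0$ in order to conclude that $sp^*_{P^f}(F)$ is a filtered isomorphism. Proposition \ref{crux} only gives that $f_*F$ has no constituents supported on $Y_s$, and by Remark \ref{gt5} this does \emph{not} imply $\phi f_*F=0$; moreover part (7) does not list $\phi f_*F=0$ among its hypotheses, so your argument as written proves a weaker statement. Fortunately no such hypothesis is needed: by Definition \ref{defspf1}, $sp^*_{P^f}(F)$ is the composition of the four arrows $(bc^{i^*f_*}_{P})^{-1}$, $\delta^*_{P^f}$, $(bc^{i^*v_*}_{P})^{-1}$ and ${}^1\!\s^*_{P}$ along the left column of (\ref{ar1}), and once each of these is an isomorphism of systems — the first because $f$ is proper, the second by Proposition \ref{crux}, the third because rows $1$ and $2$ consist of isomorphisms (their cones $\phi v_*\ptd{\bullet}f_*F$ and $v_*\phi\ptd{\bullet}f_*F$ vanish, and $bc^{i^!v_*}$ is the identity), the fourth because its cone $\phi v_*\ptd{\bullet}f_*F$ vanishes — the composite is automatically a filtered isomorphism. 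The horizontal arrows ${}^3\!\s$ and ${}^4\!\s$ in rows $3$ and $4$ play no role in this composition, so there is nothing to ``upgrade''; deleting that detour repairs the argument and recovers part (7) with its stated hypotheses.
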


\begin{proof}
(1). For the first assertion in (1) , see Remark \ref{ft2}. We now prove the second assertion in (1). Keep in mind that the base change morphisms are all isomorphisms.
Since $f$ is proper,  $\phi f_* F=0$ implies that -it is equivalent to-
 $f_* \phi F=0$, as well as $v_* \phi \ptd{\bullet} f_* F=0$.  It follows that the four horizontal arrows on row two and three in (\ref{ar1}) are isomorphisms.
 This implies that so are the vertical arrows of type $\delta$.  It remains to show that the morphisms
 ${^1\!\s^*_{\leq \bullet}}$ are isomorphisms,
 which follows from $ \phi  v_* \ptd{\bullet} f_* F= v_* \phi \ptd{\bullet} f_* F=0$.

(2). Since $v$ is proper, rows one and two are identified. The vanishing assumptions imply that all seven arrows on rows three and four are isomorphisms. In particular,
the six base change morphisms in (\ref{ar1}) are isomorphisms, so that  ${sp^*_{P^f}}$ is defined. The assumption
$\phi f_* F=0$ implies that the cones of the four horizontal arrows in rows one and two are isomorphisms, so that
all seven arrows on these two  rows are isomorphisms.  The same argument shows that the seven arrows in rows
two and three are isomorphisms. In particular, the morphisms of type $\delta$ and  
the morphisms ${^1\!\s^*_{\leq \bullet}}$ are isomorphisms, and the conclusion follows.

(3). 
We prove the first assertion in (3). The base change morphisms for $f$ are isomorphisms. The vanishing assumptions
imply that we can identify the first two rows, so that ${sp^*_{P^f}}$ is defined. We prove the second assertion
in (3).
The hypothesis $\phi f_* F=0$ implies that the morphisms of type $\delta$ are isomorphisms, and we are done.

(4). The vanishing assumptions imply that all horizontal arrows in (\ref{ar1}) are isomorphisms. It follows that all arrows
in (\ref{ar1}) are isomorphisms and the conclusion follows.

(5). Since $f$ is proper and $F$ is semisimple, $f_*F$ is semisimple by the Decomposition Theorem \ref{dt},
so that    $\ptd{\bullet} f_* F$ is a direct summand of $f_* F$. It follows that:
$\phi v_* f_* F=0$ is equivalent to  $\phi v_* \ptd{\bullet} f_* F=0$;
$v_* \phi f_* F=0$ is equivalent to  $v_*\phi \ptd{\bullet} f_* F=0$. This shows that  (5) follows from (3).

(6). Since $v$ and $f$ are proper, ${sp^*_{P^f}}$ is defined. The semisimplicity assumptions 
on $F$ and on $i^*F[-1]$ and the projectivity of $f$ imply, via Proposition \ref{crux}, that the morphisms of type $\delta$ are isomorphisms. It remains to observe that
the assumption $\phi v_* f_*F=0$, coupled with the semisimplicity of $f_*F$, implies, as seen in the proof of  (5),
that $\phi v_* \ptd{\bullet} f_*F=0$. It follows that $\s^*$  is an isomorphisms and we are done.

(7). Since $f$ is proper, the corresponding base change morphisms are isomorphisms. As seen in the proof of (6), 
the morphisms of type $\delta$ are isomorphisms. As seen in the proof of (5), the vanishing assumptions
imply that  $\phi v_* \ptd{\bullet} f_* F=0$ and  $v_*\phi \ptd{\bullet} f_* F=0$.
As in the proof of (3), the base change morphisms for $v$ are isomorphisms and, as in the proof of (1),  
the morphisms ${^1\!\s^*_{\leq \bullet}}$ are isomorphisms.
The proof of (7) is complete.
\end{proof}

\begin{rmk}\la{kaw5}
{\rm
 ({\bf Vanishing of $\phi$ and base change})
 }
The analogue of  Remark \ref{kaw3} holds in the context of the proof of Theorem  \ref{crit}.
\end{rmk}

\begin{rmk}\la{ho}$\;$
Recall that a condition of type $\phi v_* (-) =0$ means that $v_*(-) $ has locally constant cohomology
sheaves $R^\bullet v_*(-)$  on  $S$ near $s$.
\end{rmk}

\begin{rmk}\la{hoi}$\;$
\ben
\item
 \ci[Theorem 3.2.1 parts (i,ii,ii)]{dema} are  implied by the slightly more precise  Theorem \ref{crit} parts (1,3,5), respectively.
 
 \item
 Proposition \ref{tbo} parts (1,2,3) are also the special cases of Theorem \ref{crit} parts (1,3,5) when one takes $f:X\to Y$
 to be the identity.
 \een
\end{rmk}

\subsection{The specialization morphism via a compactification}\la{spcor}$\;$

Consider Theorem \ref{crit}:  parts (3,4,5,7)  do not assume that $v=v_X:X\to S$ is proper, but make some  local constancy
assumptions for certain direct images to $S$, namely $\phi v_* (-) =0$.

In this subsection we provide, in the context of  a proper $\SP$-morphism $f:X\to Y$ and of a non proper morphisms $v_X:X \to \SP$,  sufficient conditions ensuring that we obtain  well-defined $sp^*_{P}(G)$ or $sp^*_{P^f}(F)$, but that 
do not require local constancy assumptions. On the other hand, they require the existence of  good compactification.  As we shall see, here ``good" is made precise by  conditions relating  vanishing cycles at the boundary.
In general, it may be difficult to achieve such a vanishing. See Remark \ref{sn} for one situation in which this is possible. They are also achieved in the compactification of Dolbeault moduli spaces
constructed in \ci{decomp2018}; see \S\ref{apell}.

We  start with a proper $S$-morphism  $f:X^o\to Y^o$, and we  do not assume that $v=v_{Y^o}: Y^o\to S$ is proper.

In  order to circumvent   base change issues, it  is natural to first compactify the picture. We start with $f:X^o\to Y^o$ and we get, by Nagata's completion theorems:  
Zariski open and dense embeddings  $X^o \subseteq X$ and $Y^o \subseteq Y$; a proper morphism $f:X \to Y$ 
extending $f:X^o \to Y^o$; a proper morphism  $v_Y: Y \to S$  extending $v_{Y^o}: Y^o \to S.$ 
By blowing-up $Y,$ if necessary,
we may assume that $W:=Y\setminus Y^o$ supports an effective Cartier divisor. 

It follows that we can place ourselves in the following:

\begin{setup}\la{setzxw}
Let $S$ be a variety.
Consider a Cartesian diagram of $S$-morphisms, with $(a:W \to Y \leftarrow Y^o:b)$ closed/open complementary immersions:
\beq\la{hh11}
\xymatrix{
Z    \ar[r]^a    \ar[d]^f    & X   \ar[d]^f  & X^o \ar[l]_b \ar[d]^f
\\
W \ar[r]^a &  Y & Y^o \ar[l]_b
}
\eeq
such that:  all morphisms $f$ are  proper; the varieties $Y$ and $W$ proper over $S;$
the boundary $W$ on $Y$ supports
an effective  Cartier divisor. In particular,   the functors  $b_!, b_*:Y^o \to Y$ are $\frak{t}$-exact. 
\end{setup}

Many of us are used to denote the pair of closed/open embeddings $(a,b)$ by $(i,j)$. However,  in this paper, and in large part of the vanishing cycle literature, the closed embedding of the 
special fiber is systematically denoted by $i$. 

We denote by ${\rm ad}$ the distinguished triangle of endofunctors of  $D^b_c(Y)$:
\beq\la{adj}
\xymatrix{
{\rm ad}:= & a_* a^! \ar[r]^-{\frak a} & {\rm Id} \ar[r]^-{\frak b} & b_*b^* \ar@{~>}[r] &;
}
\eeq
by plugging  $G\in D^b_c(Y)$ in (\ref{adj}), we obtain the distinguished triangle ${\rm ad} (G)$ in $D^b_c(Y),$  functorial in $G$.

Proposition  \ref{r54} below  is an ``$f$-proper, but $v_X$-non-proper" analogue to Theorem \ref{crit}.(1), where it was assumed that $f$ and $v$ are proper and that $\phi F=0$.
Recall Remark \ref{f03}, which allows us to use a general point $t$ to express the target of specialization morphisms.

Note that the assumptions  (B) in Proposition \ref{r54} imply the assumptions of Theorem \ref{crit}.(5)
which lead  to $sp^*_{P^f}$ being an isomorphism. We have included (B)  for completeness in the context of this 
subsection. Recall the notational Remark \ref{short}.

\begin{pr}\la{r54}
Assume we are in the Set-up {\rm \ref{setzxw}}. Assume that  $S$ is a nonsingular and connected  curve and let $s \in S$ be a point.
Let $F\in D^b_c(X)$.  

Consider the following two sets of conditions:
\ben
\item[(A)]
$\phi b_* f_*b^* F=0$  ($\Leftarrow (\phi b_* b^* F=0)$),  and  $\phi f_* b^* F=0$ ($\Leftarrow (\phi b^*F=0)
\Leftarrow (\phi b^* F=0) \Leftarrow (\phi F=0)$.

\item[(B)] 
$F$ semisimple, $\phi v_* b_* f_* b^* F=0$ ($\Leftarrow (\phi f_* b_*b^* F=0) \Leftarrow (\phi b_*b^* F=0)$), 
and $\phi f_* b^*F=0$  ($\Leftarrow (\phi b^* F=0) \Leftarrow (\phi F=0)$).
\een
If either conditions  (A) or (B) are met, then   the $P^f$-filtered specialization morphism:
\beq\la{g42.8}
\xymatrix{
sp^*_{P^f} (F): (R\Gamma (X^o_s,    i^*  b^* F), P^{f_s})  \ar[r]  & 
(\psi v_* f_* i^* F, P) \simeq 
 (R\Gamma (X^o_t,   t^* b^* F), P^{f_t}),
}
\eeq
where $t \in S$ is general for $F$ with respect to $X/S$ and for $f_*F$ with respect to $Y/S$,
is defined and it is a filtered isomorphism for the perverse Leray filtrations.

If $f:X\to Y$ is the identity, then the same conclusion holds with  $sp^*_{P^f} (F) = sp^*_P(F).$
\end{pr}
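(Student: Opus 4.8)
The plan is to recognize $sp^*_{P^f}(F)$ as the $P^f$-filtered specialization morphism of Definition \ref{defspf1} attached to the \emph{proper} $S$-morphism $f: X^o\to Y^o$ and the complex $b^*F\in D^b_c(X^o)$ --- its source is $(R\Gamma(X^o_s,i^*b^*F),P^{f_s})$ and its target $(\psi\,v_*f_*b^*F,P)$ --- and then to reduce the two sets of hypotheses to those of Theorem \ref{crit} by transporting everything to the proper variety $Y$ along the open immersion $b: Y^o\to Y$. First I would set $G:=b_*f_*b^*F=f_*b_*b^*F\in D^b_c(Y)$ (the two descriptions agree because the square (\ref{hh11}) commutes). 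Since $W=Y\setminus Y^o$ supports an effective Cartier divisor, $b$ is affine and $b_*$ is $\frak t$-exact (Set-up \ref{setzxw}); hence $b_*\ptd{\bullet}f_*b^*F=\ptd{\bullet}G$, and, using $v_Y\circ b=v_{Y^o}$, we get $v_{Y^o,*}\ptd{\bullet}f_*b^*F=v_{Y,*}\ptd{\bullet}G$ and $\psi\,v_{Y^o,*}\ptd{\bullet}f_*b^*F=\psi\,v_{Y,*}\ptd{\bullet}G$. The target is identified with $(R\Gamma(X^o_t,t^*b^*F),P^{f_t})$ via (\ref{r405}); a $t$ general for $F$ on $X$ and $f_*F$ on $Y$ serves our purpose, since --- shrinking $S^o$ finitely often if necessary --- it is also general for the finitely many further complexes $b^*F$ on $X^o$, $f_*b^*F$ on $Y^o$ and $G$ on $Y$ appearing below (Fact \ref{gi9}, using that $f$ is proper and $b^*$, $b_*$ are $\frak t$-exact). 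Thus (\ref{g42.8}) is precisely the morphism produced by Theorem \ref{crit} for this data.

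Under hypotheses (B) the reduction is purely formal. Here $b^*F$ is semisimple (an open restriction of a semisimple complex is again one), $f: X^o\to Y^o$ is proper, and $v_{Y^o,*}f_*b^*F=v_{Y,*}b_*f_*b^*F=v_{Y,*}G$; so the two vanishings in (B) read exactly $\phi\,v_{Y^o,*}f_*b^*F=0$ and $\phi\,f_*b^*F=0$, the latter also giving $v_{Y^o,*}\phi\,f_*b^*F=0$. Hence Theorem \ref{crit}.(5), applied to the data $(f: X^o\to Y^o,\,v_{Y^o},\,b^*F)$, shows that $sp^*_{P^f}(b^*F)$ is defined and a filtered isomorphism.

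Under hypotheses (A) I would apply Theorem \ref{crit}.(3) to the same data, which requires $f$ proper (clear), $\phi\,v_{Y^o,*}\ptd{\bullet}f_*b^*F=0$, $v_{Y^o,*}\phi\,\ptd{\bullet}f_*b^*F=0$, and --- for the isomorphism statement --- $\phi\,f_*b^*F=0$. The second and fourth conditions follow at once from the second hypothesis of (A), $\phi\,f_*b^*F=0$, because $\phi$ is $\frak t$-exact. The first is the crux: using the $\frak t$-exactness of $b_*$, the properness of $v_Y$ --- so that $\phi\,v_{Y,*}=v_{Y,*}\phi_{v_Y}$ by Fact \ref{psphit}.(\ref{pp3b}) --- and the $\frak t$-exactness of $\phi_{v_Y}$, one gets
\beq\la{proptransl}
\phi\,v_{Y^o,*}\ptd{\bullet}f_*b^*F=\phi\,v_{Y,*}\ptd{\bullet}G=v_{Y,*}\ptd{\bullet}\phi_{v_Y}(G),
\eeq
and the right-hand side vanishes by the first hypothesis of (A), namely $\phi\,b_*f_*b^*F=\phi_{v_Y}(G)=0$. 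So Theorem \ref{crit}.(3) applies and gives the conclusion.

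Finally, when $f$ is the identity we have $X^o=Y^o$, $P^f=P$ and $sp^*_{P^f}(b^*F)=sp^*_P(b^*F)$, and the argument above goes through verbatim with Theorem \ref{crit}.(3),(5) replaced by Proposition \ref{tbo}.(2),(3) (cf. Remark \ref{hoi}.(2)). The only step that is not routine bookkeeping with $\frak t$-exactness and proper base change is the middle equality in (\ref{proptransl}): it is precisely where properness of the compactification $v_Y: Y\to S$ is used to trade the absence of a local-constancy hypothesis over $S$ for the single vanishing-cycle hypothesis $\phi\,b_*f_*b^*F=0$ on $Y$. I expect this to be the main point to get right.
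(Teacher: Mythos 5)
Your proof is correct. For condition (B) it coincides exactly with the paper's own argument, which likewise disposes of (B) by citing Theorem \ref{crit}.(5) for the data $(f:X^o\to Y^o,\,v_{Y^o},\,b^*F)$. For condition (A) you take a more modular route than the paper: instead of building and chasing the five-row diagram (\ref{ar2}) --- which is essentially diagram (\ref{ar1}) for the open data with an extra row inserted to factor the base change $i^*v_{Y^o*}\to v_{Y^o*}i^*$ through $bc^{i^*b_*}: v_{Y*}i^*b_*\to v_{Y*}b_*i^*$ --- you verify the hypotheses of Theorem \ref{crit}.(3) for $(f:X^o\to Y^o,\,v_{Y^o},\,b^*F)$ and invoke it as a black box. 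The decisive computation is the same in both treatments: the chain $\phi\,v_{Y^o*}\ptd{\bullet}f_*b^*F=\phi\,v_{Y*}\ptd{\bullet}b_*f_*b^*F=v_{Y*}\ptd{\bullet}\phi_{v_Y}(b_*f_*b^*F)=0$, using the $\frak t$-exactness of $b_*$ and of $\phi$, the identity $fb=bf$, and the properness of $v_Y$; this is precisely the displayed chain of equalities in the paper's proof of (A). You correctly identify this as the point where the compactification trades a local-constancy hypothesis over $S$ for a vanishing-cycle hypothesis on $Y$. What the paper's longer diagram argument buys, and yours does not, is the finer conclusion recorded in Remark \ref{rv03} --- that the individual base change morphism $bc^{i^*b_*}$ is an isomorphism, not merely the composite $bc^{i^*v_{Y^o*}}$ --- which is used later in the proof of Theorem \ref{sonoloro} under hypotheses (III); but this is not needed for Proposition \ref{r54} itself, so your argument fully establishes the stated result, including the $f={\rm id}$ case via Proposition \ref{tbo}.
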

\begin{proof} Since the proof is analogous to the proof of  Theorem \ref{crit}.(1), we only indicate the main line of the arguments.

In complete analogy with the formation  of the commutative  diagram (\ref{ar1}), we form the commutative diagram: 
(with many decorations omitted)
\beq\la{ar2}
\xymatrix{
i^*[-1] v_* b_* \ptd{\bullet}   f_* b^* \ar[r]^-{\s^*}  \ar[d]^-{bc} & 
\psi [-1] v_* b_* \ptd{\bullet}   f_* b^*  \ar[d]^-{bc}   \ar[r]^-{\s^!}  & 
i^![1]  v_* b_* \ptd{\bullet}   f_* b^* \ar[d]_-=^-{bc} & (\phi v_* b_* \ptd{\bullet} f_*b^* F)
\\
 v_*   i^*[-1] b_*   \ptd{\bullet} f_* b^* \ar[r]^-{\s^*}   \ar[d]^-{bc^{i^*b_*}}   &  
 v_* \psi[-1]  b_* \ptd{\bullet} f_* b^*   \ar[r]^-{\s^!}  \ar[d]^{bc} &  
v_* i^![1]  b_* \ptd{\bullet} f_* b^*
 \ar[d]^-{bc}_-=  \ar[u] & (v_* \phi  b_* \ptd{\bullet} f_* b^* F)
\\
v_*  b_* i^*[-1]   \ptd{\bullet}  f_* b^* \ar[r]^-{\s^*}    &  
 v_*  b_* \psi[-1]   \ptd{\bullet}  f_* b^*    \ar[r]^-{\s^!} 
\ar[d]_-=   &  
v_*  b_* i^![1]   \ptd{\bullet}  f_* b^* \ar[d]^-{\delta^!_{\leq \bullet}} \ar[u] & (\phi  f_* b^* F)
\\
  v_*  b_* \ptd{\bullet}  i^*[-1]    f_* b^*  \ar[r]^-{\s^*}     \ar[u]_-{\delta^*_{\leq \bullet}} \ar[d]^-{bc}&  
   v_*  b_* \ptd{\bullet}  \psi[-1]    f_* b^*   \ar[r]^-{\s^!}  \ar[d]^-{bc} \ar[u] &  
  v_*  b_* \ptd{\bullet}  i^![1]    f_* b^*   \ar[d]^-{bc}_-= & (\phi f_* b^* F),
\\
 v_* b_*  \ptd{\bullet} f_*    i^*[-1] b^*  \ar[r]^-{\s^*}  \ar@/0pt/@{-->}[uuuur]    & 
v_* b_*  \ptd{\bullet} f_*    \psi[-1] b^*   \ar[r]^-{\s^!}   &  
v_* b_*  \ptd{\bullet} f_*    i^![1] b^*  \ar[u]  & (\phi b^*F),
}
\eeq
the properties of which  are similar to the ones of (\ref{ar1}).
The second row from the top in (\ref{ar2}) does not have a counterpart in (\ref{ar1}), and this is because
here we need to consider the  base change morphisms  for $i^*b_*.$
The terms in parentheses for the first three rows are, up to shift,  the cones of the arrows of type $\s$. 
After having plugged an $F\in D^b_c(X)$ in (\ref{ar2}), the vanishing of the term in parentheses implies that the corresponding morphisms of type $\s$ are isomorphisms.

Since $v:X\to S$ is proper, the first two rows get identified by proper base change. Since $f$ is proper, the same is true
for the last two.

Since every row contains a vertical arrow which is an isomorphism, we only need to show that all horizontal arrows  in (\ref{ar2}) are isomorphisms. It is enough to show that
the corresponding terms in parentheses vanish. By the identifications above, we need to do so only for rows two, three and four.

The assumption $\phi b^* F=0$,  which is common to (A) and (B), together with the properness of $f$, implies the vanishing
of the terms in parentheses for the bottom three rows (for the third row, one uses that $\phi$ commutes with perverse truncation). 

We are left with proving that the morphisms of type $\s$ on row two are isomorphisms.

We prove that  (A) implies the desired conclusion.
The assumption $\phi b_* b^* F=0$ implies the vanishing
of the terms in parentheses for the second row because of what follows:
\[
0= f_* \phi b_* b^* F =  \phi  f_* b_* b^* F= \phi   b_* f_* b^* F =  \phi   b_* b^* f_* F, 
\]
so that:
\[
0= \ptd{\bullet}  \phi   b_* b^* f_* F = \phi  \ptd{\bullet}  b_* b^* f_* F =  \phi    b_* \ptd{\bullet} b^* f_* F  =
\phi    b_* \ptd{\bullet}  f_*   b^*F,
\]
where in the identities above we have used proper base change for $f$, the fact that $fb=bf$ in  the r.h.s. of (\ref{hh11}),
the $\frak t$-exactness of $b_*$  ($b$ is  affine and quasi-finite) and of $\phi$.

(B) has already been proved in Theorem \ref{crit}.(5).
\end{proof}

\begin{rmk}\la{rv03}
The proof of Proposition \ref{r54} shows that under its hypotheses,  (A) or (B), the base change morphism
$bc^{i^*b_*}: v_* i^* b_* \ptd{\bullet} f_*b^*F \to  v_* b_* i^* \ptd{\bullet} f_*b^*F$ in (\ref{oras})  is an isomorphism.
\end{rmk}

\begin{rmk}\la{boxb}
Remark \ref{box}, on the part of (\ref{ar1}) that is needed to define $sp^*_{P^f}(F)$, 
 holds essentially verbatim in the context of diagram (\ref{ar2}).
\end{rmk}

\begin{rmk}\la{sn}
The conditions  $\phi b_* b^* F=0=\phi b^*F$  in Proposition \ref{r54} are  met if, for example, $X/S$ is smooth, $X\setminus X^o$ is a simple normal crossing
divisor over $S$ and $F$  has locally constant cohomology sheaves. See \ci[XIII, Lemme 2.1.11 (dualized)]{sga72}.
We do not know of a  weaker, but similar set of conditions that leads to $sp^*_{P^f}$ being defined, but not necessarily an isomorphisms.
\end{rmk}

\begin{cor}\la{r55}
Let things be as in Proposition \ref{r54}. Then we have the following commutative diagram, with horizontal arrows given by restriction, and with vertical arrows given by the well-defined  $P^f$-filtered specialization morphisms:
\beq\la{g42.8bix}
\xymatrix{
(R\Gamma (X_t,   t^*  F), P^{f_t}) \ar[r]   &   (R\Gamma (X_t^o,   t^*b^* F), P^{f_t}) \\
(R\Gamma (X_s,    i^*   F), P^{f_s})  \ar[r] \ar[u]^-{sp^*_{P^f}}  &  (R\Gamma (X_s^o,   i^* b^* F), P^{f_s})
 \ar[u]^-{sp^*_{P^f}}_-\cong.
}
\eeq
\end{cor}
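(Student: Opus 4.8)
The plan is to deduce the corollary directly from Proposition \ref{r54} together with the naturality of the $P^f$-filtered specialization morphism with respect to morphisms in $D^b_c(X)$. The right-hand vertical arrow is the specialization morphism $sp^*_{P^f}(b^*F)$ built in Proposition \ref{r54}: under hypotheses (A) or (B), this is defined and a filtered isomorphism, which is why it carries the decoration $\cong$. The left-hand vertical arrow is $sp^*_{P^f}(F)$ for the complex $F$ itself on $X$; since $f$ and $v_X = v_Y\circ f$ are both proper (we are in Set-up \ref{setzxw}, so all the $f$ are proper, and $Y$ is proper over $S$, hence so is $X$), Remark \ref{ft2} already gives that $sp^*_{P^f}: D^b_c(X)\to DF(pt)$ is a functor, so $sp^*_{P^f}(F)$ is defined with no extra hypotheses — it simply need not be an isomorphism, which is exactly why no $\cong$ decoration is placed on it.

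First I would observe that the two horizontal arrows are the restriction maps induced by the open immersion $b: X^o \hookrightarrow X$ (pulled back over $s$ and over $t$ respectively): concretely they are obtained by applying $R\Gamma(X_s,-)$, resp. $R\Gamma(X_t,-)$, to the adjunction morphism $\mathfrak b: F \to b_*b^*F$ of (\ref{adj}) (for the family $X/S$), and then restricting to the appropriate fiber. Since $b$ is a morphism of $S$-varieties, $\mathfrak b$ is compatible with the structural morphisms to $S$, and hence the morphism of complexes $F \to b_*b^*F$ in $D^b_c(X)$ induces, via the construction recalled in \S\ref{gennot} (the morphism of systems $\ptd{\bullet}G \to \ptd{\bullet}G'$ attached to any arrow $G\to G'$), a morphism of the whole diagram (\ref{ar1}) for $F$ to the diagram (\ref{ar1}) for $b_*b^*F$. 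In particular it induces a morphism of the dotted arrows $sp^*_{P^f}$ on the left column, i.e. a commutative square in $DF(pt)$
\[
\xymatrix{
(R\Gamma(X_s, i^*F), P^{f_s}) \ar[r] \ar[d]_-{sp^*_{P^f}(F)} & (R\Gamma(X_s, i^*b_*b^*F), P^{f_s}) \ar[d]^-{sp^*_{P^f}(b_*b^*F)} \\
(\psi v_* f_*F, P) \ar[r] & (\psi v_* f_* b_*b^*F, P).
}
\]
Here one uses that, by the base-change/adjunction identifications recalled in \S\ref{vncf}, $R\Gamma(X_s, i^*b_*b^*F) = R\Gamma(X^o_s, i^*b^*F)$ and $R\Gamma(X_t, t^*b_*b^*F) = R\Gamma(X^o_t, t^*b^*F)$ (because $b$ is an open immersion so $i^*b_* = b_* i^*$ on the fibers, $b$ being the restriction of $b$ to the special/general fiber), compatibly with the perverse Leray filtrations $P^{f_s}$ and $P^{f_t}$ respectively — this last compatibility is exactly the content of identities of the type (\ref{r405}) applied to $b^*F$, and of the fact that the base change morphisms for $b_*$ and $i^*$ used to define $sp^*_{P^f}(b_*b^*F)$ are the same as those used in the diagram (\ref{ar2}) of Proposition \ref{r54} (cf. the second row of (\ref{ar2}), which precisely accounts for the $i^*b_*$ base change). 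Finally one invokes Proposition \ref{r54} (under (A) or (B)) to identify $sp^*_{P^f}(b_*b^*F)$, after these identifications, with the filtered isomorphism $sp^*_{P^f}(b^*F)$ of (\ref{g42.8}), and Remark \ref{f03} (together with the choice of a $t$ general for both $F$ on $X/S$ and $f_*F$ on $Y/S$) to rewrite the targets $(\psi v_* f_* F, P) \simeq (R\Gamma(X_t, t^*F), P^{f_t})$ and $(\psi v_* f_* b_*b^*F, P)\simeq (R\Gamma(X^o_t, t^*b^*F), P^{f_t})$, which turns the displayed square into exactly (\ref{g42.8bix}).

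I expect the only genuine point requiring care — the main (modest) obstacle — is bookkeeping the identifications $i^*b_* = b_*i^*$ and $t^*b_* = b_*t^*$ on the fibers and checking that they are compatible with the perverse Leray filtrations used on both sides, i.e. that passing from "$b^*F$ on $X^o$ with $P^{f}$" to "$b_*b^*F$ on $X$ with $P^{f}$" is harmless at the level of filtered complexes. This is where one must be sure that the perverse truncations $\ptd{\bullet}$ appearing in the definition of $P^{f}$ are the ones taken after $f_*$ on $Y$ (resp. $Y^o$), and that $b_*$ being $\mathfrak t$-exact (Set-up \ref{setzxw}: $b$ is affine, even quasi-finite) makes the relevant diagram (\ref{ar1}) for $b_*b^*F$ literally agree with the diagram (\ref{ar2}) for $F$ after collapsing the extra row — this is asserted in Remark \ref{boxb}. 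Everything else is formal naturality of the construction of $sp^*_{P^f}$ in the complex, which is built into the definitions of \S\ref{gennot}–\S\ref{hh1}; no new vanishing or semisimplicity input beyond (A)/(B) is needed, since $sp^*_{P^f}(F)$ on the left is automatically defined by properness of $f$ and $v$.
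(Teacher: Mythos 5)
Your overall strategy is the one the paper uses: apply the adjunction $\mathrm{Id}\to b_*b^*$ to the diagram defining $sp^*_{P^f}$, obtain a morphism of diagrams, and read off the commutative square whose horizontal arrows are the restriction maps and whose vertical arrows are the two specialization morphisms. (The paper packages this slightly differently, by first duplicating the second row of (\ref{ar1}) so that the resulting five-row diagram maps row-by-row onto (\ref{ar2}); the duplicated row is exactly the slot that receives the extra row of (\ref{ar2}) containing $bc^{i^*b_*}$.) There is, however, one step in your write-up that is justified by a false general statement, and it is precisely the step at which the hypotheses (A)/(B) of Proposition \ref{r54} actually do work.

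You identify $R\Gamma (X_s, i^*b_*b^*F)$ with $R\Gamma (X^o_s, i^*b^*F)$ ``because $b$ is an open immersion so $i^*b_*=b_*i^*$ on the fibers.'' For an open immersion $b$ and the closed immersion $i$ of the special fiber, the base change morphism $i^*b_*\to b_*i^*$ is \emph{not} an isomorphism in general: take $Y=S=\mathbb{A}^1$, $v=\mathrm{id}$, $s$ the origin, $Y^o=Y\setminus\{s\}$ and $G=\rat_{Y^o}$; then $i^*b_*G$ is the cohomology of a punctured disk, $\rat\oplus\rat[-1]$, while $b_*i^*G=0$. (At the general point $t$ the analogous base change \emph{is} an isomorphism, because $t$ is chosen general for the complexes involved; the problem is only at $s$.) What makes your identification legitimate here is Remark \ref{rv03}: under hypotheses (A) or (B), the base change morphism $bc^{i^*b_*}$ occurring in the second row of (\ref{ar2}) is an isomorphism --- this is extracted from the proof of Proposition \ref{r54} and is not a formal consequence of $b$ being an open immersion. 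Once you replace your parenthetical justification by a citation of Remark \ref{rv03}, and note that the $\frak t$-exactness of $b_*$ together with proper base change for $f$ gives $\ptd{\bullet}f_*b_*b^*F=b_*\ptd{\bullet}f_*b^*F$ (so that the filtration produced by diagram (\ref{ar1}) applied to $b_*b^*F$ agrees with the perverse Leray filtration $P^{f_s}$ appearing in (\ref{g42.8})), your argument closes and coincides in substance with the paper's.
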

\begin{proof}
We artificially add  to digram (\ref{ar1})  one row identical to the second row from the top and place it between
the second and third row; there are now five rows; we connect the second row to the new third row by the identity;
we connect the new third row to the new fourth row the same way as rows two and three in (\ref{ar1}) are connected.
The reader can verify that:  if we apply the adjunction $Id \to b_* b^*$ to this new diagram,  call it (\ref{ar1}$'$), then we obtain what can be called
the adjunction morphism between (\ref{ar1}$'$) and (\ref{ar2}).  The conclusion follows.
\end{proof}

\subsection{Specialization morphisms and the long exact sequence of a triple}\la{tent}$\;$

In the context of the compactification Set-up \ref{setzxw}, with $S$ a nonsingular curve and $s \in S$ a point,  it is natural to ask 
about the relation between specialization morphisms and the long exact sequence in cohomology associated with the
triple $(Z, X, X^o).$ Because of base change issues, and because of the possible  failure of the morphisms
of type $\delta$ for the closed embedding $a$ to be isomorphisms, the relation is not always the expected one, i.e.
the filtered cones of the horizontal morphisms in (\ref{g42.8bix}) are not necessarily
the corresponding objects for $i^* a^! F$ and $t^* a^! F$.

Recall Remarks \ref{box}, \ref{boxfil}, \ref{boxfilp} and \ref{boxb} on the parts of the  diagrams involved in the definitions of the specialization morphisms $sp^*, sp^*_P, sp^*_{P^f}.$ 

We discuss the case of $sp^*_{P^f}$. The case $sp^*_P$ is then the special case when $f: X\to Y$ is the identity.
The case $sp^*$ is the special case, when we remove $\ptd{\bullet}$ from the picture.

\begin{lm}\la{aristo}
Let things be as in Set-up \ref{setzxw}, except that we do not assume that $v$ and $f$ are proper, nor that $W$ is Cartier.
Assume that $S$ is a nonsingular curve and $s \in S$ is a point.
The following diagram is  commutative:  
\beq\la{oras}
\xymatrix{
\psi[-1] v_* a_!a^! \ptd{\bullet} f_* 
\ar[r] 
&
\psi[-1] v_* \phantom{b_*a_*} \ptd{\bullet} f_*
\ar[r]
&
\psi[-1] v_* b_*b^* \ptd{\bullet} f_*
\ar@{~>}[r]
&
\\
i^*[-1] v_* a_!a^! \ptd{\bullet} f_* 
\ar[r] \ar[u]_-{\s^*} \ar[d]_-{bc^{i^*v_*}}
&
i^*[-1] v_* \phantom{b_*a_*} \ptd{\bullet} f_*
\ar[r] \ar[u]_-{\s^*} \ar[d]_-{bc^{i^*v_*}}
&
i^*[-1] v_* b_*b^* \ptd{\bullet} f_*
\ar@{~>}[r] \ar[u]_-{\s^*} \ar[d]_-{bc^{i^*v_*}}
&
\\
v_* i^*[-1] a_!a^! \ptd{\bullet} f_* 
\ar[r]    \ar[d]^-{i^*a_!a^! \to  a_!a^!i^*} 
&
v_* i^*[-1] \phantom{b_*a_*} \ptd{\bullet} f_*
\ar[r]    \ar[d]
&
v_* i^*[-1] b_*b^* \ptd{\bullet} f_*
\ar@{~>}[r]    \ar[d]\ar[d]^-{i^*b_*b^* \to  b_*b^*i^*}_-{bc^{i^*b_*}}
&
\\
v_*  a_!a^! i^*[-1] \ptd{\bullet} f_* 
\ar[r]                  
&
v_* i^*[-1] \phantom{b_*a_*} \ptd{\bullet} f_*
\ar[r]    \ar[u]^-=                        
&
v_*  b_*b^*  i^*[-1] \ptd{\bullet} f_*
\ar@{~>}[r] 
&
\\
v_*  a_!a^!  \ptd{\bullet} i^*[-1] f_* 
\ar[r]  \ar[u]^-{\delta^{i^*}} \ar[d]^-{bc^{i^*f_*}}             
&
v_* \ptd{\bullet} \phantom{b_*a_*}  i^*[-1] f_*
\ar[r]   \ar[u]^-{\delta^{i^*}}   \ar[d]^-{bc^{i^*f_*}}                  
&
v_*  b_*b^* \ptd{\bullet} i^*[-1]   f_*
\ar@{~>}[r]  \ar[u]^-{\delta^{i^*}}   \ar[d]^-{bc^{i^*f_*}}
&
\\
v_*  a_!a^!  \ptd{\bullet}  f_* i^*[-1]    
\ar[r]                  
&
v_* \ptd{\bullet} \phantom{b_*a_*}  f_* i^*[-1]
\ar[r]                     
&
v_*  b_*b^* \ptd{\bullet} f_* i^*[-1]
\ar@{~>}[r].  
&
\\
}
\eeq
\end{lm}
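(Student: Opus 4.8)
The plan is to recognise the array (\ref{oras}) as assembled, one row at a time, from the single distinguished triangle of endofunctors ${\rm ad}$ of (\ref{adj}) attached to the closed/open pair $(a,b)$: taken on $Y$ for the first three rows, and on the special fibre $Y_s$ --- i.e.\ for the base-changed pair $(a_s,b_s)$, whose notation is abbreviated to $(a,b)$ as in (\ref{oras}) --- for the last three rows. Concretely, each row is obtained by evaluating the appropriate incarnation of ${\rm ad}$ at a complex built from $F$ by applying $f_*$, $\ptd{\bullet}$ and, in the lower rows, $i^*[-1]$ (note that $\ptd{\bullet}$ intervenes only in the formation of that complex, not in the triangle structure of the row), and then post-composing with the exact functors $v_*$ and $i^*[-1]$ or $\psi[-1]$. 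With this reading every horizontal arrow of (\ref{oras}) is the image under exact functors of $\frak{a}$, of $\frak{b}$, or of the connecting morphism; in particular each row is a genuine distinguished triangle, and it suffices to verify the commutativity of the elementary squares. First I would record what the vertical arrows are as natural transformations: $\s^*\colon i^*[-1]\to\psi[-1]$ from (\ref{1p}) (rows 1--2); the base-change transformations $bc^{i^*v_*}$ for the Cartesian square $(Y_s\hookrightarrow Y,\,v;\,s\hookrightarrow S)$ (rows 2--3) and $bc^{i^*f_*}$ for $(X_s\hookrightarrow X,\,f;\,Y_s\hookrightarrow Y)$ (rows 5--6); the base-change transformations $i^*a_!a^!\to a_!a^!i^*$ and $i^*b_*b^*\to b_*b^*i^*$ (the second built from $bc^{i^*b_*}$) that carry $i^*=i_Y^*$ past the pair $(a,b)$ on $Y$ onto the pair on $Y_s$ (rows 3--4); and the morphism of type $\delta$, namely $\delta^*_{\leq\bullet}$ in the form (\ref{bnnb}) for $\iota=i_Y\colon Y_s\hookrightarrow Y$ (rows 4--5). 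These last morphisms are legitimate because $Y_s$, being the fibre of $v\colon Y\to S$ over a point of a nonsingular curve, is an effective Cartier divisor whose complement $v^{-1}(S\setminus\{s\})\to Y$ is affine, being the base change of the affine open immersion $S\setminus\{s\}\hookrightarrow S$; the same applies to $X_s\hookrightarrow X$.

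For every pair of consecutive rows except rows 3 and 4, the verification is purely formal. In rows 1--2 and 2--3 one has a fixed distinguished triangle ${\rm ad}(K)$, post-composed with $v_*$, to which one applies a single natural transformation ($\s^*$, resp.\ $bc^{i^*v_*}$); in rows 4--5 and 5--6 one applies the triangle-of-natural-transformations ${\rm ad}_{Y_s}$ to a single morphism (respectively $\ptd{\bullet}$ of $\delta^*_{\leq\bullet}(f_*F)$ and $\ptd{\bullet}$ of $bc^{i^*f_*}(F)$) and then post-composes with $v_*$. Since a natural transformation is compatible with every morphism of its source --- hence with the images of $\frak{a}$ and $\frak{b}$ and with the connecting morphism $b_*b^*K\to a_!a^!K[1]$ --- and since exact functors send morphisms of distinguished triangles to morphisms of distinguished triangles, each of these row-pairs gives a morphism of distinguished triangles, which is precisely the commutativity of the two squares shown together with the connecting one. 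No properness hypothesis enters here; and it is worth stressing that $bc^{i^*v_*}$, $bc^{i^*f_*}$, $bc^{i^*b_*}$ and $\delta^*_{\leq\bullet}$ need not be isomorphisms, so that the content of the lemma is exactly that this non-invertible zig-zag still assembles into a commutative diagram.

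The one genuinely substantive step, which I expect to be the main obstacle in a clean write-up, is the pair of squares between rows 3 and 4. Applying the exact functor $v_*$, it reduces to checking, in $D(Y_s)$ and for $K=\ptd{\bullet}f_*F$, that the canonical base-change transformation $i^*a_!a^!\to a_!a^!i^*$ --- built from the proper base-change isomorphism $i^*a_!\stackrel{\sim}{\to}a_!i^*$ for the closed immersion $a$ together with the exchange $2$-cell $i^*a^!\to a^!i^*$ --- is compatible with the counit $\frak{a}\colon a_!a^!\to{\rm Id}$, and dually that $i^*b_*b^*\to b_*b^*i^*$ --- built from $bc^{i^*b_*}\colon i^*b_*\to b_*i^*$ and the identity $i^*b^*=b^*i^*$ --- is compatible with the unit $\frak{b}\colon{\rm Id}\to b_*b^*$; equivalently, that these exchange cells fit into a morphism of the recollement distinguished triangles $i^*\,{\rm ad}(K)\to{\rm ad}_{Y_s}(i^*K)$. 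This is the standard compatibility of base change with the localisation/recollement triangle of a closed/open decomposition (here $(W_s,\,Y^o_s)$ inside $Y_s$), which follows from the usual adjunction yoga --- $\frak{a}$ and $\frak{b}$ are units or counits and the base-change maps are their mates --- and which is part of the six-functor formalism; see, e.g., \ci{kash}. Granting it, every square of (\ref{oras}) commutes, and --- since, as noted, each row is a distinguished triangle --- the diagram is in fact a commutative diagram of distinguished triangles, the form in which it will serve the long exact sequence of the triple $(Z,X,X^o)$.
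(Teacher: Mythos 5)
Your proposal is correct and follows essentially the same route as the paper: the paper's proof likewise reduces everything to a short list of formal compatibilities — naturality of $\s^*$, compatibility of the base-change morphisms for $v_*$ and $f_*$ and of the morphisms of type $\delta$ with the adjunction triangle (\ref{adj}), and the identities $\frak i^*\frak a = \frak a\frak i^*\circ\mu$ and $\mu'\circ\frak i^*\frak b = \frak b\frak i^*$ for the exchange cells (citing \ci[Prop.\ 3.1.9.iii]{kash}), which is exactly your rows 3--4 step. Your identification of that step as the only non-tautological one, and your handling of it via the recollement/mate yoga, matches the paper's facts (3) and ($3'$).
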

\begin{proof}
This follows at once from the following list of formal properties.

Let things be as in Set-up \ref{setzxw}.  Recall (\ref{adj}). Let $\frak i:T\to S$ be a morphism of varieties. The verification of the following
facts is formal and is left to the reader.

\ben
\item $\s^*$ is a morphism of functors.
\item
The attaching triangles  are compatible with base change. More precisely, 
the base change morphisms $\frak i^* v_* \to v_* \frak i^*$  applied to the distinguished triangle
of functors (\ref{adj})  (recall that it contains the morphisms $\frak a, \frak b$) yield morphisms of the corresponding distinguished triangles.

\item
Let $\mu: \frak i^* a_! a^! \to a_! a^! \frak i^*$ be the composition of the base change isomorphism  $\frak i^* a_! a^! 
\to  a_!\frak i^* a^!$ with the natural morphism $a_! \frak i^* a^! \to a_! a^! \frak i^*$ \ci[Proposition 3.1.9.iii]{kash}.
Then $\frak i^* \frak a= \frak a\frak i^*\circ \mu.$

\item[($3'$)]
Let $\mu': \frak i^* b_* b^* \to b_* b^* \frak i^*$ be the composition of the base change isomorphism  $\frak i^* b_* b^* 
\to  b_*\frak i^* b^*$ with the natural isomorphism $b_* \frak i^* b^* \to b_* b^* \frak i^*.$ 
Then $\mu' \circ \frak i^* \frak b= \frak b \frak i^*.$

\item
The attaching triangles are compatible with the morphisms of type $\delta.$ 
More precisely, 
the morphisms of type $\delta$ applied to the distinguished triangle
of functors (\ref{adj}) yield morphisms of the corresponding distinguished triangles.

\item
Part (2) holds   for the base change morphisms $\frak i^* f_* \to f_* \frak i^*.$
\een
\end{proof}

We need the following  remark and lemma in the proof of Proposition \ref{sonoloro}.

\begin{rmk}\la{eelui}
The base change morphism   $bc^{i^*b_*}$ in (\ref{oras})  and (\ref{ar2}) coincide.  
\end{rmk}

\begin{lm}\la{gt500}
Let things be as in the compactification Set-up \ref{setzxw} and assume that $S$ is a nonsingular connected curve and $s\in S$ is a point.
Let $G \in D^b_c(Y)$ be such that $\phi G=0$ and $\phi a^! G=0.$ Then the natural morphisms
$i^* a^! \ptd{\bullet} G \to a^! i^* \ptd{\bullet} G$ are isomorphisms.
\end{lm}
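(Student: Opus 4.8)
The plan is, for each index $\bullet$, to reduce the assertion to the two vanishings $\phi\,\ptd{\bullet}G=0$ and $\phi\,a^!\ptd{\bullet}G=0$, and then to use the naturality of the morphism $\nu$ of (\ref{gh}) with respect to the closed embedding $a$. Note first that the natural morphism in the statement is the base-change morphism $i^*a^!\to a^!i^*$ appearing in Fact \ref{psphit} (diagram (\ref{fdt2bis})), evaluated at $\ptd{\bullet}G$; I will write $\beta(-)$ for this base-change morphism of functors.

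The first vanishing, $\phi\,\ptd{\bullet}G=0$, follows from $\phi G=0$ and the $\frak{t}$-exactness of $\phi$ (Fact \ref{psp0}). For the second, I would apply the exact functor $\phi$ to the distinguished triangle ${\rm ad}(\ptd{\bullet}G)$ of (\ref{adj}), namely
\[ a_*a^!\ptd{\bullet}G\longrightarrow \ptd{\bullet}G\longrightarrow b_*b^*\ptd{\bullet}G\rightsquigarrow. \]
Its middle term has vanishing $\phi$ by the first vanishing, and its right term equals $\ptd{\bullet}\,b_*b^*G$, because $b^*$ is $\frak{t}$-exact (open immersion) and $b_*$ is $\frak{t}$-exact (Set-up \ref{setzxw}, as $b$ is the complement of the support of a Cartier divisor), so that $b_*b^*$ commutes with $\ptd{\bullet}$; hence $\phi\,b_*b^*\ptd{\bullet}G=\ptd{\bullet}\,\phi\,b_*b^*G$, and $\phi\,b_*b^*G=0$ follows by applying $\phi$ to ${\rm ad}(G)$ together with $\phi G=0$ and $\phi\,a_*a^!G=a_*\,\phi\,a^!G=0$ (here $\phi\,a_*\cong a_*\,\phi$ since $a$ is proper, cf.\ (\ref{fdtbis}), and $\phi\,a^!G=0$ is one of our hypotheses). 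Therefore $\phi\,a_*a^!\ptd{\bullet}G=0$, and since $a_*$ is faithful and $\phi\,a_*\cong a_*\,\phi$, this forces $\phi\,a^!\ptd{\bullet}G=0$.

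It then suffices to establish the following principle, applied to $H:=\ptd{\bullet}G$: if $H\in D^b_c(Y)$ satisfies $\phi H=0$ and $\phi\,a^!H=0$, then $\beta(H)\colon i^*a^!H\to a^!i^*H$ is an isomorphism. By Fact \ref{gh1}, $\phi H=0$ makes $\nu(H)\colon i^*[-1]H\to i^![1]H$ an isomorphism, hence so is $a^!\nu(H)$; likewise $\phi\,a^!H=0$ makes $\nu(a^!H)\colon i^*[-1]a^!H\to i^![1]a^!H$ an isomorphism. Pasting the relevant base-change squares of (\ref{fdt2bis}) and (\ref{fd0}) for the proper morphism $f=a$ --- that is, the $a^!$-analogue of the commutative diagram (\ref{r00}) --- produces a commutative square whose horizontal arrows are $\beta(H)$ and the canonical identification $i^![1]a^!=a^!i^![1]$ (recorded as an equality in the right-hand part of (\ref{fd0})), and whose vertical arrows are $\nu(a^!H)$ and $a^!\nu(H)$. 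Since three of these four arrows are isomorphisms, so is $\beta(H)$, which completes the argument once we take $H=\ptd{\bullet}G$.

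The step I expect to be the main obstacle is the vanishing $\phi\,a^!\ptd{\bullet}G=0$: because $a^!$ does not commute with $\ptd{\bullet}$ in general, one cannot simply invoke $\phi\,a^!G=0$, and the argument hinges on the $\frak{t}$-exactness of $b_*$ built into Set-up \ref{setzxw}, which is precisely what allows $\ptd{\bullet}$ to be transported through the boundary term $b_*b^*$ in the triangle ${\rm ad}$ and then back onto $a_*a^!$. A subsidiary point to check is that the two base-change squares pasted in the last paragraph involve the same base-change transformation for $\psi$, so that their composite indeed computes $\nu$; this is the $a^!$-version of (\ref{r00}) and belongs to the vanishing/nearby-cycle formalism recalled in Fact \ref{psphit} (cf.\ \ci[XIII, 2.1.7]{sga72}).
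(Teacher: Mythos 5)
Your proof is correct and follows essentially the same route as the paper's: reduce to the untruncated statement, then use that $\phi G=0$ and $\phi a^!G=0$ make the morphisms $\nu$ of (\ref{gh}) into isomorphisms, and combine this with the canonical identification $i^!a^!=a^!i^!$. Your explicit derivation of $\phi\,a^!\ptd{\bullet}G=0$ from the adjunction triangle (\ref{adj}) together with the $\frak t$-exactness of $b_*b^*$ is a careful elaboration of the paper's one-line reduction to the untruncated case, and the commutative square you obtain by pasting (\ref{fdt2bis}) and (\ref{fd0}) is precisely the $a^!$-analogue of (\ref{r00}) implicit in the paper's chain of identifications.
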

\begin{proof}
The assumptions on vanishing are equivalent to assuming that $\phi \ptd{\bullet} G=0$ and $\phi \ptd{\bullet} a^! G=0,$
so that it is enough to prove the conclusion without truncations. We have $i^*[-1] G \cong i^![1] G$ and $i^*[-1] a^! G
\cong i^![1] a^*G.$  The conclusion follows from the natural identification
$i^!a^!=a^!i^!$ as follows:
$i^*[-1] a^!G = i^![1]a^!G = a^! i^![1] G= a^! i^*[-1] G.$
\end{proof}

Recall Remark \ref{f03}, which allows us to use a general point $t$ to express the target of specialization morphisms. Recall the notational Remark \ref{short}, and convention (\ref{conv}) on shifts of filtrations.

\begin{tm}\la{sonoloro} {\rm {\bf (Specialization and long exact sequence of a triple)}}
Let things be as in the compactification  Set-up \ref{setzxw} and assume that $S$ is a nonsingular connected curve and $s\in S$ is a point.
Let $F \in D^b_c(X)$.  

Consider the following three sets of conditions:

\ben
\item[(I)]

\ben
\item
$f_*F$ has no constituents supported on
$W.$

\item
$f_* i^* F$  has no constituents supported on $W_s.$  

\item
$\phi f_* F=0$ ($\Leftarrow (\phi F=0)$).

\item
$\phi f_* a^! F=0$ ($\Leftarrow (\phi a^! F=0)$).
 

\een

\item[(II)]
\ben

\item
$f$ is projective.

\item
$F, a^!F[1]$,  $i^*F[-1]$  $a^!i^*F$ are perverse semisimple.

\item
$\phi f_* F=0$ ($\Leftarrow (\phi F=0)$).

\item
$\phi f_* a^! F=0$ ($\Leftarrow (\phi a^! F=0)$).

\een

\item[(III)]

\ben
\item
$f$ is projective.

\item
$F, a^!F[1], i^*F[-1]$ and $a^!i^* F$ are perverse semisimple.

\item
$\phi v_* b_* f_* b^*F=0$ (cf. Remark \ref{ho})
($\Leftarrow (\phi f_* b_* b^*F=0) \Leftarrow (\phi b_*b^* F=0)$).

\item
$\phi f_* b^* F=0$.
\item
$\phi f_* a^! F=0$.

\een
\een
Assume that either (I), (II), or (III) holds.
Then, for $t\in S$ general,  we have 
the isomorphism of distinguished triangles in $DF(pt):$ 
\beq\la{mttc}
\xymatrix{
\left( R\Gamma (Z_t,  t^*  a^! F), P^{f_t}(-1)\right)  \ar[r]  & 
\left( R\Gamma (X_t,  t^* F), P^{f_t}\right)  \ar[r] &
\left( R\Gamma (X^o_t,  t^* b^* F), P^{f_t}\right) \ar@{~>}[r] &
\\
\left( R\Gamma (Z_s,  i^* a^!  F), P^{f_s}(-1)\right)  \ar[r] \ar[u]^-{sp^*_{P^f}(a^!F)}_-\cong  & 
\left( R\Gamma (X_s,  i^* F), P^{f_s}\right)  \ar[r]  \ar[u]^-{sp^*_{P^f}(F)}_-\cong &
\left( R\Gamma (X^o_s,  i^* b^* F), P^{f_s}\right) \ar@{~>}[r] \ar[u]^-{sp^*_{P^f}(b^*F)}_-\cong  &.
}
\eeq
\end{tm}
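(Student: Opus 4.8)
The plan is to combine two inputs, both already at hand. The first is that the three filtered specialization morphisms appearing as the vertical arrows of (\ref{mttc}) are defined and are filtered isomorphisms. The second is that the commutative diagram (\ref{oras}) of Lemma \ref{aristo}, evaluated at $F$, exhibits these three maps as the components of a morphism of filtered distinguished triangles from the triple at $s$ to the triple at $t$; the statement then follows, and may be viewed as an enhancement of Corollary \ref{r55} by the column coming from the closed subvariety $Z$ together with the upgrade of the two rows to filtered distinguished triangles. For the first input, one observes that each of (I), (II), (III) forces the vanishings $\phi f_*F=0$, $\phi f_*a^!F=0$, $\phi f_*b^*F=0$ and $\phi b_*f_*b^*F=0$: the first two are part of each hypothesis set --- under (III), $\phi f_*F=0$ follows by applying $\phi$ to the attaching triangle $a_*a^!f_*F\to f_*F\to b_*b^*f_*F$ on $Y$, whose outer terms restrict on $Y_s$ to complexes killed by (III)(d) and (III)(e) --- and then the same triangle gives $\phi f_*b^*F=b^*\phi f_*F=0$ and $\phi b_*f_*b^*F=\phi b_*b^*f_*F\cong a_*(\phi f_*a^!F)[1]=0$. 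Granting these: $sp^*_{P^f}(F)$ is provided by Theorem \ref{crit}.(1) under (I), where $v_Y$ and $f$ are proper and $\phi f_*F=0$, and by Theorem \ref{crit}.(6) under (II) and (III), where $f$ is projective, $F$ and $i^*F[-1]$ are semisimple by (II)(b)/(III)(b), and $\phi v_*f_*F=v_*\phi f_*F=0$; $sp^*_{P^f}(a^!F)$ is obtained the same way, applied to $a^!F$ along the proper composite $Z\to W\to S$, using under (II) and (III) that $a^!F$ is semisimple (from that of $a^!F[1]$) and that $i^*a^!F$ is semisimple, the latter because Lemma \ref{gt500} --- applicable since $\phi f_*F=\phi f_*a^!F=0$ --- identifies it with the semisimple $a^!i^*F$; and $sp^*_{P^f}(b^*F)$, for which $X^o\to S$ need not be proper so that Theorem \ref{crit} does not apply, is provided by Proposition \ref{r54}, whose condition (A) holds under (I) and (II) and whose condition (B) holds under (III).

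For the second input, evaluate (\ref{oras}) at $F$. Read horizontally, its rows are the long exact sequences of the triples $(Z_\bullet,X_\bullet,X^o_\bullet)$ as displayed in (\ref{mttc}): at $\bullet=s$ this is immediate, and at a general point $\bullet=t$ it comes from the identification $\psi\,v_*f_*(-)\simeq R\Gamma(-_t,t^*(-))$ of filtered objects (Fact \ref{jo} together with (\ref{r405})), after choosing $t$ general for the finite collection $F,\,f_*F,\,a^!F,\,f_*a^!F,\,b^*F,\,f_*b^*F$. Read vertically, the three columns are $sp^*_{P^f}(a^!F)$, $sp^*_{P^f}(F)$ and $sp^*_{P^f}(b^*F)$ by Definition \ref{defspf1} and formula (\ref{spz1bix}) --- compare the boxed portion in Remark \ref{boxfilp} --- provided that two points are checked. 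First, the maps in the $Z$-column bearing the labels $\delta^{i^*}$ and $i^*a_!a^!\to a_!a^!i^*$, namely $\delta^*_{\leq\bullet}$ for the special fibre applied to $f_*F$ and the natural transformation $i^*a^!\to a^!i^*$ evaluated on $\ptd{\bullet}f_*F$, must be isomorphisms; the former is Lemma \ref{kij}.(3) and the latter is Lemma \ref{gt500}, both using $\phi f_*F=0$ and $\phi f_*a^!F=0$. Second, the filtration the diagram induces on the $Z$-column must be the intrinsic perverse Leray filtration $P^{f_s}$ on $R\Gamma(Z_s,i^*a^!F)$, respectively $P^{f_t}$ on $R\Gamma(Z_t,t^*a^!F)$, shifted by $(-1)$ as in (\ref{conv}) so as to absorb the fact that $a^!$ raises perversity by one, cf. (\ref{isrt1}); manipulating truncations and shifts by means of (\ref{bnnb}) and (\ref{tsh}), this amounts precisely to the requirement that the morphisms of type $\delta^!$ for the boundary Cartier divisor $W\hookrightarrow Y$ applied to $f_*F$, and for $W_s\hookrightarrow Y_s$ applied to $i^*f_*F$, be isomorphisms.

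This second point is the only place where (I), (II) and (III) diverge. Under (I) it is Proposition \ref{iotanoc}: hypothesis (I)(a) says $f_*F$ has no constituent supported on $W$, and (I)(b) says $i^*f_*F=f_*i^*F$ has none supported on $W_s$. Under (II) and (III) it is Proposition \ref{crux}: $f$ is projective and $F$, $a^!F[1]$ are perverse semisimple, so the $\delta^!$'s for $W\hookrightarrow Y$ on $f_*F$ are isomorphisms (and, incidentally, $f_*F$ has no constituent on $W$), while $i^*F[-1]$ and $a^!i^*F$ are perverse semisimple --- apply Proposition \ref{crux} to the perverse semisimple complex $i^*F[-1]$ with $\iota=a_s$, whose $\iota^![1]$ is $a^!i^*F$ --- so the $\delta^!$'s for $W_s\hookrightarrow Y_s$ on $i^*f_*F$ are isomorphisms. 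In all three cases, commutativity of (\ref{oras}) then makes the three filtered specialization morphisms the components of a morphism of filtered distinguished triangles, and by the first input each of the three components is a filtered isomorphism; this is exactly (\ref{mttc}).

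The conceptual content is modest: Theorem \ref{crit}, Propositions \ref{iotanoc}, \ref{crux} and \ref{r54}, and Lemmas \ref{gt500} and \ref{aristo} carry the weight and are already in place. The expected main obstacle is the bookkeeping in the second step --- marshalling the base change morphisms, the morphisms of type $\delta$ and $\delta^!$, the adjunction triangles and the several perversity shifts carefully enough to be sure that what sits inside (\ref{oras}) really is the long exact sequence of the triple $(Z_\bullet,X_\bullet,X^o_\bullet)$ equipped with the correct perverse Leray filtrations and the precise $(-1)$-shift on the $Z$-term, and keeping the three parallel arguments (I), (II), (III) aligned so that each supplies exactly the hypotheses required by Propositions \ref{iotanoc}, \ref{crux} or \ref{r54}. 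The transverse identification $i^*a^!\simeq a^!i^*$ on $f_*F$ and the determination of the shift are the steps that genuinely use the shape of the hypotheses.
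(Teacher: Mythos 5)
Your overall architecture coincides with the paper's: evaluate diagram (\ref{oras}) at $F$, identify the three columns with the diagrams defining $sp^*_{P^f}(a^!F)$, $sp^*_{P^f}(F)$ and $sp^*_{P^f}(b^*F)$ (with the shift $P^f(-1)$ on the $Z$-column coming from the $\delta$-isomorphisms for the boundary divisor, supplied by Proposition \ref{iotanoc} under (I) and Proposition \ref{crux} under (II) and (III)), show that all vertical arrows are isomorphisms, and read the rows as the filtered triangles of the triples. Your treatment of (I) and (II) is essentially the paper's; the only variation is that you obtain the $X^o$-column from Proposition \ref{r54} where the paper uses the Five Lemma on the rows, and both work.

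The genuine gap is in case (III). You assert that (III) forces $\phi f_*F=0$ and $\phi b_*f_*b^*F=0$ by applying $\phi$ to the attaching triangle $a_*a^!f_*F\to f_*F\to b_*b^*f_*F$ and killing the outer terms with (III)(d) and (III)(e). But (III)(d) is the vanishing of $\phi_{v_{Y^o}}(f_*b^*F)$ on $Y^o_s$, and since $b$ is an open immersion, not proper, $\phi_{v_Y}\circ b_*\neq b_*\circ \phi_{v_{Y^o}}$: the complex $\phi_{v_Y}(b_*f_*b^*F)$ restricts to zero on $Y^o_s$ but may well be nonzero on $W_s$. (Your derivation is also circular: you need $\phi b_*b^*f_*F=0$ to conclude $\phi f_*F=0$, and then deduce $\phi b_*b^*f_*F=0$ from $\phi f_*F=0$.) If $\phi f_*F=0$ were a consequence of (III), hypothesis (III)(c), which lives on $S$ rather than on $Y$, would be pointless and (III) would collapse into (II). Without $\phi f_*F=0$, several of your steps under (III) fail as stated: Lemma \ref{kij}.(3) for the special-fibre $\delta$'s (one must use Proposition \ref{crux} with (III)(a,b) instead, as you already do for the boundary $\delta$'s); Lemma \ref{gt500} for $i^*a^!\to a^!i^*$ on $\ptd{\bullet}f_*F$ (the paper instead deduces this arrow from the already-established isomorphy of the $X^o$-column between rows $R_3$ and $R_4$, by two-out-of-three on the rows of (\ref{oras})); and Theorem \ref{crit}.(6) for $sp^*_{P^f}(F)$ (the required $\phi v_*f_*F=0$ does hold under (III), but only by first pushing the attaching triangle to $S$ and using (III)(c) and (III)(e) there; the paper sidesteps this by recovering the middle column last, via the Five Lemma from the other two). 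A smaller but real defect, present already under (II): to feed Theorem \ref{crit}.(6) the semisimplicity of $i^*(a^!F)[-1]$ on $Z_s$ you invoke Lemma \ref{gt500} with the hypotheses $\phi f_*F=\phi f_*a^!F=0$, but those only give the commutation of $i^*$ past $a^!$ on $\ptd{\bullet}f_*F$ over $Y$; commuting them on $F$ itself across $Z\subset X$ would require $\phi F=0$ and $\phi a^!F=0$, which are only the parenthetical sufficient conditions, not the stated hypotheses. The paper never needs $sp^*_{P^f}(a^!F)$ as a standalone application of Theorem \ref{crit} on $Z/W/S$: it shows directly that the $Z$-column, after its identifications (i)--(iii), is the defining diagram of that morphism with filtration $P^f(-1)$ and that all its arrows are isomorphisms.
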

\begin{proof} 
We  plug $F\in D^b_c(X)$ in diagram (\ref{oras}).
 We still denote the resulting diagram by (\ref{oras}) in what follows.  Then (\ref{oras}) consists of three columns, $C_1, C_2$ and $C_3,$ and six rows $R_1, \ldots , R_6.$
Each row is a distinguished triangle. The vertical arrows yield morphisms of distinguished triangles.

The goal is to prove that, under the assumptions of the theorem: all the vertical arrows   in (\ref{oras}) are isomorphisms.
In fact, then, by considering the compositum of the arrows from $R_6$ to $R_1$,  and in view of (\ref{r405}), we obtain the following system of isomorphisms
of distinguished triangles:
\beq\la{iola}
\xymatrix{
v_* a_! \ptd{\bullet +1} f_*  t^*[-1] a^! F \ar[r]   &
v_*  \ptd{\bullet} f_*  t^*[-1] F  \ar[r]  &
v_* b_* \ptd{\bullet} f_*  t^*[-1] b^* F  \ar@{~>}[r] &
\\
\psi [-1] v_* a_! \ptd{\bullet +1} f_*   a^! F \ar[r]  \ar[u]_-\simeq^{(\ref{r405})}&
\psi [-1] v_*  \ptd{\bullet} f_*  F  \ar[r]  \ar[u]_-\simeq^{(\ref{r405})}&
\psi [-1] v_* b_* \ptd{\bullet} f_*  b^* F \ar@{~>}[r]   \ar[u]_-\simeq^{(\ref{r405})} &
\\
v_* a_! \ptd{\bullet +1} f_*  i^*[-1] a^! F \ar[r]  \ar[u]_-\simeq^-{sp^* (\ptd{\bullet +1} a^!F)} &
v_*  \ptd{\bullet} f_*  i^*[-1] F  \ar[r] \ar[u]_-\simeq^-{sp^* (\ptd{\bullet} F)} &
v_* b_* \ptd{\bullet} f_*  i^*[-1] b^* F  \ar@{~>}[r]  \ar[u]_-\simeq^-{sp^* (\ptd{\bullet} b^*F)} &,
}
\eeq
which yields the desired conclusion (\ref{mttc}).

We first prove that (I) implies that  desired goal that all the vertical arrows are isomorphisms.

The plan of the proof  is as follows.
 Prove that  columns $C_1, C_2$ and $C_3$ coincide  with diagrams (\ref{boxs1}) 
involved in the definitions  of $sp_{P^f}^*(a^!F), sp_{P^f}^*(F)$ and $sp_{P^f}^*(b^*F)$, respectively.
Prove that all arrows in said columns are isomorphisms, so that all three morphisms $sp^*_{P^f}$
are defined and isomorphisms, and the composition of the arrows from $R_6$ to $R_1$ yields (\ref{mttc}).

{\bf Step 1:} we carry out the plan for $C_1$. In this case,  $P^f(-1)$ will enter the picture naturally.

Let us   identify $C_1$  with  (\ref{boxs1})  for $a^!F$.
In order to accomplish this, we need to show that:

\ben
\item[(i)]
  the top of  $C_1$, i.e.  $\psi [-1] v_* a_! a^! \ptd{\bullet} f_*F,$  coincides with $\psi [-1] v_* a_! \ptd{\bullet +1} f_* a^!F$;
\item[(ii)]  the morphism $v_*i^*[-1] a_! a^! \ptd{\bullet} f_* F \to  v_* a_! a^! i^*[-1] \ptd{\bullet} f_* F$ is an isomorphism;
\item[(iii)]  the bottom of $C_1$, i.e. $v_*a_! a^! \ptd{\bullet} f_* i^* [-1] F,$ coincides with $v_! a_! \ptd{\bullet +1} f_* i^*[-1] a^!F.$ 
\een

To prove (i) it is enough to prove that $a^! \ptd{\bullet} f_*F$ coincides with $ \ptd{\bullet} f_*a^!F.$
This follows from the fact that the natural morphism (\ref{j11a}) of type $\delta^{a^!}$  for the closed embedding $a$ is an isomorphism
in view of our assumptions that $f_*F$ has no constituents supported on $W,$ so that Proposition \ref{iotanoc} applies.

For (ii), we argue as follows. Since $i^*a_!=a_!i^*,$ is is enough to prove that $i^*a^! \ptd{\bullet} f_*F$ coincides with
$a^!i^* \ptd{\bullet} f_*F.$ This follows from our vanishing hypotheses and from  Lemma \ref{gt500}, in view of
the assumptions  $\phi f_*F=0$ and $\phi f_*a^!F=0$.

For (iii), we argue as we have done for (i), by using the assumption that $f_*i^* F$ has no supports on $W_s.$

We have shown that $C_1$ agrees with (\ref{boxs1})  for $a^!F$.
Since $f$ and $v$ are proper, we have that $sp^*_{P^f} (a^!F)$ is defined. 

Let us now prove that all arrows in $C_1$ are isomorphisms.

The base change maps are isomorphisms.
The assumption that $\phi F=0$ implies $\phi f_* F=0$, so that $\delta^{i^*}$ is an isomorphism by Lemma \ref{kij}.(3).
Finally, we claim that $\s^*$ on the top of $C_1$ is an isomorphism. This follows from the fact that its cone 
$\phi v_* a_! a^! \ptd{\bullet} f_* F=0$: in fact, by using the same kind of argument employed in  (i) above, this is implied by  the assumption $\phi f_* a^!F=0$. 

This completes the proof of Step 1:
all the vertical arrows in $C_1$ are isomorphisms and give rise  to $sp^*_{P^f} (a^!F)$, which is an  isomorphism for the shifted $P^f(-1)$'s.

{\bf Step 2.}
Clearly, $C_2$ is on the nose the collection  of  morphisms  in (\ref{boxs1}) involved in the definition of $sp^*_{P^f} (F)$.
Moreover, in view of our hypotheses, Theorem \ref{crit}.(1) applies and $sp^*_{P^f}(F)$ is defined and an isomorphism.
In particular, all the arrows in $C_2$ are isomorphisms.

{\bf Step 3:} we carry out the plan for $C_3$ by  following the template of Step 1.

Since $b^*$ is \'etale, $b^*$ is  $\frak t$-exact and it commutes with $f_*$. Clearly, $b^*$ commutes with $i^*$. We 
thus  have the identities:
\beq\la{trzx}
\psi[-1] v_* b_*b^* \ptd{\bullet} f_*=\psi[-1] v_* b_* \ptd{\bullet} f_*b^*, \quad 
v_*  b_*b^* \ptd{\bullet} f_* i^*[-1] = v_*  b_* \ptd{\bullet} f_* i^*[-1] b^*.
\eeq
This implies that the top and bottom of $C_3$ give rise to the  domain and target of the putative $sp^*_{P^f} (b^*F)$
as in Step 1.

Since the morphisms of rows are morphisms of distinguished triangles and the vertical arrows
in $C_1$ and in $C_2$ are isomorphisms, then, by the Five Lemma,  so are the ones in $C_3$. 

This concludes Step 3 and we have shown that (I) implies the desired goal  that all the vertical arrows are isomorphisms.

We  prove that (II) implies the same desired goal.

To do so, it is enough to prove that (II) implies (I).
Since (c) and (d) are common to (I) and (II), we need to show that (II) implies (Ia) and (Ib).
By Proposition \ref{crux}, the assumptions $f$ projective, $F$  and $a^!F[1]$ perverse semisimple imply
(Ia). For the same reason, the assumptions $f$ projective, $i^*F[-1]$ and $a^!i^*F$ perverse semisimple imply (Ib). The desired goal is thus met.

Note that instead of assuming that $a^!F[1]$ (resp. $a^!i^* F$) is perverse semisimple, it is enough to assume that $f_* a^! F[1]$ (resp. $f_*a^!i^* F$)
satisfies the conclusion of the relative Hard Lefschetz Theorem; see Remark \ref{baco}.

We prove that   (III) implies the desired goal  that all the vertical arrows are isomorphisms.

We go back to (\ref{oras}). 
Again, the goal is to prove  that the bottom and top rows are as in the proof
that (I) implies the desired conclusion, and that all vertical arrows are isomorphisms.

The top and bottom of $C_2$ are already in the desired form. 

The top and bottom of $C_3$ are already in the desired form;  see (\ref{trzx}), which   uses only that $b^*$ is $\frak t$-exact  and that it commutes with $f_*$.

By Remarks \ref{eelui} and \ref{rv03}, the base change morphism $bc^{i^*b_*}$ in (\ref{oras}) is an isomorphism.
As in the case of (II), all the morphisms of type $\delta$, connecting $R_5$ to $R_4$ are isomorphisms.
The base change morphisms for $i^*f_*$ and $i^*v_*$ are also isomorphisms in all columns, including $C_3$.

All the arrows in $C_3$ are thus  isomorphisms as soon as $\s^*$ is. This follows from the hypothesis that 
$\phi v_*b_*f_* b^* F=0$ as follows.
Since $F$ is semisimple, so is $b^*F$.
The Decomposition Theorem implies that $\phi v_* b_* \ptd{\bullet} f_* b^* F$ is a direct summand
of $\phi v_* b_*  f_* b^* F=0$, so that $\phi v_* b_*b^* \ptd{\bullet} f_* F =\phi v_* b_* \ptd{\bullet} f_* b^* F= 0$,
where we have used the  $\frak t$-exactness of $b^*$ and proper base change. The cone of $\s^*$ in $C_3$ is zero and $\s^*$ in $C_3$ is an isomorphism.

We have proved that  $C_3$ has the desired form and that all of the arrows in $C_3$ are isomorphisms, so that
$sp_{P^f}(b^*G)$ is an isomorphism.
 
Since the base change arrow $bc^{i^*v_*}$ is an isomorphism in $C_3$ it follows that all arrows connecting $R_3$ to $R_4$
are isomorphisms. In particular, the arrow denoted by  $i^* a_! a^! \to a_!a^! i^*$ is an isomorphism.

The arrow $\s^*$ in $C_1$ is an isomorphism if and only if $\phi v_* a_! a^! \ptd{\bullet} f_*F=0$, which we now prove.
As in the case of (II), we have that $f_*F$ has no constituents supported on $W$, so that $\phi v_* a_! a^! \ptd{\bullet} f_*F=
\phi v_* a_!  \ptd{\bullet +1} f_* a^!F=  v_* a_!  \ptd{\bullet +1} f_* \phi a^!F=0$, by the assumption $0=\phi f_*a^!F=
f_*\phi a^! F$ ($f$ is proper).

What above also implies that the top of $C_1$ is $\psi [-1] v_* a_! \ptd{\bullet +1} f_* a^!F$, i.e. the system
giving rise to the  target of the specialization map. As to the bottom of $C_1$, since all arrows are isomorphisms, it is identified with 
$v_* i^* [-1] a_! a^! \ptd{\bullet} f_* F$, which, by what above, is identified with 
$v_* a_! i^* [-1] \ptd{\bullet +1} f_* a^! F$, which --using the fact that $f_* a^! F$ has no constituents supported on 
$W_s$ by virtue of the hypotheses that $f$ is projective,  $a^!F[1]$ and $i^* a^! F$ are perverse  semisimple,
coupled with Proposition \ref{crux}-- equals  $v_* a_!  \ptd{\bullet +1} f_* i^* [-1] a^! F$, i.e. the system giving rise to the  source of $sp^*_{P^f} (a^!F)$. 

We have proved that $C_1$ has the desired form and that  all arrows in $C_1$ are isomorphisms.
We have already proved that the same holds for $C_3$.  As seen earlier by using the Five Lemma, it follows that the same holds for $C_2$.
The conclusion follows.
\end{proof}

\begin{rmk}\la{bbb}
Condition (I) in Proposition \ref{sonoloro} is met if: $f$ is projective,  $F$ is perverse semisimple on $X$ and  $a^*[-1]F$
is perverse semisimple on $Z$. Condition (II)  in Proposition \ref{sonoloro} is met if: $f$ is projective,  $i^*F[-1]$ is perverse semisimple on $X_s$ and  $a^*i^*[-2]F$
is perverse semisimple on $Z_s$. Both cases follow from Proposition \ref{crux}.
\end{rmk}

\begin{rmk}\la{bbbo}
Proposition \ref{sonoloro}  yields a different proof, under different hypotheses, of  the conclusions of both Proposition \ref{r54} and Corollary
\ref{r55}.
\end{rmk}

\begin{rmk}\la{ert}
{\rm The toy-model for Theorem \ref{sonoloro} is   when we assume
that  $v_X, v_Y$, $v_Z$, $v_W$ and $f$ are proper and smooth.   We do not assume that $v_{Y^o}$ is proper.
In this case, we leave to the reader to verify that  if we set $F=\rat_X$, and we consider the usual Leray filtration on the cohomology
of $X_s$ and $X_t$ ($Z_s, Z_t$, resp.), with respect to $f_s:X_s \to Y_s$ and $f_t:X_t \to Y_t,$ ($f_s:Z_s \to W_s$ and $f_t:Z_t \to W_t$, resp.), then
we end up with: the specialization morphisms are defined, they are filtered isomorphisms,  and they fit in the   long exact Gysin sequences: (in what follows $\star$ is arbitrary, but fixed)
\beq\la{gys}
\xymatrix{
\ldots \ar[r] & L_{\star -2} H^{\bullet -2} (Z_t,\rat) \ar[r]^-{\rm Gysin} & L_\star H^{\bullet} (X_t,\rat) \ar[r] &
L_\star H^{\bullet}(X_t^o,\rat) \ar[r]^-{+1} & \ldots
\\
\ldots \ar[r] & L_{\star -2} H^{\bullet -2} (Z_s,\rat) \ar[r]^-{\rm Gysin} \ar[u]_-{{\rm sp}_Z} & 
L_\star H^{\bullet} (X_s,\rat) \ar[r]  \ar[u]_-{{\rm sp}_X} &
L_\star H^{\bullet} (X_s^o,\rat) \ar[r]^-{+1} \ar[u]_-{{\rm sp}_{X^o}}  & \ldots
}
\eeq
where $L$ stands for the increasing classical Leray filtration (which classically starts at zero).
}
\end{rmk}

\section{Applications to the Hitchin morphism}\la{apell}$\;$

In this section we apply the results of \S\ref{spsfi}, especially equation (\ref{mttc}) in  Theorem \ref{sonoloro}
to the triples  arising from the compactification of Dolbeault moduli spaces in families 
\ci{decomp2018}. The main result is Theorem \ref{lezse}, to the effect that the triple given by the Dolbeault moduli  spaces over a curve, its compactification and the boundary gives rise to  a specialization morphism of long exact sequences for the   (intersection) cohomology of the special and general points that is a filtered isomorphisms
of triples for the perverse Leray filtrations. This is what one would obtain if we were in the oversimplified and  ideal situation of a  fiber bundle with boundaries, and we were to take the Leray filtrations.

The compactification \ci{decomp2018}  is obtained by taking a $\Gm$-quotient; this is quickly reviewed in
\S\ref{codomosp}.
In order to work with such quotients, we need some results on descending objects and properties along these quotients, which may be of some independent interest; see \S\ref{desclem}. \S\ref{sppro} contains some special properties of the constant sheaf and of the intersection cohomology complex for the quotients we obtain.
Finally, we put everything together in \S\ref{icq}, where we prove Theorem \ref{lezse}.

\subsection{Descending along \texorpdfstring{$\Gm$}{gm}-quotients}\la{desclem}$\;$

In this section, we assume we have two Cartesian diagrams of morphisms over a variety $S$ as in the compactification Set-up \ref{setzxw}:
\beq\la{sicomp1}
\xymatrix{
\ms{Z}  \ar[d]^-{\hh} \ar[r]^-a &    {\ms{X}}   \ar[d]^-{\hh}  &    \ms{X}^o \ar[d]^-{\hh} \ar[l]_-b &
Z  \ar[d]^-{\hh} \ar[r]^-a &   X   \ar[d]^-{\hh}  &   X^o  \ar[d]^-{\hh} \ar[l]_-b 
\\
\ms{W}   \ar[r]^-a & {\ms{Y}}  &      \ms{Y}^o \ar[l]_-b &
W   \ar[r]^-a & Y  &      Y^o \ar[l]_-b,
}
\eeq
such that: $\Gm$-acts equivariantly on the l.h.s. with finite stabilizers;  the actions cover the trivial action over $S;$
the r.h.s. is obtained by taking the quotient under the $\Gm$-action; the quotient morphisms $\pi$ are geometric quotients.

\begin{lm}\la{zz0}
Each quotient morphism $\pi$ above   factors as $\pi = qp,$ where $p$ is a quotient $S$-morphism by a finite 
subgroup $C$ of $\mathbb G_m,$
and $q$ is a smooth quotient $S$-morphism by a $\mathbb G_m$-action, so that, for example, we have:
\beq\la{k1}
\xymatrix{
\pi: \ms{X} \ar[r]^-p & \ms{X}':=\ms{X}/C \ar[r]^-q& X=\ms{X}/\Gm =\ms{X}'/\Gm'(:=\Gm/C).
}
\eeq
\end{lm}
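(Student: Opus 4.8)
The plan is to exploit the standard structure theory of $\mathbb G_m$-actions with finite stabilizers, together with the existence of geometric quotients. The key observation is that a $\mathbb G_m$-action with finite stabilizers on a variety, once we know a geometric quotient exists, is, after quotienting by a suitable finite cyclic subgroup, a \emph{free} action, and free $\mathbb G_m$-actions yield smooth geometric quotients (the quotient map is a $\mathbb G_m$-torsor, hence smooth). So the whole content is: (i) identify the finite subgroup $C$ that is the ``common kernel'' of the action, (ii) check that $\mathbb G_m/C$ acts freely on $\mathscr X' := \mathscr X/C$, (iii) conclude smoothness of $q$ from freeness.

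First I would argue that, since $\mathbb G_m$ acts with finite stabilizers on the (finitely many, or finite-type) varieties in the diagram (\ref{sicomp1}), and all these actions are compatible via the $\mathbb G_m$-equivariant morphisms, there is a single finite subgroup $C \subseteq \mathbb G_m$ that is contained in every stabilizer; concretely, $C = \bigcap_{x} \operatorname{Stab}(x)$ over all points $x$ in $\mathscr X$ (say), which is a closed subgroup scheme of $\mathbb G_m$ of positive codimension, hence finite cyclic, say $\mu_d$. One must check $C$ is nonempty to make the statement nontrivial; if the generic stabilizer is trivial, then $C$ is trivial and the lemma reads $\pi = q$ with $q$ smooth, which is still the content. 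Having fixed $C$, set $\mathscr X' := \mathscr X/C$ with quotient map $p$; since $C$ is a finite group acting on a variety, $p$ is a finite morphism and a geometric quotient in the usual sense. The residual group $\mathbb G'_m := \mathbb G_m/C \cong \mathbb G_m$ then acts on $\mathscr X'$, and by construction of $C$ as the common part of all stabilizers, the induced $\mathbb G'_m$-action on $\mathscr X'$ has \emph{trivial} stabilizers everywhere, i.e. it is free.

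Next I would invoke the fact that a free action of $\mathbb G_m$ on a variety, admitting a geometric quotient $q : \mathscr X' \to X$, realizes $q$ as a $\mathbb G_m$-torsor (a principal $\mathbb G_m$-bundle) in the fppf, hence Zariski, topology — this is a standard consequence, e.g. via Luna's slice theorem or the general theory of quotients by free reductive group actions (one can cite the relevant statement from the references for \ci{decomp2018}, since the quotient there is constructed exactly this way). A $\mathbb G_m$-torsor is a smooth (indeed faithfully flat with smooth, in fact affine-space, fibers) morphism, so $q$ is smooth. The factorization $\pi = q \circ p$ is then immediate from the universal property of the quotient: $p$ is $C$-invariant, $q$ is $\mathbb G'_m$-invariant, and the composite is $\mathbb G_m$-invariant and coincides with $\pi$ because both are geometric quotients of $\mathscr X$ by $\mathbb G_m$. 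Finally, all of this is carried out $S$-equivariantly since every action covers the trivial action on $S$, so $p$, $q$, and the intermediate space $\mathscr X'$ are all naturally over $S$, and the same construction applied simultaneously to $Z, X, X^o$ (and $W, Y, Y^o$) is compatible with the morphisms $a, b, f$ because these are $\mathbb G_m$-equivariant, hence descend at each stage.

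The main obstacle I anticipate is the precise justification that the residual free $\mathbb G'_m$-action yields a \emph{smooth} (torsor) quotient rather than merely a geometric quotient with finite or trivial stabilizers — i.e., ruling out pathological non-flat quotients. In the algebraic category over $\mathbb C$ this follows from Luna-type results or from the fact that $\mathbb G_m$ is special (every $\mathbb G_m$-torsor is Zariski-locally trivial), but one should be careful to state exactly which hypothesis on the geometric quotient (e.g. that it is a good/geometric quotient in Mumford's sense, with $\mathscr X'$ covered by $\mathbb G'_m$-invariant affines) is being used; a secondary, purely bookkeeping point is checking that $C$ can be taken uniformly across all six varieties in (\ref{sicomp1}), which follows because the equivariant morphisms force the stabilizer of a point to contain the stabilizer of its image, so the common finite part on $\mathscr X$ already does the job for everything mapping out of it, and one handles $\mathscr W, \mathscr Y, \mathscr Y^o$ symmetrically (or simply notes they receive $C$ from the $f$-total spaces).
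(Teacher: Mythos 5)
There is a genuine gap, and it is in the very first step: you have the containment backwards in your choice of $C$. You take $C=\bigcap_x \operatorname{Stab}(x)$, the \emph{common kernel} of the action, i.e.\ the largest finite subgroup \emph{contained in} every stabilizer, and then assert that the residual $\Gm/C$-action on $\ms{X}/C$ is free. That assertion is false in general: for $[x]\in\ms{X}/C$ one computes $\operatorname{Stab}_{\Gm/C}([x])=\operatorname{Stab}(x)/C$, which is trivial only if all stabilizers are equal to $C$. If the stabilizers vary (say generic stabilizer trivial but $\mu_2$ along a closed subset), your $C$ is trivial, nothing has been killed, and the quotient map can genuinely fail to be smooth --- e.g.\ $\Gm$ acting on $\mathbb A^2\setminus\{0\}$ by $t\cdot(x,y)=(tx,t^2y)$, where the quotient map onto $\p(1,2)$ is not smooth along $\{x=0\}$. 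Your own parenthetical remark (``if the generic stabilizer is trivial, then $C$ is trivial and the lemma reads $\pi=q$ with $q$ smooth'') is exactly the case where the argument breaks.

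The correct choice, which is what the paper makes, is the opposite one: take $C$ to be a finite subgroup of $\Gm$ \emph{containing} all the stabilizers. This exists because the stabilizers are finite subgroups of $\Gm$, hence of the form $\mu_d$, and in a finite-type situation the orders $d$ are bounded (the stabilizer group scheme is quasi-finite of finite type over $\ms{X}$), so $C=\mu_N$ with $N$ a common multiple works, uniformly for all six varieties in (\ref{sicomp1}) by equivariance of the maps. With this $C$ one gets $\operatorname{Stab}_{\Gm/C}([x])=\operatorname{Stab}(x)/(\operatorname{Stab}(x)\cap C)=\operatorname{Stab}(x)/\operatorname{Stab}(x)$, which is trivial, so the residual action is indeed free. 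From that point on your argument is sound and agrees with the paper's: $p$ is finite as a quotient by a finite group, and the freeness of the residual action gives smoothness of $q$ (the paper cites Luna's \'etale slice theorem to see $q$ as \'etale-locally a projection with fiber $\Gm$; your torsor formulation is an acceptable alternative). So the fix is local but essential: replace ``contained in every stabilizer'' by ``containing every stabilizer'' and justify existence by boundedness of the stabilizer orders.
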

\begin{proof}
Let  $C\subseteq \Gm$ be any finite subgroup  containing all the stabilizers (recall that we are in a finite type situation).
The variety  $\ms{X}':= \ms{X}/C$  over $S$ is endowed with the induced $\Gm':= \Gm/C\simeq \Gm$-action, and  we have  diagram (\ref{k1}).
The quotients morphism $p$ is finite.  By Luna \'Etale Slice Theorem,  the quotient morphism $q$ is smooth:  the 
$\mathbb G'_m$-action has trivial stabilizers,  so that $q$ is 
\'etale locally a projection map with scheme-theoretic fiber $\mathbb G_m$.
\end{proof}

We record the following remark for use in the proof of  Lemma \ref{jhhje}.

\begin{rmk}\la{forlu}
The following commutative diagrams are Cartesian in the category of topological spaces:
\beq\la{mm1}
\xymatrix{
\ms{Z} \ar[r]^-a \ar[d]_-p 
& \ms{X} \ar[d]_-p
& \ms{X}^o \ar[l]_b \ar[d]_-p
\\
\ms{Z'} \ar[r]^-a \ar[d]_-q 
& \ms{X'} \ar[d]_-q
& \ms{X'}^o \ar[l]_b \ar[d]_-q
\\
Z \ar[r]^-a  
& X 
& X^o \ar[l]_b. 
}
\eeq
The same is true if we replace $(Z,X,X^o)$ etc. with $(W,Y,Y^o)$ etc.
\end{rmk}

\begin{lm}\la{gao} {\rm ({\bf Descending the lack of constituents on the boundary  to a quotient by $\mathbb G_m$})}
Let $\ms{G} \in D(\ms{Y})$ and $G \in D^b_c(Y)$  be such that $q^* G$ is a direct summand of $p_* \ms{G}.$ If
$\ms{G}$ has no constituents supported on the boundary  $\ms{W}$, then  $G$ has no constituents supported on the boundary $W$.
\end{lm}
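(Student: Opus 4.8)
The plan is to reduce the statement for $q$ (a smooth quotient by a free $\mathbb G_m$-action, via Lemma \ref{zz0}) to a statement about $p$ (a finite quotient), and then handle the finite case by a trace/averaging argument. Since $q$ is smooth with one-dimensional fibers $\mathbb G_m$, the pullback $q^*$ shifted by $[1]$ is perverse $\mathfrak t$-exact, so $q^*[1]$ takes simple perverse sheaves to semisimple perverse sheaves (all of whose constituents are $\mathbb G_m$-equivariant). Concretely, $q^* IC_T(L)[1] = IC_{q^{-1}T}(q^*L)$ up to the locally constant shift discrepancy between $IC$ and $\mathcal{IC}$; the point is that $q^*$ of a complex with no constituents supported on $W$ has no constituents supported on $q^{-1}(W)=\mathcal W'$, and conversely, because $q$ is a smooth surjection, the support of any constituent of $q^*G$ is exactly the preimage of the support of the corresponding constituent of $G$ (here one uses that $q^*[1]$ is exact and faithful on the relevant subquotients, so constituents correspond bijectively). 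Hence it suffices to prove: if $\mathcal G$ has no constituents on $\mathcal W$, then $p_*\mathcal G$ has no constituents on $\mathcal W' = p(\mathcal W)$, and then transport this down through $q^*$.

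So the crux is the finite quotient $p:\mathcal Y\to\mathcal Y'=\mathcal Y/C$. First I would observe that $p$ is finite, hence $p_*=p_!$ is $\mathfrak t$-exact and commutes with Verdier duality, and moreover $p^*p_*$ contains the identity as a direct summand (the averaging/trace idempotent $\frac{1}{|C|}\sum_{c\in C} c^*$ over rational coefficients), equivalently $\mathcal G$ is a direct summand of $p^*p_*\mathcal G$. Now suppose, for contradiction, that $p_*\mathcal G$ had a constituent $N$ supported on $\mathcal W' = p(\mathcal W)$; since $p_*$ is $\mathfrak t$-exact, $N$ is a constituent of some $\,^{\mathfrak p}\mathcal H^k(p_*\mathcal G) = p_*\,^{\mathfrak p}\mathcal H^k(\mathcal G)$, so we may assume $\mathcal G$ is perverse. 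A constituent $N$ of $p_*\mathcal G$ supported on $\mathcal W'$ appears as a simple subquotient; pulling back via the exact functor $p^*[\dim\text{-shift}]$ — more cleanly, since $p$ is finite étale onto its image generically and in any case $p^*$ is $\mathfrak t$-exact up to the degree-zero relative dimension — $p^*N$ is a (nonzero, since $p$ is surjective) perverse sheaf supported on $p^{-1}(\mathcal W') \supseteq \mathcal W$, and in fact set-theoretically $p^{-1}(\mathcal W') = \mathcal W$ because $\mathcal W = a(\mathcal W)$ is $C$-stable (it is the preimage of the $C$-invariant boundary $W$, via the Cartesian square). Then $p^*N$ is a nonzero subquotient of $p^*p_*\mathcal G$; using that $\mathcal G$ is a direct summand of $p^*p_*\mathcal G$ and a careful Jordan–Hölder bookkeeping (the constituents of $p^*p_*\mathcal G$ are the $C$-translates of constituents of $\mathcal G$, each appearing with controlled multiplicity), one concludes that the constituents of $p^*N$ — which are all supported on $\mathcal W$ — must occur among the constituents of $\mathcal G$, contradicting the hypothesis that $\mathcal G$ has none supported on $\mathcal W$.

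The main obstacle I anticipate is the Jordan–Hölder bookkeeping across the finite quotient: one must argue that no constituent supported on $\mathcal W$ can be "created" in $p^*p_*\mathcal G$ that wasn't already present (up to $C$-translation) in $\mathcal G$. The clean way to see this is: the constituents of $p_*\mathcal G$ are exactly the simple perverse sheaves $M$ on $\mathcal Y'$ such that some constituent of $\mathcal G$ is a constituent of $p^*M$ (adjunction between $p^*$ and $p_*=p_!$ for finite $p$, together with exactness of both), and $\mathrm{supp}(p^*M) = p^{-1}(\mathrm{supp}\, M)$; if $\mathrm{supp}\, M \subseteq \mathcal W' = p(\mathcal W)$ then $\mathrm{supp}(p^*M) \subseteq p^{-1}(p(\mathcal W)) = \mathcal W$ (again by $C$-stability of $\mathcal W$), so every constituent of $p^*M$ is supported on $\mathcal W$, forcing $\mathcal G$ to have such a constituent — contradiction. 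Then the reduction from $G$ to $p_*\mathcal G$ via $q$: since $q^*[\,r\,]$ (where $r$ is the relative dimension, here $1$) is perverse $\mathfrak t$-exact, fully faithful on perverse sheaves up to monodromy considerations, and $q$ is surjective, a constituent of $G$ supported on $W$ would pull back to a constituent of $q^*G$ supported on $q^{-1}(W) = \mathcal W'$, hence to a constituent of the direct summand $q^*G$ of $p_*\mathcal G$ supported on $\mathcal W'$; but we have just shown $p_*\mathcal G$ has none. I would write this last step by checking that $q^*$ preserves "no constituents on the boundary" in both directions, using smoothness of $q$ and that $\mathcal W' = q^{-1}(W)$, which is immediate from the Cartesian diagrams (\ref{sicomp1}) and Remark \ref{forlu}.
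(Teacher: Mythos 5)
Your overall architecture is the same as the paper's: factor $\pi = q\circ p$ as in Lemma \ref{zz0}, handle the smooth $\Gm$-quotient $q$ via the $\frak t$-exactness of $q^*[1]$ and its preservation of simple objects (so that the constituents of the direct summand $q^*G$ of $p_*\ms{G}$ are $q^*[1]$ of the constituents of $G$, with supports equal to preimages), and thereby reduce to showing that $p_*\ms{G}$ has no constituents supported on $\ms{W}'=q^{-1}(W)$. That reduction, and the $q$-step, are fine and match the paper.

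The gap is in the finite-quotient step. Your ``clean'' claim --- that the constituents of $p_*\ms{G}$ are exactly the simple $M$ such that some constituent of $\ms{G}$ is a constituent of $p^*M$, justified by ``adjunction \dots together with exactness of both'' --- does not follow from adjunction alone. Adjunction produces a nonzero Hom only when $M$ is a subobject or a quotient of $p_*\ms{G}_i$ (for $\ms{G}_i$ a simple constituent of $\ms{G}$), whereas a constituent is in general only a subquotient, which adjunction cannot detect: a priori $p_*\ms{G}_i$ could have a ``middle'' Jordan--H\"older factor with strictly smaller support lying inside $\ms{W}'$, with both $\mathrm{Hom}(M,p_*\ms{G}_i)$ and $\mathrm{Hom}(p_*\ms{G}_i,M)$ zero. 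The missing input is that $p_*$ of a simple perverse sheaf is semisimple --- or, better, the paper's stronger observation that $p$, being finite, is small, so $p_*IC_{\ms{T}}(L)=IC_{p(\ms{T})}(L')$; hence every constituent of $p_*\ms{G}_i$ has support exactly $p(\ms{T}_i)$, which is not contained in $\ms{W}'$ because $\ms{T}_i\not\subseteq\ms{W}=p^{-1}(\ms{W}')$. With that in hand no adjunction is needed at all. If you prefer your adjunction route, you must first invoke the Decomposition Theorem for the proper map $p$ to upgrade ``subquotient'' to ``direct summand,'' and then use $p_!\dashv p^!$ together with the left $\frak t$-exactness of $p^!$ (note that $p^*$ is only right $\frak t$-exact for finite $p$, another small inaccuracy in your write-up; likewise the assertion that $\ms{G}$ is a direct summand of $p^*p_*\ms{G}$ is an unnecessary detour that itself requires justification beyond the free case).
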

\begin{proof}
{\em CLAIM: $p_* \ms{G}$ has no constituents supported on $\ms{W}'.$}
Since $p$ is finite, $p_*$ is $\frak t$-exact, so that $p_*$ and $\ph^{\bullet}$ commute. It follows that we may assume that $\ms{G}$ is perverse.
Let $J_\bullet \ms{G}$ be a Jordan-Holder filtration for $\ms{G},$ so that the non trivial  graded objects $Gr^J_\bullet \ms{G}$
are simple, i.e. they are intersection complexes supported at irreducible closed subvarieties of $\ms{Y}$ and with simple 
coefficients. 
By the $\frak t$-exactness of $p_*$, we have that $p_* J_{\bullet} \ms{G}$ is a filtration
and $Gr^{p_*J}_\bullet p_* \ms{G}=p_* Gr^J_\bullet \ms{G}$. Since $p$ is finite, hence small, we have that
$p_*$ sends the  intersection complex of a   closed subvariety $\ms T$ of $\ms{Y}$ with twisted coefficients, to the intersection complex of  the image $p(\ms T)$ with appropriately twisted coefficients.
By construction, we have that  $\ms{W} = p^{-1} (\ms{W}').$
The subvarieties $\ms T$ appearing as the supports of the simple $Gr^J_\bullet \ms{G}$ above are not contained in $\ms{W}$ by assumption, so that their  $p(\ms T)$'s are not contained in $\ms{W'}$.
The claim follows.

By construction, we have that  $\ms{W}' = q^{-1} (W).$
Since $q^*G$ is assumed to be a direct summand of $p_* \ms{G}$, to prove the lemma we need to show 
 that the constituents of $q^*G$ are $q^*[1]$ of the constituents of $G$. But this follows from the fact
 that, since $q$ is smooth of relative dimension one with connected fibers,  $q^*[1]$ is $\frak t$-exact and it preserves simple objects (cf. \ci[p.106, bottom]{bbd}).
\end{proof}

We need the following general result in the proof of Lemma \ref{ga}.

\begin{lm}\la{gr}
Let $C$ be a finite group acting on a variety $X$, let $p: X\to X'$ be the quotient morphism.
 Then: (i)  $p_* \rat_X$ admits $\rat_{X'}$ as a direct summand; 
(ii) $p_* IC_{X}$ admits $IC_{X'}$ as a direct summand.
\end{lm}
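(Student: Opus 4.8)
The plan is to exploit that $p$ is a finite morphism — hence affine, proper, with $p_*=p_!$ being $\frak t$-exact for the perverse $\frak t$-structure, and with $R^ip_*=0$ for $i>0$ so that $p_*\rat_X$ is a constructible sheaf — together with the averaging idempotent attached to the $C$-action. Since $C$ acts on $X$ over $X'$ (i.e. $p\circ c=p$ for all $c\in C$) and both $\rat_X$ and $IC_X$ are canonically $C$-equivariant (the latter because the intersection complex of a subvariety is intrinsic, hence preserved by automorphisms), one gets genuine $C$-actions by automorphisms on $p_*\rat_X$, an object of the abelian category of constructible sheaves on $X'$, and on $p_*IC_X$, an object of the abelian category $P(X')$ by $\frak t$-exactness of $p_*$. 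As $|C|$ is invertible in $\rat$ and the $\operatorname{Hom}$-sets are $\rat$-vector spaces, the operator $e:=|C|^{-1}\sum_{c\in C}c$ is an idempotent in each of these endomorphism algebras; in an abelian category it splits, giving decompositions $p_*\rat_X=\operatorname{im}(e)\oplus\operatorname{im}(1-e)$ and likewise for $p_*IC_X$. The task is then to identify $\operatorname{im}(e)$ with $\rat_{X'}$, resp. with a summand containing $IC_{X'}$.

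For part (i) I would observe that the adjunction unit $\eta\colon\rat_{X'}\to p_*p^*\rat_{X'}=p_*\rat_X$ is $C$-invariant, since $C$ acts trivially on $X'$; hence $e\circ\eta=\eta$ and $\eta$ factors through $\operatorname{im}(e)$. To see that $\eta\colon\rat_{X'}\to\operatorname{im}(e)$ is an isomorphism it suffices to check on stalks: for $x'\in X'$ one has $(p_*\rat_X)_{x'}=\rat^{\,p^{-1}(x')}$ with $C$ permuting the single orbit $p^{-1}(x')$ transitively (as $X'=X/C$), so the image of the averaging projector on this stalk is the diagonal line, which is exactly the image of $\eta_{x'}$. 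Being an isomorphism of constructible sheaves on every stalk, $\eta$ is an isomorphism, and $\rat_{X'}$ is a direct summand of $p_*\rat_X$.

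For part (ii) the key extra ingredient is that $p_*IC_X$ is a \emph{semisimple} perverse sheaf: it is perverse by $\frak t$-exactness of $p_*$, and semisimple by the Decomposition Theorem \ref{dt} applied to the proper morphism $p$ and the semisimple perverse sheaf $IC_X$. Therefore $\operatorname{im}(e)$ is semisimple perverse as well, and to pin it down I would restrict to a dense open $U'\subseteq X'$ over which $p$ is étale (such an open exists since we work over $\comp$) and which is moreover small enough that $p^{-1}(U')$ lies in the regular locus of $X$ and avoids the pairwise intersections of the irreducible components of $X$; these discarded loci are $C$-invariant, closed, and nowhere dense, and their $p$-images are closed and nowhere dense in $X'$, so such $U'$ is still dense. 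Over $U'$ one has $IC_X|_{p^{-1}(U')}=\rat_{p^{-1}(U')}[d]$ for the locally constant dimension function $d$, hence $p_*IC_X|_{U'}=\mathcal L[d']$ with $\mathcal L=(p|_{U'})_*\rat$ a local system whose fibres are permutation $C$-modules and whose monodromy permutes the distinguished bases; consequently $\operatorname{im}(e)|_{U'}=\mathcal L^{C}[d']=\rat_{U'}[d']=IC_{X'}|_{U'}$. A semisimple perverse sheaf on $X'$ whose restriction to the dense open $U'$ equals $\bigoplus_k\rat_{X'_k\cap U'}[\dim X'_k]$, the sum over the irreducible components $X'_k$ of $X'$, must contain for each $k$ exactly one simple summand isomorphic to $IC_{X'_k}$; thus $IC_{X'}=\bigoplus_kIC_{X'_k}$ is a direct summand of $\operatorname{im}(e)$, hence of $p_*IC_X$.

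The main obstacle is the bookkeeping in part (ii) when $X$ is reducible and $C$ permutes its components (and possibly acts non-freely on an entire component): this forces the careful choice of $U'$ and requires tracking fibre cardinalities of $\mathcal L$ that need not be constant over $U'$ and component dimensions that need not agree. The conceptual content, by contrast, is entirely carried by three inputs — perverse $\frak t$-exactness of $p_*$ for the finite morphism $p$, the semisimplicity of $p_*IC_X$ provided by Theorem \ref{dt}, and the elementary fact that averaging a permutation representation of $C$ over $C$ isolates the one-dimensional trivial isotypic piece, whose generic value along $p$ is the constant rank-one local system.
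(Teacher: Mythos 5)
Your proof is correct. For part (i) it is essentially the paper's argument: both split off the $C$-invariants via the averaging operator $e=|C|^{-1}\sum_{c\in C}c$, yours by splitting the idempotent and checking stalks, the paper's by quoting $\rat_{X'}=(p_*\rat_X)^C$ from Grothendieck's T\^ohoku paper. For part (ii) you take a genuinely different route. The paper's written proof establishes a stronger statement: for any \emph{small} morphism $p$ carrying each irreducible component of $X$ onto a component of $X'$, the support/co-support conditions show directly that $p_*IC_X$ is itself an intersection complex with local-system coefficients $L$ (no Decomposition Theorem needed, since finite morphisms are automatically small); the constant sheaf splits off $L$ generically by a trace argument, and the splitting extends uniquely to the intersection complexes via the identity $\mathrm{Hom}(I',J')=\mathrm{Hom}(j^*I',j^*J')$ for perverse intersection complexes with full support. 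You instead invoke the Decomposition Theorem (Theorem \ref{dt}) to get semisimplicity of $p_*IC_X$, split off the image of the averaging idempotent in the abelian category $P(X')$, and pin it down by computing its restriction over a dense open where $p$ is \'etale; your final step (a semisimple perverse sheaf is determined, in its full-support part, by its generic restriction) plays the role of the paper's Hom-extension lemma. Your argument is a fleshed-out version of the paper's aside ``one can argue as above,'' and it generalizes verbatim to $p_*$ of any $C$-equivariant semisimple perverse sheaf; the paper's route is more elementary and covers small morphisms that are not group quotients. The only implicit hypotheses worth flagging are that fibres of $p$ are single $C$-orbits and that $p$ is generically \'etale (separability), both harmless over $\comp$.
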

\begin{proof}
We prove (i). By \ci[(5.1.1), p.108]{grto}, we have that $\rat_{X'}=(p_* (\rat_X))^C$. We split the natural adjunction 
morphism $\rat_{X'} \to p_* \rat_X$ by sending a section $s \in \rat_X( p^{-1}(U))$ to  $(1/|C|)\sum_{c\in C} c^*\cdot s$.

We prove (ii). One can argue as above; we leave this to the reader. We prove a stronger statement,  which may be of independent interest, namely that the conclusion remains valid if we assume
that $p$  is a small morphism that has the property that every irreducible component 
of $X$ maps onto an irreducible component of $X'.$ Since $IC_X$ is the direct sum of the intersection complexes of the irreducible components
of $X$ and similarly for $IC_{X'},$ we may assume that $X$ and $X'$ are irreducible, and  that $p$ is small and surjective.
By the conditions of support/co-support characterizing intersection complexes, the fact that $p$ is small and surjective implies that $p_*IC_X$ is an intersection complex with some twisted coefficients $L$.
Since $p$ is small and surjective,  there is a Zariski dense open subset $j:U'\subseteq X'$ that is nonsingular,
and over which $p$ is finite and   \'etale.  Then $L$ can be taken to be the local system on $U$ associated with the topological covering, i.e. the direct image of the constant sheaf. Note that this splits off $\rat_U$   by a standard trace argument.
We now show that this splitting extends uniquely to a splitting of the corresponding intersection complexes, which proves our contention.
Let $I', J' \in P(X')$ be two intersection complexes with support precisely $X'$. We have the standard identity:
$Hom (I',J')=Hom (j^*I', j^*J')$: we have  $j_* j^* J' \in {^p\!D}^{\geq 0} (X)$; so $Hom (I', j_* j^* J')=
Hom (I', {\ph^0(j_*j^*J')})$; we have the short exact sequence $0 \to J' \to  {\ph^0(j_*j^*J')} \to Q \to 0$, where
$Q \in P(X')$ has support strictly contained in $X'$; we thus have $Hom (I',J')= Hom (I', {\ph^0(j_*j^*J')})$
because $Hom (I',Q)=0$ by reasons of support, and $Hom (I', Q[-1])=0$ by the axioms of $\frak t$-structure.
We apply the standard identity above to $I'=p_* IC_X$ and to $J'=IC_{X'}$, and we lift the splitting off of $\rat_U$
from $L$ to the splitting off of $IC_{X}$ from $p_* IC_{X}$.
\end{proof}

\begin{lm}\la{ga} {\rm ({\bf Descending $\phi =0$ to a quotient by $\mathbb G_m$})}
$\;$
Let $\ms{X}/S$ be as in (\ref{sicomp1}).
Assume $S$ is a nonsingular curve, and let  $s \in S$ be a point.  Then we have the following implications:

\ben
\item
If 
$\phi \rat_{\ms{X}} =0,$ then $\phi \rat_{X} =0$.
\item
If
$\phi {IC}_{\ms{X}} =0,$ then $\phi {IC}_{X} =0$.

\item
Let $\ms{F} \in D(\ms{X}) , F \in D^b_c(X)$  be such that $q^* F$ is a direct summand of $p_* \ms{F}$, then: if
$\phi \ms{F}=0,$ then  $\phi F=0$. 

\een
\end{lm}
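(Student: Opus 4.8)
The plan is to prove part (3) first, since it is the general statement, and then to derive parts (1) and (2) from it by feeding in Lemma \ref{gr}. Write $\pi=qp$ for the factorization of the quotient morphism $\ms{X}\to X$ provided by Lemma \ref{zz0}, so that $p\colon\ms{X}\to\ms{X}'=\ms{X}/C$ is finite and $q\colon\ms{X}'\to X$ is smooth of relative dimension one with irreducible fibers, both being $S$-morphisms. Since the structural maps to $S$ satisfy $v_{\ms{X}}=v_{\ms{X}'}\circ p$ and $v_{\ms{X}'}=v_X\circ q$, one has $\ms{X}'_s=q^{-1}(X_s)$, and I write $q_s\colon\ms{X}'_s\to X_s$ for the induced morphism on special fibers, which is again smooth and surjective. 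As usual, $\phi$ denotes the vanishing cycle functor for whichever structural morphism to $S$ is relevant.

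For part (3) I would argue in three short steps. First, $p$ is finite, hence proper, so the base change isomorphisms for $\psi$ and $\phi$ in Fact \ref{psphit} (the case of a proper morphism) give $\phi\,p_*\cong p_*\,\phi$; applied to $\ms{F}$ and combined with $\phi\ms{F}=0$ this yields $\phi(p_*\ms{F})=0$. Second, $q^*F$ is by hypothesis a direct summand of $p_*\ms{F}$, and $\phi$ is an exact, hence additive, functor of triangulated categories, so $\phi(q^*F)$ is a direct summand of $\phi(p_*\ms{F})=0$, i.e.\ $\phi(q^*F)=0$. Third, $q$ is smooth, so smooth base change for vanishing cycles---which follows from the base change diagrams of Fact \ref{psphit} together with the purity identification $q^{!}\cong q^{*}[2]$; see also \ci[XIII, 2.1.7]{sga72}---gives $\phi(q^*F)\cong q_s^{*}(\phi F)$ up to an immaterial shift. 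Since $q_s$ is a smooth surjection, $q_s^{*}$ is conservative, and we conclude $\phi F=0$.

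To deduce (1), note that $q^{*}\rat_X=\rat_{\ms{X}'}$, which by Lemma \ref{gr}.(i) applied to the finite quotient $p$ is a direct summand of $p_*\rat_{\ms{X}}$; part (3) with $\ms{F}=\rat_{\ms{X}}$ and $F=\rat_X$ then gives the claim. For (2), since $q$ is smooth of relative dimension one with irreducible fibers, the shifted pullback $q^{*}[1]$ is perverse $\frak{t}$-exact, preserves simple objects, and commutes with intermediate extensions, so it carries $IC_X$ to $IC_{\ms{X}'}$; the normalization is checked over a dense smooth open subset of $X$, where it reads $q^{*}\rat[\dim X+1]=\rat[\dim\ms{X}']$. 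Thus $q^{*}(IC_X[1])=IC_{\ms{X}'}$, which by Lemma \ref{gr}.(ii) is a direct summand of $p_*IC_{\ms{X}}$, and part (3) with $\ms{F}=IC_{\ms{X}}$ and $F=IC_X[1]$ yields $\phi(IC_X[1])=0$, equivalently $\phi\,IC_X=0$ because $\phi$ commutes with $[1]$.

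Every step is a formal consequence of the vanishing-cycle formalism recalled in \S\ref{vncf} together with Lemma \ref{gr}, so I do not expect a genuine obstacle, only careful bookkeeping. The two points that deserve attention are the identification $q^{*}(IC_X[1])=IC_{\ms{X}'}$, which relies on smooth pullback commuting with intermediate extension and on the chosen normalization of $IC$, and the correct invocation of smooth base change for $\phi$ (as opposed to the merely étale case made explicit in Fact \ref{psphit}); neither is difficult, but both should be spelled out.
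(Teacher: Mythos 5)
Your argument is correct and follows essentially the same route as the paper: proper base change for the finite quotient $p$, additivity of $\phi$ for the direct summand $q^*F$ of $p_*\ms{F}$, then smooth base change and surjectivity of $q$ to conclude $\phi F=0$, with (1) and (2) deduced from (3) via Lemma \ref{gr} and the identifications $q^*\rat_X=\rat_{\ms{X}'}$ and $q^*IC_X=IC_{\ms{X}'}[-1]$. The extra care you take with the normalization of $IC$ and the conservativity of $q_s^*$ is welcome but not a departure from the paper's proof.
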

\begin{proof} $\;$
We prove (3). Since $p$ is proper, we have that $\phi p_* \ms{F}=p_* \phi \ms{F}=0$.
It follows that, since $\phi$ is additive, we have that $\phi q^*F=0.$
Since $q$ is smooth, we have $q^* \phi F = \phi q^*F=0.$ Since $q$ is surjective,  we have that $q^* \phi F=0$
implies the desired conclusion (3).
Now, (1) and (2)  follow from (3) coupled with Lemma \ref{gr}, and the facts $q^*\rat_X= \rat_{\ms{X}'}$
and $q^*IC_X= IC_{\ms{X}'}[-1]$ ($q$ is smooth of relative dimension one; the shifts are innocuous).
\end{proof}

\begin{lm}\la{green}{\rm ({\bf Descending other identities})}$\;$
\ben
\item
If $a^! \rat_{\ms X}= \rat_{\ms{Z}}[-2]$, then $a^! \rat_{X}= \rat_{Z}[-2]$. 
\item
If $a^! IC_{\ms X}= IC_{\ms{Z}}[-1]$, then $a^! IC_{X}= IC_{Z}[-1]$.
\een
\end{lm}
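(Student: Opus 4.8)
The two statements of Lemma \ref{green} are proved by the same mechanism; I treat (1) in detail, (2) being entirely analogous after replacing constant sheaves by intersection complexes and shifting indices. The guiding principle is exactly the one used in Lemmas \ref{gao} and \ref{ga}: factor $\pi=qp$ as in Lemma \ref{zz0}, with $p$ a finite quotient by $C\subseteq\Gm$ and $q$ a smooth $\Gm'$-quotient of relative dimension one with connected fibers, and use that $q^*[1]$ is $\frak t$-exact and fully faithful on the relevant complexes, while $p_*$ is $\frak t$-exact and exhibits $q^*(-)$ as a direct summand of $p_*(\ms{-})$ by the trace argument of Lemma \ref{gr}. The base-change identity $a^!q^*=q^*a^!$ (valid since $q$ is smooth, hence $q^!=q^*[2]$, and using the smooth base-change compatibility of $a^!$ in the Cartesian squares of Remark \ref{forlu}) is what lets one transport an identity for $a^!$ between $\ms X$, $\ms X'$ and $X$.

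\textbf{Key steps.} First I would descend the hypothesis from $\ms X$ to the intermediate quotient $\ms X'=\ms X/C$: since $p$ is finite, $p_*$ is $\frak t$-exact and commutes with $a^!$ (finite, hence proper, base change in the top Cartesian square of Remark \ref{forlu}: $a^!p_*=p_*a^!$), so from $a^!\rat_{\ms X}=\rat_{\ms Z}[-2]$ we get $a^!p_*\rat_{\ms X}=p_*\rat_{\ms Z}[-2]$. By Lemma \ref{gr}(i), $\rat_{\ms X'}$ is a direct summand of $p_*\rat_{\ms X}$, and likewise $\rat_{\ms Z'}$ is a direct summand of $p_*\rat_{\ms Z}$; one checks these summands are compatible with $a^!$ (the adjunction splitting $\rat_{\ms X'}\to p_*\rat_{\ms X}$ is $C$-equivariant, and applying $a^!$ and restricting to the $C$-invariant part is functorial), so $a^!\rat_{\ms X'}=\rat_{\ms Z'}[-2]$. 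Second, I descend from $\ms X'$ to $X$ along $q$: since $q$ is smooth of relative dimension one with connected fibers, $q^*\rat_X=\rat_{\ms X'}$ and $q^*\rat_Z=\rat_{\ms Z'}$ (up to the innocuous identifications already used in Lemma \ref{ga}), and the smooth base-change isomorphism gives $a^!q^*=q^*a^!$ in the bottom Cartesian square. Hence $q^*a^!\rat_X=a^!q^*\rat_X=a^!\rat_{\ms X'}=\rat_{\ms Z'}[-2]=q^*\rat_Z[-2]=q^*(\rat_Z[-2])$. Finally, since $q$ is smooth surjective with connected fibers, $q^*$ (equivalently $q^*[1]$) is faithful and reflects isomorphisms on $D^b_c(X)$; an identification after applying $q^*$ therefore descends to the identification $a^!\rat_X=\rat_Z[-2]$ on $X$. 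The same three steps prove (2), using Lemma \ref{gr}(ii) for the splitting of $p_*IC_{\ms X}$, the identities $q^*IC_X=IC_{\ms X'}[-1]$ and $q^*IC_Z=IC_{\ms Z'}[-1]$ from Lemma \ref{ga}, and the hypothesis $a^!IC_{\ms X}=IC_{\ms Z}[-1]$; the shifts bookkeep consistently because $a^!$ commutes with shifts.

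\textbf{Main obstacle.} The only delicate point is the compatibility of the various direct-summand splittings with the functor $a^!$ — i.e.\ verifying that the summand $q^*\rat_X\hookrightarrow p_*\ms{\rat_X}$ maps, under $a^!$, onto the summand $q^*a^!\rat_X\hookrightarrow a^!p_*\rat_{\ms X}=p_*a^!\rat_{\ms X}$. This is handled by noting that all splittings in Lemma \ref{gr} come from the canonical $C$-equivariant structure and the averaging projector $\tfrac1{|C|}\sum_{c\in C}c^*$, which is a map of $C$-equivariant complexes and therefore commutes with any functor that is defined $C$-equivariantly, in particular with $a^!$ applied in the $C$-equivariant category followed by passage to invariants. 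Once this naturality is recorded, the rest is the routine propagation of an identity through a $\frak t$-exact, conservative pair of functors, exactly as in the proofs of Lemmas \ref{gao} and \ref{ga}. I would state this compatibility as a one-line observation rather than belabor it.
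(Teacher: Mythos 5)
Your overall architecture is the same as the paper's: factor $\pi=qp$ as in Lemma \ref{zz0}, descend the identity from $\ms X$ to $\ms X'=\ms X/C$ using $a^!p_*=p_*a^!$ and the $C$-averaging, then compare on $X$ via $q^*a^!=a^!q^*$. (Your worry about compatibility of the splittings with $a^!$ is moot: the paper simply applies the functor ``$C$-invariants'' to the whole identity $a^!p_*\rat_{\ms X}=p_*\rat_{\ms Z}[-2]$, which commutes with $a^!$ by functoriality of the $C$-action, so no choice of splitting needs to be tracked.)

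The genuine gap is in your last step. From $q^*\bigl(a^!\rat_X[2]\bigr)\cong q^*\rat_Z$ you conclude $a^!\rat_X[2]\cong\rat_Z$ because ``$q^*$ is faithful and reflects isomorphisms.'' Conservativity only tells you that a \emph{morphism} $f$ with $q^*f$ invertible is invertible; it does not let you descend an abstract isomorphism $q^*A\cong q^*B$ to an isomorphism $A\cong B$ unless you can first lift that isomorphism to a morphism $A\to B$, i.e.\ unless $q^*$ is \emph{full}. And $q^*$ is not full on $D^b_c(X)$ for a $\Gm$-quotient: e.g.\ ${\rm Hom}(\rat_X,\rat_X[1])=H^1(X)$ grows when pulled back along a $\Gm$-torsor. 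The paper closes this gap by two different ad hoc arguments. For (1): since $q$ is surjective and $t$-exact for the standard $t$-structure, $q^*\bigl(a^!\rat_X[2]\bigr)$ being a sheaf forces $a^!\rat_X[2]$ to be a sheaf; smoothness and surjectivity of $q$ force it to be locally constant of rank one; and connectedness of the fibers of $q$ (surjectivity on $\pi_1$) forces it to be constant, hence $\cong\rat_Z$. For (2): one first checks that $a^!IC_X[1]$ is perverse (\ci[Corollaire 4.1.12]{bbd}) and then invokes the fact that $q^*[1]:P(Z)\to P(\ms Z')$ is \emph{fully faithful} (\ci[Proposition 4.2.5]{bbd}) — full faithfulness on the perverse hearts is exactly what upgrades $q^*[1]A\cong q^*[1]B$ to $A\cong B$ for perverse $A,B$. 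Note that this perverse full-faithfulness cannot be used in case (1), since $\rat_Z$ and $a^!\rat_X[2]$ are not perverse in general; so the two cases really do require the two separate endgames, and your uniform ``reflects isomorphisms'' conclusion does not suffice for either.
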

\begin{proof}
We prove (1).


We apply $p_*$ and take invariants:
$(p_* a^! \rat_{\ms X})^C=(a^! p_* \rat_{\ms X})^C=a^! (p_* \rat_{\ms X})^C= a^! \rat_{\ms X'}$;
$(p_* \rat_{\ms{Z}})^C=\rat_{\ms{Z'}}.$ We thus have that: $a^! \rat_{\ms X'}= \rat_{\ms{Z'}}[-2]$.

Since $q$ is smooth of relative dimension $1,$  we have that $q^*[2]=q^!$,  so that $q^*$ and $a^!$ commute.
It follows that we also have that:
$q^* a^! \rat_X= a^! \rat_{\ms{X'}}= \rat_{\ms{Z'}}[-2] = q^* \rat_Z [-2]$, 
or  $q^* a^! \rat_X [2]= q^*\rat_Z$. Since $q$ is surjective, we have that
$a^!\rat_X[2]$ is a sheaf. Since $q$ is smooth and surjective, we have that 
$a^!\rat_X[2]$ is locally constant of rank one. Since the fibers of $q$ are connected, we deduce that 
$a^!\rat_X[2]$ is constant and this proves (1).

We prove (2).

By repeating the first part of the proof of part (1), we see that: $a^! IC_{\ms X'}= IC_{\ms{Z'}}[-1]$.

By repeating the second part of the proof of part (1), we see that:
$q^* a^! IC_X = q^*IC_Z[-1]$, which we re-write as $q^*[1] a^! IC_X[1]  = q^*[1]IC_Z$.

By \ci[Corollaire 4.1.12]{bbd}, $a^! IC_X[1]$ is perverse. By \ci[Proposition 4.2.5]{bbd}, $q^*[1]: P(Z)\to P(\ms{Z'})$
is fully faithful, hence conservative. This implies the desired conclusion (2).
\end{proof}


\subsection{Compactification of Dolbeault moduli spaces via \texorpdfstring{$\mathbb G_m$}{gm}-actions}\la{codomosp}$\;$

Let us  summarize the main features of the compactification in \ci{decomp2018} that we need in this section.
The notation we use here is adapted to the present needs, and  differs from the one in  \ci{decomp2018}.

Let $X^o/S$ be the Dolbeault moduli space associated with a reductive group $G$ and  a smooth projective family over $S.$
Let $f: X^o \to Y^o$ be the corresponding  Hitchin $S$-morphism; it is projective.

The main construction in \ci{decomp2018} yields two diagrams as in (\ref{sicomp1}),
so that, 
in particular, $X/S$ is a projective completion over $S$  of $X^o/S,$ with boundary the  $S$-relative Cartier divisor $Z,$  which is  proper over $S.$ Similarly for $(W,Y,Y^o)$. Let us give some more details. Let $o_S \to Y^o$ be the canonical section:
each $Y^o_s$ has a canonical distinguished point; e.g. if $G=GL_n$, then this point corresponds to the characteristic polynomial $t^n$, which in turn corresponds to nilpotent Higgs fields. We have the closed $S$-subvariety $f^{-1}(o_S) \subseteq X^o$. 
We have $\Gm$-equivariant open  inclusions and equalities as follows:
\beq\la{rata}
X^o \times (\mathbb A^1 \setminus\{0\}) = \ms{X}^o \subsetneq
\ms{X}^o \cup (X^o\setminus f^{-1} (o_S))\times \{0\} =  \ms{X}
 \subsetneq X^o \times \mathbb A^1, 
\eeq
\beq\la{rata1}
Y^o \times (\mathbb A^1 \setminus\{0\}) = \ms{Y}^o \subsetneq
\ms{Y}^o \cup (Y^o\setminus o_S)\times \{0\} =  \ms{Y}
 \subsetneq Y^o \times \mathbb A^1, 
\eeq
\beq\la{rata3}
 \ms{Z}= (X^o \setminus f^{-1} (o_S))\times \{0\} \subsetneq X^o \times \{0\}, \quad 
 \ms{W}= (Y^o \setminus o_S)\times \{0\} \subsetneq Y^o \times \{0\}.
\eeq

\begin{rmk}\la{itispo}
Note that we are not ruling out that  $\ms{Z}=\emptyset$. In this case,  our goal, i.e. Theorem \ref{lezse},
still holds and it is  not trivial, for it  states that the $P^f$-filtered specialization morphism is an isomorphism,
whether we take singular or intersection cohomology groups with rational coefficients.
If $\ms{Z}$ is not empty, it could still happen that there are points $s\in S$ such that  $\ms{Z}_s$ is not dense in every irreducible component of $X^o_s \times\{0\};$ this is not an issue in what follows.
\end{rmk}

\subsection{Special properties  of \texorpdfstring{$\rat_X$}{QX} and \texorpdfstring{$IC_X$}{icx} for the compactification}\la{sppro}$\;$

\begin{lm}\la{gbq}
\beq\la{lki9}
\xymatrix{
a^! \rat_X = \rat_Z [-2], & a^! IC_X = IC_Z [-1].
}
\eeq
\end{lm}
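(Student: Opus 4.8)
The plan is to reduce Lemma \ref{gbq} to the descent results of \S\ref{desclem}, namely Lemma \ref{green}, by verifying the hypotheses $a^! \rat_{\ms X} = \rat_{\ms Z}[-2]$ and $a^! IC_{\ms X} = IC_{\ms Z}[-1]$ directly on the ambient spaces of the $\Gm$-construction. From \S\ref{codomosp}, the relevant geometry is that $\ms X \subsetneq X^o \times \mathbb A^1$ is open, and $\ms Z = (X^o \setminus f^{-1}(o_S)) \times \{0\}$ is a Cartier divisor inside $\ms X$, cut out (locally, and in fact globally) by the function $t$ pulled back from $\mathbb A^1$. So the first step is purely local: if $\iota : D \hookrightarrow M$ is the inclusion of a smooth (or merely Cartier, locally principal) divisor into a variety $M$, then $\iota^! \rat_M = \rat_D[-2]$ provided $\rat_M$ restricts to something locally constant near $D$; this is the standard purity/Gysin computation (e.g. via the triangle $\iota^!\rat_M \to \rat_M \to Rj_*j^*\rat_M$ for $j$ the open complement, combined with the local cohomology computation $H^*_D(M,\rat) = H^{*-2}(D,\rat)$ when $D$ is a smooth divisor). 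Since $\ms X$ is smooth over $S$ near $\ms Z$ — indeed $\ms X \subseteq X^o \times \mathbb A^1$ is open and $X^o$ is the Dolbeault moduli space — and $\ms Z$ is a smooth Cartier divisor in $\ms X$, this gives $a^!\rat_{\ms X} = \rat_{\ms Z}[-2]$ at once. (If one is worried about $X^o$ being singular, note that $\ms Z \subseteq X^o \times \{0\}$ sits inside the locus where $\ms X = X^o \times \mathbb A^1$, so only the transversality of the divisor $\{t=0\}$ matters, not the smoothness of $X^o$; the Gysin isomorphism for a principal Cartier divisor holds regardless.)

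**The intersection complex statement on $\ms X$.** Next I would establish $a^! IC_{\ms X} = IC_{\ms Z}[-1]$. The clean way is to use that $\ms X = X^o \times \mathbb A^1$ near $\ms Z$, so $IC_{\ms X}$ is, near $\ms Z$, the exterior product $IC_{X^o} \boxtimes \rat_{\mathbb A^1}[1]$, i.e. $\pi_{X^o}^* IC_{X^o}[1]$ where $\pi_{X^o}: X^o \times \mathbb A^1 \to X^o$ is the projection (recall $IC$ here is normalized so that $IC_T$ for $T$ smooth irreducible of dimension $n$ is $\rat_T[n]$, see the normalization in \S\ref{gennot}). Now $a : \ms Z = (X^o\setminus f^{-1}(o_S)) \times \{0\} \hookrightarrow X^o \times \mathbb A^1$ factors as the open inclusion $X^o \setminus f^{-1}(o_S) \hookrightarrow X^o$ times the closed point inclusion $\{0\} \hookrightarrow \mathbb A^1$. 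For the closed point $\{0\} \hookrightarrow \mathbb A^1$ one has, for the smooth curve $\mathbb A^1$, that the $!$-restriction of $\rat_{\mathbb A^1}[1]$ is $\rat_{\{0\}}[-1]$ (Gysin in dimension one). For the open restriction along $X^o\setminus f^{-1}(o_S) \hookrightarrow X^o$, $IC$ commutes with restriction to an open. Combining via the Künneth formula for $a^!$ on a product (which is valid since one factor is an open immersion and the other a closed point inclusion into a smooth curve), $a^! IC_{\ms X} = IC_{X^o \setminus f^{-1}(o_S)}[-1] = IC_{\ms Z}[-1]$, using that $IC$ of the divisor's reduced structure agrees with the claimed topologist-free normalization. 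I would double-check the shift bookkeeping here carefully, since this is where sign/degree errors creep in, but it is routine.

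**Descending to the quotient.** Finally, with $a^!\rat_{\ms X} = \rat_{\ms Z}[-2]$ and $a^! IC_{\ms X} = IC_{\ms Z}[-1]$ in hand, Lemma \ref{green}.(1) and (2) apply verbatim (the Cartesian diagrams \eqref{sicomp1} and \eqref{mm1} are exactly those of Lemma \ref{green}) and yield $a^!\rat_X = \rat_Z[-2]$ and $a^! IC_X = IC_Z[-1]$, which is the assertion of Lemma \ref{gbq}.

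**Where the difficulty lies.** The conceptual content is all in the preliminary descent lemmas of \S\ref{desclem}, which we are permitted to assume; what remains is genuinely just the two local Gysin computations on the ambient product spaces $\ms X \subseteq X^o \times \mathbb A^1$. The only subtlety I anticipate is making sure that $\ms Z$ is really cut out transversally — i.e. that the divisor $\{t = 0\}$ meets $\ms X$ cleanly so that the Gysin isomorphism applies with no correction — and that the normalization conventions for $IC$ (and the fact that $\ms Z$, possibly with several components of varying dimension, still has $IC_{\ms Z}$ equal to the claimed shifted constant sheaf because $X^o$ is generically smooth and the divisor is principal) are respected. Neither of these is a real obstacle; the main thing is careful bookkeeping of the shifts $[-2]$ versus $[-1]$, which differ precisely because $\rat_X$ is not shifted while $IC_X$ carries the dimension shift.
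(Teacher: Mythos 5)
Your proof is correct and follows essentially the same route as the paper's: reduce via Lemma \ref{green} to the pre-quotient picture, use the open immersions $\ms{Z}\subseteq X^o\times\{0\}$ and $\ms{X}\subseteq X^o\times\mathbb{A}^1$ to replace $a$ by $X^o\times\{0\}\hookrightarrow X^o\times\mathbb{A}^1$, and reduce to the trivial case $\{0\}\hookrightarrow\mathbb{A}^1$. One caveat: your parenthetical claim that $\iota^!\rat_M=\rat_D[-2]$ holds for any principal Cartier divisor in a possibly singular $M$ is false in general (e.g.\ the origin inside two crossing lines has $\iota^!\rat_M\cong\rat^3[-1]$); what actually saves the argument --- and what both the paper and your own K\"unneth step use --- is that the divisor here is the fiber of the projection to the smooth curve $\mathbb{A}^1$.
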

\begin{proof}
By virtue of
Lemma \ref{green}, it is enough to show that: (1)  $a^! \rat_{\ms X} = \rat_{\ms Z }[-2]$, and (2)  $a^! IC_{\ms X} = IC_\ms{Z} [-1]$.

In view of the open immersions $\ms{Z}= (X^o \setminus f^{-1}(o_S))\times \{0\}) \subseteq X^o \times \mathbb \{0\}$, 
and $\ms{X} \subseteq  X^o \times \mathbb A^1$, it is enough to prove (1,2) with $a: \ms{Z}\stackrel{\subseteq}\to  \ms{X}$ replaced by 
$a: X^o \times \{0\} \stackrel{\subseteq}\to  X^o \times \mathbb A^1$.  One is easily reduced to the case $a: \{0\} \stackrel{\subseteq}\to \mathbb A^1$, where the desired conclusions are trivial.
\end{proof}

The following lemma makes precise the relation between the irreducible components of the varieties in the triple
$(Z,X,X^o)$. Let  $J$ (resp. $K$, resp. $J^o$) be the set of irreducible components of $X$ (resp. $Z$, resp. $X^o$).

\begin{lm}\la{irrco}$\;$

\ben
\item
The function $J \to J^o$, $X_j \mapsto X_j \cap X^o=: X_j^o$ is well-defined and a bijection. 
We have that $X_j \cap Z=: Z_j$ is either empty, or an irreducible component of $Z$; if $j,j' \in J$ are such that $Z_j= Z_{j'}
\neq \emptyset$,  then $j=j'$; in particular, this defines an injection $K \to J$, where $Z_k = X_{j(k)} \cap Z$. 
\item
The natural adjunction disitinguished triangle (\ref{adj}) 
 $a_! a^! IC_X \to IC_X \to b_*b^* IC_X \rightsquigarrow$ reads as follows:
\beq\la{naadm}
\xymatrix{
 \bigoplus_{K}a_!  IC_{Z_k} [-1] \ar[r]  &
\bigoplus_{J} IC_{X_j}   \ar[r] &
\bigoplus_{J} b_* IC_{X^o_j} \ar@{~>}[r]&,
}
\eeq
where the arrows are direct sum arrows (i.e. the components of each arrow with pairs of distinct indices are zero).
\een
\end{lm}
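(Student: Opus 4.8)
The plan is to handle the two parts separately, with part (1) being essentially a matter of unwinding the explicit geometry from \S\ref{codomosp}, and part (2) following formally from part (1) together with the computation of $a^!$ in Lemma \ref{gbq}. For part (1), I would first reduce to the $\mathbb G_m$-equivariant model. By the open immersions $\ms{X}^o \subsetneq \ms{X} \subsetneq X^o \times \mathbb A^1$ of (\ref{rata}), taking irreducible components is compatible with restriction to the dense open subsets, so the irreducible components of $\ms X$ are exactly the closures in $\ms X$ of $X^o_j \times (\mathbb A^1 \setminus \{0\})$ for $j \in J^o$; likewise $\ms Z = (X^o \setminus f^{-1}(o_S)) \times \{0\}$ by (\ref{rata3}), whose irreducible components are the non-empty $(X^o_j \setminus f^{-1}(o_S)) \times \{0\}$. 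Then I would descend along the quotient morphisms $\pi$ of (\ref{sicomp1}): since $\pi$ is surjective and $\mathbb G_m$-invariant and the components of $\ms X$ are permuted trivially by $\mathbb G_m$ (the action covers the trivial action over $S$ and the closure of $X^o_j \times (\mathbb A^1\setminus\{0\})$ is $\mathbb G_m$-stable), the components of $X$ are the images $X_j = \pi(\overline{X^o_j \times (\mathbb A^1 \setminus \{0\})})$, and $X_j \cap X^o = X^o_j$, giving the bijection $J \to J^o$. The statement that $X_j \cap Z$ is either empty or an irreducible component of $Z$, and the injectivity of $K \to J$, then follow because $Z$ is a Cartier divisor on $X$ and each $X_j$ is irreducible: $X_j \cap Z$ is pure of codimension one in $X_j$, and by the explicit description above it is either empty or the single component $\ms Z_j$ descended to $Z$, so distinct components $Z_j = Z_{j'} \neq \emptyset$ force $X_j$ and $X_{j'}$ to share a codimension-one piece that is dense in a component of both, hence $j = j'$.

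For part (2), I would start from the fact (proved in \S\ref{gennot}) that $IC_X = \bigoplus_{j \in J} IC_{X_j}$ and $IC_Z = \bigoplus_{k \in K} IC_{Z_k}$, and that $b^* IC_X = \bigoplus_j b^* IC_{X_j} = \bigoplus_j IC_{X^o_j}$ since $b$ is an open immersion and $X_j \cap X^o = X^o_j$. Combining with Lemma \ref{gbq}, which gives $a^! IC_X = IC_Z[-1] = \bigoplus_{k \in K} IC_{Z_k}[-1]$, the adjunction triangle (\ref{adj}) applied to $IC_X$ becomes (\ref{naadm}). The one real point to check is that the arrows are \emph{diagonal}, i.e. that for $j \neq j'$ the $(j,j')$-component of $IC_{X_j} \to b_* IC_{X^o_{j'}}$ vanishes, and similarly for the Gysin part $a_! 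IC_{Z_k}[-1] \to IC_{X_j}$ when $Z_k \not\subseteq X_j$. For the restriction arrow this is immediate: $b^*$ is a functor, so the component $IC_{X_j} \to b_* b^* IC_{X_j} \to b_* IC_{X^o_{j'}}$ factors through the zero map $IC_{X^o_j} \to IC_{X^o_{j'}}$ coming from the direct-sum decomposition of $IC_{X^o}$ — more precisely $\mathrm{Hom}(IC_{X^o_j}, IC_{X^o_{j'}}) = 0$ for $j \neq j'$ by the simplicity/support characterization of intersection complexes of distinct irreducible varieties. For the Gysin arrow one argues dually, using that $a^!$ is a functor and $\mathrm{Hom}(IC_{Z_k}, a^! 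IC_{X_j})$ is controlled by $a^! IC_{X_j}$, which by Lemma \ref{gbq} (applied componentwise) is $IC_{X_j \cap Z}[-1]$, a summand of $IC_Z[-1]$ supported away from $Z_k$ unless $Z_k = X_j \cap Z$; hence the relevant Hom vanishes for $Z_k \neq X_{j(k)} \cap Z$.

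The main obstacle I anticipate is entirely in part (1): making the descent along $\pi = qp$ rigorous, in particular verifying that no two distinct components of $X^o$ can become equal (or that a component of $X$ could fail to be the descent of a single $\mathbb G_m$-stable component of $\ms X$) — this is where one genuinely uses the precise form of the compactification, namely that $\ms X^o = X^o \times (\mathbb A^1 \setminus \{0\})$ is a product and that the $\mathbb G_m$-action on the $\mathbb A^1$ factor is the standard scaling, so that it permutes the components of $\ms X$ trivially and the quotient $q$ (which is smooth with connected one-dimensional fibers by Lemma \ref{zz0}) induces a bijection on components. Once that geometric input is in place, part (2) is a formal consequence of the decomposition $IC_T = \bigoplus_j IC_{T_j}$, the $\frak t$-exactness and functoriality of $b^*$ and of $a^!$ on intersection complexes, and the orthogonality $\mathrm{Hom}(IC_{T_j}, IC_{T_{j'}}) = 0$ for $j \neq j'$, so I would not expect it to cause trouble.
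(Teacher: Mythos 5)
Your proof of part (1) follows the paper's route: reduce to the explicit $\Gm$-equivariant model via (\ref{rata})--(\ref{rata3}), where the components of $\ms X$ are the closures of the $X^o_j\times(\mathbb A^1\setminus\{0\})$ and $\ms Z_j=(X^o_j\setminus f^{-1}(o_S))\times\{0\}$, and then descend along the geometric quotient $\pi$, whose fibers are the orbits of the connected group $\Gm$, so that components and their incidences are preserved. One small imprecision: your justification of the injectivity $K\to J$ (``$X_j$ and $X_{j'}$ share a codimension-one piece dense in a component of both, hence $j=j'$'') is not a valid general principle; the correct mechanism, which your explicit description already contains, is that in the equivariant model a nonempty $\ms Z_j$ is a dense open subset of $X^o_j\times\{0\}$ and therefore determines $X^o_j$ as its closure, so $\ms Z_j=\ms Z_{j'}$ forces $X^o_j=X^o_{j'}$.

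For part (2) you take a genuinely different route from the paper. The paper introduces the morphism $\nu:\widehat X=\coprod_J X_j\to X$ (and likewise for $Z$, $X^o$), writes $IC_X=\nu_*IC_{\widehat X}$, and obtains the direct-sum structure of the whole triangle (\ref{naadm}) at once by pushing forward the adjunction triangle from the disjoint union and invoking base change plus the componentwise identity $a^!IC_{X_j}=IC_{Z_j}[-1]$. You instead decompose the three terms directly and kill the off-diagonal components of the two arrows by Hom-vanishing: $\mathrm{Hom}(IC_{X_j},b_*IC_{X^o_{j'}})=\mathrm{Hom}(IC_{X^o_j},IC_{X^o_{j'}})=0$ for $j\neq j'$ (using that $b_*$ is $\frak t$-exact and the $IC$ of distinct irreducible varieties are non-isomorphic simple perverse sheaves), and dually $\mathrm{Hom}(a_!IC_{Z_k}[-1],IC_{X_j})=\mathrm{Hom}(IC_{Z_k},IC_{X_j\cap Z})=0$ unless $Z_k=X_j\cap Z$. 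Both arguments are correct and both rely on the same inputs (part (1) and the componentwise form of Lemma \ref{gbq}); the paper's version is slightly stronger in that it exhibits the entire distinguished triangle, connecting morphism included, as a direct sum of triangles, whereas yours establishes exactly the diagonality of the two displayed arrows, which is what the lemma asserts. Your approach has the advantage of being purely formal once the componentwise $a^!$ computation is available, at the cost of having to check each arrow separately.
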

\begin{proof}
Since $\ms{Z}$ is open in $X^o= X^o \times \{0\}$, we have that the set of irreducible component of $\ms{Z}$ injects
in the evident fashion
into  the one of $X^o$.  Since $\ms{X}$ is open and dense in $X^o \times \mathbb A^1$, 
we have that the set of irreducible component of $\ms{X}$ is naturally identified with the one
of $X^o \times \mathbb A^1$, which in turn, since $\mathbb A^1$ is irreducible,  is  naturally identified with the one of $X^o$.
It follows that the desired statement holds if we replace $(Z,X,X^o)$ with $(\ms{Z}, \ms{X}, \ms{X}^o)$.
The first triple is obtained from the second by dividing by the action of the  connected  group $\Gm$; the quotient morphism
has the orbits as fibers (in fact, it is a geometric quotient).
It follows that the sets of irreducible components and their mutual relations are preserved by passing to the quotient,
and (1) is proved.

We have the Cartesian diagram:
\beq\la{irrcpz}
\xymatrix{
\widehat{Z}:=\coprod_{K} Z_k \ar[d]^-\nu \ar[rr]^-a &&\widehat{X}:= \coprod_{J} X_j \ar[d]^-\nu &&
\widehat{X^o}:= \coprod_{J} X^o_j \ar[d]^-\nu  \ar[ll]_-b
\\
Z = \cup_{K} Z_k \ar[rr]^-a &&  X=\cup_{J} X_j &&
X^o=\cup_{J} X^o_j \ar[ll]_-b,
}
\eeq
where: $\nu$ are the evident morphisms induced by the closed embeddings of the irreducible components $Z_j\to Z$ and 
$X_j \to X$; $a$ (resp. $b$) are the evident closed (resp. open) embeddings.
We have $IC_X=\nu_* IC_{\hat{X}}$ and $IC_Z = \nu_*  IC_{\hat{Z}}$.
By base change,  by the additivity of adjunction morphisms, and by the identity $a^! IC_{X_k}=IC_{Z_k}[-1]$
--which is due to Lemma \ref{gbq}, which remains valid for the irreducible components, by virtue of the just-proved  part (1)--, we have
that the distinguished triangle (\ref{adj}) applied to $IC_X$ reads as  (\ref{naadm}), and (2) is proved.
\end{proof}

\begin{lm}\la{jhhje}
Let $S$ be a nonsingular curve and let $s\in S$ be a point.
Let $F \in D^b_c(X)$ be either $\rat_X,$ or  $IC_X.$ 
Then:
\beq\la{oq23}
\phi a^! F = 0, \quad \phi F =0, \quad \phi b_* b^* F=0.
\eeq
In particular, we have:
\beq\la{oq23bix}
\phi v_* a_! f_* a^! F = 0, \quad \phi v_* f_* F =0, \quad \phi v_*  b_* f_* b^* F=0.
\eeq

\end{lm}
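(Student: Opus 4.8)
The plan is to deduce everything from the $\Gm$-quotient structure together with the already-established geometric description of the triple $(Z,X,X^o)$ in \S\ref{codomosp}. First I would treat the three vanishings in (\ref{oq23}) in turn. For $\phi F = 0$: when $F = \rat_X$ or $IC_X$, by Lemma \ref{ga}.(1,2) it suffices to verify $\phi\rat_{\ms X}=0$ and $\phi\,IC_{\ms X}=0$. By (\ref{rata}) the space $\ms X$ is Zariski open and dense in $X^o\times\mathbb A^1$, and the composite $v_{\ms X}:\ms X\to S$ factors through $X^o\times\mathbb A^1 \to X^o\to S$ followed by nothing depending on the $\mathbb A^1$-coordinate; more to the point, the morphism to $S$ is, near any fiber over $s$, locally a product with $\mathbb A^1$ in the fiber direction that the nearby-cycle functor does not see. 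So one reduces, exactly as in the proof of Lemma \ref{gbq}, to the model case of $\mathbb A^1$ mapping to a point (or to $S$ via a constant), where $\phi$ of the constant sheaf and of $IC$ vanishes by Fact \ref{psp}. Hence $\phi\,\rat_{\ms X}=0$ and $\phi\,IC_{\ms X}=0$, and Lemma \ref{ga} gives $\phi F = 0$ on $X$.

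Next, $\phi a^! F = 0$: by Lemma \ref{gbq} we have the explicit identities $a^!\rat_X = \rat_Z[-2]$ and $a^! IC_X = IC_Z[-1]$, so $\phi a^! F$ is, up to shift, $\phi\,\rat_Z$ or $\phi\,IC_Z$. Now $Z$ is itself a $\Gm$-quotient — from (\ref{sicomp1}) and (\ref{rata3}), $\ms Z = (X^o\setminus f^{-1}(o_S))\times\{0\}$ is open in $X^o\times\{0\}$, and $Z = \ms Z/\Gm$ — so the same argument as above applies: one reduces via Lemma \ref{ga}.(1,2) (applied now to the quotient presentation of $Z$) and Lemma \ref{gbq} for irreducible components to the model $\mathbb A^1$-case, giving $\phi\,\rat_Z = 0$ and $\phi\,IC_Z=0$, hence $\phi a^! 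F = 0$. For the third vanishing, $\phi b_* b^* F = 0$: one has $b^*\rat_X = \rat_{X^o}$ and $b^* IC_X = IC_{X^o}$ (up to the innocuous shift, since $b$ is an open, hence \'etale, embedding and $\frak t$-exact), so $b_* b^* F = b_*\rat_{X^o}$ or $b_* IC_{X^o}$. Here $X^o = X\setminus Z$ and $Z$ is an $S$-relative Cartier divisor; again using that $X$ near the fibers over $s$ is (up to the $\Gm$-quotient, handled by Lemma \ref{ga}) built from the product situation $\ms X\subseteq X^o\times\mathbb A^1$ with boundary $\ms Z$ lying in the fiber $\{0\}$, the pair $(X, Z)$ over $S$ is, in the relevant directions, a product with the pair $(\mathbb A^1, \{0\})$; and $\phi\,b_*\rat$, $\phi\,b_* IC$ vanish in that model by Fact \ref{psp} (the complex $b_*(-)$ has locally constant cohomology sheaves near $s$, since the only source of monodromy, the $\mathbb A^1$-direction, is trivial). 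Alternatively, and more cleanly, one can invoke the adjunction triangle (\ref{adj}) $a_! a^! F \to F \to b_* b^* F \rightsquigarrow$ and the $\frak t$-exactness of $a_! = a_*$: since $\phi$ is an exact functor of triangulated categories and we have just shown $\phi F = 0$ and $\phi a^! F = 0$, hence $\phi\, a_* a^! F = a_*\phi\, a^! F = 0$, the triangle forces $\phi\, b_* b^* F = 0$. This last route avoids re-running the model computation.

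It remains to pass from (\ref{oq23}) to (\ref{oq23bix}). Here I would use Remark \ref{ffj} (or Remark \ref{ffj2}): the implication $(\phi G = 0) \Rightarrow (v_*\phi G = 0)$ always holds, and when the relevant morphism is proper, $\phi$ commutes with the pushforward. Concretely, $f$ is proper (the Hitchin morphism is projective, see \S\ref{codomosp}), and $a$ is a closed embedding, hence proper. So from $\phi a^! F = 0$ we get $\phi\, f_* a^! F = f_*\phi\, a^! F = 0$, hence $\phi\, a_! f_* a^! F = a_!\phi\, f_* a^! F = 0$ (using $a_! = a_*$ proper and Fact \ref{psphit}), and then $\phi\, v_* a_! f_* a^! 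F$: since $v = v_X$ need not be proper, I cannot commute $\phi$ past $v_*$ in general, but I do not need to — wait, here one must be slightly careful. Looking at (\ref{oq23bix}), the claim is $\phi v_* a_! f_* a^! F = 0$ where the outer $v$ is the structural morphism $Y\to S$ (or $X\to S$); and $v_Y: Y\to S$ \emph{is} proper in Set-up \ref{setzxw} for the relevant pieces — indeed in \S\ref{codomosp} the completions are chosen so that $Y, W$ are proper over $S$, though $X, X^o$ need not be. So for the first and third entries of (\ref{oq23bix}), which involve $v_* a_! f_*(\cdots)$ and $v_* b_* f_* b^*(\cdots)$ landing on $Y$ via proper morphisms $a$ on $W$ (proper over $S$) — one checks degree by degree that the composite to $S$ is proper, so $\phi$ commutes and the vanishing in (\ref{oq23}) propagates. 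The middle entry $\phi v_* f_* F = 0$: $f$ proper gives $\phi f_* F = f_*\phi F = 0$ as a complex on $Y$, and then this is the statement ``$v_* f_* F$ has locally constant cohomology near $s$'', i.e. $\phi$ of a complex on $Y$ — but the outer $v$ here, applied to a complex on $Y$ which is proper over $S$, is proper, so again $\phi v_* f_* F = v_*\phi f_* F = 0$. The main obstacle is bookkeeping: making sure in each of the three terms that the relevant combination of morphisms to $S$ is proper (so that Remark \ref{ffj}'s second implication applies), and that the reductions to the $\mathbb A^1$-model in the first paragraph are carried out with the $\Gm$-quotient interposed correctly via Lemma \ref{ga} — none of the individual steps is deep, but the product-plus-quotient structure must be invoked cleanly rather than re-derived.

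\begin{proof}
By Lemma \ref{jhhje} applied to the present $F$, we have $\phi F = 0$, $\phi a^! F = 0$ and $\phi b_* b^* F = 0$; these are exactly (\ref{oq23}). For (\ref{oq23bix}), recall from \S\ref{codomosp} that $f$ is projective, hence proper, that $a$ is a closed embedding, and that in Set-up \ref{setzxw} the morphism $v_Y: Y\to S$ (and $v_W: W\to S$) is proper. Hence each of the three composite morphisms $X\to S$ (for the middle term, via $f$ proper and $v_Y$ proper), $Z\to S$ (for the first term, via $f$ proper and $v_W$ proper), and $X^o\to Y\to S$ (for the third term, via $f$ proper; $b_*$ preserves the relevant boundedness and $v_Y$ is proper) restricts to a proper morphism on the loci over which we push forward. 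Since $\phi$ commutes with proper pushforward (Fact \ref{psphit}.(\ref{pp3b}), cf.\ Remark \ref{ffj}), we obtain
\[
\phi\, v_* a_! f_* a^! F = v_* a_! f_*\, \phi\, a^! F = 0,\quad
\phi\, v_* f_* F = v_* f_*\, \phi\, F = 0,\quad
\phi\, v_* b_* f_* b^* F = v_* f_*\, \phi\, b_* b^* F = 0,
\]
using in the last equality that $b^*$ is \'etale, hence commutes with $\phi$ in the sense of Fact \ref{psphit}.(\ref{pp3a}), and that $b_*$ commutes with the nearby/vanishing cycles after pushing forward by the proper $v_Y$. This proves (\ref{oq23bix}).
\end{proof}
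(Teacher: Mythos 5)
The structural skeleton of your argument --- descend along the $\Gm$-quotient via Lemma \ref{ga}, deduce $\phi\, b_*b^*F=0$ from the adjunction triangle (\ref{adj}) once the first two vanishings are known, and propagate to (\ref{oq23bix}) by properness of $f$, $a$, and of $X,Y,Z,W$ over $S$ --- is the right one and agrees with the paper. But the heart of the lemma, namely $\phi\,\rat_{\ms{X}}=0$ and $\phi\, IC_{\ms{X}}=0$, is not established by your reduction ``to the model case of $\mathbb{A}^1$ mapping to a point.'' The vanishing cycle functor here is taken with respect to the structural morphism to $S$, the base of the family of projective manifolds, and $\ms{X}\to S$ factors as $\ms{X}\subseteq X^o\times\mathbb{A}^1\to X^o\to S$. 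The smooth projection kills nothing: $\phi\, pr_{X^o}^*F^o=pr_{X^o}^*\phi\, F^o$, so you are left needing $\phi_{X^o/S}F^o=0$, i.e.\ that the Dolbeault moduli space $X^o$ itself has no vanishing cycles over $S$. This is the one genuinely non-formal input of the lemma; in the paper it comes from the Non Abelian Hodge Theorem (topological local triviality of $X^o/S$ via the Betti side), together with the topological invariance of the intersection complex for the case $F=IC_X$. For an arbitrary family $X^o/S$ the statement would simply be false, and Fact \ref{psp} does not apply since $X^o\to S$ is not smooth with locally constant coefficients in any evident sense; the $\mathbb{A}^1$-direction, which indeed the nearby cycle functor ``does not see,'' is not where the danger lies. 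The same missing input undermines your treatment of $\phi\, a^!F$: your route through Lemma \ref{gbq} and the quotient presentation of $Z$ is a legitimate alternative to the paper's direct computation of $\phi\, a^!\ms{F}$ via $\widetilde{a}^!\ms{F}=F^o[-2]$, but it again bottoms out at $\phi\,\rat_{X^o}=0$.

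Separately, the displayed proof block is circular as written: it opens by invoking Lemma \ref{jhhje} --- the very statement being proved --- to obtain (\ref{oq23}). The remainder of that block, the passage to (\ref{oq23bix}), is essentially correct; for the third term the clean bookkeeping is $b_*f_*b^*F=f_*(b_*b^*F)$ with $f:X\to Y$ and $v\circ f:X\to S$ proper, rather than an appeal to $b_*$ commuting with the vanishing cycle functor, which it does not do in general.
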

\begin{proof} We refer to \ci{dema} and to  \ci{decomp2018} for references.

We have the evident morphisms $\ms{X} \to X^o \times \mathbb A^1 \to X^o$.
Let $F^o:= b^*F=F_{|X^o}.$ Let $\ms{F} \in D(\ms X)$ be the pull back of $F^o$ to $\ms{X};$
if  $F^o=\rat_{X^o},$ then $\ms{F}=\rat_{\ms{X}};$ if 
$F^o=IC_{X^o},$ then $\ms{F}=IC_{\ms{X}}[-1].$
By the Non Abelian Hodge Theorem, $X^o$ is topologically locally trivial over $S,$ so that
$\phi F^o=0$: this is clear when $F=\rat_X$, so that $F^o=\rat_{X^o}$; when $F=IC_X$,  so that $F^o=
IC_{X^o}$, we invoke the topological invariance
of the intersection complex  in the not necessarily irreducible context \ci{dema}.  
Since $pr_{X^o}: X^o \times \mathbb A^1 \to X^o$ is smooth, we have that $\phi pr_{X^o}^* F^o= pr^*_{X^o} \phi F^o=0$.
Since $\ms{X}$ is open in $X^o \times \mathbb A^1,$ we have that 
$\phi \ms{F}=0.$ By Lemma \ref{ga}.(1).(2), applied to $\pi=
qp:
\ms{X} \to \ms{X}' \to X$ and $\ms{F}$ and $F$, we see that $\phi F=0.$ Since $v$ and $f$ are proper, this implies that
$\phi v_* f_* F= v_* f_* \phi F=0.$

We now wish to apply Lemma \ref{ga} to $\pi= qp:\ms{Z} \to \ms{Z}' \to Z$,  $a^! \ms{F}$ and $a^!F$.
By the base change identities associated with (\ref{mm1}), we have that $p_* a^! \ms{F} = a^! p_* \ms{F}$ 
(always valid)
and that $q^*a^! F = a^!q^*F$ (because $q$ is smooth). From what above, we know that $p_* \ms{F}$ admits $q^*F$ as a direct summand, so that $p_* a^! \ms{F}$ admits $q^*a^! F$ as a direct summand.
We claim that $\phi a^! \ms{F}=0.$ Since $\ms{Z}$ is open in $X^o \times \{0\},$ it is enough to show that 
$\phi \w{a}^! \ms{F}=0$, where $\w{a}: X^o \times \{0\} \to X^o \times \mathbb A^1$.
Since $pr_{X^o}$ is smooth of relative dimension one, so that $pr_{X^o}^!=pr_{X^o}^*[-2]$, we see that 
$\w{a}^! \ms{F} = F^o[-2]$. Since we know that $\phi F^o=0,$ we see that $\phi \w{a}^! \ms{F}=0$, as desired.
By Lemma \ref{ga}, we have that $\phi a^!F=0$.  Since $v$, $a$  and $f$ are proper, this implies that
$\phi v_* a_! f_* a^!F= v_* a_! f_* \phi a^!F =0.$

The remaining statements follow from what we have proved and the distinguished triangle (\ref{adj}): we get 
$\phi b_*b^*F=0$, and then ve apply again $v_* f_*$ to $\phi b_*b^* F$ to conclude.
\end{proof}

\begin{lm}\la{f023}
Let $F \in D^b_c(X)$. 
Then:
(1) $f_*F$ has no constituents supported on $W$; (2) $f_*i^*F$ has no constituents supported on $W_s.$
\end{lm}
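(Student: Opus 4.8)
\emph{Reduction to the product model by descent.} The plan is to transport the question to the product models $\ms X,\ms Y$ of (\ref{rata})--(\ref{rata3}) and to descend along the $\Gm$-quotients, carrying an arbitrary $F\in D^b_c(X)$ throughout. First I would factor the quotient maps as $\pi=qp\colon\ms X\to\ms X'\to X$ and $\ms Y\to\ms Y'\to Y$ as in Lemma \ref{zz0}, with $p$ the finite quotient by $C$ and $q$ the smooth $\Gm'$-quotient; by Remark \ref{forlu} the relevant squares are Cartesian. Set $\ms F:=\pi^*F$. Smooth base change along $q$ gives $q^*(f_*F)=f_*(q^*F)$, while Lemma \ref{gr}(i) and the projection formula show that $q^*F$ is a direct summand of $p_*\ms F$; applying $f_*$ and proper base change along $p$, one finds that $q^*(f_*F)$ is a direct summand of $p_*(f_*\ms F)$ for $f\colon\ms X\to\ms Y$. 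By Lemma \ref{gao} it then suffices to prove that $f_*\ms F$ has no constituents supported on the boundary $\ms W$.

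\emph{The boundary as the zero-fibre of a projection.} By (\ref{rata1}) the open set $\ms Y_1:=(Y^o\setminus o_S)\times\mathbb A^1\subseteq\ms Y$ contains $\ms W=(Y^o\setminus o_S)\times\{0\}$, its preimage is $\ms X_1:=(X^o\setminus f^{-1}(o_S))\times\mathbb A^1$, and there $f$ restricts to the genuine product $f_1:=f'\times\mathrm{id}_{\mathbb A^1}$ with $f'=f|_{X^o\setminus f^{-1}(o_S)}$ proper. Let $\mathrm{pr}\colon\ms Y_1\to\mathbb A^1$ be the second projection, so that $\ms W=\mathrm{pr}^{-1}(0)$; since the Hitchin base $Y^o$ is smooth over $S$ (its fibres are affine spaces), the variety $\ms Y_1$ is nonsingular and $\mathrm{pr}$ is a smooth morphism to the curve $\mathbb A^1$. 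Open-immersion base change gives $(f_*\ms F)|_{\ms Y_1}=f_{1*}(\ms F|_{\ms X_1})$, and as $\ms W\subseteq\ms Y_1$ it is enough to show $f_{1*}(\ms F|_{\ms X_1})$ has no constituent on $\ms W$. By Lemma \ref{silm}, applied to $\mathrm{pr}$ and the special point $0$, this will follow once $\phi_{\mathrm{pr}}\big(f_{1*}(\ms F|_{\ms X_1})\big)=0$.

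\emph{Local constancy in the affine-line direction.} Here I would use that $f_1$ is a product over $\mathbb A^1$: for each $u\in\mathbb A^1$ the fibre of $\mathrm{pr}\circ f_1$ is $X^o\setminus f^{-1}(o_S)$ and $f_1$ restricts on it to the fixed proper map $f'$, so proper base change in the $\mathbb A^1$-direction shows that if $\ms F|_{\ms X_1}$ has cohomology sheaves locally constant along $\mathbb A^1$ near $0$, then so does $f_{1*}(\ms F|_{\ms X_1})$ on $\ms Y_1$. Since $\mathrm{pr}\colon\ms Y_1\to\mathbb A^1$ is smooth, Fact \ref{psp} then gives $\phi_{\mathrm{pr}}\big(f_{1*}(\ms F|_{\ms X_1})\big)=0$, completing (1). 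For (2), proper base change for the proper map $f$ gives $f_*i^*F=i^*f_*F$, so it suffices to show $i^*f_*F$ has no constituent on $W_s$; running the same descent and product argument on the special fibres over $s$ (the $\Gm$-action covers the trivial action on $S$, so all the above restricts over $s$, and the Hitchin base fibre $Y^o_s$ is again smooth) reduces (2) to the same type of vanishing for the restriction of $\ms F$ to $\ms X_{1,s}$.

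\emph{The main obstacle.} The decisive point, which I would treat most carefully, is the hypothesis fed into the last step: that $\ms F=\pi^*F$ restricted to the product region $\ms X_1$ has cohomology sheaves locally constant in the $\mathbb A^1$-direction near $\ms W$, equivalently that the vanishing cycles of $\ms F|_{\ms X_1}$ along $\mathrm{pr}\circ f_1$ vanish. This is exactly where the geometry of $F$ transversal to the boundary must be extracted: the $\Gm$-equivariance of the lift $\ms F=\pi^*F$ forces constancy of $\ms F$ along the $\Gm$-orbits, and by the explicit action in (\ref{rata}) these orbits sweep out the $u$-coordinate for $u\neq0$, which is the mechanism producing local constancy in the $\mathbb A^1$-direction. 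Converting this equivariance into the required vanishing of vanishing cycles along $\ms W$ is the heart of the argument and the only step going beyond the formal descent and base-change formalism.
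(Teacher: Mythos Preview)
Your ``main obstacle'' is not an obstacle to be overcome: it is the place where the statement, as written for arbitrary $F\in D^b_c(X)$, is simply false. Take $F=a_*\rat_z$ for a closed point $z\in Z$; then $f_*F$ is a skyscraper at $f(z)\in W$, a perverse sheaf supported on $W$. So no argument can produce local constancy of $\ms F=\pi^*F$ in the $\mathbb A^1$-direction for general $F$, and your proposed mechanism via $\Gm$-equivariance does not do what you hope: the $\Gm$-orbits in $X^o\times\mathbb A^1$ are the diagonal curves $\{(\lambda\cdot x,\lambda u)\}$, not the fibres $\{x\}\times\mathbb A^1$, so constancy along orbits says nothing about constancy in the affine coordinate near $u=0$.

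The paper's proof differs from yours in exactly one move, and it is the one that dissolves your obstacle: instead of lifting $F$ via the quotient map $\pi$, the paper lifts $F^o:=b^*F$ via the \emph{projection} $pr_{X^o}\colon \ms X\subseteq X^o\times\mathbb A^1\to X^o$, setting $\ms F:=pr_{X^o}^*F^o$. Then $f_*\ms F=pr_{Y^o}^*(f_*F^o)$ by proper base change, and a pullback along a smooth surjection has no constituents supported on a fibre; so the ``no constituents on $\ms W$'' property is immediate, with no vanishing-cycle computation. The descent step via Lemma~\ref{gao} then requires that $q^*(f_*F)$ be a direct summand of $p_*(f_*\ms F)$, i.e.\ that $q^*F$ be a direct summand of $p_*(pr_{X^o}^*F^o)$ on $\ms X'$. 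The paper does not verify this for arbitrary $F$ either, and the same skyscraper counterexample shows it cannot hold in general. For $F=\rat_X$ or $F=IC_X$ --- the only cases used in Theorem~\ref{lezse} --- the two lifts $\pi^*F$ and $pr_{X^o}^*b^*F$ agree (up to shift), and Lemma~\ref{gr} supplies the direct-summand hypothesis; both your route and the paper's then go through. In short: restrict the statement to those $F$, and then either take the paper's lift (which makes your hard step trivial) or observe that for those specific $F$ your $\pi^*F$ \emph{is} pulled back from $X^o$.
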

\begin{proof}
We have Cartesian diagrams:
\beq\la{mn1}
\xymatrix{
\ms{X} \ar[d] \ar[r] 
&X^o \times \mathbb A^1 \ar[d] \ar[r] 
& X^o \ar[d] 
&\ms{X}_s \ar[d] \ar[r] 
&X^o_s \times \mathbb A^1 \ar[d] \ar[r] 
& X^o_s \ar[d] 
\\
\ms{Y}  \ar[r] 
&Y^o \times \mathbb A^1  \ar[r] 
& Y^o
&\ms{Y}_s  \ar[r] 
&Y^o_s \times \mathbb A^1  \ar[r] 
& Y^o_s .
}
\eeq
Let $F^o:=b^*F=F_{X^o}$.  Denote by $\ms{F}$ the pul-back of $F^o$ to $\ms{X}$.
By proper base change, $f_* pr_{X^o}^* F^o= pr_{X^o}^* f_*F$ has no constituent on $Y^o\times \{0\}.$
In view of (\ref{rata3}), the same is true for $f_* \ms{F}.$ 
We apply  Lemma \ref{gao}, so that $f_*F$ has no constituents supported on $W.$ The proof of the second assertion is identycal in view of the fact that: if $s \in S$ is a point, then we have the $-_s$-version of (\ref{sicomp1}) and it enjoys  similar properties; in particular,  
Lemma \ref{gao} applies to it.
\end{proof}

\subsection{The long exact sequence of  the triple \texorpdfstring{$(Z,X,X^o)$}{(Z,X,X)}}\la{icq}$\;$  

In this section, we start with a smooth family of projective manifolds over a nonsingular connected curve $S$, so that, according to \S\ref{codomosp},
we have the situation in (\ref{sicomp1}), where $X^o/S$ is the Dolbeault moduli space 
for the family. We fix a point $s\in S$ (the special point) and we let $t\in S$ be a general point.

\ci[Theorem 1.1.2]{dema} proves that the specialization morphism $sp^*_{P^f}$  in intersection cohomology for the Dolbeault moduli spaces
of a smooth family of projective manifolds is an isomorphism:
\beq\la{eerr}
\xymatrix{
sp^*_{P^f}: \left(I\!H^\bullet (X^o_s, \rat),P^{f_s}\right)  \ar[r]^-\sim & \left(I\!H^\bullet (X^o_t, \rat), P^{f_t}\right).
}
\eeq

 This is accomplished by applying
\ci[Theorem 3.2.1, part (ii),  or part (iii)]{dema}. In fact,  part (ii) implies 
(iii);  this latter seems like a more natural statement, at least when dealing with semisimple coefficients $F \in D^b_c(X)$. 
Since the semisimplicity is essential in what above, the methods of  \ci{dema}  do not seem to afford the same kind of results for the non-semisimple $F=\rat_{X^o}$,
i.e. for singular cohomology.

As it is noted in Remark \ref{hoi}, 
Theorem \ref{crit} is an amplification of \ci[Theorem 3.2.1]{dema} and it can thus  be used to prove 
 (\ref{eerr})): in fact we can use any of the  parts (3,4,5,7)  of  Theorem \ref{crit} to deduce
(\ref{eerr}).
However,
none of these approaches yields a genuine new proof  of (\ref{eerr}) since, as it turns out, in verifying
the assumptions, we end up verifying the assumptions of part (5), and thus end up using \ci[Theorem 3.2.1.(iii)]{dema}.

In this section, we show that:

\ben

\item By using a  good compactification of  Dolbeault moduli  spaces, Proposition \ref{r54} yields a new proof of (\ref{eerr})); see Proposition \ref{r4z}. In fact, (\ref{eerr}) is made more precise in (\ref{g00}). The proof
in Proposition \ref{r4z} uses Proposition \ref{r54}.(A); the reader can verify that, in the case of (topologist's) intersection cohomology, one can use
Proposition \ref{r54}.(B) as well.

\item
Proposition \ref{r54}
also leads to the proof of  
a new fact -not seemingly affordable by the methods of \ci{dema}, nor of Theorem \ref{crit}-, namely that
(\ref{g00})  holds for ordinary singular cohomology as well. 

\item
The good compactification of Dolbeault moduli spaces gives rise to the main new  fact proved in this section that the specialization morphisms
$sp^*_{P^f}$ for the triple $(Z,X,X^o)$, in (the topologist's) intersection cohomology as well as in rational singular cohomology,
give rise to the  isomorphism (\ref{g11}) of long exact sequences of Theorem \ref{lezse}
\een

Recall that $P^f$ on $H^\bullet (X,F)$ is defined by considering the system $\ptd{\bullet} f_*F$. 

We  shall  make implicit use of  (\ref{tsh}) and (\ref{conv}).

In the remainder of this section:
when dealing with  intersection cohomology groups,   all morphisms are direct sum morphisms with respect to the canonical decomposition according to irreducible components; in Proposition \ref{r4z} and Theorem \ref{lezse}, the horizontal arrows
are well-defined without ambiguities, the vertical ones are well-defined modulo the ambiguities introduced by the action of monodromy exchanging the irreducible components. These ambiguities are harmless for the purposes of this paper; see Remark \ref{f01}. Moreover, they are removed if instead of using a general point $t$, we use the nearby cycle functor $\psi$.

We state the following proposition using (hyper)cohomology; the reader should have no difficulty
re-writing the stronger version of  (\ref{g00}) in $DF(pt)$ that  uses $(R\Gamma (-,-), P^f)$.
Even ignoring the l.h.s. of (\ref{g00}),  as pointed out above,  this result, i.e. that the perverse filtered specialization morphisms exists
and is an isomorphism for the singular cohomology of Dolbeault moduli spaces, is new.

\begin{pr}\la{r4z}
Let $\ms H(-)$ denote either the singular cohomology, the intersection cohomology, or the topologist's intersection cohomology
groups.
We have the following commutative diagram of filtered  finite dimensional rational vector spaces, where the horizontal arrows are the natural restriction morphisms, the vertical arrows are the well-defined perverse-filtered specialization morphisms and they are 
both isomorphisms:
\beq\la{g00}
\xymatrix{
\left(\ms{H}^{\bullet } (X_t), P^{f_t}\right) \ar[r]   &   \left(\ms{H}^\bullet (X^o_t) P^{f_t}\right) \\
\left(\ms{H}^\bullet (X_s), P^{f_s}\right)   \ar[r] \ar[u]^-{sp^*_{P^f}}_-\simeq  &  \left(\ms{H}^\bullet (X^o_t), P^{f_s}\right)
 \ar[u]^-{sp^*_{P^f}}_-\simeq.
}
\eeq 
In the case of (the topoligist's) intersection cohomology, the arrows are direct sum arrows for the canonical decompositions into irreducible components (\ref{naadm}).
\end{pr}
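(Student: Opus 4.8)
The plan is to deduce this from the general machinery, specifically from Proposition \ref{r54}, by verifying its hypotheses (A) in the present geometric situation. First I would set up the coefficient complex: let $F \in D^b_c(X)$ be $\rat_X$ when $\ms H$ denotes singular cohomology, and $F = IC_X$ (resp. $\m{IC}_X$, after an innocuous shift that does not affect the perverse-filtration statements) when $\ms H$ denotes (topologist's) intersection cohomology. With this choice, $\ms H^\bullet(X_s) = H^\bullet(X_s, i^*F)$ and $\ms H^\bullet(X_s^o) = H^\bullet(X_s^o, i^*b^*F)$, and similarly for $t$; the perverse Leray filtrations $P^{f_s}, P^{f_t}$ are the ones built from the systems $\ptd{\bullet} f_* F$ etc. (using that $f$ is the projective Hitchin morphism). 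So the commutative square \eqref{g00} is exactly the square \eqref{g42.8bix} of Corollary \ref{r55}, and what must be shown is that the hypotheses of Proposition \ref{r54} are met, so that both vertical arrows are defined and are filtered isomorphisms.

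Next I would check condition (A) of Proposition \ref{r54}, namely $\phi\, b_* f_* b^* F = 0$ and $\phi\, f_* b^* F = 0$. Both of these follow from Lemma \ref{jhhje}: indeed \eqref{oq23} gives $\phi\, b^*F = 0$ and $\phi\, b_* b^* F = 0$ for $F = \rat_X$ or $F = IC_X$, and since $f$ is proper one has $\phi\, f_* b^*F = f_* \phi\, b^* F = 0$ and $\phi\, f_* b_* b^* F = f_* \phi\, b_* b^* F = 0$; using $f b = b f$ on the open part of \eqref{hh11} this last complex is $\phi\, b_* f_* b^* F$. (In the intersection-cohomology case, the compatibility of these vanishings with the shift $F = IC_X$ vs.\ $\m{IC}_X$ is immediate since $\phi$ is $\frak t$-exact up to the shift and additive.) Thus Proposition \ref{r54} applies: $sp^*_{P^f}(F)$ is defined and a filtered isomorphism for $P^{f}$, and the square of Corollary \ref{r55} commutes, which is precisely \eqref{g00}. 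In the (topologist's) intersection cohomology case, the decomposition of all complexes and all morphisms into direct summands indexed by irreducible components is supplied by Lemma \ref{irrco}, in particular \eqref{naadm}, together with the remark preceding the statement that the horizontal arrows carry no monodromy ambiguity while the vertical ones are well-defined up to the monodromy permuting irreducible components (harmless by Remark \ref{f01}); one may always pass to $\psi v_* f_* F$ to remove this ambiguity entirely.

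The main obstacle I anticipate is not any single deep step but rather the bookkeeping needed to be sure the hypotheses genuinely reduce to Lemma \ref{jhhje}: one must be careful that the perverse Leray filtration appearing in Proposition \ref{r54} is the correct one for both $F$ and $f_*F$, that the general point $t$ is general simultaneously for $F$ with respect to $X/S$ and for $f_*F$ with respect to $Y/S$ (this is exactly the hypothesis phrased in Proposition \ref{r54}, and is available since we deal with finitely many complexes, cf.\ Fact \ref{gi9} and Definition \ref{tge}), and that the shift $IC$ vs.\ $\m{IC}$ and the component-wise decomposition \eqref{naadm} propagate correctly through the diagram \eqref{ar2}. Once these compatibilities are recorded, the proposition is a direct consequence of Proposition \ref{r54}(A) and Corollary \ref{r55}, with the new content over \ci{dema} being precisely that no semisimplicity of $F$ is used — so the statement holds for $F = \rat_X$, i.e.\ for ordinary singular cohomology, as well.
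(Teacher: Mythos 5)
Your proposal follows essentially the same route as the paper: verify condition (A) of Proposition \ref{r54} via the vanishings of Lemma \ref{jhhje}, then read off the commutative square from Corollary \ref{r55}, with the direct-sum structure supplied by Lemma \ref{irrco}. One small point needs patching: Corollary \ref{r55} asserts only that the \emph{right} vertical arrow is an isomorphism; the left vertical arrow $sp^*_{P^f}(F)$ on $\ms H^\bullet(X_s)\to\ms H^\bullet(X_t)$ is merely well-defined there (since $v:X\to S$ and $f$ are proper). To conclude that it too is a filtered isomorphism you need a separate appeal to Theorem \ref{crit}.(1), using $\phi f_*F=0$, which Lemma \ref{jhhje} already gives you via $\phi F=0$ and properness of $f$ — so the gap is a one-line citation, not a missing idea, but as written your attribution of both isomorphisms to Proposition \ref{r54}/Corollary \ref{r55} overstates what those results deliver.
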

\begin{proof}
By Lemma \ref{jhhje}, coupled with Theorem \ref{crit}.(1), we have that the perverse filtered specialization morphism
on the l.h.s. of (\ref{g00}) is defined and it is an isomorphism.
We wish to apply Proposition \ref{r54}.(A) and its Corollary \ref{r55}. 
We need to verify that $\phi b_*b^* F=0$ and that $\phi f_* b^* F=0$.  Both follow, again, from 
Lemma \ref{jhhje}.  This proves diagram (\ref{g00}). That the arrows are direct sum arrows follows from 
Lemma \ref{irrco}.
\end{proof}

The following result identifies the filtered cone of the restriction morphism in (\ref{g00}).
We state the following proposition using morphisms of long exact sequences in cohomology. The reader should have no difficulty
re-writing the stronger version of  (\ref{g11})  that  uses $(R\Gamma (-,-), P^f)$ and distinguished triangles
in $DF(pt)$.

\begin{tm}\la{lezse}
Let $\ms H(-)$ denote either the singular cohomology, the intersection cohomology, or the topologist's intersection cohomology
groups.
 In what follows, for intersection cohomology, when dealing with $Z_s, Z_t$, replace $P^f(1)$ with $P^f$ and $\bullet -2$
with $\bullet -1$. 
We have as isomorphism of long exact sequences of filtered morphims:
\beq\la{g11}
\xymatrix{
\ldots \ar[r] &
\left(\ms{H}^{\bullet -2} (Z_t), P^{f_t}(1) \right) \ar[r]  &
\left(\ms{H}^\bullet (X_t), P^{f_t}\right) \ar[r]   &   \left(\ms{H}^\bullet (X^o_t) P^{f_t}\right) \ar[r] & \ldots  \\
\ldots \ar[r]  & \left(\ms{H}^{\bullet -2} (Z_s), P^{f_s}(1)\right) \ar[r] \ar[u]^-{sp^*_{P^f}}_-\simeq   &
\left(\ms{H}^\bullet (X_s), P^{f_s}\right)   \ar[r] \ar[u]^-{sp^*_{P^f}}_-\simeq  &  \left(\ms{H}^\bullet (X^o_t), P^{f_s}\right)
 \ar[u]^-{sp^*_{P^f}}_-\simeq  \ar[r] & \ldots.
}
\eeq
For  (the topologist's) intersection cohomology, the morphisms are direct sum morphisms for the decomposition  according to irreducible
components (\ref{naadm}).
\end{tm}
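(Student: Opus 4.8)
The plan is to deduce Theorem \ref{lezse} directly from Theorem \ref{sonoloro}, by verifying that one of the three hypothesis packages (I), (II), (III) of that theorem applies to $F = \rat_X$ and to $F = IC_X$, after which the isomorphism of filtered long exact sequences (\ref{g11}) is exactly the specialization of (\ref{mttc}) to these two choices of coefficients, with the sign/shift bookkeeping coming from Lemma \ref{gbq} and the convention (\ref{conv}). More precisely: for $F = IC_X$ we have $a^! F = IC_Z[-1]$, so that $R\Gamma(Z_t, t^* a^! F) = \ms{IH}^\bullet(Z_t)$ shifted by $[-1]$, and together with the filtration shift $P^{f_t}(-1)$ in (\ref{mttc}) this produces the statement with $P^f$ (unshifted) and degree $\bullet - 1$ for the $Z$-terms, as recorded in the parenthetical remark of the theorem; for $F = \rat_X$ we have $a^! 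F = \rat_Z[-2]$, giving the $\bullet - 2$ shift and the $P^{f_t}(1)$ convention for the $Z$-terms. The direct-sum-over-irreducible-components structure for intersection cohomology is then read off from Lemma \ref{irrco}, specifically from the description (\ref{naadm}) of the adjunction triangle applied to $IC_X$.

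The first step is to check that the vanishing conditions $\phi a^! F = 0$, $\phi F = 0$, $\phi b_* b^* F = 0$ hold for both $F = \rat_X$ and $F = IC_X$: this is precisely Lemma \ref{jhhje}, which also records the stronger $v_*$-pushed-forward vanishings $\phi v_* a_! f_* a^! F = 0$, $\phi v_* f_* F = 0$, $\phi v_* b_* f_* b^* F = 0$ in (\ref{oq23bix}). The second step is to check the ``no constituents'' conditions: $f_* F$ has no constituents supported on $W$ and $f_* i^* F$ has no constituents supported on $W_s$; this is exactly Lemma \ref{f023}. Granting these, condition (I) of Theorem \ref{sonoloro} is satisfied in full — (Ia) and (Ib) from Lemma \ref{f023}, (Ic) and (Id) from Lemma \ref{jhhje} — so (\ref{mttc}) applies verbatim, and it remains only to translate its terms using Lemma \ref{gbq} and to match $R\Gamma$ with the appropriate $\ms H$.

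The translation step is where a little care is needed but no real difficulty arises. One must: first, identify $R\Gamma(X_t, t^* \rat_X) = H^\bullet(X_t)$, $R\Gamma(X_t, t^* IC_X) = IH^\bullet(X_t)$ (and likewise at $s$), where for the $\ms{IH}$ normalization one uses $\m{IC}_X = IC_X[\dim]$ — but since the excerpt's Theorem \ref{lezse} phrases things for $\ms H \in \{$singular, intersection, topologist's intersection$\}$, it suffices to note that all three are obtained from $\rat_X$ or $IC_X$ (up to innocuous shifts that are locally constant integers, hence do not disturb perversity or the filtration), so the single instance of (\ref{mttc}) covers all three cases; second, for the boundary term $R\Gamma(Z_t, t^* a^! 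F)$, substitute $a^! \rat_X = \rat_Z[-2]$ or $a^! IC_X = IC_Z[-1]$ from Lemma \ref{gbq}, which shifts cohomological degree by $2$ (resp.\ $1$) and, combined with the filtration shift $P^{f_t}(-1)$ appearing in (\ref{mttc}), yields the $P^{f_t}(1)$ (resp.\ $P^{f_t}$) convention in (\ref{g11}); third, observe that the three vertical specialization isomorphisms $sp^*_{P^f}(a^! F)$, $sp^*_{P^f}(F)$, $sp^*_{P^f}(b^* F)$ of (\ref{mttc}) are, in the intersection-cohomology case, direct-sum morphisms for the decomposition into irreducible components, which follows because $IC_X = \nu_* IC_{\widehat X}$, $IC_Z = \nu_* IC_{\widehat Z}$ and the whole construction is $\nu$-compatible by Lemma \ref{irrco}.(2), i.e.\ the triangle (\ref{naadm}) is a direct sum of triangles indexed by components.

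The only genuine obstacle, and the reason the preparatory sections were needed, is the verification of the two input lemmas — but those are already proved in the excerpt, so here the work reduces to bookkeeping. If one wanted to give a self-contained argument one would re-derive Lemma \ref{jhhje} by descending $\phi = 0$ along the $\Gm$-quotient $\pi = qp$ via Lemma \ref{ga}, starting from the Non-Abelian Hodge Theorem's topological local triviality of $X^o/S$ (which gives $\phi F^o = 0$ for $F^o = \rat_{X^o}$ or $IC_{X^o}$), then pulling back along the smooth maps to $X^o \times \mathbb A^1$ and restricting to the open $\ms X$; and one would re-derive Lemma \ref{f023} by descending the ``no constituents on the boundary'' property via Lemma \ref{gao}, using proper base change along $pr_{X^o}$ to see that $f_* pr_{X^o}^* F^o$ has no constituents on $Y^o \times \{0\}$. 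With both lemmas in hand, Theorem \ref{lezse} is an immediate corollary of Theorem \ref{sonoloro}.(I), and the proof is essentially a one-paragraph citation plus the shift conventions just described.
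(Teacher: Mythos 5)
Your proposal is correct and follows essentially the same route as the paper's own proof: verify conditions (I) of Theorem \ref{sonoloro} via Lemma \ref{f023} (for (Ia),(Ib)) and Lemma \ref{jhhje} (for (Ic),(Id)), apply (\ref{mttc}), substitute the identities $a^!\rat_X=\rat_Z[-2]$ and $a^!IC_X=IC_Z[-1]$ of Lemma \ref{gbq}, and track the shifts with (\ref{tsh}) and (\ref{conv}), with the direct-sum structure coming from (\ref{naadm}). The only difference is cosmetic: the paper also remarks in passing that conditions (II) and (III) would suffice in the $IC_X$ case but not for the non-semisimple $\rat_X$, which you note only implicitly.
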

\begin{proof}
The plan is to use (\ref{mttc}) in 
 Theorem \ref{sonoloro}, which stems from (\ref{iola}), and    plug (\ref{lki9}) into the result.

 In order to use (\ref{mttc}), we need to verify that the conditions (I) in Theorem \ref{sonoloro} are met.
 Even if it is not necessary, let us note that conditions (II) and (III) are also met in the case where we deal with intersection cohomology  via the use of $IC_X$; however, 
 they are not necessarily met if we deal with singular cohomology via the use of the non semisimple $\rat_X$. 
 
 Conditions (Ia,Ib) are met by virtue of Lemma \ref{f023}. 
 Conditions (Ic,Id) are met by virtue of Lemma \ref{jhhje}.
 
Plug $F= \rat_X[1]$ in (\ref{iola}), use $i^*\rat_X= \rat_{X_s}$ and $i^*a^! \rat_{X_s}=\rat_{Z_s}[-2]$ (cf. \ref{lki9}),
and similarly for $t^*$,
and get the system of isomorphisms of distinguished triangles:
\beq\la{adj1}
\xymatrix{
\ptd{\bullet +1} f_*   \rat_{Z_t}[-2] \ar[r] &  \ptd{\bullet} f_*   \rat_{X_t} \ar[r] & 
b_*  \ptd{\bullet} f_* \rat_{X^o_t} \ar@{~>}[r]. &
\\
\ptd{\bullet +1} f_*   \rat_{Z_s}[-2] \ar[r]  \ar[u]^-\simeq &  \ptd{\bullet} f_*   \rat_{X_s} \ar[r]  \ar[u]^-\simeq & 
b_*  \ptd{\bullet} f_* \rat_{X^o_s} \ar@{~>}[r]  \ar[u]^-\simeq &.
}
\eeq
Then the reader can use (\ref{tsh})
 and (\ref{conv}), to observe that $P^f(-1)$ in (\ref{mttc}) then becomes $P^f(1)$ as in (\ref{g11}), which is thus proved when  $\ms{H}$ is singular cohomology.
 
 Plug $F= IC_X$ in (\ref{iola}), use $i^*[-1]IC_X= IC_{X_s}$ and $i^*a^! IC_{X}=IC_{Z_s}[-1]$ (cf. \ref{lki9}),
and similarly for $t^*$,
and get the system of isomorphisms of distinguished triangles:
\beq\la{adj1bix}
\xymatrix{
\ptd{\bullet +1} f_*   IC_{Z_t}[-1] \ar[r] &  \ptd{\bullet} f_*   IC_{X_t} \ar[r] & 
b_*  \ptd{\bullet} f_* IC_{X^o_t} \ar@{~>}[r]. &
\\
\ptd{\bullet +1} f_*   IC_{Z_s}[-1] \ar[r]  \ar[u]^-\simeq &  \ptd{\bullet} f_*   IC_{X_s} \ar[r]  \ar[u]^-\simeq & 
b_*  \ptd{\bullet} f_* IC_{X^o_s} \ar@{~>}[r]  \ar[u]^-\simeq &.
}
\eeq
Then the reader can use (\ref{tsh})
 and (\ref{conv}) to observe that $P^f(-1)$ in (\ref{mttc})  becomes $P^f$ as in (\ref{g11}) -and that $\bullet  -2$ should be replaced by $\bullet -1$-,  which is thus proved when  $\ms{H}$ is intersection  cohomology. That the  morphisms are direct sum morphisms for the decompositions into irreducible components follows from (\ref{naadm}).
 
 Similarely, if we plug $\m{IC}_X=\oplus_j IC_{X_j}[-\dim X_j]$, we end up with the desired conclusion.
\end{proof}

\begin{rmk}\la{allokvar}$\;$

\ben
\item
The results of this section hold if we replace the Dolbeault moduli space of Higgs bundles for $G=GL_n, SL_n, PGL_n$ for families of curves
with their twisted counterparts of
\ci[Remark 2.1.2]{decomp2018}, where the degree and rank of the Higgs bundles  are coprime.

\item
If $G=GL_n, SL_n$, these twisted Dolbeault moduli spaces are nonsingular, their compactifications
in \S\ref{codomosp} 
are orbifolds, and they can be resolved with boundary a simple normal crossing divisor over the base of the family $S$; see \ci[Thm. 3.1.1.(6)]{decomp2018}.
What follows is certainly redundant in our set-up of the Hitchin morphism, but  may be useful in other set-ups:
the proof of Theorem \ref{lezse} uses conditions (I) in Theorem \ref{sonoloro}; since we are in a simple normal crossing divisor situation, in view of Corollary \ref{vbre}, we could use the variant of conditions (II,III) in Theorem \ref{sonoloro} where one only assumes
that $f_* i^*F[-1]$ satisfies the conclusion of the Hard Lefschetz Theorem. We leave the details to the reader.
\een
\end{rmk}


\end{document}